\newtheorem{introtheorem}{Theorem}
\newtheorem{theorem}{Theorem}[section]
\newtheorem{lemma}[theorem]{Lemma}
\newtheorem{proposition}[theorem]{Proposition}
\newtheorem{corollary}[theorem]{Corollary}
\theoremstyle{definition}
\newtheorem{definition}[theorem]{Definition}
\newtheorem{remark}[theorem]{Remark}
\newtheorem*{question*}{Question}
\newtheorem*{questions*}{Questions}
\newtheorem*{steps*}{Answer/steps}
\newtheorem*{progress*}{Progress}
\newtheorem*{classification*}{Classification}
\newtheorem*{construction*}{Classification}
\newtheorem*{example*}{Example}
\newtheorem*{remark*}{Remark}
\newtheorem*{remarks*}{Remarks}
\newtheorem*{definition*}{Definition}
\renewcommand{\tilde}{\widetilde}
\newcommand{\Q}{\mathbb{Q}}
\newcommand{\Z}{\mathbb{Z}}
\newcommand{\F}{\mathbb{F}}
\newcommand{\PSL}{\mathrm{PSL}}
\newcommand{\X}{\mathcal{X}}
\DeclareMathOperator{\Res}{Res}
\DeclareMathOperator{\SL}{SL}
\DeclareMathOperator{\GL}{GL}
\DeclareMathOperator{\Aut}{Aut}
\DeclareSymbolFont{cyrletters}{OT2}{wncyr}{m}{n}
\DeclareMathSymbol{\Sha}{\mathalpha}{cyrletters}{"58}
\def\greekbolds#1{%
 \@for\next:=#1\do{%
    \def\X##1;{%
     \expandafter\def\csname V##1\endcsname{\boldsymbol{\csname##1\endcsname}}
     }
   \expandafter\X\next;
  }
}
\def\make@bb#1{\expandafter\def
  \csname bb#1\endcsname{{\mathbb{#1}}}\ignorespaces}
\def\make@bbm#1{\expandafter\def
  \csname bb#1\endcsname{{\mathbbm{#1}}}\ignorespaces}
\def\make@bf#1{\expandafter\def\csname bf#1\endcsname{{\bf
      #1}}\ignorespaces} 
\def\make@gr#1{\expandafter\def
  \csname gr#1\endcsname{{\mathfrak{#1}}}\ignorespaces}
\def\make@scr#1{\expandafter\def
  \csname scr#1\endcsname{{\mathscr{#1}}}\ignorespaces}
\def\make@cal#1{\expandafter\def\csname cal#1\endcsname{{\mathcal
      #1}}\ignorespaces} 
\def\do@Letters#1{#1A #1B #1C #1D #1E #1F #1G #1H #1I #1J #1K #1L #1M
                 #1N #1O #1P #1Q #1R #1S #1T #1U #1V #1W #1X #1Y #1Z}
\def\do@letters#1{#1a #1b #1c #1d #1e #1f #1g #1h #1i #1j #1k #1l #1m
                 #1n #1o #1p #1q #1r #1s #1t #1u #1v #1w #1x #1y #1z}
\def\ol{\overline}
\def\wt{\widetilde}
\def\ul{\underline}
\def\onto{\twoheadrightarrow}
\def\wh{\widehat}
\newcommand{\<}{\langle}  
\renewcommand{\>}{\rangle}
\newcommand{\isoto}{\stackrel{\sim}{\longrightarrow}}
\newcommand{\embed}{\hookrightarrow}
\def\Spec{{\rm Spec}\,}
\def\Fpbar{\overline{\bbF}_p}
\def\Fp{{\bbF}_p}
\def\Qp{{\bbQ}_p}
\def\Zp{{\bbZ}_p}
\def\ch{characteristic\ }
\newcommand{\A}{\mathbb A}    
\newcommand{\G}{\mathbb G}
\def\makeop#1{\expandafter\def\csname#1\endcsname
  {\mathop{\rm #1}\nolimits}\ignorespaces}
\DeclareMathOperator{\Mass}{Mass}
\newcommand{\dieu}{Dieudonn\'{e} }
\DeclareMathOperator{\Span}{Span}
\DeclareMathSymbol{\twoheadrightarrow} {\mathrel}{AMSa}{"10}
\DeclareMathOperator{\pr}{pr}
\def\sep{\mathrm{sep}}
\def\sfF{\mathsf{F}}
\def\sfV{\mathsf{V}}
\begin{document}

\title{Mass formula and Oort's conjecture 
for supersingular abelian threefolds}
\author{Valentijn Karemaker}
\address{(Karemaker) Mathematical Institute, Utrecht University, Utrecht, The Netherlands}
\email{V.Z.Karemaker@uu.nl}
\author{Fuetaro Yobuko}
\address{(Yobuko) Graduate School of Mathematics, Nagoya University, Nagoya, Japan}
\email{yobuko@math.nagoya-u.ac.jp}
\author{Chia-Fu Yu}
\address{(Yu) Institute of Mathematics, Academia  Sinica and National Center for Theoretic Sciences, Taipei, Taiwan}
\email{chiafu@math.sinica.edu.tw}

\date{\today}

\begin{abstract}
Katsura and Oort obtained an explicit description of the supersingular
locus $\calS_{3,1}$ of the Siegel modular variety of degree $3$ in terms of
class numbers. In this paper we study an alternative stratification of
$\calS_{3,1}$, the so-called mass stratification. 
We show that when $p\neq 2$, there are eleven 
strata (one of $a$-number $3$, two of $a$-number $2$ and eight of 
$a$-number $1$). We give an explicit mass formula for each stratum and
classify  
possible automorphism groups on each stratum of $a$-number one. 
On the largest open stratum we show that every automorphism group is 
$\{\pm 1\}$ if and only if $p \neq 2$; that is, we prove that Oort's
conjecture on the automorphism groups of generic supersingular
abelian threefolds holds precisely when $p>2.$ 
\end{abstract}

\maketitle
\setcounter{tocdepth}{2}

\section{Introduction}

Throughout this paper, let $p$ be a prime number, and let $k$ be an algebraically closed field of characteristic $p$. An abelian variety $X$ over $k$ is said to be \emph{supersingular} if it is isogenous to a product of
supersingular elliptic curves; it is called \emph{superspecial} if it is isomorphic to a product of supersingular elliptic curves. 
To each polarised supersingular abelian variety $x=(X_0,\lambda_0)$ of $p$-power polarisation degree, we associate a set $\Lambda_x$ of isomorphism classes of 
$p$-power degree polarised abelian varieties $(X,\lambda)$ over $k$, consisting of those whose associated quasi-polarised $p$-divisible groups satisfy $(X,\lambda)[p^\infty]\simeq (X_0,\lambda_0)[p^\infty]$. 
It is known that $\Lambda_x$ is a finite set, and
the \emph{mass} of $\Lambda_x$ is defined to be the weighted sum
\begin{equation}\label{eq:intromass}
  \mathrm{Mass}(\Lambda_x):=\sum_{(X,\lambda)\in \Lambda_x} \frac{1}{\vert \mathrm{Aut}(X,\lambda)\vert }. 
\end{equation}

Let $\calA_g$ be the moduli space over $\Fpbar$ 
of $g$-dimensional principally
polarised abelian varieties. 
If $x=(X_0,\lambda_0)$ is a superspecial point in $\calA_g(k)$, that
is, $X_0$ is superspecial,  
then $\Lambda_x$ coincides with the
superspecial locus $\Lambda_{g,1}$ of $\calA_g$, which
consists of all superspecial points in $\calA_g$, called the
\emph{principal genus}.
The classical mass formula (see Hashimoto--Ibukiyama \cite[Proposition 
9]{hashimotoibukiyama} and Ekedahl \cite[p.~159]{ekedahl}) states that  
\begin{equation}\label{eq:introsspmass}
  \mathrm{Mass}(\Lambda_{g,1})=\frac{(-1)^{g(g+1)/2}}{2^g} \left \{
  \prod_{i=1}^g \zeta(1-2i) \right \}\cdot
  \prod_{i=1}^{g}\left\{(p^i+(-1)^i\right \}, 
\end{equation}
where $\zeta(s)$ denotes the Riemann zeta function. 

More generally, for any integer $c$ with 
$0 \leq c \leq \lfloor g/2 \rfloor$, let $\Lambda_{g,p^c}$ denote the
finite set of isomorphism classes of $g$-dimensional polarised
superspecial abelian varieties $(X,\lambda)$ such that $\ker(\lambda)
\simeq \alpha_p^{2c}$, 
where $\alpha_p$ is the kernel of the Frobenius morphism on the
additive group $\mathbb{G}_a$.  
Then one also has $\Lambda_{g,p^c}=\Lambda_x$ for any
member $x$ in $\Lambda_{g,p^c}$. 
The case $c=\lfloor g/2 \rfloor$ is called the
\emph{non-principal genus}. 
As shown by Li-Oort \cite{lioort}, both the principal and
non-principal genera describe the irreducible components of the 
supersingular locus $\mathcal{S}_{g,1}$ of $\mathcal{A}_g$.
Similarly, the sets 
$\Lambda_{g,p^c}$ describe the irreducible components of 
supersingular Ekedahl-Oort (EO) strata in $\mathcal{A}_g$
cf.~\cite{harashita}. 
The explicit determination of the class number $\vert \Lambda_{g,p^c}\vert$, i.e., the class number problem, is a very difficult task for large $g$, and
is still open for $g=3$ and $c=1$. 
Nevertheless, an explicit calculation of the mass $\mathrm{Mass}(\Lambda_{g,p^c})$ is more accessible
and provides a good estimate for the class number. 
This mass was calculated explicitly by the third author \cite[Theorem 1.4]{yu2} when $g=2c$ and extended to arbitrary $g$ and~$c$ by Harashita \cite[Proposition 3.5.2]{harashita}. 

In \cite{yuyu}, J.-D. Yu and the third author explicitly calculated the
mass formula for $\Mass(\Lambda_x)$ for 
an arbitrary  principally polarised
supersingular abelian surface $x=(X_0,\lambda_0)$. 
In \cite{ibukiyama}, Ibukiyama investigated 
principal polarisations of a given supersingular
non-superspecial abelian surface $X_0$. 
He explicitly computed the number of polarisations and
the mass of the corresponding principally polarised abelian surfaces. 
He also showed the agreement with $\vert \Lambda_x \vert$ and 
$\mathrm{Mass}(\Lambda_x)$ cf.~\cite[Proposition 3.3 and Theorem
3.6]{ibukiyama}, respectively, for a member 
$x=(X_0,\lambda_0)$ in $\calS_{2,1}$. 
As an important arithmetic application, Ibukiyama 
proved Oort's conjecture that the automorphism group of 
any generic member is $\{\pm 1\}$ for 
$p\geq 3$, and he gave a counterexample for 
$p=2$.\\

Inspired by Ibukiyama's work \cite{ibukiyama}, and as 
a continuation of \cite{yuyu}, 
in this paper we 
completely determine the mass formula for $\mathrm{Mass}(\Lambda_x)$
when $g=3$, and prove Oort's conjecture for $p>2$ 
as an arithmetic application.
To describe our results, we introduce some notation; more details will 
be given in Sections~\ref{sec:formulae} and~\ref{sec:sslocus}. 

For any abelian variety $X$ over $k$, the
\emph{$a$-number} of $X$ is $a(X):=\mathrm{dim}_k
\mathrm{Hom}(\alpha_p, X)$. 
For abelian threefolds $X$ we have $a(X) \in \{1,2,3\}$; when computing the mass, we will separate into cases based on the $a$-number. 

Further let $E$ be a supersingular elliptic curve over
$\mathbb{F}_{p^2}$ with Frobenius endomorphism $\pi_E=-p$, and let
$E_k=E\otimes_{\mathbb{F}_{p^2}} k$. 
For each integer $c$ with $0\leq c \leq \lfloor g/2 \rfloor$, we denote by $P_{p^c}({E^g_k})$ the set of polarisations $\mu$ on ${E^g_k}$ such that $\mathrm{ker} \mu \simeq \alpha_p^{2c}$; one has $P_{p^c}({E^g_k})=P_{p^c}(E^g)$. 
As superspecial abelian threefolds are unique up to isomorphism, there
is a natural bijection $P_{p^c}({E^g_k}) \simeq
\Lambda_{g,p^c}$. 

Let $\mu$ be a polarisation in $P_1(E_k^3)$.
As alluded to above, Li and Oort \cite{lioort} show there is a one-to-one natural correspondence between the
set $P_1({E^3_k})$ and the set $\Sigma(\mathcal{S}_{3,1})$ of
(geometrically) irreducible components of $\mathcal{S}_{3,1}$.  
More precisely, they consider the moduli space $\mathcal{P}_{\mu}$
(resp.~$\mathcal{P}'_{\mu}$) over $\mathbb{F}_{p^2}$ of
three-dimensional (resp.~rigid) polarised flag type quotients with
respect to $\mu$. This space is an irreducible scheme which comes with
a proper projection morphism $\mathrm{pr}_0: \mathcal{P}_{\mu} \to
\mathcal{S}_{3,1}$, such that for each principally polarised
supersingular abelian threefold $(X,\lambda)$ there exist a $\mu \in
P_1(E_k^3)$ and a $y \in \mathcal{P}_{\mu}$ such that
$\mathrm{pr}_0(y) = [(X,\lambda)] \in \mathcal{S}_{3,1}$.  

Let $C\subseteq \mathbb{P}^2$ be the Fermat curve of degree $p+1$
defined by the equation $X_1^{p+1}+X_2^{p+1}+X_3^{p+1}=0$. There
exists a natural proper morphism $\pi: \mathcal{P}_{\mu} \to C$
with $\bbP^1$-fibers, and it is shown (cf.~\cite[Section 9.4]{lioort} and
Proposition~\ref{prop:explicitmoduli}) that 
$\mathcal{P}_{\mu}$ is isomorphic to the $\bbP^1$-bundle 
$\mathbb{P}_{C}(\mathcal{O}(-1)\oplus \mathcal{O}(1))$ 
over the Fermat curve $C$.
Moreover, the morphism $\pi$
has a section $s~:~C\isoto T\subseteq \calP_{\mu}$, 
cf.~Definition~\ref{def:T}. 
In particular, for each $k$-point $(X,\lambda)$ in the component
$\pr_0(\calP_\mu)$ of $\calS_{3,1}$ and a point 
$y \in \mathcal{P}_{\mu}(k)$ lying over $(X,\lambda)$, there exists a
unique 
pair $(t,u)$ where $t = (t_1:t_2:t_3) \in C(k)$ and $u = (u_1:u_2) \in
\pi^{-1}(t) \simeq \mathbb{P}^1_t(k)$ that characterises $y$.
Moreover, we have (cf.~Proposition~\ref{prop:sections}):
\begin{enumerate}
\item If $y \in T$ then $a(X) = 3$;
\item For any $t \in C(k)$, we have $t \in C(\mathbb{F}_{p^2})$ if and only if for any $y \in \pi ^{-1}(t)$ the corresponding threefold $X$ has $a(X) \geq 2$.
\item We have $a(X) = 1$ if and only if $y \notin T$ and $\pi (y) \notin C(\mathbb{F}_{p^2})$.
\end{enumerate}

We are now ready to state our first two main results, computing the mass for any principally polarised supersingular abelian threefold.

\begin{introtheorem}\label{introthm:a2} (Theorem~\ref{thm:massa2}) 
  Let $x = (X,\lambda)\in \mathcal{S}_{3,1}(k)$ with $a(X)\ge 2$, let $\mu\in P_1(E^3)$, and let $y\in \calP'_\mu(k)$ be such that $\mathrm{pr}_0(y) = [(X,\lambda)]$. 
  Write $y=(t,u)$ where $t=\pi(y)\in C(\F_{p^2})$ and $u\in \pi^{-1}(t) \simeq \mathbb{P}^1_t(k)$. Then 
\[
  \mathrm{Mass}(\Lambda_x)=\frac{L_p}{2^{10}\cdot 3^4\cdot 5\cdot 7}, 
\]
where 
\[
 L_p=
\begin{cases}
  (p-1)(p^2+1)(p^3-1) & \text{if } u\in
  \mathbb{P}_t^1(\mathbb{F}_{p^2}); \\ 
  (p-1)(p^3+1)(p^3-1)(p^4-p^2) & \text{if }
  u\in\mathbb{P}_t^1(\mathbb{F}_{p^4})\setminus
  \mathbb{P}_t^1(\mathbb{F}_{p^2}); \\ 
  2^{-e(p)}(p-1)(p^3+1)(p^3-1) p^2(p^4-1) & \text{ if
  } u \not\in 
  \mathbb{P}_t^1(\mathbb{F}_{p^4});
\end{cases} 
\]
where $e(p)=0$ if $p=2$ and $e(p)=1$ if $p>2$. 
\end{introtheorem}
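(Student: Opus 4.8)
The plan is to reduce the computation of $\mathrm{Mass}(\Lambda_x)$ to a purely local problem at $p$. By the very definition of $\Lambda_x$ in \eqref{eq:intromass}, this set, and hence its mass, depends only on the isomorphism class of the quasi-polarised $p$-divisible group $(X,\lambda)[p^\infty]$. My first step is therefore to pin down this local isomorphism class in terms of the pair $(t,u)$ supplied by the polarised flag type quotient description. Since $a(X)\ge 2$ forces $t=\pi(y)\in C(\F_{p^2})$ by Proposition~\ref{prop:sections}, I would write down the \dieu module of $X$ attached to $(t,u)$ explicitly and show that its isomorphism class, together with the induced quasi-polarisation, depends only on the field of definition of $u$ inside the fibre $\pi^{-1}(t)\simeq\mathbb{P}^1_t$. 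This should produce exactly the three local types indexed by $u\in\mathbb{P}^1_t(\F_{p^2})$, by $u\in\mathbb{P}^1_t(\F_{p^4})\setminus\mathbb{P}^1_t(\F_{p^2})$, and by $u\notin\mathbb{P}^1_t(\F_{p^4})$.

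Next I would invoke the adelic mass formula for the relevant quaternion hermitian group $G$ attached to $(E^3,\mu)$ (following the third author's work and Hashimoto--Ibukiyama): writing $\mathrm{Mass}(\Lambda_x)=\mathrm{vol}(G(\Q)\backslash G(\A_f))/\mathrm{vol}(U_x)$ for the stabiliser $U_x=\prod_\ell U_{x,\ell}$ of the lattice corresponding to $x$, the factors $U_{x,\ell}$ for $\ell\neq p$ coincide with those of the superspecial (principal genus) lattice, because all members of $\Lambda_x$ share the same prime-to-$p$ structure. Consequently the ratio $\mathrm{Mass}(\Lambda_x)/\mathrm{Mass}(\Lambda_{3,1})$ equals a single local volume ratio at $p$, and it suffices to compute this local factor for each of the three types above. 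For the base value I would evaluate \eqref{eq:introsspmass} at $g=3$, using $\zeta(-1)=-1/12$, $\zeta(-3)=1/120$, $\zeta(-5)=-1/252$, to obtain $\mathrm{Mass}(\Lambda_{3,1})=(p-1)(p^2+1)(p^3-1)/(2^{10}\cdot 3^4\cdot 5\cdot 7)$. This matches the first case exactly, so the local factor for $u\in\mathbb{P}^1_t(\F_{p^2})$ is trivial, and it then remains to show that the other two local factors equal $(p^3+1)(p^4-p^2)/(p^2+1)$ and $2^{-e(p)}(p^3+1)p^2(p^4-1)/(p^2+1)$ respectively.

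The heart of the argument, and the main obstacle, is the explicit evaluation of the local factor at $p$ in the two non-trivial cases. Here I would work with the \dieu module $M$ lying between the superspecial module and its dual, describe the stabiliser $U_{x,p}$ inside the local quaternion unitary group $G(\Q_p)$, and compute the resulting index as a rational function of $p$ by counting the number of admissible \dieu lattices in the local orbit; the growing fields $\F_{p^2}\subset\F_{p^4}$ governing $u$ enlarge the relevant residue-field data and thereby account for the increasing powers of $p$ in $L_p$. The most delicate point is the factor $2^{-e(p)}$ in the last case: it reflects whether a distinguished involution stabilising $M$ is already accounted for in $G(\Q)$, and tracking this correctly at $p=2$ versus $p>2$ — precisely the phenomenon underlying Oort's conjecture — will require a careful separate analysis of the $2$-adic automorphisms rather than a uniform formula valid for all $p$.
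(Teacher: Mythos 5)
Your overall reduction (the mass depends only on $(X,\lambda)[p^\infty]$, the prime-to-$p$ local factors agree with the superspecial ones, so $\Mass(\Lambda_x)/\Mass(\Lambda_{3,1})$ is a single local volume ratio at $p$) is sound and is essentially Proposition~\ref{prop:geomarithmass} together with Lemma~\ref{lem:massesU1U2}; the first case, where $u\in\bbP^1_t(\F_{p^2})$ forces $a(X)=3$ and $\Lambda_x=\Lambda_{3,1}$, is also handled correctly. But the heart of the theorem is the evaluation of the local factor in the two remaining cases, and your proposal leaves exactly that step as a sketch (``counting the number of admissible \dieu lattices in the local orbit''), so there is a genuine gap. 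The mechanism that makes the count tractable is missing: for $a(X)=2$ the minimal isogeny is the \emph{degree-$p$} isogeny $\rho_1:(Y_1,\lambda_1)\to(X,\lambda)$ from a superspecial $Y_1$ with $\ker(\lambda_1)\simeq\alpha_p^2$ (Proposition~\ref{prop:miniso}(2)), so the natural base point is the non-principal genus, $\Mass(\Lambda_{3,p})=(p-1)(p^3+1)(p^3-1)/(2^{10}\cdot3^4\cdot5\cdot7)$, not $\Mass(\Lambda_{3,1})$. Relative to $\Lambda_{3,p}$ the local factor becomes an honest group index $[\Aut(M_1,\langle\,,\rangle):\Aut(M,\langle\,,\rangle)]$: one has $M_1^\vee\subseteq M\subseteq M_1$ with $M/M_1^\vee$ the line $u$ in the two-dimensional symplectic $\F_{p^2}$-space $M_1/M_1^\vee$, the reduction map $\Aut(M_1,\langle\,,\rangle)\to\SL_2(\F_{p^2})$ is surjective (smoothness of the parahoric, Lemma~\ref{lem:m}), and the stabiliser of $u$ is $\End(u)^\times\cap\SL_2(\F_{p^2})$, namely $\F_{p^4}^1$ when $u\in\bbP^1_t(\F_{p^4})\setminus\bbP^1_t(\F_{p^2})$ and $\{\pm1\}$ when $u\notin\bbP^1_t(\F_{p^4})$. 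Your anchor at $\Lambda_{3,1}$ would instead force you to compute the non-integral ratio $(p^3+1)(p^4-p^2)/(p^2+1)$ directly as a quotient of volumes, which your ``growing residue fields'' heuristic does not deliver.

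Relatedly, your explanation of the factor $2^{-e(p)}$ is off target: it has nothing to do with ``a distinguished involution already accounted for in $G(\Q)$'' (the whole computation at this stage is local at $p$ and independent of the global class set). It arises simply because the stabiliser in the last case is the centre $\{\pm1\}$ of $\SL_2(\F_{p^2})$, whose order is $2$ for $p>2$ and $1$ for $p=2$, so the index is $\vert\PSL_2(\F_{p^2})\vert=2^{-e(p)}p^2(p^4-1)$. Without identifying the surjection onto $\SL_2(\F_{p^2})$ and the stabilisers $\End(u)^\times\cap\SL_2(\F_{p^2})$, none of the three case distinctions, nor the $2^{-e(p)}$, can actually be derived from your outline.
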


\begin{introtheorem}\label{introthm:a1} (Theorem~\ref{thm:anumber1})
Let $x = (X,\lambda) \in \mathcal{S}_{3,1}(k)$ such that $a(X)=1$ and
$x\in \pr_0(\calP_\mu)$ for some $\mu \in P_1(E^3)$. 
Consider an element $y \in
\mathcal{P}_{\mu}(k)$ over $x$, 
which is characterised by the pair $(t,u)$ with $t
\in C(k)\setminus C(\mathbb{F}_{p^2})$ and $u \in
\mathbb{P}^1_t(k)$. Let $\calD_t$ be as in Definition \ref{def:D}, and let
$d(t)$ be as in Definition~\ref{def:dx}. 
Then 
\[
  \mathrm{Mass}(\Lambda_x)=\frac{p^3 L_p}{2^{10}\cdot 3^4\cdot 5\cdot 7}, 
\]
where 
\[
\begin{split}
L_p = \begin{cases}
2^{-e(p)}p^{2d(t)}(p^2-1)(p^4-1)(p^6-1) & \text{ if } u \notin \calD_t; \\
p^{2d(t)}(p-1)(p^4-1)(p^6-1) & \text{ if } t \notin C(\mathbb{F}_{p^6}) \text{ and } u \in \calD_t; \\
p^6(p^2-1)(p^3-1)(p^4-1) & \text{ if } t \in C(\mathbb{F}_{p^6}) \text{ and } u \in \calD_t.
\end{cases}
\end{split}
\]
\end{introtheorem}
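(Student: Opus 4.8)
The plan is to compute the mass of $\Lambda_x$ for $a(X)=1$ by leveraging the explicit geometric description of $\calP_\mu$ as a $\PP^1$-bundle over the Fermat curve $C$, together with a mass formula that is sensitive to the field of definition of the characterising pair $(t,u)$. Since $\Lambda_x$ depends only on the quasi-polarised $p$-divisible group $(X,\lambda)[p^\infty]$, and since this isomorphism type is constant along the automorphism-group strata (the mass strata), the essential task is to relate $\Mass(\Lambda_x)$ to the superspecial mass $\Mass(\Lambda_{3,1})$ computed by~\eqref{eq:introsspmass}, correcting by a local factor that records how the deformation at $x$ differs from the superspecial point. First I would fix a superspecial point $x_0=(E^3,\mu)$ with $\mu\in P_1(E^3)$ in the same component $\pr_0(\calP_\mu)$, and compare the two masses via the local index $[\Lambda_{x_0}:\Lambda_x]$-type quantity arising from the automorphism groups of the corresponding lattices or Dieudonné modules.

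The key steps, in order, are as follows. First, translate the geometric data: by the stated correspondence (item~(3) preceding the theorem), $a(X)=1$ forces $y\notin T$ and $t=\pi(y)\notin C(\F_{p^2})$, so the pair $(t,u)$ with $t\in C(k)\setminus C(\F_{p^2})$ genuinely parametrises $x$; the three cases of the formula are governed by whether $u\in\calD_t$ (Definition~\ref{def:D}) and whether $t\in C(\F_{p^6})$, which are precisely the conditions distinguishing the automorphism behaviour. Second, express $\Mass(\Lambda_x)$ as a product of local masses over all places, reducing by strong approximation (as in~\cite{yu2, yuyu}) to a computation at $p$, where the local factor is governed by the structure of the Dieudonné module $M=M(X,\lambda)[p^\infty]$ as determined by $(t,u)$. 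Third, compute this local factor: the integer $d(t)$ from Definition~\ref{def:dx} records the relevant ``level'' or valuation jump, contributing the overall $p^{2d(t)}$ in the first two cases, and the distinct polynomial factors $(p^2-1)(p^4-1)(p^6-1)$ versus $(p-1)(p^4-1)(p^6-1)$ versus $(p^2-1)(p^3-1)(p^4-1)$ reflect the order of the finite unitary-type automorphism group of the local lattice in each regime. The global prefactor $\frac{p^3}{2^{10}\cdot 3^4\cdot5\cdot7}$ then matches the superspecial mass~\eqref{eq:introsspmass} for $g=3$ (which supplies the denominator $2^{10}\cdot3^4\cdot5\cdot7$ after evaluating $\prod_{i=1}^3\zeta(1-2i)$) multiplied by the $p^3$ coming from the difference between the principal genus and the generic supersingular stratum along the flag-type-quotient construction of~\cite{lioort}.

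The factor $2^{-e(p)}$ in the first case is the subtle point where the parity of $p$ enters: it records that for $p>2$ there is an extra involution in the relevant automorphism group (the element $-1$ acting nontrivially) which is absent, or rather coincides with an existing symmetry, when $p=2$. I would handle this by a direct comparison of the orders of the local automorphism groups in the two cases, tracking the contribution of $\{\pm1\}$ against the unitary group $U_3$ or $U_1\times U_2$ structure that appears; this is exactly the phenomenon underlying Oort's conjecture, so establishing it carefully here is what later yields the arithmetic application.

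The hard part will be the precise determination of the local automorphism group at $p$ in each of the three regimes, i.e.\ identifying the correct finite group (and hence its order) attached to the Dieudonné module parametrised by $(t,u)$ via the invariants $\calD_t$ and $d(t)$. Equivalently, the main obstacle is to prove that the field-of-definition conditions on $t$ and $u$ translate \emph{exactly} into the three listed orders, with the integer $d(t)$ correctly counting the deformation contribution; this requires a careful lattice-theoretic analysis of the polarised flag type quotient at $y$, using Proposition~\ref{prop:explicitmoduli} and the explicit $\PP^1$-bundle structure, rather than any soft or purely formal argument.
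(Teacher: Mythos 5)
Your proposal follows essentially the same route as the paper: construct the degree-$p^3$ minimal isogeny from the superspecial $(E^3,\mu)$ (Proposition~\ref{prop:miniso}), invoke the superspecial mass formula of Corollary~\ref{cor:sspmassg3}, and reduce via Proposition~\ref{prop:compmass} to the local index $[\Aut(M_2,\langle,\rangle_2):\Aut(M,\langle,\rangle)]$ at $p$, which is then computed by reducing the unitary group $\{A\in\GL_3(\calO_{D_p}):A^*A=\mathbb{I}_3\}$ modulo $p$ and $\Pi$ and tracking how the conditions $u\in\calD_t$, $t\in C(\F_{p^6})$, and the invariant $d(t)$ (via the kernel of the map $\psi_t$ on symmetric matrices) cut out the subgroup preserving $M/pM_2$. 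You correctly locate the remaining work in the explicit determination of these local groups, including the $\{\pm1\}$ origin of the $2^{-e(p)}$ factor, so the outline is sound and matches Sections~\ref{sec:a1} of the paper.
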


The mass function on $\calS_{3,1}$ induces a stratification such that the mass function becomes constant on each stratum.
By Theorem~\ref{introthm:a2}, the locus of $\calS_{3,1}$ with
$a$-number $\ge 2$ decomposes into three strata: 
one stratum with $a$-number $3$ and two strata
with $a$-number $2$. On the locus with $a$-number $1$, the
stratification depends on
$p$. When $p\ne 2$, the $d$-invariant takes values in $\{3,4,5,6\}$
and $d(t)=3$ if and only if $t\in C(\F_{p^6})$. In this case, 
Theorem~\ref{introthm:a1} says that the mass function
depends only on the $d$-invariant and whether $u\in \calD_t$ or not, 
and hence there are eight strata. 
When $p=2$, the $d$-value $d(t)$ is always $3$ and
Theorem~\ref{introthm:a1} gives three strata. 

Our computations of the automorphism groups can be summarised as follows.

\begin{introtheorem}\label{introthm:Oort}
Let $x = (X,\lambda) \in \mathcal{S}_{3,1}(k)$ and $\mu \in P_1(E^3)$ so that $x\in \pr_0(\calP_\mu)$.
Consider an element $y \in \mathcal{P}_{\mu}$ over~$x$, which is characterised by the pair $(t,u)$ with $t \in C(k)$ and $u \in \mathbb{P}^1_t(k)$.
Let $\calD_t$ be as in Definition~\ref{def:D} and let $d(t)$ be as in Definition~\ref{def:dx}.
\begin{enumerate}
\item (Theorem~\ref{thm:gen_autgp}) 
Suppose that $a(X) =1$, so that $t \in C(k)\setminus C(\mathbb{F}_{p^2})$. Assume that $(t,u)\not \in \calD$, that is, $u\not\in \calD_t$. 
  \begin{enumerate}
  \item If $p=2$, then $\Aut(X,\lambda)\simeq C_2^3$.
  \item If $p\ge 5$, or $p=3$ and $d(t)=6$, then
$\Aut(X,\lambda)\simeq C_2$,
\end{enumerate}
where $C_n$ denotes the cyclic group of order $n$. 
\item (Theorem~\ref{thm:inD})
Suppose that $a(X) = 1$ and that $(t,u)\in \calD$ with $t\not \in C(\F_{p^6})$. 
\begin{enumerate}
\item If $p=2$, then $\Aut(X,\lambda)\simeq C_2^3 \times C_3$.
\item If $p=3$ and $d(t)=6$, then $\Aut(X,\lambda)\in
\{C_2,C_4\}$.
\item For $p\ge 5$, we have the following cases:
\begin{itemize}

\item [(i)] If $p\equiv -1 \pmod {4}$, then $\Aut(X,\lambda)\in
  \{C_2,C_4 \}$. 
\item [(ii)] If $p\equiv -1 \pmod {3}$, then $\Aut(X,\lambda)\in
  \{C_2,C_6\}$.
\item [(iii)] If $p\equiv 1 \pmod {12}$, then $\Aut(X,\lambda)\simeq C_2$.  
\end{itemize}
\end{enumerate}
\item (Proposition~\ref{prop:asympt})
  Let $\Lambda_{3,1}(C_2):=\{(X,\lambda)\in \Lambda_{3,1}:
\Aut(X,\lambda)\simeq C_2\}$ be the set of superspecial principally polarised abelian threefolds satisfying Oort's conjecture. Then 
\[
  \frac{\vert \Lambda_{3,1}(C_2)\vert }{\vert \Lambda_{3,1}\vert }\to 1 \quad \text{as\ 
$p\to \infty$}.
\]
\end{enumerate}
\end{introtheorem}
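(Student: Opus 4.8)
The plan is to handle the two automorphism computations (parts (1) and (2)) by a single mechanism and then deduce the density statement (part (3)) from the mass formulae. For parts (1) and (2), I would first recast $\Aut(X,\lambda)$ in terms of the superspecial model. Since $a(X)=1$, the point $x$ lies off the $a$-number $\ge 2$ locus, so by the trichotomy recalled before Theorem~\ref{introthm:a2} the corresponding $y\in\calP_\mu(k)$ satisfies $y\notin T$ and $t=\pi(y)\in C(k)\setminus C(\F_{p^2})$. The polarised flag type quotient attached to $y$ realises $(X,\lambda)$ as an isogeny quotient of the superspecial threefold $E^3$ equipped with the polarisation $\mu$, and $\Aut(X,\lambda)$ is the group of units $g\in\End(X)$ with $g^\dagger g=1$ for the Rosati involution $\dagger$ attached to $\lambda$. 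I would translate this, via Dieudonn\'e theory, into the group of automorphisms of the Dieudonn\'e module of $E^3$ that preserve the quasi-polarisation and fix every step of the flag encoded by $y$; concretely this is the stabiliser, inside the finite unitary group $G:=\Aut(E^3,\mu)$ acting on $\calP_\mu$, of the pair $(t,u)$. Thus the whole problem becomes the determination of $\Stab_G(t,u)$.

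The stabiliser is then computed in two stages. The group $G$ acts on $C\subset\PP^2$ through a group of projective unitary transformations (a subgroup of $\mathrm{PGU}_3(\F_{p^2})$, the automorphisms of the Hermitian/Fermat curve $C$) and on the $\PP^1$-fibres of $\pi$; since $t\notin C(\F_{p^2})$, the field of definition of $t$ — measured by the invariant $d(t)$ of Definition~\ref{def:dx} — already forces $\Stab_G(t)$ to be small, and I would read off from the Frobenius/Galois structure of $t$ exactly which elements of $G$ fix $t$. Stabilising $u$ cuts this further: by the very definition of $\calD_t$ (Definition~\ref{def:D}), a nontrivial element of $\Stab_G(t)$ other than $-1$ fixes $u$ precisely when $u\in\calD_t$. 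Hence off $\calD_t$ only $\pm 1$ survives, giving $\Aut(X,\lambda)\simeq C_2$ for $p\ge 5$ (and for $p=3$ once $d(t)=6$ kills the exceptional behaviour at $3$), which is part (1)(b); for $u\in\calD_t$ one obtains a genuinely larger stabiliser, and its order is governed by which root of unity the quaternion algebra $\End^0(E):=\End(E)\otimes\Q$ can contain. An extra automorphism of order $4$ requires $\Q(i)\hookrightarrow\End^0(E)$, i.e. $p\equiv -1\pmod 4$, and one of order $3$ or $6$ requires $\Q(\sqrt{-3})\hookrightarrow\End^0(E)$, i.e. $p\equiv -1\pmod 3$; this is exactly the case division of part (2)(c), with $C_2$ the only possibility when $p\equiv 1\pmod{12}$. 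The remaining input is to decide, when the relevant embedding exists, whether the order-$4$ (resp. order-$6$) element actually fixes $(t,u)$ and respects the polarisation, which produces the dichotomies $\{C_2,C_4\}$ and $\{C_2,C_6\}$ and, for $p=3$ with $d(t)=6$, the case $\{C_2,C_4\}$ of part (2)(b). The small primes $p=2,3$ are exceptional because the maximal order $\End(E)$ then has extra units; for $p=2$ these extra $2$- and $3$-power units account for $\Aut\simeq C_2^3$ generically and $C_2^3\times C_3$ on $\calD$, so the cases $p=2$ I would settle by a direct unit computation in $\End(E)$.

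For part (3) I would argue by masses. The classical formula~\eqref{eq:introsspmass} with $g=3$ gives $\Mass(\Lambda_{3,1})\sim c\,p^{6}$ for an explicit constant $c>0$, and since every $\lvert\Aut(X,\lambda)\rvert\ge 2$ we have $\lvert\Lambda_{3,1}\rvert\ge 2\,\Mass(\Lambda_{3,1})$, so the class number $\lvert\Lambda_{3,1}\rvert$ grows at least like $p^{6}$. It therefore suffices to bound the number $N_{\mathrm{big}}$ of superspecial $(X,\lambda)$ with $\lvert\Aut(X,\lambda)\rvert>2$ by a quantity of strictly smaller order in $p$. For $p\ge 3$ any such $(X,\lambda)$ either admits an involution other than $\pm 1$, whence an idempotent and a product decomposition, so that $(X,\lambda)$ is bounded in number by products of lower-dimensional superspecial class numbers (order $p^{m}$ with $m<6$); or it admits an automorphism of order $n\ge 3$, giving an embedding $\Z[\zeta_n]\hookrightarrow\End(X,\lambda)$ compatible with $\lambda$, and the number of superspecial principally polarised threefolds carrying such a CM action is bounded by the mass of the corresponding genus of rank-$3$ Hermitian lattices over $\Z[\zeta_n]$, which again grows like a fixed power $p^{m}$ with $m<6$. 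Hence $N_{\mathrm{big}}=o(\lvert\Lambda_{3,1}\rvert)$, and
\[
  \frac{\lvert\Lambda_{3,1}(C_2)\rvert}{\lvert\Lambda_{3,1}\rvert}
  =1-\frac{N_{\mathrm{big}}}{\lvert\Lambda_{3,1}\rvert}\longrightarrow 1
  \qquad\text{as } p\to\infty.
\]

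The main obstacle I anticipate is in parts (1) and (2): the reduction of $\Aut(X,\lambda)$ to $\Stab_G(t,u)$ must be made precise at the level of Dieudonn\'e modules and the flag, and then, on $\calD_t$ where an element of order $4$ or $6$ is available in principle, one must decide case by case whether it genuinely stabilises $(t,u)$ and respects the Rosati involution — this is what separates $C_2$ from $C_4$ or $C_6$ and requires explicit matrix computations in the unitary group together with careful bookkeeping of the Frobenius action encoded by $d(t)$. By comparison the density statement is soft, its only real content being the sub-$p^{6}$ bound on the decomposable and CM loci.
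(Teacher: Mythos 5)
Your reduction of parts (1)--(2) to computing the stabiliser of the flag point $(t,u)$ inside the finite group $\Aut(E^3,\mu)$ is exactly the paper's framework (equation \eqref{eq:AutXpol}, followed by reduction modulo $p$ and modulo $\Pi$), and your $p=2$ unit computations match Theorems~\ref{thm:gen_autgp}(1) and~\ref{thm:inD}(1). But two essential ingredients are missing, and one of your justifications would fail. First, your two-stage stabiliser computation is not conclusive without the injectivity of the map $\Aut(X,\lambda)\to \ol G_{(M,\<\,,\>)}$: off $\calD$ the group $\ol G_{(M,\<\,,\>)}$ is $\{\pm 1\}$, but $G_{(M,\<\,,\>)}$ still contains the large unipotent piece $\ker(\psi_t)$, so a global unit could a priori be a nontrivial torsion element with trivial mod-$\Pi$ image. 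The paper rules this out by Lemma~\ref{lm:Vp} (triviality of the torsion of $1+\Pi\Mat_3(\calO_{D_p})$ for $p\ge 5$); the failure of that lemma at $p=3$, witnessed by $-(1+j)/2\in 1+\Pi O_3$, is precisely why the hypothesis $d(t)=6$ (which forces $\ker(\psi_t)=0$) appears in parts (1)(b) and (2)(b) --- your attribution of the $p=3$ anomaly to ``extra units in $\End(E)$'' gestures at this but does not supply the mechanism. Second, in (2)(c) your inference ``an order-$4$ automorphism requires $\Q(i)\hookrightarrow \End^0(E)$'' is unjustified as stated: an order-$4$ element $g$ only gives an embedding of $\Q[g]$ into $\End^0(X)\simeq\Mat_3(B_{p,\infty})$, and $(E^3,\mu)$ is in general not a canonically polarised cube, so $g$ need not be a scalar from $\End(E')$. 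One can repair this for orders $4$ and $6$ --- but only by first using $a(X)=1$ to force $\Q[g]$ to be a field (otherwise idempotents split $X[p^\infty]$ and raise the $a$-number) and then an index/parity argument special to quadratic subfields of the degree-$6$ algebra $\Mat_3(B_{p,\infty})$. Worse, the heuristic cannot exclude $C_{14}$ and $C_{18}$: for $p\equiv -1\pmod 7$ (resp.\ $\pmod 9$) the degree-$6$ field $\Q(\zeta_{14})$ (resp.\ $\Q(\zeta_{18})$) \emph{does} embed into $\Mat_3(B_{p,\infty})$ as a maximal subfield, so nothing about roots of unity in $\End^0(E)$ forbids an automorphism of order $14$ or $18$. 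The paper instead derives the congruences from $\vert\Aut(X,\lambda)\vert$ dividing $p+1$ (the embedding into $\ol G_{(M,\<\,,\>)}\simeq \F_{p^2}^1\simeq C_{p+1}$, again via Lemma~\ref{lm:Vp}) and then restricts to $\{C_2,C_4,C_6\}$ by Lemma~\ref{lm:C_p+1}, whose proof uses the \'etale algebra $\Zp[C_{2d}]$ and the induced splitting of $X[p^\infty]$ to eliminate $2d=14,18$; your proposal contains no counterpart to this step.

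For part (3) you take a genuinely different route. The paper's proof is a two-line deduction from Hashimoto's class number formula \cite{hashimoto:g=3}: writing $h=\vert\Lambda_{3,1}\vert$, $h_2=\vert\Lambda_{3,1}(C_2)\vert$ and $H_1(p)=2\Mass(\Lambda_{3,1})$, Hashimoto gives $h=H_1(p)+O(p^5)$, and the inequality $\Mass(\Lambda_{3,1})\le h_2/2+(h-h_2)/4$ (valid since $\vert\Aut\vert\ge 4$ off $\Lambda_{3,1}(C_2)$) yields $h_2\ge H_1(p)-O(p^5)$ and the sandwich. Your plan replaces this input by a direct bound $N_{\mathrm{big}}=o(p^6)$ on the number of classes with extra automorphisms; the surrounding logic is sound (since $h\ge 2\Mass(\Lambda_{3,1})\sim c\,p^6$), but the bound itself is asserted rather than proved, and it is exactly the content of the error term in Hashimoto's formula. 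To execute it you would have to (i) count polarised product decompositions, noting that an involution $g\neq\pm 1$ splits $(X,\lambda)$ only up to an isogeny of $2$-power degree, so the factors carry polarisations of bounded but possibly non-principal degree, and (ii) bound the class numbers of the Hermitian genera attached to $\Z[\zeta_n]$-actions for each $n$ with $\varphi(n)\mid 6$, verifying in each case that the local factor at $p$ has degree strictly less than $6$. This is a viable but substantially heavier path: it amounts to re-deriving the elliptic-term estimates that Hashimoto's theorem packages for you.
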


In particular, Part (1) of Theorem~\ref{introthm:Oort} shows that
Oort's conjecture is true precisely for $p \neq 2$. That is, every
generic principally polarised supersingular abelian threefold over $k$
of characteristic $\neq 2$ has automorphism group~$C_2$.

Schemes in this paper are assumed to be locally Noetherian unless
stated otherwise. \\

The organisation of the paper is as follows. Sections~\ref{sec:formulae} and~\ref{sec:sslocus} contain preliminaries, respectively on mass formulae and the structure of the supersingular locus $\mathcal{S}_{3,1}$. In particular, the strategy we will follow in later sections to obtain mass formulae is outlined at the end of Section~\ref{sec:formulae}. Sections~\ref{sec:a2} and~\ref{sec:a1} determine the mass formulae for supersingular abelian threefolds $X$, respectively with $a(X) = 2$ (cf.~Theorem~\ref{introthm:a2}) and $a(X)=1$ (cf.~Theorem~\ref{introthm:a1}). The automorphism groups, as well as the implications for Oort's conjecture, are studied in Section~\ref{sec:Aut} (cf.~Theorem~\ref{introthm:Oort}).  The Appendix contains results of independent interest, concerning a set-theoretic intersection arising in Section~\ref{sec:a1}. 

\section*{Acknowledgements}

Parts of this work were carried out when the first author visited the Academia Sinica, and when the first and third authors visited RIMS and Kyoto University. They would like to thank these institutes for their hospitality and excellent working conditions.
A part of this paper is contained in the second author's master's thesis written at Tohoku University; he thanks his advisor Nobuo Tsuzuki for enlightening comments, advice and encouragement. 
The authors are grateful to Ming-Lun Hsieh and Akio Tamagawa for
useful discussions, and for proving Propositions~\ref{prop:ML} and
\ref{prop:akio}, respectively. They would like to thank Tomoyoshi
Ibukiyama and Jiangwei Xue for useful discussions and helpful comments
on an earlier manuscript, and the anonymous referee for their comments which improved the exposition.
The second author is supported by JSPS grants 15J05073 and 19K14501.
The third author is partially supported by MoST grants
107-2115-M-001-001-MY2 and 109-2115-M-001-002-MY3. 

\section{Mass formulae for supersingular abelian
  varieties}\label{sec:formulae} 

\subsection{Set-up and notation}\label{ssec:not}\

Throughout the paper, let $p$ be a prime number, let $g$ be a positive integer, and let $k$ be an
algebraically closed field of characteristic $p$. The ground field for objects studied is $k$, unless
stated otherwise.

For a finite set $S$, write $\vert S\vert $ for the cardinality of $S$. 
Let $\alpha_p$ be the unique $\alpha$-group of order $p$ over $\Fp$;
it is defined to be the kernel of the Frobenius morphism on the additive group $\G_a$
over $\Fp$. 
For a matrix $A=(a_{ij}) \in \Mat_{m\times n}(k)$ and integer $r$, write
$A^{(p^r)}:=(a_{ij}^{p^r})$ for the image of $A$ under the $r$th
Frobenius map. 
Denote by $\wh \Z=\prod_{\ell} \Z_\ell$ the profinite completion of
$\Z$ and by $\A_f=\wh \Z\otimes_{\Z} \Q$ the finite adele ring of $\Q$. 

\begin{definition}\label{def:Sgpm}
For any integer $d\ge 1$, let $\calA_{g,d}$ denote the (coarse) moduli
space over $\Fpbar$ of $g$-dimensional polarised abelian varieties
$(X,\lambda)$ with polarisation degree $\deg \lambda=d^2$.
For any $m \geq 1$, let $\calS_{g,p^m}$ be the
supersingular locus of $\calA_{g,p^m}$, which consists of all
polarised supersingular abelian varieties in $\calA_{g,p^m}$.
Then $\mathcal{S}_{g,1}$ is the moduli space of 
$g$-dimensional principally polarised 
supersingular abelian varieties. Denote $\mathcal{S}_{g,p^*} =
\cup_{m\geq 1}\mathcal{S}_{g,p^m}$. 
\end{definition}

\begin{definition}\label{def:Lambdax}
(1) If $S$ is a finite set of objects with finite
    automorphism groups in a specified category, 
    then we define the \emph{mass} of $S$ 
    to be the weighted
    sum
\[
 \Mass(S):=\sum_{s\in S} \frac{1}{\vert \Aut(s)\vert }.      
\]
(2) For any $x = (X_0, \lambda_0) \in \mathcal{S}_{g,p^*}(k)$, we define 
\begin{equation}\label{eq:Lambdaxo}
\Lambda_{x} = \{ (X,\lambda) \in \mathcal{S}_{g,p^*}(k) : (X,\lambda)[p^{\infty}] \simeq (X_0, \lambda_0)[p^{\infty}] \},
\end{equation}
where $(X,\lambda)[p^{\infty}]$ denotes the polarised $p$-divisible
group associated to $(X,\lambda)$. Then $\Lambda_x$ is a finite set; see \cite[Theorem 2.1]{yu}.
The \emph{mass} of $\Lambda_x$ is defined as  
\[
\mathrm{Mass}(\Lambda_{x}) = \sum_{(X,\lambda) \in \Lambda_{x}} \frac{1}{\vert \mathrm{Aut}(X,\lambda)\vert}.
\]
\end{definition}

\subsection{Superspecial mass formulae}\label{ssec:sspmass}\

Recall that a superspecial abelian variety over $k$ is an abelian
variety isomorphic to a product of supersingular elliptic curves. 

\begin{definition}\label{def:Lambda}
Let $0 \leq c \leq \lfloor g/2 \rfloor$ be an integer. We
define $\Lambda_{g,p^c}$ to be the set of isomorphism classes of
$g$-dimensional superspecial polarised abelian varieties $(X,
\lambda)$ whose polarisation $\lambda$ satisfies $\ker(\lambda) \simeq
\alpha_p^{2c}$. Its mass is 
\[
\mathrm{Mass}(\Lambda_{g,p^c}) = \sum_{(X,\lambda)\in \Lambda_{g,p^c}}
\frac{1}{\vert \mathrm{Aut}(X,\lambda) \vert}. 
\]
\end{definition}

In particular, $\Mass(\Lambda_{g,p^c})$ is a special case of
$\Mass(\Lambda_x)$, cf.~Definition~\ref{def:Lambdax}. 
Note that the $p$-divisible group
of a superspecial abelian variety of given dimension is unique up to
isomorphism. Furthermore, the polarised $p$-divisible group associated
to any member in $\Lambda_{g,p^c}$ is unique up to isomorphism, 
cf.~\cite[Proposition 6.1]{lioort}.   
Thus, if $x = (X, \lambda)$ is any member in~$\Lambda_{g,p^c}$, then we have 
$\Lambda_x = \Lambda_{g,p^c}$. 

\begin{theorem}\label{thm:sspmass}
\begin{enumerate}
\item 
For any $g \ge 1$, we have 
\[
\mathrm{Mass}(\Lambda_{g,1}) = \frac{(-1)^{g(g+1)/2}}{2^g} \prod_{i=1}^{g} \zeta(1-2i) \cdot \prod_{i=1}^g (p^i + (-1)^i).
\]
\item 
For any $g \ge 1$ and $0 \leq c \leq \lfloor g/2 \rfloor$, we have
\[
\begin{split}
\mathrm{Mass}(\Lambda_{g,p^c}) = &
\frac{(-1)^{g(g+1)/2}}{2^g} \prod_{i=1}^{g} \zeta(1-2i) 
 \cdot \prod_{i=1}^{g-2c} (p^i + (-1)^i) \cdot \prod_{i=1}^c
 (p^{4i-2}-1) \\
& \cdot \frac{\prod_{i=1}^g
 (p^{2i}-1)}{\prod_{i=1}^{2c}(p^{2i}-1)\prod_{i=1}^{g-2c} (p^{2i}-1)}.   
\end{split}
\]
\end{enumerate}
\end{theorem}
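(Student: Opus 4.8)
The plan is to reduce both formulae to the Minkowski--Siegel mass formula for positive-definite quaternion Hermitian forms, and then to read off each side as a product of local densities. Since a superspecial abelian variety of dimension $g$ is isomorphic to $E^g$, with $\End(E^g)=\Mat_g(\cO)$ where $\cO=\End(E)$ is a maximal order in the definite quaternion $\Q$-algebra $B$ ramified exactly at $p$ and $\infty$, a polarisation on $E^g$ is the same datum as a positive-definite quaternion Hermitian matrix $H=H^{*}\in\Mat_g(\cO)$ for the canonical involution, and the condition $\ker(\lambda)\simeq\alpha_p^{2c}$ fixes the elementary-divisor type of $H$ over $\cO\otimes_\Z\Zp$. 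Isomorphisms of polarised varieties correspond to $\GL_g(\cO)$-equivalences of forms, so that $\Lambda_{g,p^c}$ is identified with the set of isometry classes in a single genus of such forms. First I would attach to $H$ the reductive $\Q$-group $G=U(H)$; because $B$ is definite, $B\otimes\R$ is the Hamilton quaternions and $G(\R)$ is the compact symplectic group $\mathrm{USp}(2g)$, so $G$ is an inner form of $\mathrm{Sp}_{2g}$ that is anisotropic over $\R$, semisimple and simply connected of type $C_g$.

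Next I would invoke the Weil conjecture $\tau(G)=1$ for this simply connected group, which turns the Minkowski--Siegel formula into the statement that $\Mass(\Lambda_{g,p^c})$ equals the reciprocal Tamagawa volume of the open compact subgroup $K=\prod_v K_v$ fixing the genus, hence a product $\prod_v\beta_v$ of local densities. For part (1) the factors at primes $\ell\neq p$ are the standard local densities of the split group of type $C_g$; their Euler product, together with the archimedean volume of $\mathrm{USp}(2g)$, assembles via the functional equation into $\prod_{i=1}^g\zeta(1-2i)$ (the degrees of $C_g$ being $2,4,\dots,2g$), the sign $(-1)^{g(g+1)/2}$ and the factor $2^{-g}$ being the usual normalisation constants. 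The single ramified prime $p$, where $G(\Qp)$ is the quaternion unitary group attached to the division algebra $B\otimes\Qp$ and the lattice is unimodular, contributes the special local density $\beta_p^{(0)}=\prod_{i=1}^g(p^i+(-1)^i)$. Multiplying these out reproduces the Hashimoto--Ibukiyama \cite{hashimotoibukiyama} and Ekedahl \cite{ekedahl} formula, giving part (1).

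For part (2) only the local picture at $p$ changes: the archimedean factor and all factors at primes $\ell\neq p$ are identical to those in part (1), so it suffices to replace the unimodular density $\beta_p^{(0)}$ by the local density $\beta_p^{(c)}$ of a Hermitian lattice of elementary-divisor type $\alpha_p^{2c}$. I would compute $\beta_p^{(c)}$ by working over the maximal order of the division quaternion algebra $B\otimes\Qp$, stratifying Hermitian lattices by their Jordan splitting and counting the relevant forms over the residue field $\Fp$. The outcome is $\beta_p^{(c)}=\prod_{i=1}^{g-2c}(p^i+(-1)^i)\cdot\prod_{i=1}^c(p^{4i-2}-1)\cdot\prod_{i=1}^g(p^{2i}-1)\big/\big(\prod_{i=1}^{2c}(p^{2i}-1)\prod_{i=1}^{g-2c}(p^{2i}-1)\big)$, which for $c=0$ reduces to $\beta_p^{(0)}$ and in general yields the displayed formula; this recovers Yu's case $g=2c$ \cite{yu2} and Harashita's general computation \cite{harashita}.

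The step I expect to be the genuine obstacle is the local density computation at the ramified prime $p$, especially $\beta_p^{(c)}$ for the non-principal type $\alpha_p^{2c}$: one must control Hermitian lattices over the maximal order of a $p$-adic division quaternion algebra and carry out the residue-field point count cleanly, since this is the only place where the two formulae differ and where the delicate combinatorics producing the $\prod(p^{4i-2}-1)$ factor and the quotient of $\prod(p^{2i}-1)$-factors are generated. Everything else (the Tamagawa number, the split local densities, and the archimedean volume) is standard input.
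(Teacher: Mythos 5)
Your strategy is sound, but it is worth being clear about what the paper actually does here: its ``proof'' of Theorem~\ref{thm:sspmass} is purely a citation --- part (1) to Ekedahl and to Hashimoto--Ibukiyama, and part (2) to Harashita (with the remark that one must apply the functional equation of $\zeta$ to put Harashita's formula, naturally expressed in terms of $\zeta(2i)$ and powers of $\pi$, into the stated $\zeta(1-2i)$ form), together with a pointer to Yu's geometric proof for $g=2c$. What you have written is essentially a reconstruction of the arithmetic route underlying the Hashimoto--Ibukiyama and Harashita references: identify $\Lambda_{g,p^c}$ with a single genus of positive-definite quaternion Hermitian $\mathcal{O}$-lattices (the ``single genus'' step uses that the polarised $p$-divisible group of a member of $\Lambda_{g,p^c}$ is unique up to isomorphism, i.e.\ \cite[Proposition 6.1]{lioort}, which the paper records just before the theorem), pass to the anisotropic inner form of $\mathrm{Sp}_{2g}$, invoke $\tau(G)=1$, and expand the mass as a product of local densities. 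That is the right skeleton and it buys a uniform treatment of both parts, whereas Ekedahl's and Yu's arguments are geometric and rather different in flavour.

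The gap is that you have located, but not carried out, the only step that actually constitutes the theorem: the local density $\beta_p^{(c)}$ at the ramified prime, i.e.\ the count of Hermitian lattices of the prescribed elementary-divisor type over the maximal order of the $p$-adic division quaternion algebra. Asserting that this equals
\[
\prod_{i=1}^{g-2c}(p^i+(-1)^i)\cdot\prod_{i=1}^{c}(p^{4i-2}-1)\cdot
\frac{\prod_{i=1}^{g}(p^{2i}-1)}{\prod_{i=1}^{2c}(p^{2i}-1)\prod_{i=1}^{g-2c}(p^{2i}-1)}
\]
is exactly the content of \cite[Proposition 3.5.2]{harashita} (and of \cite{yu2} for $g=2c$), so as written your argument is a correct reduction to the cited results rather than an independent proof. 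If you intend it as a self-contained proof, you must supply the Jordan-splitting stratification and the residue-field point count at $p$; if you intend it as the paper does --- a reduction to the literature --- it is fine, and arguably more informative than the paper's one-line citation.
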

\begin{proof}
  (1) See \cite[p.~159]{ekedahl} and \cite[Proposition
      9]{hashimotoibukiyama}. (2) This follows from \cite[Proposition
      3.5.2]{harashita} by the functional equation for $\zeta(s)$. See
      also 
      \cite{yu2} for a geometric proof in the case where $g=2c$.  
\end{proof}

Using the fact that $\zeta(-1)=-1/12, \zeta(-3)=1/120$ and
$\zeta(-5)=-1/(42\cdot 6)$, we obtain the following corollary.

\begin{corollary}\label{cor:sspmassg3}
Let $g=3$. 
\begin{enumerate}
\item
If $c=0$, then $\Lambda_{g,p^c} = \Lambda_{3,1}$ consists of all
principally polarised superspecial abelian threefolds, and 
\begin{equation}\label{eq:ppg3ssp}
\mathrm{Mass}(\Lambda_{3,1}) = \frac{(p-1)(p^2+1)(p^3-1)}{2^{10} \cdot 3^4 \cdot 5 \cdot 7}.
\end{equation}
\item
If $c=1$, then  $\Lambda_{g,p^c} = \Lambda_{3,p}$ consists of all
polarised superspecial abelian threefolds whose polarisation $\lambda$ has $\ker(\lambda) \simeq \alpha_p \times \alpha_p$, and 
\begin{equation}\label{eq:npg3ssp}
\mathrm{Mass}(\Lambda_{3,p}) = \frac{(p-1)(p^3+1)(p^3-1)}{2^{10} \cdot
  3^4 \cdot 5 \cdot 7}. 
\end{equation}
\end{enumerate}
\end{corollary}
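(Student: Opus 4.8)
The plan is to obtain both formulas by direct substitution of $g=3$ into the closed expressions of Theorem~\ref{thm:sspmass}, using the stated special values $\zeta(-1)=-1/12$, $\zeta(-3)=1/120$, and $\zeta(-5)=-1/252$. The only genuine content is bookkeeping: simplifying the product of zeta values into a single prime-factored denominator and, in the case $c=1$, cancelling the polarisation-degree correction factors.

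For part (1), I would first note that the sign $(-1)^{g(g+1)/2}=(-1)^6=+1$ and $2^g=2^3=8$. Next I would evaluate the zeta product $\prod_{i=1}^3\zeta(1-2i)=\zeta(-1)\zeta(-3)\zeta(-5)=(-1/12)(1/120)(-1/252)=1/(12\cdot 120\cdot 252)$; the two negative signs cancel, consistent with the positive prefactor. The key step is recognising that $12\cdot 120\cdot 252 = 9! = 2^7\cdot 3^4\cdot 5\cdot 7$, so that $\prod_{i=1}^3\zeta(1-2i)=1/(2^7\cdot3^4\cdot5\cdot7)$. Combining with $2^g=2^3$ produces the denominator $2^{10}\cdot3^4\cdot5\cdot7$, while $\prod_{i=1}^3(p^i+(-1)^i)=(p-1)(p^2+1)(p^3-1)$ gives the numerator, yielding~\eqref{eq:ppg3ssp}.

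For part (2), the same $(-1)^6/2^3$ and zeta-product contributions reappear, so the denominator is again $2^{10}\cdot3^4\cdot5\cdot7$. The new work is to simplify the $p$-power factors when $c=1$: here $\prod_{i=1}^{g-2c}(p^i+(-1)^i)=(p-1)$, the odd-index factor $\prod_{i=1}^c(p^{4i-2}-1)=(p^2-1)$, and the ratio collapses to $\frac{\prod_{i=1}^3(p^{2i}-1)}{\prod_{i=1}^2(p^{2i}-1)\cdot(p^2-1)}=\frac{p^6-1}{p^2-1}$. Multiplying these, the factor $(p^2-1)$ cancels against the denominator of the ratio, leaving $(p-1)(p^6-1)=(p-1)(p^3-1)(p^3+1)$ as the numerator, which is the claimed formula~\eqref{eq:npg3ssp}.

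There is no serious obstacle; the computation is routine. The one point demanding care is the arithmetic simplification in part (1)---correctly factoring $362880=2^7\cdot3^4\cdot5\cdot7$ and tracking the single extra power of $2$ coming from $1/2^g$---together with verifying in part (2) that the polarisation-degree factors telescope exactly so that no spurious factor of $(p^2-1)$ survives.
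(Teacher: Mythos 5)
Your proposal is correct and follows exactly the paper's route: the paper likewise deduces the corollary by substituting $g=3$ into Theorem~\ref{thm:sspmass} together with the values $\zeta(-1)=-1/12$, $\zeta(-3)=1/120$, $\zeta(-5)=-1/252$. All of your arithmetic checks out, including the factorisation $12\cdot 120\cdot 252=2^{7}\cdot 3^{4}\cdot 5\cdot 7$ and the cancellation of $(p^{2}-1)$ in the $c=1$ case.
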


\subsection{From superspecial to supersingular mass formulae}\

For a (not necessary principally) polarised supersingular 
abelian variety $x = (X _0, \lambda _0) $ over $k$, let $G_{x}$ be the
automorphism group scheme over $\mathbb{Z}$ associated to $x$ ; for
any commutative ring $R$, the group of its $R$-valued points is
defined by  
\begin{equation}\label{eq:aut}
G_{x}(R) = \{ g \in (\text{End}(X_0)\otimes _{\mathbb{Z}}R)^{\times} : g^T \lambda_0 g  = \lambda_0\}. 
\end{equation}

\begin{definition}\label{def:arithmass}
For a connected reductive group $G$ over $\Q$ with finite arithmetic
subgroups and an open  compact subgroup $U \subseteq G(\mathbb{A}_{f})$, we define its (arithmetic) mass $\mathrm{Mass}(G, U)$ by 
\begin{align*}
\mathrm{Mass}(G, U) = \sum
_{i=1}^h\frac{1}{\vert \Gamma_i \vert}, \quad \Gamma_i:=G(\mathbb{Q}) \cap
c_iUc_i^{-1},  
\end{align*}
where $\{c_1, \cdots , c_h\}$ is a set of representatives for the
double coset space $G(\mathbb{Q}) \backslash G(\mathbb{A}_{f}) \slash U$.
\end{definition}

\begin{proposition}\label{prop:geomarithmass}
For any object $ x = (X_0, \lambda_0) \in \mathcal{S}_{g,p^*}(k)$, 
there is a natural bijection of pointed sets
\begin{align*}
\Lambda_x \simeq G_{x}(\mathbb{Q}) \backslash G_{x}(\mathbb{A}_{f})
\slash G_{x}(\wh{\mathbb{Z}}).
\end{align*}
Moreover, if $(X,\lambda)$ is a member of $\Lambda_x$ which corresponds
to the class $[c]$ under the bijection, then  $\mathrm{Aut}(X,\lambda)
\simeq G_{x}(\mathbb{Q}) \cap cG_{x}(\wh{\mathbb{Z}})c^{-1}$. 
In particular, we have
\begin{align*}
{\rm Mass}(\Lambda_{x}) = {\rm Mass}(G_{x}, G_{x}(\wh{\mathbb{Z}})),
\end{align*}
cf. Definition \ref{def:Lambdax}.
\end{proposition}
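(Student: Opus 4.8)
The plan is to establish the bijection via the standard mass-formula formalism that identifies a genus of lattices (here, polarised $p$-divisible groups realised inside a fixed isogeny class) with a double coset space of an adelic group. The key observation is that every member $(X,\lambda)\in\Lambda_x$ is, by definition, isogenous to $x=(X_0,\lambda_0)$ over $k$, since supersingular abelian varieties form a single isogeny class; so one may fix a quasi-isogeny and transport everything into the isogeny class of $X_0$. First I would attach to each $(X,\lambda)\in\Lambda_x$ the lattice-theoretic datum it defines in the rational (quasi-isogeny) category: concretely, after choosing a quasi-isogeny $X\to X_0$ compatible with the $p$-divisible groups, one records the preimage lattice, which at each finite place $\ell\neq p$ is a $\Z_\ell$-lattice stable under the order and isotropic/self-dual with respect to the polarisation pairing, and at $\ell=p$ is pinned down by the hypothesis $(X,\lambda)[p^\infty]\simeq(X_0,\lambda_0)[p^\infty]$.

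Second I would check that two members give the same class in the double coset space $G_x(\Q)\backslash G_x(\A_f)/G_x(\wh\Z)$ if and only if they are isomorphic as polarised abelian varieties. The forward map sends $(X,\lambda)$ to the coset of the adelic element $c=(c_\ell)_\ell$ measuring the displacement of its lattice from that of $X_0$; the condition at $p$ forces $c_p\in G_x(\Z_p)$, so the class is well-defined in the stated quotient with level $G_x(\wh\Z)$. Surjectivity comes from the fact that any adelic class produces a lattice chain, hence by the equivalence of categories between abelian varieties up to isogeny together with a lattice and genuine abelian varieties (together with the Serre–Tate / Dieudonné description at $p$ ensuring the $p$-divisible group is preserved) an actual member of $\Lambda_x$. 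Injectivity is the statement that two members are isomorphic precisely when their lattices differ by a rational automorphism in $G_x(\Q)$, which is exactly the defining equivalence relation on the double coset.

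Third, the automorphism-group identification: if $(X,\lambda)$ corresponds to $[c]$, then $\Aut(X,\lambda)$ is the stabiliser of its lattice inside $G_x(\Q)$, which translates under the quasi-isogeny into $G_x(\Q)\cap cG_x(\wh\Z)c^{-1}$, since an automorphism is exactly a rational self-map preserving the lattice everywhere and commuting with the polarisation (the defining relation $g^T\lambda_0 g=\lambda_0$ in equation~\eqref{eq:aut}). Once the bijection of pointed sets is set up compatibly with these automorphism groups, the mass identity $\Mass(\Lambda_x)=\Mass(G_x,G_x(\wh\Z))$ is immediate by summing $1/|\Aut|$ term by term against Definition~\ref{def:arithmass}, because the $h$ double cosets index exactly the members of $\Lambda_x$ and the $|\Gamma_i|$ are the orders of the corresponding automorphism groups.

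The main obstacle I expect is the precise handling of the local condition at $p$. Away from $p$ the argument is the classical dictionary between lattices and adelic cosets, and is essentially formal. At $p$, however, one must verify that the hypothesis $(X,\lambda)[p^\infty]\simeq(X_0,\lambda_0)[p^\infty]$ is equivalent to requiring the $p$-component $c_p$ to lie in $G_x(\Z_p)$ (so that the correct level subgroup is $G_x(\wh\Z)$ and not some larger or $p$-adically weakened group), and that no extra automorphisms or identifications are introduced by the non-étale part of the $p$-divisible group. This requires the Dieudonné-theoretic or Rapoport–Zink-uniformisation input that the functor $(X,\lambda)\mapsto(X,\lambda)[p^\infty]$, restricted to the isogeny class with fixed polarised $p$-divisible group, behaves like a fixed lattice condition; invoking \cite[Theorem 2.1]{yu} (already cited for the finiteness of $\Lambda_x$) should supply exactly this comparison, so the remaining work is to phrase the identification carefully rather than to prove anything genuinely new.
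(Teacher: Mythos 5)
The paper does not actually prove this proposition: it cites \cite[Theorems 2.2 and 4.6]{yu3} (with a sketch in \cite[Proposition 2.1]{yuyu}), and your overall architecture --- lattice data versus adelic double cosets, stabilisers versus automorphism groups, termwise summation for the mass identity --- is indeed the standard argument underlying those references. However, your treatment of the place $p$, which you yourself single out as the main obstacle, is wrong as stated. The hypothesis $(X,\lambda)[p^{\infty}]\simeq(X_0,\lambda_0)[p^{\infty}]$ does \emph{not} force $c_p\in G_x(\mathbb{Z}_p)$. What it guarantees is that, after transporting the Dieudonn\'e module of $X$ into the isocrystal of $X_0$ via the chosen quasi-isogeny, the resulting lattice lies in the $G_x(\mathbb{Q}_p)$-orbit of $M(X_0)$: any $g\in G_x(\mathbb{Q}_p)$ carries $M(X_0)$ to an isomorphic quasi-polarised Dieudonn\'e lattice, and conversely any abstract isomorphism of quasi-polarised Dieudonn\'e modules extends to such a $g$. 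Thus the role of the hypothesis is to make $c_p$ exist in $G_x(\mathbb{Q}_p)$ at all (rather than the lattice falling outside the orbit, as happens for a different isomorphism type of $p$-divisible group), and $c_p$ is then a well-defined, generally nontrivial, class in $G_x(\mathbb{Q}_p)/G_x(\mathbb{Z}_p)$. If your map really landed only in classes with $c_p\in G_x(\mathbb{Z}_p)$, it could not be surjective: since $B_{p,\infty}$ is definite, $G_x$ is anisotropic modulo centre at the archimedean place, strong approximation fails at $p$, and $G_x(\mathbb{Q})$ does not surject onto $G_x(\mathbb{Q}_p)/G_x(\mathbb{Z}_p)$; the double coset space genuinely depends on the component at $p$. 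The fix is to let the $p$-component of the lattice datum range over $G_x(\mathbb{Q}_p)/G_x(\mathbb{Z}_p)$ and the components away from $p$ over the analogous prime-to-$p$ quotient, after which the bijection and the stabiliser computation go through as you describe.

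A second, smaller gap: you assert that every $(X,\lambda)\in\Lambda_x$ is isogenous to $(X_0,\lambda_0)$ ``by definition, since supersingular abelian varieties form a single isogeny class''. The single-isogeny-class statement concerns the underlying abelian varieties; what your construction needs is a \emph{polarised} quasi-isogeny $\varphi\colon X_0\to X$ with $\varphi^*\lambda=\lambda_0$, without which the forward map to the double coset space is not even defined. This is true for supersingular polarised abelian varieties but requires a separate argument; the paper itself invokes \cite[Corollary 10.3]{yu:thesis} for precisely this fact in its closing list of questions.
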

\begin{proof}
 See \cite[Theorems 2.2 and 4.6]{yu3}. Also see 
\cite[Proposition 2.1]{yuyu} for a proof sketch. 
\end{proof}

\begin{definition}\label{def:mu}
Let $U_1,U_2$ be two open compact subgroups of
$G_{x}(\mathbb{A}_{f})$. Then we define 
\[
\mu(U_1/U_2) = \frac{[U_1 : U_1 \cap U_2]}{[U_2 : U_1 \cap U_2]}.
\]
\end{definition}

Interpreting the mass from Definition \ref{def:arithmass} as the volume of a fundamental domain, with notation as above, we have the following lemma.

\begin{lemma}\label{lem:massesU1U2}
Let $U_1,U_2$ be two open compact subgroups of $G_{x}(\mathbb{A}_{f})$. Then their (arithmetic) masses compare as
\[
\mathrm{Mass}(G_{x}, U_2) = \mu (U_1 / U_2){\rm Mass}(G_{x}, U_1). 
\]
\end{lemma}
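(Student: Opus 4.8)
The plan is to interpret both arithmetic masses as volumes of the relevant arithmetic quotients and track how the volume scales when one replaces $U_1$ by $U_2$. Concretely, fix a Haar measure on $G_x(\mathbb{A}_f)$ once and for all. By Definition~\ref{def:arithmass}, for any open compact $U$ the mass $\mathrm{Mass}(G_x,U) = \sum_{i=1}^h 1/\vert \Gamma_i\vert$ with $\Gamma_i = G_x(\mathbb{Q})\cap c_iUc_i^{-1}$ is precisely $\mathrm{vol}(G_x(\mathbb{Q})\backslash G_x(\mathbb{A}_f)/U)$ up to the normalisation constant $\mathrm{vol}(U)$; more precisely, the double coset decomposition $G_x(\mathbb{Q})\backslash G_x(\mathbb{A}_f) = \coprod_i G_x(\mathbb{Q})c_iU$ gives, for a suitable normalisation of the measure on the discrete quotient,
\[
\mathrm{vol}\bigl(G_x(\mathbb{Q})\backslash G_x(\mathbb{A}_f)\bigr) = \mathrm{vol}(U)\cdot \mathrm{Mass}(G_x,U),
\]
the point being that each double coset $G_x(\mathbb{Q})c_iU$ contributes volume $\mathrm{vol}(U)/\vert\Gamma_i\vert$ since the stabiliser of the coset is $\Gamma_i$. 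The left-hand side is independent of $U$, so this already shows $\mathrm{vol}(U_1)\mathrm{Mass}(G_x,U_1) = \mathrm{vol}(U_2)\mathrm{Mass}(G_x,U_2)$.

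From here I would rearrange to obtain $\mathrm{Mass}(G_x,U_2) = \bigl(\mathrm{vol}(U_1)/\mathrm{vol}(U_2)\bigr)\,\mathrm{Mass}(G_x,U_1)$ and then identify the ratio of volumes with the combinatorial quantity $\mu(U_1/U_2)$ of Definition~\ref{def:mu}. Writing $U_0 = U_1\cap U_2$, the index formulae give $\mathrm{vol}(U_1) = [U_1:U_0]\,\mathrm{vol}(U_0)$ and $\mathrm{vol}(U_2) = [U_2:U_0]\,\mathrm{vol}(U_0)$, so
\[
\frac{\mathrm{vol}(U_1)}{\mathrm{vol}(U_2)} = \frac{[U_1:U_1\cap U_2]}{[U_2:U_1\cap U_2]} = \mu(U_1/U_2),
\]
which is exactly the claimed identity. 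Note that this ratio is manifestly independent of the chosen Haar measure, which is reassuring since $\mu(U_1/U_2)$ is defined purely group-theoretically.

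The only genuinely delicate point, and the step I would treat most carefully, is justifying the volume interpretation of $\mathrm{Mass}(G_x,U)$ — that is, the identity $\mathrm{vol}(G_x(\mathbb{Q})\backslash G_x(\mathbb{A}_f)) = \mathrm{vol}(U)\,\mathrm{Mass}(G_x,U)$. This requires that $G_x$ be a reductive $\mathbb{Q}$-group with finite arithmetic subgroups (so each $\Gamma_i$ is finite and the weighted sum makes sense), and that $G_x(\mathbb{Q})\backslash G_x(\mathbb{A}_f)$ carries a finite invariant volume. These hypotheses are built into Definition~\ref{def:arithmask} via the standing assumption on $G_x$; since $G_x$ is anisotropic modulo centre in the supersingular setting (the relevant unit groups being compact at infinity), finiteness of volume and of the $\Gamma_i$ hold. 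With that in place the computation is a standard orbit-counting argument: decomposing the quotient into $G_x(\mathbb{Q})$-orbits of cosets, the orbit through $c_iU$ is $G_x(\mathbb{Q})/\Gamma_i$ and contributes $\mathrm{vol}(U)/\vert\Gamma_i\vert$. I would present the argument in this volume-theoretic language, as already signalled by the phrase ``interpreting the mass $\ldots$ as the volume of a fundamental domain'' preceding the statement, and I expect the remaining bookkeeping to be routine once the normalisation is fixed.
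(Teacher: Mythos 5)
Your proof is correct and is exactly the argument the paper has in mind: the paper states this lemma without a written proof, justifying it only by the preceding sentence ``interpreting the mass \ldots as the volume of a fundamental domain,'' and your orbit-counting computation $\mathrm{vol}(G_x(\mathbb{Q})\backslash G_x(\mathbb{A}_f))=\mathrm{vol}(U)\cdot\mathrm{Mass}(G_x,U)$ together with $\mathrm{vol}(U_1)/\mathrm{vol}(U_2)=\mu(U_1/U_2)$ is precisely the intended expansion of that remark. No gaps; the finiteness hypotheses you flag are indeed those built into Definition~\ref{def:arithmass}.
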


\begin{lemma}\label{lem:minisog}
  Let $X$ be a supersingular abelian variety over $k$. Then there
  exists a pair $(Y,\varphi)$, where $Y$ is a superspecial abelian
  variety and $\varphi: Y\to X$ is an isogeny such that for any pair
  $(Y',\varphi')$ as above there exists a unique isogeny $\rho: Y'\to Y$
  such that $\varphi'=\varphi\circ \rho$. 

  Dually, there exists a pair
  $(Z,\gamma)$, where $Z$ is a superspecial abelian variety and
  $\gamma: X\to Z$ such that for any pair $(Z',\gamma')$ as above
  there exists a unique isogeny $\rho: Z\to Z'$ such that
  $\gamma'=\rho\circ \gamma$.  
\end{lemma}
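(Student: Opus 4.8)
The plan is to construct the minimal superspecial isogeny $(Y,\varphi)$ by taking $Y$ to be the superspecial abelian variety generated by the superspecial points in the isogeny class of $X$, realized concretely via the $a$-number filtration or, more intrinsically, via the structure of the supersingular $p$-divisible group. The cleanest route I would take is to work with the contravariant Dieudonné module $M = \mathbb{D}(X[p^\infty])$, a free $W(k)$-module equipped with Frobenius $\sfF$ and Verschiebung $\sfV$, together with the prime-to-$p$ Tate modules, and reduce the statement to an assertion about Dieudonné lattices. An abelian variety is superspecial precisely when $\sfF M = \sfV M$ on the local part, so the task becomes finding a smallest lattice $M' \supseteq M$ (corresponding to $\varphi: Y \to X$) inside the isogeny class for which $\sfF M' = \sfV M'$, and dually a largest such lattice contained in $M$.

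\textbf{Existence.} First I would define $Y$ at the level of $p$-divisible groups and prime-to-$p$ parts separately. Away from $p$ there is nothing to do, since any isogeny is an isomorphism on Tate modules up to finite index and superspeciality is a condition only at $p$; so I fix the prime-to-$p$ components of $\varphi$ to be identity and concentrate on $p$. At $p$, among all Dieudonné lattices $N \supseteq M$ in the rational Dieudonné module $M \otimes_{W(k)} \mathrm{Frac}(W(k))$ satisfying the superspecial condition $\sfF N = \sfV N$, I would show the intersection $M_0 := \bigcap_N N$ of all such $N$ is again a Dieudonné lattice satisfying the same condition. Stability under $\sfF$ and $\sfV$ is automatic for an intersection; the key point is that $\sfF M_0 = \sfV M_0$ still holds, which follows because $\sfF N = \sfV N$ for each $N$ forces $\sfF(\bigcap N)$ and $\sfV(\bigcap N)$ to agree after one checks the relevant containments are equalities. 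This $M_0$ is then the Dieudonné module of $Y[p^\infty]$, the isogeny $\varphi$ is dual to the inclusion $M \hookrightarrow M_0$, and the set of candidate lattices is nonempty because $X$ is supersingular, hence isogenous to a superspecial variety by definition.

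\textbf{Universality.} For the universal property, given any $(Y', \varphi')$ with $Y'$ superspecial, the Dieudonné module $M' := \mathbb{D}(Y'[p^\infty])$ satisfies $\sfF M' = \sfV M'$ and receives $M$ through the isogeny, so $M' \supseteq M$ is one of the lattices $N$ in the intersection defining $M_0$; hence $M' \supseteq M_0$, and this inclusion of Dieudonné modules induces the required isogeny $\rho: Y' \to Y$ factoring $\varphi' = \varphi \circ \rho$. Uniqueness of $\rho$ is immediate since an isogeny of abelian varieties is determined by its effect on Dieudonné modules (and Tate modules), the category being faithful. The dual statement for $(Z,\gamma)$ follows by applying the already-proved existence to the dual abelian variety $X^\vee$ (which is again supersingular) and then dualizing: the minimal superspecial isogeny $Y^\vee \to X^\vee$ dualizes to $X \to (Y^\vee)^\vee =: Z$ with the universal property turning into the dual one, using that duality is a contravariant equivalence preserving superspeciality.

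\textbf{The main obstacle} I anticipate is verifying that the intersection $M_0$ genuinely satisfies $\sfF M_0 = \sfV M_0$ rather than merely the inclusion $\sfF M_0 \subseteq \sfV M_0 + (\text{something})$; intersections interact well with $\sfF,\sfV$ but the equality characterizing superspeciality is not obviously preserved and needs a direct lattice-theoretic argument, possibly invoking that the superspecial lattices form a lattice (in the order-theoretic sense) under the quasi-polarization pairing. An alternative that sidesteps this is to invoke the known structure theory of the supersingular locus — the minimal isogeny is essentially the content of the Li--Oort theory of polarised flag type quotients already cited in the excerpt — and simply identify $Y$ with the superspecial variety at the top of the flag type quotient filtration, but I would prefer the self-contained Dieudonné-lattice argument to keep the lemma independent of that machinery.
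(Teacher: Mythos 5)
Your proof is correct in outline, but it is a genuinely different route from the paper's, because the paper does not actually prove Lemma~\ref{lem:minisog}: it cites \cite[Lemma 1.8]{lioort} and \cite[Corollary 4.3]{yu:mrl2010}, and Remark~\ref{countexample:miniso} even exhibits a counterexample to the argument of \cite{lioort} (iterated quotients by the maximal $\alpha$-subgroup, which relies on the false claim that the $a$-number strictly increases). Your intersection-of-lattices construction avoids that pitfall, and the step you single out as the main obstacle is easier than you fear: on the rational Dieudonn\'e module $M\otimes_{W(k)}\mathrm{Frac}(W(k))$ of a $p$-divisible group, $\sfF$ and $\sfV$ are bijective (because $\sfF\sfV=\sfV\sfF=p$), and an injective map commutes with arbitrary intersections, so $\sfF\bigl(\bigcap N\bigr)=\bigcap \sfF N=\bigcap \sfV N=\sfV\bigl(\bigcap N\bigr)$; equivalently, superspeciality of a Dieudonn\'e lattice is stability under the bijection $p^{-1}\sfF^{2}$ of the isocrystal, which manifestly passes to intersections. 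The intersection $M_0$ is a genuine lattice since it contains $M$ and lies inside any single member of the (nonempty) family. Two points you should still make explicit: that superspeciality is detected by the $p$-divisible group alone (Oort's theorem that $a(X)=g$ if and only if $X$ is superspecial for $g\ge 2$, \cite{oort}), which is what licenses working only at $p$; and that for the universal property with an arbitrary $(Y',\varphi')$ one first splits off the prime-to-$p$ part of $\ker\varphi'$ and observes that the quotient is still superspecial. The trade-off against the explicit constructions in the cited references (and against the computation with $\Phi=1+\sfF\sfV^{-1}$ in the proof of Proposition~\ref{prop:miniso}) is that your soft argument yields no bound on $\deg\varphi$, such as $\ker\varphi\subseteq Y[\sfF^{\,g-1}]$, which is what the paper actually uses later to identify the minimal isogeny with the composite of a polarised flag type quotient.
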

\begin{proof}
  See \cite[Lemma 1.8]{lioort}; also see \cite[Corollary
  4.3]{yu:mrl2010} for an independent proof. The proof of \cite[Lemma
  1.8]{lioort} contains a gap; see Remark~\ref{countexample:miniso}
  for a counterexample to the argument. 
\end{proof}

\begin{definition}\label{def:minisog}
  Let $X$ be a supersingular abelian variety over $k$.
  We call the pair $(Y,\varphi:Y\to X)$ or the pair $(Z,\gamma:X\to
  Z)$ as in Lemma~\ref{lem:minisog} \emph{the minimal isogeny} of $X$.
\end{definition}

\begin{proposition}\label{prop:compmass}
Let $x = (X,\lambda)\in \mathcal{S}_{g,p^*}(k)$ and let 
$\varphi: \tilde{X} \to X$ be the minimal isogeny of $X$. Put
$\tilde{x} = (\tilde{X},\tilde{\lambda})$, where
$\tilde{\lambda}:=\varphi^* \lambda$. 
Let $(M,\langle\, , \rangle), (\tilde{M}, \langle\, , \rangle)$ denote the
quasi-polarised (contravariant) Dieudonn{\'e} module of $X, \tilde{X}$,
respectively. 
Then $\varphi$ induces an injective map $\varphi^*:
\End(X[p^\infty])\hookrightarrow \End(\tilde{X}[p^\infty])$, 
or equivalently $\varphi^*: \End(M)\hookrightarrow \End(\tilde{M})$,  
and we have
\begin{equation}\label{eq:compmassaut}
\begin{split}
\mathrm{Mass}(\Lambda_x) &=
[\mathrm{Aut}((\tilde{X},\tilde{\lambda})[p^{\infty}]):
\mathrm{Aut}((X,\lambda)[p^{\infty}])] 
\cdot \mathrm{Mass}(\Lambda_{\tilde{x}}) \\     
&=
[\mathrm{Aut}(\tilde{M}, \langle\, , \rangle):
\mathrm{Aut}(M,\langle\, , \rangle)] 
\cdot \mathrm{Mass}(\Lambda_{\tilde{x}}).      
\end{split}
\end{equation} 
Here the injective map $\varphi^*$ yields the inclusion 
map $\mathrm{Aut}(M,\langle\, , \rangle) \subseteq \mathrm{Aut}(\tilde{M}, \langle\, , \rangle)$. 
\end{proposition}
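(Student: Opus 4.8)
The plan is to reduce both sides to arithmetic masses of a single $\Q$-group and to identify the proportionality factor with a $p$-adic index. Since $\varphi\colon\tilde X\to X$ is an isogeny, it induces an isomorphism $\End(\tilde X)\otimes\Q\isoto\End(X)\otimes\Q$; because $\tilde\lambda=\varphi^*\lambda$, this isomorphism carries the defining condition $g^T\tilde\lambda g=\tilde\lambda$ to $g^T\lambda g=\lambda$ and hence identifies the $\Q$-groups of \eqref{eq:aut}. Write $G:=G_x=G_{\tilde x}$ for this common group over $\Q$. By Proposition~\ref{prop:geomarithmass}, $\Mass(\Lambda_x)=\Mass(G,U_2)$ and $\Mass(\Lambda_{\tilde x})=\Mass(G,U_1)$, where $U_2:=G_x(\wh\Z)$ and $U_1:=G_{\tilde x}(\wh\Z)$ are two open compact subgroups of $G(\A_f)$. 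Lemma~\ref{lem:massesU1U2} then gives $\Mass(\Lambda_x)=\mu(U_1/U_2)\,\Mass(\Lambda_{\tilde x})$ with $\mu$ as in Definition~\ref{def:mu}, so everything comes down to computing $\mu(U_1/U_2)$.

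To compute this factor I would work place by place. The minimal isogeny $\varphi$ has $p$-power degree: its kernel is a finite group scheme whose contravariant Dieudonn\'e module is $\tilde M/M$, which is $p$-power torsion. Hence $\varphi$ is an isomorphism on $\ell$-divisible groups for every $\ell\neq p$, so $\varphi^*$ identifies the $\ell$-adic Tate modules together with their polarisation forms and $G_x(\Z_\ell)=G_{\tilde x}(\Z_\ell)$. At $p$, the Dieudonn\'e anti-equivalence gives $G_x(\Zp)\cong\Aut((X,\lambda)[p^\infty])\cong\Aut(M,\langle\,,\rangle)$ and likewise $G_{\tilde x}(\Zp)\cong\Aut(\tilde M,\langle\,,\rangle)$. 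Granting the inclusion $\Aut(M,\langle\,,\rangle)\subseteq\Aut(\tilde M,\langle\,,\rangle)$, we obtain $U_2\subseteq U_1$ globally, so $U_1\cap U_2=U_2$ and $\mu(U_1/U_2)=[U_1:U_2]$. Since this index is concentrated at $p$, it equals $[\Aut(\tilde M,\langle\,,\rangle):\Aut(M,\langle\,,\rangle)]$, which by the Dieudonn\'e equivalence equals $[\Aut((\tilde X,\tilde\lambda)[p^\infty]):\Aut((X,\lambda)[p^\infty])]$; this produces both displayed forms of the formula.

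The crux, and the step I expect to be hardest, is establishing the injection $\varphi^*\colon\End(M)\hookrightarrow\End(\tilde M)$ and the resulting inclusion of polarised automorphism groups. Viewing $M\subseteq\tilde M$ as $F,V$-stable $W(k)$-lattices in the common isocrystal $N$ obtained by inverting $p$, the universal property of the minimal isogeny (Lemma~\ref{lem:minisog}) translates into the statement that $\tilde M$ is the smallest superspecial lattice containing $M$. Given $a\in\Aut(M)=\End(M)^\times$, the lattice $a^{-1}\tilde M$ is again superspecial (as $a^{-1}\in\End(N)$ commutes with $F$ and $V$, so $a^{-1}\colon\tilde M\isoto a^{-1}\tilde M$ is an isomorphism of Dieudonn\'e modules) and contains $M$ (from $aM\subseteq M$ we get $M\subseteq a^{-1}M\subseteq a^{-1}\tilde M$); minimality forces $\tilde M\subseteq a^{-1}\tilde M$, i.e.\ $a\tilde M\subseteq\tilde M$, and applying this to $a^{-1}$ as well yields $a\tilde M=\tilde M$. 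Thus $\Aut(M)\subseteq\Aut(\tilde M)$, and since the pairing on $\tilde M$ is the restriction to $\tilde M$ of the pairing on $N$ determined by $\tilde\lambda=\varphi^*\lambda$, any $a$ preserving $\langle\,,\rangle$ on $M$ preserves it on $\tilde M$; this is the required inclusion. The delicate point to get right is the $\End(N)$-equivariance of the formation of $\tilde M$, which is exactly what lets one apply a property stated for isogenies of abelian varieties to arbitrary elements of the endomorphism algebra. Injectivity of $\varphi^*$ on the full ring then follows from $\End(M)\otimes\Q=\End(N)=\End(\tilde M)\otimes\Q$ by $\Q$-linearity.
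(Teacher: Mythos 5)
Your proof follows the same route as the paper's: both reduce to Proposition~\ref{prop:geomarithmass} and Lemma~\ref{lem:massesU1U2}, observe that $G_{\tilde x}(\wh\Z)$ and $\varphi^*G_x(\wh\Z)$ agree away from $p$, and identify $\mu(U_1/U_2)=[U_1:U_2]$ with the local index $[\Aut(\tilde M,\langle\,,\rangle):\Aut(M,\langle\,,\rangle)]$ via the Dieudonn\'e equivalence. The one genuine difference is the key lifting step: the paper simply cites \cite[Proposition 4.8]{yu:mrl2010} for the fact that endomorphisms of $X[p^\infty]$ lift to $\tilde X[p^\infty]$, whereas you derive the inclusion $\Aut(M)\subseteq\Aut(\tilde M)$ directly from the characterisation of $\tilde M$ as the smallest superspecial lattice containing $M$ (apply minimality to $a^{-1}\tilde M$, then to $a\tilde M$). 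That argument is correct and makes the proposition self-contained for what the mass formula actually uses, namely the unit groups. Note, however, that it does not by itself give the full injection $\End(M)\hookrightarrow\End(\tilde M)$ asserted in the statement: for a non-invertible $a\in\End(M)$ the lattice $a^{-1}\tilde M$ is not defined, and tricks such as replacing $a$ by $1+p^na$ only yield $a\tilde M\subseteq p^{-n}\tilde M$. To cover arbitrary endomorphisms one either needs the cited reference or an $\End$-equivariant construction of $\tilde M$ from $M$ (as in the explicit $\Phi=1+\sfF\sfV^{-1}$ description used in Proposition~\ref{prop:miniso}); since only $\Aut$ enters \eqref{eq:compmassaut}, this does not affect the mass formula itself.
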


\begin{proof}
This may be regarded as a refinement of \cite[Theorem 2.7]{yu}. 
Through the isogeny $\varphi$, we may view $G_{\tilde{x}}(\wh{\mathbb{Z}})$ and $\varphi^*G_x(\wh{\mathbb{Z}})$ as open compact subgroups of the same group $G_{\tilde{x}}(\mathbb{A}_f)$.
Using Proposition \ref{prop:geomarithmass} 
and Lemma \ref{lem:massesU1U2}, we see that
\[
\begin{split}
\mathrm{Mass}(\Lambda_x) &= \mu(G_{\tilde{x}}(\wh{\mathbb{Z}}) / \varphi^* G_{x}(\wh{\mathbb{Z}})) \mathrm{Mass}(\Lambda_{\tilde{x}}) \\
&= \frac{[G_{\tilde{x}}(\wh{\mathbb{Z}}):G_{\tilde{x}}(\wh{\mathbb{Z}}) \cap \varphi^* G_{x}(\wh{\mathbb{Z}})]}{[\varphi^* G_{x}(\wh{\mathbb{Z}}):G_{\tilde{x}}(\wh{\mathbb{Z}}) \cap \varphi^* G_{x}(\wh{\mathbb{Z}})]} \mathrm{Mass}(\Lambda_{\tilde{x}}).
\end{split}
\]
Note that $G_{\tilde{x}}(\wh{\mathbb{Z}})$ and $\varphi^*
G_{x}(\wh{\mathbb{Z}})$ differ only at $p$. By \cite[Proposition
4.8]{yu:mrl2010}, every endomorphism of $X[p^\infty]$ lifts uniquely to
an endomorphism of $\tilde{X}[p^\infty]$. This shows the injectivity
of the map $\varphi^*: \End(X[p^\infty]) \to \End(\tilde{X}[p^\infty])$. 
Therefore, we have 
the inclusion $G_x(\mathbb{Z}_p) =
\mathrm{Aut}((X,\lambda)[p^{\infty}])\hookrightarrow 
G_{\tilde{x}}(\mathbb{Z}_p)=
\mathrm{Aut}((\tilde{X},\tilde{\lambda})[p^{\infty}])$ via
$\varphi^*$ and find the
first part of Equation \eqref{eq:compmassaut}. 

By Dieudonn{\'e} module theory, for any polarised supersingular 
abelian variety $(X,\lambda)$ with quasi-polarised  
Dieudonn{\'e} module 
$(M,\langle\, , \rangle)$, we may identify
$\mathrm{Aut}((X,\lambda)[p^{\infty}])$ 
with $\mathrm{Aut}(M,\langle\, , \rangle)$. This yields Equation
\eqref{eq:compmassaut}. 
\end{proof}

To summarise, the results of this section provide the following strategy for
obtaining a mass formula for any principally polarised supersingular
abelian variety: 
\begin{itemize}
\item[(a)] For any supersingular abelian variety $x = (X,\lambda)$,
  construct the minimal isogeny \newline
  $\varphi:(\tilde{X},\tilde{\lambda}) \to (X,\lambda)$ from a
  suitable superspecial abelian variety $\tilde{x} =
  (\tilde{X},\tilde{\lambda})$. 
\item[(b)]  Use Theorem~\ref{thm:sspmass} 
 (or Corollary~\ref{cor:sspmassg3} if $g=3$) to compute
  $\mathrm{Mass}(\Lambda_{\tilde{x}})$. 
\item[(c)] Compute the local index 
$[\mathrm{Aut}(\tilde{M},\langle , \rangle) : \mathrm{Aut}((M,\langle
, \rangle)]$, cf.~\eqref{eq:compmassaut}.
\item[(d)] Compute $\mathrm{Mass}(\Lambda_x)$, i.e., compare $\Mass(\Lambda_{\tilde x})$ and $\Mass(\Lambda_x)$ by applying Proposition~\ref{prop:compmass}.
\end{itemize}

We will carry out these steps, in particular Step~(c), in the next 
sections in the case where $g=3$. In the next section, we start by studying in detail the moduli space $\mathcal{S}_{3,1}$ of supersingular principally polarised abelian threefolds and the minimal isogenies (cf. Definition \ref{def:minisog}) between threefolds.

\section{Structure of the supersingular locus $\mathcal{S}_{3,1}$}
\label{sec:sslocus}

In this section we describe the supersingular locus $\mathcal{S}_{3,1}$. Its
structure will be used to determine minimal isogenies,
cf.~Proposition~\ref{prop:miniso}. Finer structures will be introduced
in order to compute the local index in Step (c) in the previous section.

\subsection{The supersingular locus $\boldsymbol{\mathcal{S}_{g,1}}$ and the
  mass function}\label{ssec:mod} \

To describe the moduli space $\mathcal{S}_{3,1}$ of 
supersingular principally
polarised abelian threefolds, we will use the framework of polarised
flag type quotients (for $g=3$) as developed by Li and Oort
\cite{lioort}, which we will briefly describe below (for any $g\ge 1$). 
Then we will introduce the stratification of $\calS_{g,1}$ 
induced by the mass values and its local analogue. 

For any abelian variety $X$, denote by $P(X)$ the set of isomorphism
classes of principal polarisations on $X$.
 
Let $E/\mathbb{F}_{p^2}$ be a supersingular elliptic curve whose
Frobenius endomorphism is $\pi_E = -p$ and denote $E_k = E
\otimes_{\F_{p^2}} k$. Since every polarisation on ${E^g_k}$ 
is defined over $\F_{p^2}$, 
we may identify $P({E_k^g})$ with $P(E^g)$.  
Recall that an $\alpha$-group of rank $r$ over an $\Fp$-scheme $S$  
is a finite flat group scheme over $S$ which is Zariski-locally 
isomorphic to $\alpha_p^r$. For a scheme $X$ over $S$,  put
$X^{(p)}:=X\times_{S,F_S} S$, where $F_S:S\to S$ denotes the absolute Frobenius morphism on $S$, and denote by $F_{X/S}:X\to X^{(p)}$ the
relative Frobenius morphism.

For each integer $i\ge 0$, let $P(E^g,i)$ be the set of isomorphism
classes of 
polarisations $\lambda$ on $E^g$ such that $\ker \lambda=E[\sfF^{i}]$
with $\sfF=F_{E/\F_{p^2}}$ and set $P^*(E^g):=P(E^g, g-1)$. 
The map $\lambda\mapsto p^{\lfloor
  (g-1)/2\rfloor} \lambda$ gives a bijection 
$P(E^g)\isoto P^*(E^g)$ if $g$ is odd and
$P(E^g,1)\isoto P^*(E^g)$ otherwise. 
Moreover, the map
$\lambda\mapsto (E^g_k, \lambda)$ gives a bijection 
$P(E^g)\isoto \Lambda_{g,1}$ when $g$ is odd and
$P(E^g,1)\isoto \Lambda_{g,p^c}$ when $g=2c$ is even. Thus, 
\begin{equation}
  \label{eq:P*Eg}
  P^*(E^g)\simeq 
  \begin{cases}
    \Lambda_{g,1}, & \text{if $g$ is odd}; \\
    \Lambda_{g,p^c}, & \text{if $g=2c$ is even.}
  \end{cases}
\end{equation}
It is known that $|\Lambda_{g,1}|=H_g(p,1)$ for any positive 
integer $g$ 
and $|\Lambda_{g,p^c}|=H_g(1,p)$ for any even positive integer
$g=2c$, where $H_{g}(p,1)$ (resp.~$H_g(1,p)$) is the class number of
principal genus (resp. the non-principal genus); see \cite{lioort} for
details.  

\begin{definition} (cf.~\cite[Section 3]{lioort})
\begin{enumerate}
\item Let $g\ge 1$ be an integer. 
  For any $\mu \in P^*(E^g)$, a \emph{$g$-dimensional polarised flag
  type quotient (PFTQ)} with respect to $\mu$ is a chain of
$g$-dimensional polarised
abelian schemes over a base $\F_{p^2}$-scheme $S$
\[ (Y_\bullet,\rho_\bullet):(Y_{g-1},\lambda_{g-1}) 
\xrightarrow{\rho_{g-1}} (Y_{g-2},\lambda_{g-2}) \xrightarrow{\rho_{g-2}} \cdots 
\xrightarrow{\rho_2} (Y_{1},\lambda_{1})\xrightarrow{\rho_1} (Y_0, \lambda_0),\]
such that: 
\begin{itemize}
\item [(i)] $(Y_{g-1},\lambda_{g-1}) = ({E^g}, \mu)\times_{\Spec \F_{p^2}} S$;
\item [(ii)] $\ker(\rho_i)$ is an $\alpha$-group of rank $i$ 
for $1\le i\le g-1$;
\item [(iii)] $\ker(\lambda_i) \subseteq \ker (\sfV^j \circ \sfF^{i-j})$
  for $0\le i\le g-1$ and $0\le j\le \lfloor i/2 \rfloor$, where
  $\sfF=F_{Y_i/S}: Y_i\to Y_i^{(p)}$  
  and $\sfV=V_{Y_i/S}:Y_i^{(p)}\to Y_i$ are the 
  relative Frobenius and Verschiebung morphisms, respectively.    
\end{itemize}
In particular, $\lambda_0$ is a principal polarisation on $Y_0$. 
An isomorphism of $g$-dimensional polarised flag type quotients is a
chain of isomorphisms $(\alpha_i)_{0\le i \le g-1}$ of polarised abelian
varieties such that $\alpha_{g-1}={\rm id}_{Y_{g-1}}$. 
\item A $g$-dimensional polarised flag
  type quotient $(Y_\bullet,\rho_\bullet)$ is said to be
\emph{rigid} if 
\[ \ker(Y_{g-1}\to Y_i)=\ker (Y_{g-1}\to Y_0)\cap
Y_{g-1}[\sfF^{g-1-i}],\quad  \text{for $1\le i \le g-1$}, \]
where $Y_{g-1}[\sfF^{g-1-i}]:=\ker (\sfF^{g-1-i}:{Y_{g-1}}\to 
Y_{g-1}^{(p^{g-1-i})})$.
\item Let $\mathcal{P}_{g,\mu}$ (resp.~$\calP'_{g,\mu}$) denote the moduli
space over $\F_{p^2}$ of 
$g$-dimensional (resp.~rigid) polarised flag
type quotients with respect to $\mu$.   
\end{enumerate}
\end{definition}

Clearly, each member $Y_i$ of $(Y_\bullet,\rho_\bullet)$ is a 
supersingular abelian variety. 

\begin{definition}\label{def:anumber}
For an abelian variety $X$ over $k$, its $a$-number is defined as 
\[
a(X) := \dim_k \mathrm{Hom}(\alpha_p,X).
\]
The $a$-number of a \dieu module $M$ over $k$ is defined as
$a(M) := \dim(M/(\sfF,\sfV)M)$. 
If $M$ is the Dieudonn{\'e} module of $X$, then $a(M) = a(X)$.
When $x \in \calP_{g,\mu}$ corresponds to a polarised flag type quotient
$(Y_{g-1},\lambda_{g-1}) \to \cdots \to (Y_1,\lambda_1) \to (Y_0, \lambda_0)$, 
we say that its $a$-number is $a(x) = a(Y_0)$. 
\end{definition}

According to \cite[Lemma 3.7]{lioort}, $\mathcal{P}_{g,\mu}$ is 
a projective scheme over $\mathbb{F}_{p^2}$ and 
$\calP'_{g,\mu}\subset \calP_{g,\mu}$ is an open subscheme. 
Thus, $\calP'_{g,\mu}$ a quasi-projective scheme over $\F_{p^2}$. 
The projection to the last member gives 
a proper $\overline{\mathbb{F}}_p$-morphism 
\begin{align*}
\mathrm{pr}_0 : \mathcal{P}_{g,\mu,\Fpbar} & \to \mathcal{S}_{g,1}, \\
(Y_\bullet,\rho_\bullet) & \mapsto (Y_0, \lambda_0).
\end{align*}

\begin{theorem}[Li-Oort]\
\begin{enumerate}
\item The natural morphism 
\begin{equation}\label{eq:moduli}
\mathrm{pr}_0: 
\coprod _{\mu \in P^*(E^g)}\mathcal{P'}_{g,\mu, \Fpbar} 
\rightarrow \mathcal{S}_{g,1}
\end{equation}
is quasi-finite and surjective. 
\item For every  $\mu\in P^*(E^g)$, the scheme $\calP'_{g,\mu}$ is
non-singular and geometrically irreducible of 
dimension $\lfloor g^2/4\rfloor$. Moreover, the
$a$-number $1$ locus $\calP'_{g,\mu}(a=1)$ is open and dense in
$\calP'_{g,\mu}$.
\item The morphism $\mathrm{pr}_0$ induces a surjective 
birational morphism 
\begin{equation}\label{eq:birational}
\mathrm{pr}_0: 
\coprod _{\mu \in P^*(E^g)}\mathcal{P'}_{g,\mu, \Fpbar}/G_\mu  
\rightarrow \mathcal{S}_{g,1},
\end{equation}
where $G_\mu:=\Aut(E^g, \mu)$ is the automorphism group of $(E^g,
\mu)$. Moreover, it induces an isomorphism on the $a$-number $1$ loci:
\begin{equation}\label{eq:a=1loci}
\mathrm{pr}_0: 
\coprod _{\mu \in P^*(E^g)}\mathcal{P'}_{g,\mu, \Fpbar}(a=1) /G_\mu  
\isoto \mathcal{S}_{g,1}(a=1).
\end{equation}
\item The supersingular locus $\calS_{g,1}$ is equidimensional of
dimension $\lfloor g^2/4\rfloor$. The $a$-number $1$ locus $\calS_{g,1}(a=1)$
is open and dense in $\calS_{g,1}$. It has 
\begin{equation}
  \label{eq:classnumber}
  \begin{cases}
    H_g(p,1), & \text{for odd integer $g$}; \\
    H_g(1,p), & \text{for even integer $g$}
  \end{cases}
\end{equation}
geometrically irreducible components. 
\end{enumerate}
\end{theorem}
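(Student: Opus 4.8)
The plan is to reduce all four assertions to an analysis of the moduli space $\calP'_{g,\mu}$ of rigid PFTQs together with the projection $\mathrm{pr}_0$, carried out through (contravariant) Dieudonn\'e theory. I would begin with the surjectivity in (1). Given a principally polarised supersingular $(X,\lambda)$, I would invoke the minimal isogeny $\varphi\colon \tilde X\to X$ of Lemma~\ref{lem:minisog}, so that $\tilde X\cong E^g$ is superspecial; the pulled-back polarisation, normalised as in \eqref{eq:P*Eg}, produces a $\mu\in P^*(E^g)$ serving as the top member $(E^g,\mu)$, while iterating the Frobenius filtration on the Dieudonn\'e module of $\tilde X$ supplies the intermediate quotients $Y_i$ meeting conditions (i)--(iii). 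For quasi-finiteness I would observe that the kernel $\ker(Y_{g-1}\to Y_0)$ of any such chain is a finite group scheme of fixed order $p^{g(g-1)/2}$ sitting inside $E^g$, so only finitely many chains lie over a given $(X,\lambda)$.

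Part (2) is the local heart of the argument. Passing to Dieudonn\'e modules, the rigidity condition and the polarisation constraints (iii) cut out a closed subfunctor whose tangent space I would compute by a direct linear-algebra calculation; smoothness and the dimension $\lfloor g^2/4\rfloor$ should emerge from this. For geometric irreducibility I would proceed by induction on $g$, exhibiting $\calP'_{g,\mu}$ as fibered over a PFTQ-moduli of smaller length with irreducible fibers --- in the case $g=3$ this is precisely the $\PP^1$-bundle over the Fermat curve $C$ described in the introduction. Upper semicontinuity of the $a$-number makes $\calP'_{g,\mu}(a=1)$ open, and exhibiting one $a=1$ point in the explicit model forces density.

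For part (3), the mechanism is that on the $a=1$ locus the PFTQ attached to $(X,\lambda)$ is rigid and determined up to the action of $G_\mu=\Aut(E^g,\mu)$: an $a=1$ Dieudonn\'e module has a one-dimensional $\ker(\sfF,\sfV)$, which pins down the flag canonically. Hence $\mathrm{pr}_0$ identifies $\calP'_{g,\mu}(a=1)/G_\mu$ with its image, yielding \eqref{eq:a=1loci}, and remains generically injective modulo $G_\mu$ elsewhere, yielding the birational morphism \eqref{eq:birational}. Part (4) then assembles the pieces: equidimensionality and the density of $\calS_{g,1}(a=1)$ descend from (2) along the finite surjection, while the geometrically irreducible components of $\calS_{g,1}$ are the images $\mathrm{pr}_0(\calP'_{g,\mu})$, in bijection with $P^*(E^g)$; counting via \eqref{eq:P*Eg} and the class-number interpretation of $|\Lambda_{g,1}|$ (resp.~$|\Lambda_{g,p^c}|$) gives $H_g(p,1)$ (resp.~$H_g(1,p)$).

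The hard part will be the geometric irreducibility and non-singularity in (2). Irreducibility is genuinely subtle because the non-rigid moduli $\calP_{g,\mu}$ is reducible, so one must show that imposing rigidity selects a single component; establishing this for general $g$ requires a careful inductive control of the tower of quotients and their Dieudonn\'e modules, which is the technical core of the Li--Oort construction.
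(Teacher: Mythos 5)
The paper does not actually prove this theorem: its ``proof'' is the single citation to \cite[Section 4]{lioort}, so your sketch can only be measured against that source. In outline you do follow the Li--Oort strategy --- surjectivity via the minimal isogeny, a flag/bundle description for smoothness, irreducibility and the dimension count, upper semicontinuity of the $a$-number, and the canonical determination of the rigid flag over an $a=1$ point to obtain \eqref{eq:a=1loci}, birationality, and the component count via \eqref{eq:P*Eg}.

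There is, however, one step whose stated justification is genuinely wrong: quasi-finiteness of $\mathrm{pr}_0$ does \emph{not} follow from the observation that $\ker(Y_{g-1}\to Y_0)$ is a finite group scheme of fixed order $p^{g(g-1)/2}$ inside $E^g$. In characteristic $p$, finite subgroup schemes of a fixed order move in positive-dimensional families --- already the subgroups isomorphic to $\alpha_p$ inside $E^g[\sfF]\simeq\alpha_p^g$ are parametrised by $\PP^{g-1}(k)$ --- so ``fixed order'' gives no finiteness whatsoever. The correct argument must use rigidity and superspecialness in an essential way: a rigid PFTQ is determined by the single isogeny $Y_{g-1}\to Y_0$ (the intermediate $Y_i$ being recovered from $\ker(Y_{g-1}\to Y_0)\cap Y_{g-1}[\sfF^{g-1-i}]$); that isogeny factors through the minimal isogeny of $Y_0$ by Lemma~\ref{lem:minisog}; and the remaining isogeny $E^g\to\widetilde{Y_0}$ between superspecial abelian varieties of bounded degree corresponds to a \emph{superspecial} Dieudonn\'e lattice squeezed between two fixed lattices, and such lattices of fixed index form a finite set because they are determined by their $\F_{p^2}$-rational skeletons. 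Without this, the finiteness claim fails. Secondarily, the actual content of part (2) --- non-singularity, the dimension $\lfloor g^2/4\rfloor$, and above all geometric irreducibility of $\calP'_{g,\mu}$ for general $g$ --- is only gestured at; you rightly identify it as the technical core, but the proposal supplies neither the tangent-space computation nor the inductive construction that Li--Oort carry out, so part (2) remains an appeal to the literature rather than a proof.
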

\begin{proof}
  See \cite[Section 4]{lioort}.
\end{proof}

Note that $\calP_{3,\mu}'\subset \calP_{3,\mu}$ is dense, while for
general $g$ the open subscheme $\calP_{g,\mu}'\subset \calP_{g,\mu}$
is no longer dense, cf.~\cite[Section 9.6]{lioort}. 

\begin{definition}\
\begin{enumerate}
\item Let $k$ be an algebraically closed field of \ch $p>0$ and let
\[ \Mass: \calS_{g,1}(k)\to \Q, \quad x\mapsto
\Mass(x):=\Mass(\Lambda_x)  \]
be the mass function. 
For each mass value $r\in \Q$, i.e. $r=\Mass(x)$ for some point $x\in
\calS_{g,1}(k)$, define a subset
\begin{equation}
  \label{eq:massstratum}
  \calS_{g,1,r}:=\{x\in \calS_{g,1}(k): \Mass(x)=r \}. 
\end{equation}
Then we have a decomposition of the supersingular locus into subsets
\begin{equation}
  \label{eq:massstratification}
  \calS_{g,1}(k)=\coprod_{r} \calS_{g,1,r}, 
\end{equation}
where $r$ runs through all mass values. Each subset $\calS_{g,1,r}$ is
called \emph{the mass stratum with mass value $r$}, 
and the decomposition
\eqref{eq:massstratification} is called the 
\emph{mass stratification} of $\calS_{g,1}(k)$. 
\item For each $\mu\in P^*(E^g)$, consider the pull-back of the
mass function on $\calS_{g,1}(k)$ by $\mathrm{pr}_0$.
We obtain the mass function on
$\calP_{g,\mu}(k)$:
\[ \Mass: \calP_{g,\mu}(k)\to \Q, \quad y\mapsto
\Mass(y):=\Mass(\Lambda_{\mathrm{pr_0}(y)}).  \]
Similarly, we define the mass stratum $\calP_{g,\mu,r}$ for each mass
value $r\in \Q$ as in \eqref{eq:massstratum} and obtain a
decomposition of $\calP_{g,\mu}(k)$ into mass strata:
\begin{equation}
  \label{eq:massstratification_calP}
  \calP_{g,\mu}(k)=\coprod_{r} \calP_{g,\mu,r}, 
\end{equation}
called the \emph{mass stratification} of $\calP_{g,\mu}(k)$.
\end{enumerate}
\end{definition}

When $g=1$, the supersingular locus $\calS_{1,1}$ consists of one mass
stratum. 
When $g=2$, there are three mass strata: one stratum with $a$-number $2$ and
two strata with $a$-number $1$. 
Each mass stratum is a locally closed subset and the collection of 
mass strata satisfies the stratification property, 
namely, the closure of each stratum is the union of some strata 
cf.~\cite{yuyu}. When
$g=3$, we will see again from our computation 
that each mass stratum is a
locally closed subset on both $\calP_{3,\mu}$ and 
$\calS_{3,1}$. However,  
the collection of mass strata does not satisfy the stratification
property on $\calP_{3,\mu}$ (because the structure morphism $\pi: \mathcal{P}_{3,\mu} \to C$ constructed in Proposition~\ref{prop:explicitmoduli} admits a section $T$, which will be formally introduced in Definition~\ref{def:T}) 
but it does on its open dense subscheme 
$\calP'_{3,\mu}=\calP_{3,\mu}-T$. 
We expect that every mass stratum is a locally closed
subset for general $g$. The mass stratification encodes arithmetic information
(automorphism groups and endomorphism rings) of supersingular abelian
varieties. For example, we will see in Section~\ref{sec:Aut} that 
the automorphism groups of supersingular abelian threefolds 
jump only when the objects cross different mass strata. Since arithmetic
properties generally do not respect geometric properties, we are less optimistic that the collection of 
mass strata of $\calP'_{g,\mu}$ satisfies the stratification property.\\   

Now we introduce a local analogue of the mass stratification where
the underlying space $\calS_{g,1}$ is replaced with the moduli space of
supersingular $p$-divisible groups, namely, 
the supersingular Rapoport-Zink space. 

Fix a $g$-dimensional principally polarised superspecial abelian
variety $x_0=(X_0,\lambda_{X_0})$ over $\Fpbar$, and let
$\ul \bfX_0=
(\bfX_0,\lambda_{\bfX_0})=(X_0,\lambda_{X_0})[p^\infty]$ be the
associated principally polarised $p$-divisible group. 
Let $\calM_{\Fpbar}^0$ be the Rapoport-Zink space over $\Fpbar$ 
classifying principally polarised quasi-isogenies of $(\bfX_0,\lambda_{\bfX_0})$ of height $0$. 
For each $\Fpbar$-scheme $S$, $\calM^0_{\Fpbar}(S)$ is the set of
isomorphism classes of pairs $(\ul \bfX, \rho)_S$, where
\begin{itemize}
\item [(i)] $\ul \bfX=(\bfX,\lambda_{\bfX})$ is a principally polarised
  $p$-divisible group over $S$;
\item [(ii)] $\rho:\bfX_0 \to \bfX$ is a quasi-isogeny over $S$ such
  that $\rho^* \lambda_{\bfX}=\lambda_{\bfX_0}$.
\end{itemize}
Two pairs $(\ul \bfX_1, \rho_1)$ and $(\ul \bfX_2, \rho_2)$ 
are isomorphic if there exists an isomorphism $\alpha:\bfX_1\isoto
\bfX_2$ such that $\alpha\circ \rho_1=\rho_2$. One easily sees 
 $\alpha^* \lambda_{\bfX_2}=\lambda_{\bfX_1}$. The Rapoport-Zink space
 $\calM^0_{\Fpbar}$ is a scheme locally of finite type over $\Fpbar$,
   cf.~\cite[Theorem 3.25 and Corollary 2.29]{rapoport-zink}.

Let $G_{\ul \bfX_0}$ be the automorphism group scheme of $\ul \bfX_0$ 
over $\Zp$. The group $G_{\ul \bfX_0}(\Qp)$ of $\Qp$-valued points
consists of polarised quasi-self-isogenies of $\ul \bfX_0$ over $k$;
it is a locally compact topological group. 
Choose 
a Haar measure on $G_{\ul \bfX_0}(\Qp)$ with
volume one on the maximal open compact subgroup $G_{\ul
  \bfX_0}(\Zp)=\Aut(\ul \bfX_0)$. 
For each $k$-valued point $\bfx=(\ul
\bfX, \rho)\in \calM^0_{\Fpbar}(k)$, we may 
regard its automorphism group
$\Aut(\ul \bfX)$ as an open compact subgroup of $G_{\ul
  \bfX_0}(\Qp)$ by inclusion:
\[ \rho^*:\Aut(\ul \bfX)\embed G_{\ul \bfX_0}(\Qp), \quad h\mapsto 
   \rho^{-1}\circ h\circ \rho. \]

\begin{definition}
Let the notation be as above.
Define a function on $\calM^0_{\Fpbar}(k)$ by
\begin{equation}
  \label{eq:vfunction}
  v: \calM^0_{\Fpbar}(k) \to \Q, \quad \bfx=(\ul \bfX,\rho)\mapsto
  v(\bfx):=\mathrm{vol}(\rho^*(\Aut(\ul \bfX)))^{-1}. 
\end{equation}
For each $v$-value $r\in \Q$, that is, $r=v(\bfx)$ for some $\bfx\in
\calM^0_{\Fpbar}(k)$, consider the subset 
\[ \calM^0_r:=\{x\in \calM^0_{\Fpbar}(k): v(\bfx)=r\}, \]
for which the function $v$ takes value $r$, called \emph{the $v$-stratum
with $v$-value $r$}. The Rapoport-Zink space then decomposes in
subsets:
\[ \calM^0_{\Fpbar}(k)=\coprod_{r} \calM^0_r, \]
where $r$ runs through all $v$-values in $\Q$, called the
$v$-stratification of $\calM^0_{\Fpbar}(k)$. 
Observe that the collection of $v$-strata
is independent of the choice of the Haar measure on $G_{\ul
  \bfX_0}(\Qp)$ as the function $v'$ associated to a different Haar
measure is just a multiple of $v$ by a scalar.          
\end{definition}
 
Let 
\[ \wt \pi: \calM^0_{\Fpbar} \to \calS_{g,1} \]
be the Rapoport-Zink uniformisation morphism, cf.~\cite[6.13]{rapoport-zink}.
 
\begin{proposition}\label{massandv}
  The stratification of $\calM^0_{\Fpbar}(k)$ obtained 
  by the pull-back of the
  mass stratification of $\calS_{g,1}(k)$ by $\wt \pi$ 
  coincides with the $v$-stratification. 
\end{proposition}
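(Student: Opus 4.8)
The plan is to trace carefully how the mass function on $\calS_{g,1}(k)$ and the volume function $v$ on $\calM^0_{\Fpbar}(k)$ are both computed from the same local datum at $p$, and to show that the Rapoport–Zink uniformisation $\wt\pi$ matches them. First I would fix a point $\bfx=(\ul\bfX,\rho)\in\calM^0_{\Fpbar}(k)$ and set $x=\wt\pi(\bfx)=(X,\lambda)\in\calS_{g,1}(k)$, so that the principally polarised $p$-divisible group $(X,\lambda)[p^\infty]$ is identified with $\ul\bfX$ via $\rho$. By Proposition~\ref{prop:geomarithmass} we have $\Mass(\Lambda_x)=\Mass(G_x,G_x(\wh\Z))$, where $G_x$ is the automorphism group scheme from \eqref{eq:aut}. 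The key observation is that $G_x$ depends on $x$ only through its rational structure $G_x(\Q)$ together with the local compact-open subgroups $G_x(\Z_\ell)$; and because all points of $\calS_{g,1}(k)$ are supersingular and hence isogenous, the group $G_x(\A_f)$ and the subgroups $G_x(\Z_\ell)$ for $\ell\neq p$ are the \emph{same} across the whole supersingular locus — only the factor at $p$, namely $G_x(\Z_p)=\Aut((X,\lambda)[p^\infty])=\Aut(\ul\bfX)$, varies with $x$.

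Next I would make this precise using Lemma~\ref{lem:massesU1U2}. Choosing a fixed base superspecial point $x_0=(X_0,\lambda_{X_0})$ with $\ul\bfX_0=(X_0,\lambda_{X_0})[p^\infty]$, all the groups $G_x(\A_f)$ for $x$ in the relevant isogeny class are canonically identified, so $\Mass(G_x,G_x(\wh\Z))$ differs from $\Mass(G_{x_0},G_{x_0}(\wh\Z))$ only by the local index factor $\mu$ at $p$, which measures the relative volume of $\Aut(\ul\bfX)$ inside the ambient local group. Concretely, writing $U_p=\rho^*(\Aut(\ul\bfX))\subseteq G_{\ul\bfX_0}(\Qp)$, we get
\[
\Mass(\Lambda_x)=c\cdot \mathrm{vol}\bigl(\rho^*(\Aut(\ul\bfX))\bigr)^{-1}
\]
for a constant $c$ independent of $x$, where the Haar measure is the one normalised in the definition of $v$ (volume one on $G_{\ul\bfX_0}(\Z_p)$, and the prime-to-$p$ volumes absorbed into $c$). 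But the right-hand factor is exactly $v(\bfx)$ by \eqref{eq:vfunction}. Hence $\Mass(\wt\pi(\bfx))=c\cdot v(\bfx)$ with $c$ constant, so the two functions are positive scalar multiples of one another on the whole space.

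Finally, since $\Mass$ and $v$ are related by a single global positive scalar, they induce \emph{identical} partitions into level sets: $\Mass(\wt\pi(\bfx_1))=\Mass(\wt\pi(\bfx_2))$ if and only if $v(\bfx_1)=v(\bfx_2)$. Thus the pull-back $\{\wt\pi^{-1}(\calS_{g,1,r})\}_r$ of the mass stratification coincides with $\{\calM^0_{r'}\}_{r'}$, which is the claim. The main obstacle I anticipate is the bookkeeping of the prime-to-$p$ local factors: one must verify that for any two points $x_1,x_2$ in the supersingular locus the identifications $G_{x_1}(\A_f^p)\cong G_{x_2}(\A_f^p)$ carry $G_{x_1}(\wh\Z^p)$ to $G_{x_2}(\wh\Z^p)$ exactly, so that no hidden $x$-dependence leaks into the constant $c$. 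This is where one invokes the superspecial reduction via the minimal isogeny (Lemma~\ref{lem:minisog}, Proposition~\ref{prop:compmass}): the minimal isogeny is an isomorphism away from $p$, confirming that all variation is concentrated at $p$ and is captured precisely by the quasi-isogeny $\rho$ defining the Rapoport–Zink point, and hence by $v$. The remainder of the argument is the routine translation between $\Aut$-groups and Dieudonné-module automorphisms already recorded in Proposition~\ref{prop:compmass}.
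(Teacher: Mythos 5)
Your proposal is correct and follows essentially the same route as the paper: lift the Rapoport--Zink point $\bfx=(\ul\bfX,\rho)$ to a polarised quasi-isogeny $\wt\rho:(X_0,\lambda_{X_0})\to(X,\lambda_X)$, apply Proposition~\ref{prop:geomarithmass} and Lemma~\ref{lem:massesU1U2} to get $\Mass(\wt\pi(\bfx))=\Mass(\Lambda_{x_0})\cdot v(\bfx)$, and conclude that the two stratifications agree because the functions differ by a global positive constant. The only cosmetic difference is that the paper identifies your constant $c$ explicitly as $\Mass(\Lambda_{x_0})$ and notes that the prime-to-$p$ bookkeeping is automatic because the lift $\wt\rho$ of an RZ point is an isomorphism away from $p$, so the appeal to minimal isogenies is not needed.
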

\begin{proof}
  We compare the functions $v$ and $\wt \pi^* \Mass=\Mass \circ \wt
  \pi$. Let $\bfx=(\ul \bfX, \rho)$ be a $k$-valued point in
  $\calM^0_{\Fpbar}(k)$. Then $\bfx$ lifts to a pair 
  $((X,\lambda_X),\wt \rho)$ of a principally
  polarised supersingular abelian variety $(X,\lambda_X)$ and 
  a polarised
  quasi-isogeny $\wt \rho:(X_0,\lambda_{X_0})\to (X,\lambda_X)$. 
  By the construction of \cite[6.13]{rapoport-zink}, the
  map $\wt
  \pi$ sends $\bfx$ to $x:=(X,\lambda_X)$. Using Proposition \ref{prop:geomarithmass} 
and Lemma \ref{lem:massesU1U2}, we see that
\[
\begin{split}
\Mass(x)=\mathrm{Mass}(\Lambda_x) &=
\frac{\mathrm{vol}(G_{x_0}(\Zp))}{\mathrm{vol}(\wt \rho^*(G_{x}(\Zp)))} \mathrm{Mass}(\Lambda_{x_0}) \\
&=
\frac{\mathrm{Mass}(\Lambda_{x_0})}{\mathrm{vol}(\rho^*(\Aut(\bfX, 
\lambda_{\bfX})))}=\Mass(x_0)\cdot v(\bfx).
\end{split}
\]
Thus, $\wt \pi^* \Mass (\bfx)=\Mass(x_0) \cdot v(\bfx)$ for $\bfx\in
\calM^0_{\Fpbar}(k)$ and the assertion follows. 
\end{proof}

\subsection{The structure of $\calS_{3,1}$}\

Hereafter we will only treat the case where $g=3$. For brevity, we
write $\calP_{\mu}$ and $\calP'_{\mu}$ for 
$\calP_{3,\mu}$ and $\calP'_{3,\mu}$, respectively. 
Roughly speaking, Equation \eqref{eq:moduli} says that each
$\mathcal{P}_{\mu }$ approximates an irreducible component of the
supersingular locus $\mathcal{S}_{3,1}$. More precisely, one can show the
following structure results; for more details, we refer to
\cite[Sections 9.3-9.4]{lioort}. 
Let $C \subseteq \mathbb{P}^2$ be the Fermat curve  
defined by the equation $X_{1}^{p+1}+X_{2}^{p+1}+X_{3}^{p+1} = 0$. 

\begin{proposition}\label{prop:explicitmoduli}
The Fermat curve $C$ can be interpreted as the 
classifying space of isogenies $(Y_2, \lambda_2) \to (Y_1,\lambda_1)$ 
whose kernel is locally isomorphic to $\alpha_p^2$.
Moreover, there is an isomorphism $\mathcal{P}_{\mu} \simeq
 \mathbb{P}_{C}(\mathcal{O}(-1)\oplus \mathcal{O}(1))$ for which
the structure morphism $\pi : \mathbb{P}_{C}(\mathcal{O}(-1)\oplus
\mathcal{O}(1)) \to C$ corresponds to the forgetful map
$((Y_2,\lambda_2) \to (Y_1,\lambda_1) \to (Y_0, \lambda_0)) \mapsto
((Y_2,\lambda_2) \to (Y_1,\lambda_1))$.   
\end{proposition}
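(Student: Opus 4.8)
The plan is to analyze the fibers of the forgetful map and identify the resulting family as the projectivization of a rank-two bundle on $C$. First I would make precise the claim that $C$ classifies the isogenies $(Y_2,\lambda_2)\to(Y_1,\lambda_1)$ with kernel locally isomorphic to $\alpha_p^2$. Here $(Y_2,\lambda_2)=(E^3,\mu)$ is fixed (the top member of the PFTQ), so such an isogeny is determined by its kernel $K\subseteq Y_2$, a rank-two $\alpha$-subgroup satisfying the compatibility with $\lambda_2$ forced by condition (iii) of the PFTQ definition. Passing to the (contravariant) Dieudonné module $M$ of $E^3$, an $\alpha$-group of rank $2$ corresponds to a $2$-dimensional subspace of the $3$-dimensional $k$-vector space $M/(\mathsf{F},\mathsf{V})M$ on which the relevant Frobenius-semilinear structure acts; the polarization condition cuts these subspaces out as the zero locus of the Hermitian form attached to $\mu$, which in suitable coordinates is exactly $X_1^{p+1}+X_2^{p+1}+X_3^{p+1}=0$. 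Thus I would exhibit an explicit bijection, functorial in the base $S$, between such kernels and points of $C$, thereby identifying $C$ with this classifying space; I would lean on \cite[Sections 9.3--9.4]{lioort} for the computation of the Hermitian form and the identification of its vanishing locus with the Fermat curve.

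Next I would understand the fiber of $\pi$ over a point $t\in C$, which amounts to describing the possible final quotients $(Y_1,\lambda_1)\to(Y_0,\lambda_0)$ completing the flag once the first isogeny is fixed. By condition (ii) the kernel of $\rho_1$ is an $\alpha$-group of rank one, i.e.\ a line in an appropriate two-dimensional space attached to $Y_1$; the rigidity and polarization constraints (iii) restrict these lines, and over each $t$ the admissible choices form a $\mathbb{P}^1$. This is what produces the $\mathbb{P}^1$-fibration structure. To upgrade the fiberwise statement to a global identification of $\mathcal{P}_\mu$ with a $\mathbb{P}^1$-bundle, I would assemble the line bundles appearing in these constructions into a rank-two vector bundle $\mathcal{E}$ on $C$ and show $\mathcal{P}_\mu\simeq \mathbb{P}_C(\mathcal{E})$, with $\pi$ the structure morphism and hence compatible with the forgetful map by construction.

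The main obstacle, and the step requiring the most care, is pinning down $\mathcal{E}$ precisely as $\mathcal{O}(-1)\oplus\mathcal{O}(1)$ rather than merely as \emph{some} rank-two bundle with $\mathbb{P}^1$-fibers. Over a projective curve every rank-two bundle splits as a sum of line bundles, so $\mathcal{E}\simeq\mathcal{O}(a)\oplus\mathcal{O}(b)$, and the issue is to compute the two twists. I would determine these by tracking the tautological sub- and quotient-bundles that arise from the Dieudonné-module description: the subgroup kernels vary in a bundle whose degree is read off from how the $\alpha$-group lines twist as $t$ moves along $C$, and I expect one summand to come from the restriction of $\mathcal{O}_{\mathbb{P}^2}(-1)$ and the other from its Serre dual contribution, giving the degrees $-1$ and $+1$. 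Concretely I would compute the first Chern class of the relevant extension or, equivalently, the transition functions in an affine cover of $C$, comparing against the known splitting type; this is precisely the content extracted from \cite[Sections 9.3--9.4]{lioort}, so I would verify the normalization there matches the present conventions (in particular the choice $\pi_E=-p$) and cite it for the final identification $\mathcal{P}_\mu\simeq\mathbb{P}_C(\mathcal{O}(-1)\oplus\mathcal{O}(1))$.
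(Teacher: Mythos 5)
Your overall skeleton matches the paper's proof: parametrise the first isogeny by a point of $\bbP^2=\bbP((M_2/(\sfF,\sfV)M_2)^{\ast})$, translate the polarisation condition $\ker(\lambda_1)\subseteq Y_1[\sfF]$ into the Fermat equation (citing Li--Oort/Katsura--Oort for the Hermitian-form computation), and realise the fibres of $\pi$ as $\bbP^1$'s coming from the two-dimensional spaces $M_1/M_1^{\vee}$, assembled into a rank-two bundle $\calV$ on $C$. Up to the usual sub-versus-quotient duality in contravariant Dieudonn\'e theory, this is the same route.

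However, there is a genuine gap at precisely the step you flag as the main obstacle. You assert that ``over a projective curve every rank-two bundle splits as a sum of line bundles, so $\mathcal{E}\simeq\mathcal{O}(a)\oplus\mathcal{O}(b)$.'' That is Grothendieck's theorem for $\bbP^1$ only; it fails on curves of positive genus (non-split extensions of $\mathcal{O}$ by $\mathcal{O}$ are classified by $H^1(C,\mathcal{O}_C)\neq 0$), and the Fermat curve here has genus $p(p-1)/2>0$. Moreover, on a positive-genus curve a line bundle is not determined by its degree, so computing Chern classes or ``the two twists'' cannot by itself identify the summands with the specific bundles $\mathcal{O}(\pm 1)=\mathcal{O}_{\bbP^2}(\pm 1)|_C$. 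The paper's argument avoids both problems: the canonical line $(\sfF,\sfV)M_2/M_1^{\vee}\subseteq M_1/M_1^{\vee}$ defines a section of $\pi$ (the section $T$ of Definition~\ref{def:T}), hence a surjection $\calV\to\mathcal{O}(-1)$ onto a \emph{specific} line bundle; the duality of polarisations then identifies the kernel of this surjection with $\mathcal{O}(1)$, exhibiting $\calV$ as an extension of $\mathcal{O}(-1)$ by $\mathcal{O}(1)$; and finally one checks that this particular extension splits. Your proposal needs this construction (or an equivalent explicit one) in place of the false general splitting principle.
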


\begin{proof}
Let $M_2$ be the polarised contravariant Dieudonn{\'e} module of $Y_2$. 
Choosing an isogeny $\rho _2$ from ${E^3_k}$ 
such that $\ker (\rho_2) \simeq \alpha _p^2$ is equivalent to choosing
a surjection of Dieudonn{\'e} modules $M_2 \to k^2$. Since Frobenius
$\sfF$ and Verschiebung $\sfV$ act as zero on $k^2$, this is further
equivalent to choosing a one-dimensional subspace of the
three-dimensional (since $a(Y_2)=3$) $k$-vector space $M_2/(\sfF, \sfV)M_2$
which corresponds to a point $(t_1:t_2:t_3) \in \mathbb{P}^2 =
\mathbb{P}((M_2/(\sfF, \sfV)M_2)^{\ast})$.  

The polarisation $\lambda_2=p\mu$ descends to a polarisation $\lambda_1$ on
$Y_1$ through such $\rho_2$, and the condition $\ker (\lambda _1)
\subseteq Y_1[\sfF]$ is equivalent to the condition 
\begin{align*}
t_1^{p+1}+t_2^{p+1}+t_3^{p+1} = 0,
\end{align*}
which describes the Fermat curve $C$ of degree $p+1$ in $\mathbb{P}^2$. 
For precise computations, we refer to \cite{katsuraoort}. 

Let $M_1$ be the polarised Dieudonn{\'e} module of $Y_1$:
the polarisation $\lambda_1$ induces a quasi-polarisation
$D(\lambda_1) \colon M_1^{\vee} \to M_1$, and we regard $M_1^{\vee}$
as an submodule of $M_1$ under this injection. One has the inclusions
$M_1^\vee \subset \sfV M_2 \subset M_1$ as $\sfV M_2$ is self-dual with 
respect to the quasi-polarisation induced by $\lambda_1$ and 
$\sfV M_2=(\sfF,\sfV)M_2 \subset M_1$.  
Choosing a second isogeny $(Y_1,\lambda_1) \to (Y_0, \lambda_0)$ is equivalent to choosing a one-dimensional subspace of the two-dimensional vector space $M_1/M_1^{\vee}$. 
Thus each fibre of the structure morphism $\pi \colon 
\mathcal{P}_{\mu} \to C $ is isomorphic to
$\mathbb{P}((M_1/M_1^{\vee})^{\ast}) \simeq \mathbb{P}^1$ and this
fibration corresponds to a rank two vector bundle $\calV$ on $C$.  
The canonical one-dimensional space $(\sfF, \sfV)M_2/M_1^{\vee} \subseteq
M_1/M_1^{\vee}$ defines a section $s$ of 
$\pi \colon \mathcal{P}_{\mu} \to C $ and corresponds to a surjection 
$\mathcal{V} \to \mathcal{O}(-1)$. 
By the duality of polarisations, we see that $\mathcal{V}$ is an extension of $\mathcal{O}(-1)$ by $\mathcal{O}(1)$ and this extension splits. 
\end{proof}

Since the Fermat curve $C$ is a smooth plane curve of degree $p+1$,
its genus is
equal to $p(p-1)/2$.
Let $U_3(\F_p)\subseteq \GL_3(\F_{p^2})$ denote the unitary subgroup 
consisting of matrices $A$ such that $A^T A^{(p)}=\mathbb{I}_3$. 
We see that for each $A\in
U_3(\F_p)$ and $t\in C$, the  matrix multiplication 
$A\cdot t^T$ lies in $C$. This gives a left action of  $U_3(\F_p)$ on
the curve $C$. 
It is known that $\vert U_3(\F_p)\vert =p^3(p+1)(p^2-1)(p^3+1)$. 

A curve is $\mathbb{F}_{p^{2k}}$-maximal (resp.~minimal) if its Frobenius eigenvalues over $\mathbb{F}_{p^{2k}}$ all equal $-p^k$ (resp.~$p^k$).  From the well-understood behaviour of Frobenius eigenvalues under field extensions we then derive the following lemma.
 
\begin{lemma}\label{lem:Cmaxmim}
We have  $\vert C(\mathbb{F}_{p^2}) \vert = p^3 + 1$. 
Thus, it is $\F_{p^2}$-maximal
and hence $\F_{p^4}$-minimal. Moreover, 
we have $C(\mathbb{F}_{p^2}) = C(\mathbb{F}_{p^4})$. Furthermore, we
have
\begin{equation}
  \label{eq:Cpoints}
  \vert C(\F_{p^{2i}})\vert =
  \begin{cases}
    p^{2i}+p^{i+2}-p^{i+1}+1 & \text{if $i$ is odd;}\\
    p^{2i}-p^{i+2}+p^{i+1}+1 & \text{if $i$ is even.}\\   
  \end{cases}
\end{equation}
 \end{lemma}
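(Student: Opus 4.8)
The plan is to pin down the Frobenius eigenvalues of $C$ over $\F_{p^2}$ and then read off every assertion of the lemma by standard zeta-function bookkeeping; the one substantive input will be the exact point count over $\F_{p^2}$, equivalently the maximality of $C$.

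First I would establish $\vert C(\F_{p^2})\vert = p^3+1$. Since the genus is $g = p(p-1)/2$, the Hasse--Weil bound gives $\vert C(\F_{p^2})\vert \le p^2 + 1 + 2gp = p^3+1$, so it suffices to prove equality, i.e.\ that $C$ is the classical Hermitian maximal curve. I would obtain the exact count by a direct character-sum argument. The key observation is that $x \mapsto x^{p+1}$ is the norm map $N\colon \F_{p^2}\to \F_p$, so the fibre $\#\{x : x^{p+1}=c\}$ equals $1$ for $c=0$ and $p+1$ for $c\in \F_p^\times$. Writing $\mathbf 1_{a+b+c=0} = p^{-1}\sum_\psi \psi(a+b+c)$ over the additive characters $\psi$ of $\F_p$, the number of affine solutions of $X_1^{p+1}+X_2^{p+1}+X_3^{p+1}=0$ in $\F_{p^2}^3$ becomes $p^{-1}\sum_\psi \wh{g}(\psi)^3$, where the fibre-count $g$ has transform $\wh{g}(\psi_0)=p^2$ for the trivial character and $\wh{g}(\psi)=-p$ for each of the $p-1$ nontrivial ones; this evaluates to $p^5-p^3+p^2$. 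Excluding the origin and dividing by the free scaling action of $\F_{p^2}^\times$ (which preserves the equation since $\lambda^{p+1}\in\F_p^\times$) gives $\vert C(\F_{p^2})\vert = (p^5-p^3+p^2-1)/(p^2-1) = (p^3+1)(p^2-1)/(p^2-1) = p^3+1$.

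Next I would extract the eigenvalues. Writing $\vert C(\F_{p^2})\vert = p^2 + 1 - \sum_{j=1}^{2g}\alpha_j$ with $\vert \alpha_j\vert = p$ and $2g = p(p-1)$, the value $p^3+1$ forces $\sum_j \alpha_j = -p^2(p-1) = -2gp$; equality in the triangle inequality with each $\vert \alpha_j\vert = p$ then pins down $\alpha_j = -p$ for all $j$. This is precisely $\F_{p^2}$-maximality. Over $\F_{p^{2i}}$ the eigenvalues are $\alpha_j^i = (-p)^i$, so over $\F_{p^4}$ they all equal $p^2$, giving $\F_{p^4}$-minimality, and the general count follows from $\vert C(\F_{p^{2i}})\vert = p^{2i}+1-\sum_j \alpha_j^i = p^{2i}+1-(-1)^i p^i\cdot p(p-1)$, which is exactly the two stated cases according to the parity of $i$. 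In particular $\vert C(\F_{p^4})\vert = p^4+1-p^2\cdot p(p-1)=p^3+1$; since $C(\F_{p^2})\subseteq C(\F_{p^4})$ and both finite sets have cardinality $p^3+1$, they coincide.

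The only real obstacle is the exact count $\vert C(\F_{p^2})\vert = p^3+1$, equivalently the maximality of the Hermitian curve of degree $p+1$; once the eigenvalues are known all to equal $-p$, every remaining assertion (minimality over $\F_{p^4}$, the equality $C(\F_{p^2})=C(\F_{p^4})$, and the parity-dependent formula) is immediate. If one prefers to avoid the character sum, one may instead cite the standard maximality of the Hermitian curve over $\F_{p^2}$ and proceed directly to the eigenvalue bookkeeping.
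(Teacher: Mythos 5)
Your proof is correct and follows essentially the same route as the paper: both arguments reduce the count $\vert C(\F_{p^2})\vert = p^3+1$ to the fact that $x\mapsto x^{p+1}$ is the norm $\F_{p^2}\to\F_p$ with fibres of size $p+1$ over $\F_p^\times$, and then deduce maximality, minimality over $\F_{p^4}$, and the general formula by the standard Frobenius-eigenvalue bookkeeping. The only (cosmetic) difference is that you organise the affine count via additive characters, whereas the paper enumerates fibres directly over the $p+1$ points of the line $s_1+s_2+s_3=0$ in $\mathbb{P}^2(\F_p)$.
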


\begin{proof} For each $t=(t_i)\in C(\F_{p^2})$,  
  let $s_i=t_i^{p+1}$. Then $s_i\in \Fp$ and $s_1+s_2+s_3=0$. So there
  are $p+1$ points $(s_i)$ in $\mathbb{P}^1(\Fp)$. 
  For each point $(s_i)$, there are $p+1$ (resp.~$(p+1)^2$) points
  $(t_i)$ over $(s_i)$
  if some of the $s_i$ are zero (resp.~otherwise);
  there are 3 points $(s_i)$ with $s_i=0$ for some $i$. 
  Thus, \[ \vert
  C(\mathbb{F}_{p^2}) \vert=(p+1-3)(p+1)^2+3(p+1)=p^3+1. \] 
  One checks that this means $C$ is $\F_{p^2}$-maximal.
  Hence, $C$ is $\F_{p^4}$-minimal and satisfies $\vert C(\mathbb{F}_{p^4}) \vert =p^3+1$. 
  Since $C$ is $\F_{p^{2i}}$-maximal (resp.~$\F_{p^{2i}}$-minimal) if $i$ is odd
  (resp.~even), the formula \eqref{eq:Cpoints} follows immediately.  
\end{proof}

\begin{lemma}\label{lem:CFp2}
Let $t = (t_1:t_2:t_3) \in C(k)$. Then $t \in C(\mathbb{F}_{p^2})$ if and only if $t_1, t_2, t_3$ are linearly dependent over $\mathbb{F}_{p^2}$.
\end{lemma}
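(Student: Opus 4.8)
The plan is to interpret $C$ as the Hermitian curve attached to the standard non-degenerate Hermitian form $H(u,v)=\sum_{i=1}^3 u_i v_i^p$ on $k^3$, taken relative to the involution $z\mapsto z^p$ of $\F_{p^2}/\F_p$. Since $\sum_i t_i^{p+1}=H(t,t)$, we have $C(k)=\{t:H(t,t)=0\}$. The first observation is that the coordinates $t_1,t_2,t_3$ are linearly dependent over $\F_{p^2}$ if and only if $t$ lies on an $\F_{p^2}$-rational line: a nontrivial relation $\sum a_i t_i=0$ with $a_i\in\F_{p^2}$ is precisely the assertion that $t$ lies on the line $L_a=\{v:\sum a_i v_i=0\}$, which is defined over $\F_{p^2}$. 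Thus the lemma is equivalent to the statement that a point of $C(k)$ is $\F_{p^2}$-rational if and only if it lies on some $\F_{p^2}$-rational line. The forward implication is immediate: if $t\in C(\F_{p^2})$ we may take $t_1,t_2,t_3\in\F_{p^2}$, and any three elements of the one-dimensional $\F_{p^2}$-vector space $\F_{p^2}$ are $\F_{p^2}$-linearly dependent (if $t_1\neq 0$, the triple $(t_2,-t_1,0)$ gives a nontrivial relation).

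For the converse I would argue as follows. Suppose $t$ lies on an $\F_{p^2}$-rational line $L$, spanned by two points $P,Q\in\mathbb{P}^2(\F_{p^2})$. Restricting $H(v,v)$ to $v=\lambda P+\mu Q$ produces the binary form $h(\lambda,\mu):=H(\lambda P+\mu Q,\lambda P+\mu Q)$, whose projective zero locus is exactly $C\cap L$; expanding via the (semi)linearity of $H$ gives $h=A\lambda^{p+1}+B\lambda\mu^p+B^p\lambda^p\mu+D\mu^{p+1}$ with $A=H(P,P),\,D=H(Q,Q)\in\F_p$ and $B=H(P,Q)\in\F_{p^2}$, i.e.\ a binary Hermitian form over $\F_{p^2}$. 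This form is not identically zero, since otherwise $L\subseteq C$, which is impossible: the curve $C$ is smooth (its gradient $(X_1^p,X_2^p,X_3^p)$ has no common zero in $\mathbb{P}^2$) of degree $p+1\geq 3$, hence irreducible, and so contains no line.

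The crux, then, is to show that every projective zero of a nonzero binary Hermitian form over $\F_{p^2}$ is $\F_{p^2}$-rational; granting this, the point $t\in(C\cap L)(k)$ corresponds to a zero of $h$, hence to a point of $\mathbb{P}^1(\F_{p^2})$, so that $t=\lambda_0 P+\mu_0 Q$ with $\lambda_0,\mu_0\in\F_{p^2}$ and thus $t\in C(\F_{p^2})$. To prove this key point I would diagonalise $h$ by an $\F_{p^2}$-linear change of the variables $(\lambda,\mu)$ — available for Hermitian forms because the involution is non-trivial — bringing it to diagonal shape with entries $a_1,a_2\in\F_p$; such a substitution lies in $\GL_2(\F_{p^2})$ and so preserves both $\mathbb{P}^1(\F_{p^2})$ and $\mathbb{P}^1(k)$, reducing us to the diagonal case. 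If $h$ is non-degenerate ($a_1a_2\neq 0$), its zeros satisfy $(\lambda/\mu)^{p+1}=-a_2/a_1\in\F_p^\times$; since the norm $\Nm\colon\F_{p^2}^\times\to\F_p^\times$, $z\mapsto z^{p+1}$, is surjective with kernel of order $p+1$, this equation has exactly $p+1$ solutions for $\lambda/\mu$, all in $\F_{p^2}$, exhausting the zeros of the degree-$(p+1)$ form. If $h$ has rank one it reduces (up to relabelling) to $a\lambda^{p+1}$ with $a\neq 0$, whose only zero is $(\lambda:\mu)=(0:1)\in\mathbb{P}^1(\F_{p^2})$ with multiplicity $p+1$. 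In either case all zeros are $\F_{p^2}$-rational, which completes the converse.

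The main obstacle is this last step — controlling the zeros of the restricted form — and it is exactly where the geometry of Hermitian curves enters: the content is that an $\F_{p^2}$-rational line meets $C$ only in $\F_{p^2}$-rational points (as a secant in $p+1$ distinct points, or as a tangent in one point of multiplicity $p+1$). The surjectivity of the norm $\Nm\colon\F_{p^2}^\times\to\F_p^\times$ is the arithmetic input forcing rationality, while the diagonalisation of Hermitian forms over a field with non-trivial involution and the smoothness of $C$ are the structural ingredients; the easy direction and the reduction to a binary form are routine.
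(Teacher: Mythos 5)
Your proof is correct, but it takes a genuinely different route from the one written in the paper. The paper's independent proof starts from a dependence relation $\sum a_i t_i=0$, rewrites it as $t_i=at_i^{p^2}+bt_i^{p^4}$, substitutes into the Fermat equation to conclude (after disposing of the cases $a=0$ or $b=0$) that $t$ also lies on the auxiliary Fermat curve $Z:\ X_1^{p^3+1}+X_2^{p^3+1}+X_3^{p^3+1}=0$, and then shows by B\'ezout that the set-theoretic intersection $C\cap Z$ is exactly $C(\F_{p^2})$: the total intersection number $(p+1)(p^3+1)$ is accounted for by the $p^3+1$ points of $C(\F_{p^2})$, each of local multiplicity $p+1$ (computed at one point and propagated by the transitive $U_3(\F_p)$-action). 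You instead use the auxiliary curve of lowest possible degree --- the $\F_{p^2}$-rational line $L$ on which $t$ lies --- and reduce to the classical fact that a line defined over $\F_{p^2}$ meets the Hermitian curve either in $p+1$ distinct $\F_{p^2}$-points or in a single $\F_{p^2}$-point with multiplicity $p+1$; the arithmetic input is the surjectivity of the norm $\F_{p^2}^\times\to\F_p^\times$ together with the classification of binary Hermitian forms, and the degree count on $h$ plays the role of B\'ezout. All the steps you invoke check out: the restriction of $\sum X_i^{p+1}$ to $L$ is indeed the binary Hermitian form $A\lambda^{p+1}+B\lambda\mu^p+B^p\lambda^p\mu+D\mu^{p+1}$ with $A,D\in\F_p$ (using $P_i^{p^2}=P_i$), it is nonzero because the smooth irreducible curve $C$ contains no line, diagonalisation is available since the involution is nontrivial, and the $p+1$ norm-equation solutions exhaust the zeros of a degree-$(p+1)$ binary form. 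Your argument is arguably more elementary and self-contained (no intersection-multiplicity computation at a point of $C(\F_{p^2})$ is needed), while the paper's version has the side benefit of exhibiting $C(\F_{p^2})$ as the set-theoretic intersection of $C$ with a second Fermat curve, a picture it reuses in spirit in the Appendix.
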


\begin{proof}
See \cite[Lemma 2.1]{oort2}. Alternatively, we give the following
independent proof: 

The forward implication is immediate, so we will only show the reverse implication.
Assume $t_1, t_2, t_3$ are linearly dependent over $\mathbb{F}_{p^2}$. 
Then the vectors $(t_i,t_i^{p^2},t_i^{p^4})$ for $i=1,2,3$ are
$k$-linearly dependent. If $(t_i,t_i^{p^2},t_i^{p^4})$ for $i=2,3$ are
linearly independent, then there exist $a, b \in k$ such that $t_i =
at_i^{p^2} + bt_i^{p^4}$ for $i=1,2,3$. If they are linearly
dependent, then there exists $a'\in k$ such that $t_i^{p^2} =
a' t_i^{p^4}$ for $i=1,2,3$ and hence $t_i =
at_i^{p^2}$ with $a^{p^2}=a'$. 
Therefore, there exist $a, b \in k$ such that $t_i = at_i^{p^2} +
bt_i^{p^4}$ for $i=1,2,3$ in either case. 
Substituting this into the defining equation of $C$, we obtain
\[ a^{p+1}\sum_{i=1}^3  t_i^{p^2+p^3} + ab^p \sum_{i=1}^3
t_i^{p^2+p^5} + a^pb \sum_{i=1}^3 t_i^{p^3+p^4} + b^{p+1} \sum_{i=1}^3
t_i^{p^4+p^5} =0. \]
Again using the defining equation of $C$, we see that the first, third, and fourth terms vanish, so that also
$ab^p \sum_{i=1}^3 t_i^{p^2+p^5}= ab^p(\sum_{i=1}^3 t_i^{p^3+1})^{p^2} =0$.
If $a=0$ then the point $t = (t_1 : t_2: t_3)$ is defined over $\mathbb{F}_{p^4}$ and hence, by Lemma \ref{lem:Cmaxmim}, it is defined over $\mathbb{F}_{p^2}$. If $b=0$, then $t$ is defined over $\mathbb{F}_{p^2}$ as well.
So we may assume that $\sum_{i=1}^3 t_i^{p^3+1} = 0$.
Let $Z:=V(X_1^{p^3+1} + X_2^{p^3+1} + X_3^{p^3+1})$ be the Fermat curve of degree $p^3+1$. 
Then $t \in C \cap Z$.
The intersection number of $C$ and $Z$ is $(p+1)(p^3+1)$ and each point of $C(\mathbb{F}_{p^2})$ is in $C \cap Z$.
Since $\vert C(\mathbb{F}_{p^2}) \vert=p^3+1$ by Lemma \ref{lem:Cmaxmim}, it is enough to show that for each point $s \in C(\mathbb{F}_{p^2})$, the local multiplicity of $C$ and $Z$ at $s$ is $p+1$.
Since the unitary group $U_3(\mathbb{F}_p)$ acts transitively on $C(\mathbb{F}_{p^2})$, we may assume that $s = (\zeta : 0 :1)$ where $\zeta^{p+1}=-1$.
With local coordinates $v = X_1 - \zeta$ and $w = X_2$, the respective equations for $C$ and $Z$ at $y$ become $v^{p+1} + \zeta v^p + \zeta v + w^{p+1}$ and $v^{p^3+1} + \zeta v^{p^3} + \zeta^p v + w^{p^3+1}$. 
Now we may read off that the local multiplicity, i.e., the valuation of $v$ at $s$, is $p+1$, as required.
\end{proof}

We will denote $C^0 := C\setminus C(\mathbb{F}_{p^2})$.
Slightly abusively, we will tacitly switch between the notations $(t_1, t_2, t_3)$ and $(t_1:t_2:t_3)$.
For later use, we define the following:

\begin{definition}\label{def:Endt}
For $t = (t_1,t_2,t_3) \in k^3$ (viewed as a column vector), let 
\[
\mathrm{End}(t) = \{ A \in \mathrm{Mat}_3(\mathbb{F}_{p^2}) : A \cdot t \in k\cdot t \}.
\]
\end{definition}

\begin{lemma}\label{lem:Endt}
For any $t \in C^0(k)$, the $\mathbb{F}_{p^2}$-algebra $\mathrm{End}(t)$ is isomorphic to either $\mathbb{F}_{p^2}$ or $\mathbb{F}_{p^6}$.
\end{lemma}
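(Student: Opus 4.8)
The plan is to realise $\mathrm{End}(t)$ as a subalgebra of $\mathrm{Mat}_3(\mathbb{F}_{p^2})$, use the defining property of $C^0$ to embed it into $k$ as a subfield containing $\mathbb{F}_{p^2}$, and then pin down the degree by a dimension count.

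First I would record that $\mathrm{End}(t)$ is a unital $\mathbb{F}_{p^2}$-subalgebra of $\mathrm{Mat}_3(\mathbb{F}_{p^2})$: if $A\cdot t=\lambda t$ and $B\cdot t=\mu t$ with $\lambda,\mu\in k$, then $(AB)\cdot t=\lambda\mu\, t$ and $(A+B)\cdot t=(\lambda+\mu)t$, so $A,B$ and all $\mathbb{F}_{p^2}$-combinations lie in $\mathrm{End}(t)$, which therefore has $\mathbb{F}_{p^2}$-dimension at most $9$. Since $t\neq 0$, the eigenvalue $\lambda_A$ with $A\cdot t=\lambda_A t$ is uniquely determined by $A$, and the assignment $\phi\colon \mathrm{End}(t)\to k$, $A\mapsto \lambda_A$, is a unital $\mathbb{F}_{p^2}$-algebra homomorphism by the same computation.

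The crucial step is to show that $\phi$ is injective. Its kernel consists of those $A=(a_{ij})\in \mathrm{Mat}_3(\mathbb{F}_{p^2})$ with $A\cdot t=0$, i.e.\ of three $\mathbb{F}_{p^2}$-linear relations $\sum_j a_{ij}t_j=0$ among $t_1,t_2,t_3$. By Lemma~\ref{lem:CFp2}, the hypothesis $t\in C^0(k)=C(k)\setminus C(\mathbb{F}_{p^2})$ says precisely that $t_1,t_2,t_3$ are $\mathbb{F}_{p^2}$-linearly independent, so each such relation forces its coefficients to vanish and hence $A=0$. Thus $\phi$ embeds $\mathrm{End}(t)$ into $k$; in particular $\mathrm{End}(t)$ is commutative. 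Its image is a finite-dimensional $\mathbb{F}_{p^2}$-subalgebra of the field $k$, hence an integral domain finite over $\mathbb{F}_{p^2}$, hence itself a field $\mathbb{F}_{p^{2d}}$ for some $d\ge 1$, giving $\mathrm{End}(t)\cong \mathbb{F}_{p^{2d}}$.

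Finally I would bound $d$ by a module-theoretic dimension count. The standard left action of $\mathrm{Mat}_3(\mathbb{F}_{p^2})$ on $V:=\mathbb{F}_{p^2}^3$ restricts to make $V$ a module over the field $F:=\mathrm{End}(t)\cong\mathbb{F}_{p^{2d}}$, hence an $F$-vector space. Comparing $\mathbb{F}_{p^2}$-dimensions yields $3=\dim_{\mathbb{F}_{p^2}}V=[F:\mathbb{F}_{p^2}]\cdot\dim_F V=d\cdot\dim_F V$, so $d\mid 3$ and therefore $d\in\{1,3\}$, i.e.\ $\mathrm{End}(t)\cong\mathbb{F}_{p^2}$ or $\mathbb{F}_{p^6}$. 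The only genuine obstacle is the injectivity of $\phi$, which is exactly where the input $t\notin C(\mathbb{F}_{p^2})$ (via the linear-independence criterion of Lemma~\ref{lem:CFp2}) is indispensable; everything else is formal linear algebra.
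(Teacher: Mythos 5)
Your proof is correct and follows essentially the same route as the paper: the eigenvalue map $A\mapsto\lambda_A$ is an $\mathbb{F}_{p^2}$-algebra homomorphism into $k$, injective because $t_1,t_2,t_3$ are $\mathbb{F}_{p^2}$-linearly independent by Lemma~\ref{lem:CFp2}, so $\mathrm{End}(t)$ is a finite field extension of $\mathbb{F}_{p^2}$, and viewing $(\mathbb{F}_{p^2})^3$ as a vector space over it forces the degree to divide $3$. Your write-up merely spells out the intermediate steps (closure under products, uniqueness of the eigenvalue, the integral-domain argument) in more detail than the paper does.
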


\begin{proof}
For any $A \in \mathrm{End}(t)$, we have $A \cdot t = \alpha_A t$ for some $\alpha_A \in k$. The map
\[
\begin{split}
\mathrm{End}(t) & \to k \\
A & \mapsto \alpha_A
\end{split}
\]
is an $\mathbb{F}_{p^2}$-algebra homomorphism. It is injective, i.e., $A \cdot t = 0$ with $t = (t_1:t_2:t_3)$ implies that $A = 0$, since the $t_i$ are linearly independent over $\mathbb{F}_{p^2}$ by Lemma \ref{lem:CFp2}. 
Hence, $\mathrm{End}(t)$ is a finite field extension of $\mathbb{F}_{p^2}$. 
Since $\mathrm{End}(t) \subseteq \Mat_3(\mathbb{F}_{p^2}) = \mathrm{End}((\mathbb{F}_{p^2})^3)$, we may regard $(\mathbb{F}_{p^2})^3$ as a vector space over $\mathrm{End}(t)$. It follows that $[\mathrm{End}(t) : \mathbb{F}_{p^2}] \mid 3$, as required.
\end{proof}

\begin{lemma}\label{lem:CM}
We have 
\begin{equation}
  \label{eq:CM}
CM:=\{ t \in C^0(k) : \mathrm{End}(t) \simeq \mathbb{F}_{p^6} \} =
C^0(\mathbb{F}_{p^6}).   
\end{equation}
\end{lemma}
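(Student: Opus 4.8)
The plan is to prove the two inclusions separately, exploiting the injective $\mathbb{F}_{p^2}$-algebra homomorphism $\mathrm{End}(t)\hookrightarrow k$, $A\mapsto \alpha_A$ (where $A\cdot t=\alpha_A t$) constructed in the proof of Lemma~\ref{lem:Endt}, together with the dichotomy established there that $\mathrm{End}(t)$ is isomorphic to $\mathbb{F}_{p^2}$ or $\mathbb{F}_{p^6}$. Throughout I will use that, for $t\in C^0(k)$, the coordinates $t_1,t_2,t_3$ are linearly independent over $\mathbb{F}_{p^2}$ by Lemma~\ref{lem:CFp2}.

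For the inclusion $C^0(\mathbb{F}_{p^6})\subseteq CM$: given $t\in C^0(\mathbb{F}_{p^6})$, choose a representative with $t_1,t_2,t_3\in\mathbb{F}_{p^6}$. Since these three elements are linearly independent over $\mathbb{F}_{p^2}$ and $[\mathbb{F}_{p^6}:\mathbb{F}_{p^2}]=3$, they form an $\mathbb{F}_{p^2}$-basis of $\mathbb{F}_{p^6}$. For any $\beta\in\mathbb{F}_{p^6}$ I can then expand $\beta t_i=\sum_j A_{ij}t_j$ with unique coefficients $A_{ij}\in\mathbb{F}_{p^2}$; the resulting matrix $A=(A_{ij})\in\mathrm{Mat}_3(\mathbb{F}_{p^2})$ satisfies $A\cdot t=\beta t$, so $A\in\mathrm{End}(t)$ with $\alpha_A=\beta$. (Concretely, $A$ is the matrix of multiplication by $\beta$ on $\mathbb{F}_{p^6}$ in the basis $t_\bullet$.) Thus the image of $\mathrm{End}(t)$ in $k$ contains all of $\mathbb{F}_{p^6}$, and by Lemma~\ref{lem:Endt} this forces $\mathrm{End}(t)\simeq\mathbb{F}_{p^6}$, i.e.\ $t\in CM$.

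For the reverse inclusion $CM\subseteq C^0(\mathbb{F}_{p^6})$: suppose $\mathrm{End}(t)\simeq\mathbb{F}_{p^6}$ and pick $A\in\mathrm{Mat}_3(\mathbb{F}_{p^2})$ whose image $\alpha:=\alpha_A$ generates $\mathbb{F}_{p^6}$ over $\mathbb{F}_{p^2}$, so that $A\cdot t=\alpha t$. The characteristic polynomial of $A$ is monic of degree $3$ with coefficients in $\mathbb{F}_{p^2}$ and has $\alpha$ as a root; since $[\mathbb{F}_{p^2}(\alpha):\mathbb{F}_{p^2}]=3$, the minimal polynomial of $\alpha$ over $\mathbb{F}_{p^2}$ is irreducible of degree $3$ and hence equals the characteristic polynomial of $A$. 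Finite fields being perfect, this polynomial is separable, so $A$ has three distinct eigenvalues and the $\alpha$-eigenspace $\ker(A-\alpha\cdot\mathrm{id})$ is one-dimensional. As $A$ has entries in $\mathbb{F}_{p^2}$ and $\alpha\in\mathbb{F}_{p^6}$, this eigenspace is cut out by a linear system over $\mathbb{F}_{p^6}$, and being one-dimensional it is spanned by a vector with coordinates in $\mathbb{F}_{p^6}$. Since $t$ spans it, $t\in\mathbb{P}^2(\mathbb{F}_{p^6})$, whence $t\in C^0(\mathbb{F}_{p^6})$.

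I expect the only slightly delicate point to be the reverse inclusion, specifically the passage from ``$t$ is an eigenvector of an $\mathbb{F}_{p^2}$-matrix with eigenvalue in $\mathbb{F}_{p^6}$'' to ``$t$ is $\mathbb{F}_{p^6}$-rational as a point of $\mathbb{P}^2$''. This hinges on the one-dimensionality of the eigenspace, which is exactly why the irreducibility (hence separability) of the characteristic polynomial is needed: without it, $t$ could be an arbitrary point of a higher-dimensional $\mathbb{F}_{p^6}$-subspace and need not be rational. Everything else is a direct translation between the $\mathbb{F}_{p^2}$-algebra $\mathrm{End}(t)$ and the field-multiplication structure on $\mathbb{F}_{p^6}$, fed by the linear independence from Lemma~\ref{lem:CFp2} and the dichotomy from Lemma~\ref{lem:Endt}.
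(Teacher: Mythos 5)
Your proof is correct and follows essentially the same route as the paper: your regular-representation construction of the matrix of multiplication by $\beta$ in the basis $t_1,t_2,t_3$ is precisely the ``abstract way'' that the paper's own remark immediately following the lemma identifies with its explicit formula $A_\alpha=(t,t^{(p^2)},t^{(p^4)})\cdot\mathrm{diag}(\alpha,\alpha^{p^2},\alpha^{p^4})\cdot(t,t^{(p^2)},t^{(p^4)})^{-1}$. For the other inclusion the paper merely asserts it is ``immediate'' because $t$ is an eigenvector of a matrix in $\Mat_3(\F_{p^2})$ solvable over $\F_{p^6}$; your separability and one-dimensional-eigenspace argument supplies exactly the justification behind that one-liner and correctly identifies it as the only delicate point.
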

\begin{proof}
The containment $\{ t \in C^0(k) : \mathrm{End}(t) \simeq
\mathbb{F}_{p^6} \} \subseteq C^0(\mathbb{F}_{p^6})$ is immediate,
because $t$ is an eigenvector of a matrix in $\Mat_3(\F_{p^2})$ and
can be solved over the ground field $\F_{p^6}$. 
We will now prove the reverse containment.

For each $t \in C^0(\mathbb{F}_{p^6})$, we construct for each
element $\alpha\in \F_{p^6}$ a
matrix $A \in \mathrm{Mat}_3(\mathbb{F}_{p^6})$ as follows
\[
A=A_\alpha:= (t, t^{(p^2)}, t^{(p^4)})\cdot {\rm diag}(\alpha
,\alpha^{p^2}, \alpha^{p^4}  ) \cdot (t, t^{(p^2)}, t^{(p^4)})^{-1}.
\]
Since the $t_i$ are linearly independent over $\mathbb{F}_{p^2}$
by Lemma \ref{lem:CFp2}, 
the matrix $(t, t^{(p^2)}, t^{(p^4)})$ is invertible.
We check that
\begin{equation*}
  \label{eq:A_rational}
\begin{split}
  A^{(p^2)}&=(t^{(p^2)}, t^{(p^4)}, t)\cdot {\rm diag}(\alpha^{p^2}
  ,\alpha^{p^4},\alpha   ) \cdot (t^{(p^2)}, t^{(p^4)}, t)^{-1} \\
&= (t, t^{(p^2)}, t^{(p^4)})\cdot 
\begin{pmatrix}
  0 & 0 & 1\\1 & 0 & 0\\0 & 1 & 0
\end{pmatrix}
 \cdot {\rm diag}(\alpha^{p^2}
  ,\alpha^{p^4},\alpha) \cdot \begin{pmatrix}
  0 & 1 & 0\\0 & 0 & 1\\1 & 0 & 0
\end{pmatrix}\cdot (t, t^{(p^2)}, t^{(p^4)})^{-1} \\
  &=A,
\end{split}  
\end{equation*}
and hence $A\in \mathrm{Mat}_3(\mathbb{F}_{p^2})$. We also have that
$A_\alpha \cdot t = \alpha t$. Thus, the map $\alpha\in \F_{p^6}
\mapsto A_\alpha$ gives an isomorphism $\F_{p^6}\simeq \End(t)$, as
required. 
\end{proof}

\begin{remark} \
\begin{enumerate}
\item We can also show that $U_3(\Fp)$ acts transitively on $C^0(\F_{p^6}) = CM$. 
The action on $C(\F_{p^2})$ is also transitive, with stabilisers of size
  $p^3(p+1)(p^2-1)$; this gives another proof of the result $\vert
  C(\F_{p^2}) \vert =p^3+1$.
\item The proof of Lemma~\ref{lem:Endt} proves the following more
  general result. Let $F$ be any field contained in a field $K$ 
  and $t_1, t_2,\dots, t_n$ be a set of $F$-linearly independent
  elements in $K$. Put $t=(t_1,\dots, t_n)^T$ and $\End(t):=\{A\in
  \Mat_n(F) : A\cdot t\subseteq K\!\cdot t\}$. 
  Then $\End(t)$ is a finite field
  extension of $F$ of degree dividing $n$. 

  Furthermore, suppose that $t_1,\dots, t_n$ are contained a degree
  $n$ subextension $E$ of 
  $F$ in $K$. Then the $F$-basis $t_1,\dots, t_n$ of $E$ determines an
  $F$-algebra embedding $r: E \to \Mat_n(F)$ which is characterised
  by $r(a)\cdot t=at$ for every $a\in
  E$. Thus, $E\simeq \End(t)$ and $t$ is an eigenvector of a
  matrix in $\Mat_n(F)$. This is an abstract way of doing what is done
  explicitly in the the second part of the proof of
  Lemma~\ref{lem:CM}.                          
\end{enumerate}
\end{remark}

\begin{definition}\label{def:T}
The morphism $\pi:\calP_\mu\to C$ admits a section $s$ defined as follows. 
For a base scheme $S$, let $\rho_2: (Y_2,p\mu)\to (Y_1,\lambda_1)$ be an object in $C(S)$. Put $(Y_2^{(p)},\mu^{(p)}):=(Y,\mu)\times_{S,F_S}
S$, where $F_S:S\to S$ is the absolute Frobenius map. The relative
Frobenius morphism $\sfF:Y_2 \to Y_2^{(p)}$ gives rise to a morphism
of polarised abelian schemes $\sfF: (Y_2,p \mu)\to
(Y_2^{(p)},\mu^{(p)})$. Since $\ker(\rho_2)\subseteq \ker(\sfF)$, the
morphism factors through an isogeny $\rho_1:Y_1 \to Y_2^{(p)}$. As
$\rho_2^* \rho_1^* \mu^{(p)}=\sfF^* \mu^{(p)}=p\mu=\rho_2^* \lambda_1$,
we see that $\rho_1^* \mu^{(p)}=\lambda_1$ and thus obtain a polarised flag type quotient 
\[ 
\begin{CD}
  (Y_2,p\mu) @>\rho_2>> (Y_1,\lambda_1) @>{\rho_1}>> (Y_2^{(p)},\mu^{(p)}).
\end{CD}
\]    
This defines the section $s$, whose image will be denoted by $T$.   
\end{definition}

Recall the definition of the $a$-number from Definition~\ref{def:anumber}.
For an abelian threefold $X$ over~$k$, we have $a(X) \in \{1,2,3\}$.

\begin{proposition}\label{prop:sections}
Let the notation be as above.
\begin{enumerate}
\item We have $\calP_\mu'=\calP_\mu-T$. 
\item If $x \in T$ then we have $a(x) =3$.
\item For any $t \in C(k)$, we have $t \in C(\mathbb{F}_{p^2})$ if and only if $a(x) \geq 2$ for any $x \in \pi ^{-1}(t)$.
\item For any $x \in \calP_\mu(k)$, 
we have $a(x) = 1$ if and only if $x \notin T$ and $\pi (x) \notin C(\mathbb{F}_{p^2})$.
\end{enumerate}
\end{proposition}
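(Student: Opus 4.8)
The plan is to treat the four assertions in the order $(1),(2),(3),(4)$, extracting the first two from the structural description of Proposition~\ref{prop:explicitmoduli} and reducing the delicate points to a single explicit Dieudonn\'e-module computation on the fibres of $\pi$.

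For $(1)$, I would first observe that for $g=3$ the rigidity condition reduces to the single equality $\ker(\rho_2)=\ker(Y_2\to Y_0)\cap Y_2[\sfF]$ at $i=1$, the condition at $i=2$ being vacuous. Since $\ker(\rho_2)\cong\alpha_p^2$ has order $p^2$, $Y_2[\sfF]=\ker(\sfF_{Y_2})$ has order $p^3$ (as $Y_2=E^3$ is superspecial), and $\ker(Y_2\to Y_0)$ has order $p^3$, a comparison of orders shows that rigidity fails exactly when $Y_2[\sfF]\subseteq\ker(Y_2\to Y_0)$, i.e.\ when these two order-$p^3$ groups coincide. This happens precisely when $Y_2\to Y_0$ factors through the relative Frobenius $\sfF\colon Y_2\to Y_2^{(p)}$, which is exactly the locus $T$ parametrised by the section $s$ of Definition~\ref{def:T}. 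Hence the non-rigid locus is $T$ and $\calP'_\mu=\calP_\mu-T$. For $(2)$, if $x\in T$ then $Y_0\cong Y_2^{(p)}=(E^3)^{(p)}$; a Frobenius twist of a superspecial abelian variety is again superspecial, so $Y_0$ is superspecial and $a(x)=a(Y_0)=3$.

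For the forward implication of $(3)$ I would show that $t\in C(\F_{p^2})$ forces $Y_1$ to be superspecial and then deduce $a(Y_0)\ge 2$ for the whole fibre. In the model of Proposition~\ref{prop:explicitmoduli}, take $M_2=\bigoplus_{i=1}^3(We_i\oplus Wf_i)$ over $W=W(k)$ with $\sfF e_i=f_i$, $\sfF f_i=-pe_i$, $\sfV e_i=-f_i$, $\sfV f_i=pe_i$ ($\sfF$ being $\sigma$-semilinear, $\sfV$ being $\sigma^{-1}$-semilinear), and $M_1=\sfV M_2+W\tilde w$ with $\tilde w=\sum_i t_i e_i$. A direct computation of $M_1/(\sfF M_1+\sfV M_1)$ gives $a(Y_1)=4-\dim_k\langle (t_i^{p})_i,(t_i^{1/p})_i\rangle$, so $a(Y_1)=3$ exactly when $(t_i^{p})_i$ and $(t_i^{1/p})_i$ are proportional, equivalently when all ratios $t_i/t_j$ lie in $\F_{p^2}$; by Lemma~\ref{lem:CFp2} this is precisely $t\in C(\F_{p^2})$. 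Once $Y_1$ is superspecial, for any $\rho_1\colon Y_1\to Y_0$ with kernel $\alpha_p$ one has $M_0\subseteq M_1$ with $\dim_k M_1/M_0=1$ and $(\sfF,\sfV)M_0\subseteq \sfF M_1\cap M_0$; since $\dim_k M_1/\sfF M_1=3$, the quotient $M_0/(\sfF,\sfV)M_0$ surjects onto $(M_0+\sfF M_1)/\sfF M_1$, of dimension $\ge 3-1=2$, so $a(Y_0)\ge 2$.

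The heart of the matter, and the step I expect to be the main obstacle, is the reverse implication of $(3)$ together with the nontrivial direction of $(4)$: for $t\in C^0(k)$ and $x\notin T$ one must prove $a(Y_0)=1$. I would fix such a $t$ (necessarily with all $t_i\neq 0$), normalise $t_3=1$, and write $M_0=M_1^\vee+W(u_1\tilde w+u_2 f_3)$ explicitly, noting that the section point $s(t)\in T$ is exactly $u=(0:1)$, where $M_0=\sfV M_2$. Passing to $\bar M_0=M_0/pM_0$ and computing the image of $\sfF M_0+\sfV M_0$, the crucial point is that, after using the Fermat relation $\sum_i t_i^{p+1}=0$ together with its $p^{-1}$-power to annihilate the ``$f_3$-direction'', the image meets the span of $\overline{f_1-t_1f_3},\overline{f_2-t_2f_3}$ in the plane generated by $(t_1^{p},t_2^{p})$ and $(t_1^{1/p},t_2^{1/p})$, while its $\langle\overline{pe_1},\overline{pe_2}\rangle$-part is generated by $(t_2^{p},-t_1^{p})$ and $(t_2^{1/p},-t_1^{1/p})$. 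For $t\notin C(\F_{p^2})$ both pairs are linearly independent (again by the proportionality criterion equivalent to Lemma~\ref{lem:CFp2}), so the image has dimension $5$ and $a(Y_0)=\dim_k\bar M_0-5=1$. This simultaneously yields the reverse direction of $(3)$ (every $u\neq(0:1)$ gives $a=1$) and the nontrivial direction of $(4)$; the forward direction of $(4)$ then follows formally from $(2)$ and $(3)$. The genuine difficulty lies in tracking the section point inside each fibre and in checking that the twisted Fermat relations make exactly the $f_3$-components vanish, so that the single proportionality dichotomy of Lemma~\ref{lem:CFp2} controls every jump of the $a$-number.
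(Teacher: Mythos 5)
Your argument is correct, but it is worth noting that the paper does not actually prove this proposition: it simply cites \cite[Section 9.4]{lioort}. What you have written is therefore a genuinely self-contained proof of a statement the paper outsources. Your identification of the non-rigid locus with $T$ by comparing orders of $\ker(\rho_2)\subseteq\ker(Y_2\to Y_0)\cap Y_2[\sfF]$ inside $Y_2[\sfF]\simeq\alpha_p^3$ is clean and complete, and your Dieudonn\'e-module computations are consistent with (and in places sharper than) what the paper does elsewhere: the formula $a(M_1)=4-\dim_k\langle (t_i^p),(t_i^{1/p})\rangle$ reappears implicitly in the proof of Proposition~\ref{prop:miniso}, but note that that proof \emph{assumes} $a(M_0)=1$ and deduces consequences, whereas your fibrewise computation actually establishes that $a(M_0)=1$ holds for every $u\neq(0:1)$ over $t\in C^0$, which is precisely the content delegated to Li--Oort. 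Your parametrisation $M_0=M_1^\vee+W(u_1\tilde w+u_2f_3)$ differs from the paper's $u_1\bar E_1+u_2\bar F_1$ (proof of Proposition~\ref{prop:GM}) by a projective change of coordinates on the fibre, which is harmless, and your identification of the section point with $u=(0:1)$, i.e.\ $M_0=\sfV M_2$, matches Remark~\ref{rem:choiceoftandu}(2).

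One point deserves to be made explicit in the final step. After the Fermat relation $\sum_i t_i^{p+1}=0$ and its $p^{-1}$-power eliminate the $f_3$-components, the independence you need is that of the \emph{two-component} vectors $(t_1^p,t_2^p)$ and $(t_1^{1/p},t_2^{1/p})$, which is equivalent to $t_1/t_2\notin\F_{p^2}$. This is strictly stronger than the condition $t\notin\PP^2(\F_{p^2})$ that governed your computation of $a(M_1)$, and it is exactly here that the full strength of Lemma~\ref{lem:CFp2} ($\F_{p^2}$-linear \emph{independence} of $t_1,t_2,t_3$, hence of every pair) is required; alternatively one can check directly from the Fermat equation that $t_i/t_j\in\F_{p^2}$ for some $i\neq j$ already forces $t\in C(\F_{p^2})$. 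Your citation of the lemma covers this, but as phrased (``the proportionality criterion equivalent to Lemma~\ref{lem:CFp2}'') it blurs the distinction between the two conditions; I would spell it out. With that caveat, the dimension count ($\overline{pf_3}$, the full plane $\langle\overline{pe_1},\overline{pe_2}\rangle$, and the full plane $\langle\overline{f_1-t_1f_3},\overline{f_2-t_2f_3}\rangle$, with no $\bar w$-component) gives exactly $5$, hence $a(Y_0)=1$, and parts (3) and (4) follow as you say.
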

\begin{proof}
  See \cite[Section 9.4]{lioort}.
\end{proof}

\subsection{Minimal isogenies}\label{ssec:miniso}\

Given a polarised flag type quotient $Y_2 = E_k^3
\xrightarrow{\rho_2} Y_1 \xrightarrow{\rho_1} Y_0 = X$, 
the composite map $\rho_1\circ \rho_2: (Y_2,\lambda_2) \to
(Y_0,\lambda_0) = (X, \lambda)$ 
is an isogeny from a superspecial abelian variety
$Y_2$. Thus, this isogeny factors through the minimal isogeny of
$(X,\lambda)$:
\[ (Y_2,\lambda_2) \xrightarrow{\rho_1\circ \rho_2}
(\tilde{X},\tilde{\lambda})  \xrightarrow{\varphi} 
(X, \lambda). \]
Since every member $(X,\lambda)\in \mathcal{S}_{3,1}(k)$ can be constructed from
a polarised flag type quotient $(Y_\bullet,\rho_\bullet)$, 
we can construct the minimal isogeny of $(X,\lambda)$ from
$(Y_\bullet,\rho_\bullet)$. 

To describe the minimal isogenies for supersingular abelian threefolds in more detail, in the following proposition we separate into three cases, based on the $a$-number of the threefold.

\begin{proposition}\label{prop:miniso} 
Let $(X,\lambda)$ be a supersingular principally polarised abelian
threefold over~$k$. Suppose that $(X,\lambda)$ lies in the image of $\calP_\mu'$ under the
map $\calP'_\mu\to \mathcal{S}_{3,1}$ for some $\mu\in P(E^3)$, so that there
is a unique PFTQ over $(X,\lambda)$. 
\begin{enumerate}
\item If $a(X) = 1$, then the associated 
polarised flag type quotient $(Y_2,\lambda_2) 
\xrightarrow{\rho_2} (Y_1, \lambda_1) \xrightarrow{\rho_1} (Y_0,
\lambda_0) = (X, \lambda)$ gives the minimal isogeny $\varphi :=
\rho_1 \circ \rho_2$ of degree~$p^3$. 
\item If $a(X) = 2$, then in the associated polarised flag type
  quotient $Y_2 = E_k^3 \to Y_1 \to Y_0 = X$ we have $a(Y_1)
  = 3$, so $Y_1$ is superspecial. Thus, the minimal isogeny is
  $\rho_1: (Y_1, \lambda_1) \to (X, \lambda)$ of degree~$p$, where
  $\rho_1^* \lambda = \lambda_1$ satisfies $\ker(\lambda_1) \simeq
  \alpha_p \times \alpha_p$. 
\item If $a(X) = 3$, then $X$ is superspecial. Thus, $X$ is $k$-isomorphic to $E_k^3$ and the minimal isogeny is the identity map. 
\end{enumerate}
\end{proposition}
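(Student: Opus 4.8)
The plan is to locate the source of the minimal isogeny inside the flag type quotient chain $Y_2 \xrightarrow{\rho_2} Y_1 \xrightarrow{\rho_1} Y_0 = X$, using the universal property of Lemma~\ref{lem:minisog} together with the standard fact that a $g$-dimensional supersingular abelian variety is superspecial precisely when its $a$-number equals $g$. As observed just before the statement, $Y_2 = E_k^3$ is superspecial and $\rho_1\circ\rho_2$ factors through the minimal isogeny as $Y_2 \xrightarrow{\psi} (\tilde X,\tilde\lambda) \xrightarrow{\varphi} (X,\lambda)$. I expect $\tilde X$ to coincide with the superspecial member of the chain lying closest to $X$, so the first move is to decide which of $Y_1$ and $Y_0$ are superspecial by reading off their $a$-numbers from Proposition~\ref{prop:sections} and the point $t=\pi(x)\in C$.

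For part~(3) this is immediate: $a(X)=3$ means $X$ is already superspecial, so $\tilde X = X$ and $\varphi = \id$. For part~(2), Proposition~\ref{prop:sections}(3)--(4) place $t$ in $C(\F_{p^2})$ with $x\notin T$, and the crucial sub-step is to show that $Y_1$ is superspecial. I would establish this through the explicit Dieudonn\'e description in the proof of Proposition~\ref{prop:explicitmoduli}: as $Y_1$ is the target of $\rho_2$ it depends only on $t$, and I expect superspeciality of $Y_1$ to be equivalent to $t\in C(\F_{p^2})$ --- one direction because superspecial abelian varieties and the isogenies between them are defined over $\F_{p^2}$, the other by verifying $\sfF M_1 = \sfV M_1$ on the module $M_1$ attached to an $\F_{p^2}$-rational $t$. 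Granting this, $\rho_1\colon Y_1\to X$ is a degree-$p$ isogeny from a superspecial variety; since $X$ is not superspecial the minimal isogeny is nontrivial, and because $\rho_1$ factors through $\varphi$ while already having the minimal possible degree $p$, I conclude $\varphi=\rho_1$ and $\tilde X = Y_1$. The shape of $\ker(\lambda_1)$ then follows from $\deg\lambda_1 = p^2$ together with the flag type condition~(iii), which confines $\ker(\lambda_1)$ to $Y_1[\sfF]\simeq\alpha_p^3$ and forces $\ker(\lambda_1)\simeq\alpha_p\times\alpha_p$.

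For part~(1), Proposition~\ref{prop:sections}(4) gives $t\notin C(\F_{p^2})$ and $x\notin T$, so $Y_0$ is not superspecial and, by the sub-step above, neither is $Y_1$; hence $Y_2$ is the only superspecial member of the chain. It remains to prove that $\psi$ is an isomorphism, equivalently that the minimal isogeny has degree $p^3$. I would suppose otherwise, so that $H:=\ker\psi$ is a nonzero subgroup scheme of $\ker(\rho_1\circ\rho_2)$ with $Y_2/H$ superspecial, and seek a contradiction. The rigidity identity $\ker\rho_2 = \ker(\rho_1\circ\rho_2)\cap Y_2[\sfF]$ constrains the possible $H$, and passing to contravariant Dieudonn\'e modules turns the assertion into the statement that any submodule $N$ with $M\subseteq N\subseteq M_2$ and $\sfF N = \sfV N$ must equal $M_2$ once $a(X)=1$. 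I expect this module-theoretic statement --- combining the rigidity constraint with the $a$-number~$1$ hypothesis to exclude every proper superspecial $N\subsetneq M_2$ containing $M$ --- to be the main obstacle of the proof.
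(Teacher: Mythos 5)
Your architecture is sound, and parts (2) and (3) essentially match the paper. For (3) this is Oort's theorem; for (2) the paper likewise shows $a(Y_1)=3$ when $t\in C(\F_{p^2})$, by computing that $(\sfF,\sfV)M_1/pM_2$ is one-dimensional because $\sfF t$ and $\sfV t$ become proportional when the $t_i$ are $\F_{p^2}$-linearly dependent (Lemma~\ref{lem:CFp2}); your degree-$p$ minimality argument for why $\rho_1$ is then the minimal isogeny, and the identification of $\ker(\lambda_1)$ inside $Y_1[\sfF]\simeq\alpha_p^3$, are exactly what is needed. One caution: your proposed descent argument for ``$Y_1$ superspecial $\Rightarrow t\in C(\F_{p^2})$'' is shakier than you suggest --- the issue is not whether $Y_1$ descends to $\F_{p^2}$ but whether the kernel of $\rho_2$ does --- whereas the direct computation that $a(M_1)=2$ for $t\notin C(\F_{p^2})$ settles both directions at once and is what the paper does.

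The genuine gap is in part (1). You correctly reduce the claim to: every superspecial Dieudonn\'e module $N$ with $M\subseteq N\subseteq M_2$ equals $M_2$. But you then label this ``the main obstacle of the proof'' and stop, and this is precisely where all the content of the paper's argument lies. Concretely, writing $M_0=M$, the paper uses that $M_1/\sfF M_2$ is spanned by $t$ and that, since $t\notin C(\F_{p^2})$, the coordinates $t_1,t_2,t_3$ are $\F_{p^2}$-linearly independent (Lemma~\ref{lem:CFp2}); hence $\sfF t,\sfV t$ are $k$-independent in $\sfF M_2/pM_2$ and $\sfF^2t,\,pt,\,\sfV^2t$ are independent in $\sfF M_2/p\sfF M_2$. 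This yields $a(M_1)=2$ (so $M_1$ is \emph{not} superspecial), then $(\sfF,\sfV)M_1=(\sfF,\sfV)M_0$ (using $a(M_0)=1$), and finally $(\sfF,\sfV)^2M_0=pM_2$. Setting $\Phi=1+\sfF\sfV^{-1}$, any superspecial $N\supseteq M_0$ satisfies $\sfF\sfV^{-1}N=N$ and therefore $\Phi^2M_0\subseteq N$, while the computation above gives $\sfV^2\Phi^2M_0=(\sfF,\sfV)^2M_0=pM_2=\sfV^2M_2$, i.e.\ $\Phi^2M_0=M_2$; hence $N=M_2$. Your proposal supplies neither the linear-independence input from Lemma~\ref{lem:CFp2} nor the $\Phi$-iteration (or any substitute for it), so as written part (1) is not proved.
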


\begin{proof}
\begin{enumerate}
\item Let $M_2, M_1, M_0$ denote the Dieudonn{\'e} modules of $Y_2,
  Y_1, Y_0 = X$, respectively. Then $a(M_2) = 3$.  
Suppose that $a(M_0) = 1$. By Proposition \ref{prop:sections}, this
corresponds to a point $t = (t_1:t_2:t_3) \not\in
C(\mathbb{F}_{p^2})$.  
We claim that $a(M_1) = 2$, which implies the statement. 
The Dieudonn{\'e} modules satisfy the following inclusions:
\begin{align*}
\begin{matrix}
M_2 &\supseteq &M_1 &\supseteq &M_0 &&&\\
&\rotatebox{-30}{$\supseteq$} &\rotatebox{90}{$\subseteq$} & &\rotatebox{90}{$\subseteq$}   &\rotatebox{-30}{$\supseteq$} &&\\
&&(\sfF, \sfV)M_2 &\supset &(\sfF, \sfV)M_1&= &(\sfF, \sfV)M_0&\\
&&&\rotatebox{-30}{$\supseteq$} &\rotatebox{90}{$\subseteq$} &\rotatebox{-30}{$\supseteq$} &\rotatebox{90}{$\subseteq$} &\rotatebox{-30}{$\supseteq$}\\
&&&&(\sfF,\sfV)^2M_2 &= &(\sfF,\sfV)^2M_1 &= &(\sfF,\sfV)^2M_0.
\end{matrix}
\end{align*}
All inclusions follow from the construction of flag type quotients.
For the equalities, we note the following:
Since $M_2$ is superspecial of genus three, we have $(\sfF, \sfV)M_2 =
\sfF M_2, (\sfF, \sfV)^2M_2=pM_2$, and 
\[
\dim (M_2/\sfF M_2) = \dim (\sfF M_2/pM_2) = 3.
\]
It follows from the definition of flag type quotients that
$\dim(M_1/\sfF M_2) = 1$, so $M_1/\sfF M_2$ is generated by one element,
namely the image of $t$ (abusively again denoted $t$). 
So $(\sfF, \sfV) M_1/pM_2$ is two-dimensional and generated by the two
elements $\sfF t$ and $\sfV t$, which are $k$-linearly
independent since $t \not\in C(\mathbb{F}_{p^2})$, by Lemma
\ref{lem:CFp2}.  
Using this, we see that 
\[
\dim(\sfF M_2/(\sfF, \sfV)M_1) = \dim(\sfF M_2/pM_2) - \dim((\sfF, \sfV)M_1/pM_2) = 1
\]
and $a(M_1) = \dim (M_1/(\sfF, \sfV)M_1) = 2$, as claimed.
It follows from $\dim (M_1/M_0)=1$ and $a(M_1)=2$ that $\dim (M_0/(\sfF,\sfV)M_1)=1$.
As we have assumed that $a(M_0) =\dim(M_0/(\sfF,\sfV)M_0)=1$, 
the latter implies the equality
$(\sfF, \sfV)M_1= (\sfF, \sfV)M_0$. 
Since $\dim (M_0/(\sfF,\sfV)M_1)=1$ and $\dim(M_0/pM_2)=3$, one has
$\dim (\sfF,\sfV)M_1/pM_2)=2$. 
Since $t_1,t_2,t_3$ are $\F_{p^2}$-linearly independent by 
Lemma~\ref{lem:CFp2}, the vectors $\sfF^2 t, p t$ and $\sfV^2 t$ in
$\sfF M_2/p\sfF M_2$ span a $3$-dimensional subspace and hence 
$\dim((\sfF,\sfV)^2M_1/p\sfF M_2)=3$. This shows the equality
$pM_2 = (\sfF, \sfV)^2M_1 = (\sfF,\sfV)^2M_0$.

Now put $\Phi:=1+\sfF \sfV^{-1}$. We have shown that $\sfV \Phi
M_0=(\sfF,\sfV)M_1$ is not superspecial and that $\Phi^2 M_0=M_2$ is
superspecial. Therefore, $M_2$ is the smallest 
superspecial \dieu module containing $M_0$.
This proves that $\rho_1\circ \rho_2: Y_2\to X$ is the minimal isogeny.
\item When $a(M_0)=2$, this corresponds to a point $t = (t_1:t_2:t_3)
  \in C(\mathbb{F}_{p^2})$. Using the notation from the previous item,
  we still have that $(\sfF,\sfV)M_1/pM_2$ is generated by $\sfF t$
  and $\sfV t$, but since the $t_i$ are $\mathbb{F}_{p^2}$-linearly
  dependent, we have $\dim((\sfF, \sfV)M_1/pM_2)=1$, so $a(M_1) = 3$.
  Since $\ker(\lambda_1) \subseteq Y_1[F]\simeq \alpha_p^3$, we have
  $\ker(\lambda_1) \simeq \alpha_p^2$, as claimed. 
\item The fact that $a(X) = 3$ if and only if $X$ is superspecial is
  due to Oort, \cite[Theorem 2]{oort}. 
\end{enumerate}
\end{proof}

\begin{remark}\label{countexample:miniso}
The proof of \cite[Lemma 1.8]{lioort} uses the claim: If $X$ is
a $g$-dimensional supersingular abelian variety with $a(X)<g$, 
and $X':=X/A(X)$,
where $A(X)$ is the maximal $\alpha$-subgroup of $X$, then
$a(X')>a(X)$. 

Now take $Y_1$ the abelian threefold as in
Proposition~\ref{prop:miniso}(1). 
We have computed $a(Y_1)=2$ and 
\[ 
\begin{split}
a(Y_1/A(Y_1))&=a((\sfF,\sfV)M_1)=\dim\,
(\sfF,\sfV)M_0/(\sfF,\sfV)^2M_1 \\
&=\dim\, M_0/(\sfF,\sfV)^2M_1-\dim\,
M_0/(\sfF,\sfV)M_0 =2.  
\end{split}
 \]
This gives a counterexample to the claim.   
\end{remark}

\section{The case $a(X) \ge 2$}\label{sec:a2}

Let $x=(X,\lambda)\in \mathcal{S}_{3,1}(k)$ with $a(X)=2$ and let $y\in \calP_\mu\simeq
\bbP^1_C(\calO(-1)\oplus \calO(1))$ be the point corresponding to the
PFTQ over it:
\[ (Y_2,\lambda_2)\xrightarrow{\rho_2} (Y_1,\lambda_1) 
\xrightarrow{\rho_1} (Y_0,\lambda_0)=(X,\lambda). \]
By Propositions~\ref{prop:sections} and \ref{prop:miniso}, 
$(Y_1,\lambda_1)$ corresponds to a point 
$t=(t_1,t_2,t_3)\in C(\F_{p^2})$ and $u \in
\bbP^1_t(k):=\pi^{-1}(t)$. Moreover, $\rho_1:(Y_1,\lambda_1)\to
(X,\lambda)$ is the minimal isogeny. Put $x_1~=~(Y_1,\lambda_1)$. Then
$\Lambda_{x_1}=\Lambda_{3,p}$ and by Corollary~\ref{cor:sspmassg3} 
and Proposition~\ref{prop:compmass} we have
\begin{equation}
  \label{eq:a2formula}
  \Mass(\Lambda_x)=\frac{(p-1)(p^3+1)(p^3-1)}{2^{10}\cdot 3^4 \cdot
  5\cdot 7}\cdot [\Aut(M_1,\<\, \>): \Aut(M,\<\,,\>)],  
\end{equation}
where $(M,\<\,,\>)\subseteq (M_1,\<\,, \>)$ are the quasi-polarised \dieu
modules associated to $(Y_1,\lambda_1)\to (X,\lambda)$. 

Let $M_1^{\vee}$ denote the dual lattice of $M_1$ with respect to
$\<\,,\>$. Then one has $M_1^{\vee}\subseteq M\subseteq M_1$ and 
$M/M_1^{\vee} \in \bbP(M_1/M_1^{\vee})=\bbP^1_t(k)$ is a one-dimensional 
$k$-subspace in $M_1/M_1^{\vee}$. Since the morphism $\rho_2$ is defined over
$\F_{p^2}$, the threefold $Y_1$ is endowed with the $\F_{p^2}$-structure $Y_1'$
with Frobenius $\pi_{Y_1'}=-p$. The induced $\F_{p^2}$-structure on
$\bbP^1_t$ is defined by the $\F_{p^2}$-vector space
$V_0:=M_1^\diamond/M^{{t,\diamond}}_1$, where $M_1^\diamond:=\{m\in M_1 :
\sfF m+\sfV m=0\}$ is the skeleton of $M_1$, cf. \cite[Section 5.7]{lioort}.  

Since $\ker(\lambda_1) \simeq
\alpha_p\times \alpha_p$, the quasi-polarised superspecial \dieu
module $(M_1,\<\,,\>)$ decomposes into a product of
a two-dimensional indecomposable superspecial \dieu module  and 
a one-dimensional such module. By \cite[Proposition 6.1]{lioort}, there is a
$W$-basis $e_1$, $e_2$, $e_3$, $f_1$, $f_2$, $f_3$ for $M_1$ such that $\sfF e_i=-\sfV
e_i=f_i$, $\sfF f_i=-\sfV f_i=-p e_i$. for $i=1,2,3$,  
\[ \<e_1,e_2\>=p^{-1}, \quad \<f_1,f_2\>=1, \quad \<e_3,f_3\>=1, \]  
and other pairings are zero. 
Then $M_1^{\vee}$ is spanned by $p e_1, p_2, e_3, f_1,f_2,f_3$ and
$M_1/M_1^{\vee}=\Span_k\{e_1,e_2\}$. Let $u=(u_1:u_2)\in \bbP^1_t(k)$ be
the projective coordinates of the point corresponding to $M/M_1^{\vee}$. That is,
$M/M_1^{\vee}$ is the one-dimensional subspace spanned by $u=u_1 \bar e_1+u_2
\bar e_2$, where $\bar e_i$ denotes the image of $e_i$ in $M_1/M_1^{\vee}$. 

If $u\in\ \bbP^1_t(\F_{p^2})$, then $a(M)=3$ and $\Mass(\Lambda_x)$ is
already computed in Corollary~\ref{cor:sspmassg3}. 
Suppose then that $u\not\in \bbP^1_t(\F_{p^2})$. 
In this case, $M_1$ (resp.~$M_1^{\vee}$) is the smallest (resp.~maximal) 
superspecial \dieu module containing (resp.~contained in) $M$. Thus, 
\[ \End(M)=\{g\in \End(M_1): g(M_1^{\vee})\subseteq M_1^{\vee},\  g(M) \subseteq
M\}. \] 
Consider the reduction map 
\[ m: \End(M_1)=\End(M_1^\diamond) \onto
\End(M_1^\diamond/M_1^{t,\diamond})=\End_{\F_{p^2}}(V_0)=\Mat_2(\F_{p^2}).
\]
It is clear that $\End(M)$ contains $\ker(m)$ and that $m$ induces a
surjective map 
\[ m: \End(M) \onto m(\End(M))=\{g\in \Mat_2(\F_{p^2}): g \cdot
u\subseteq k \cdot u\}. \]
Write $\End(u):=\{ g\in
  \Mat_2(\F_{p^2}): g \cdot u\subseteq k \cdot u\}$.
  
\begin{lemma}\label{lem:endu}
\begin{enumerate}
\item 
If $u\in \bbP^1_t(\F_{p^4})-\bbP^1_t(\F_{p^2})$, then $\End(u)\subseteq
    \Mat_2(\F_{p^2})$ is an
    $\F_{p^2}$-subalgebra which is isomorphic to $\F_{p^4}$. 
\item
If $u\in \bbP^1_t(k)-\bbP^1_t(\F_{p^4})$, then $\End(u)=\F_{p^2}$. 
\end{enumerate}
\end{lemma}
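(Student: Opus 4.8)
The plan is to treat this as the two-dimensional analogue of Lemma~\ref{lem:Endt} and Lemma~\ref{lem:CM}, exploiting the general principle recorded in the remark following Lemma~\ref{lem:CM}. First I would observe that in both cases of the statement we have $u\notin \mathbb{P}^1_t(\F_{p^2})$, which is equivalent to saying that the homogeneous coordinates $u_1,u_2$ are linearly independent over $\F_{p^2}$: a relation $c_1u_1+c_2u_2=0$ with $(c_1,c_2)\in\F_{p^2}^2\setminus\{0\}$ would force $u\in\mathbb{P}^1_t(\F_{p^2})$, and conversely. Writing $u=(u_1,u_2)^T$, the assignment $g\mapsto \alpha_g$, where $g\cdot u=\alpha_g u$, is an $\F_{p^2}$-algebra homomorphism $\End(u)\to k$; linear independence of $u_1,u_2$ over $\F_{p^2}$ makes it injective, exactly as in Lemma~\ref{lem:Endt}. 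Hence $\End(u)$ is a finite field extension of $\F_{p^2}$, and since $(\F_{p^2})^2$ becomes a vector space over it, its degree divides $2$. Therefore $\End(u)\simeq \F_{p^2}$ or $\F_{p^4}$ in all cases, and the content of the lemma is to pin down which field occurs.

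For part (1), with $u\in\mathbb{P}^1_t(\F_{p^4})-\mathbb{P}^1_t(\F_{p^2})$, I would choose representatives $u_1,u_2\in\F_{p^4}$; being $\F_{p^2}$-linearly independent, they form an $\F_{p^2}$-basis of the degree-$2$ extension $\F_{p^4}$. This is precisely the hypothesis of the second part of the remark after Lemma~\ref{lem:CM} (taking $F=\F_{p^2}$, $E=K=\F_{p^4}$, $n=2$), which produces an $\F_{p^2}$-algebra embedding $\F_{p^4}\hookrightarrow \Mat_2(\F_{p^2})$ with image $\End(u)$; thus $\End(u)\simeq \F_{p^4}$. Alternatively, I can mimic the explicit construction in Lemma~\ref{lem:CM}, setting $A_\alpha:=(u,u^{(p^2)})\cdot \mathrm{diag}(\alpha,\alpha^{p^2})\cdot (u,u^{(p^2)})^{-1}$ for $\alpha\in\F_{p^4}$, where $(u,u^{(p^2)})$ is invertible by the independence of $u_1,u_2$. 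Conjugating by the swap $\begin{pmatrix}0&1\\1&0\end{pmatrix}$ one checks $A_\alpha^{(p^2)}=A_\alpha$, so $A_\alpha\in\Mat_2(\F_{p^2})$, while $A_\alpha\cdot u=\alpha u$; this exhibits the isomorphism $\F_{p^4}\simeq \End(u)$ directly.

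For part (2), with $u\notin\mathbb{P}^1_t(\F_{p^4})$, I would argue by contradiction: if $\End(u)\simeq \F_{p^4}$, pick $g\in\End(u)$ mapping to a generator $\alpha$ of $\F_{p^4}$ over $\F_{p^2}$. Then $\alpha\in\F_{p^4}\setminus\F_{p^2}$ is an eigenvalue of $g\in\Mat_2(\F_{p^2})$, so the minimal polynomial of $\alpha$ (of degree $2$) equals the characteristic polynomial of $g$, its two roots $\alpha,\alpha^{p^2}$ are distinct, and the eigenspace $\ker(g-\alpha I)$ is one-dimensional and already defined over $\F_{p^2}(\alpha)=\F_{p^4}$. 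Its generator $u$ therefore lies in $\mathbb{P}^1_t(\F_{p^4})$, contradicting the hypothesis. Hence $\End(u)\simeq \F_{p^2}$, and since the scalar matrices $\F_{p^2}\cdot I$ always lie in $\End(u)$ and already realise a copy of $\F_{p^2}$, we conclude $\End(u)=\F_{p^2}$.

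Since the whole argument is a faithful transcription of the $n=3$ case of Lemmas~\ref{lem:Endt} and~\ref{lem:CM} to $n=2$, I do not expect a genuine obstacle. The only points requiring care are the bookkeeping that identifies $u\notin\mathbb{P}^1_t(\F_{p^2})$ with $\F_{p^2}$-linear independence of the coordinates, and the descent of the eigenspace to $\F_{p^4}$ in part (2). Neither is deep, but both must be stated cleanly so that the dichotomy between the two cases is made precise.
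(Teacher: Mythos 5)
Your proof is correct and is exactly what the paper intends: the paper omits the argument, stating only that it is "a simpler version of Lemmas~\ref{lem:Endt} and \ref{lem:CM}", and your write-up is a faithful transcription of those arguments to the $2\times 2$ case (degree dividing $2$ via the injective evaluation map, the explicit $A_\alpha=(u,u^{(p^2)})\,\mathrm{diag}(\alpha,\alpha^{p^2})\,(u,u^{(p^2)})^{-1}$ for part (1), and descent of the eigenline to $\F_{p^4}$ for part (2)). No gaps.
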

\begin{proof}
  This is a simpler version of Lemmas~\ref{lem:Endt} 
  and \ref{lem:CM} so we omit the proof; cf. also \cite[Section 3]{yuyu}.
\end{proof}

Put $\<\,,\>_1:=p\<\,, \>$. Then $\<\,,\>_1$ induces a non-degenerate
alternating pairing, again denoted $\<\,,\>_1:V_0\times V_0\to \F_{p^2}$. The reduction map $m$
then gives rise to the following map 
\begin{equation}
  \label{eq:redmapautM1}
  m: \Aut(M_1,\<\,,\>)=\Aut(M_1,\<\,,\>_1) \to \Aut(V_0,\<\,,\>_1)\simeq
  \SL_2(\F_{p^2}). 
\end{equation}

\begin{lemma}\label{lem:m}
  The map $m: \Aut(M_1,\<\,,\>)\to \Aut(V_0,\<\,,\>_1)$ is surjective.
\end{lemma}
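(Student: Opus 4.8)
The plan is to prove surjectivity by lifting an explicit generating set of the target group. Since $(V_0,\langle\,,\rangle_1)$ is a $2$-dimensional symplectic space over $\F_{p^2}$, its group of isometries is exactly $\Aut(V_0,\langle\,,\rangle_1)\simeq \SL_2(\F_{p^2})$, and this group is generated by its two families of elementary transvections, namely the upper- and lower-triangular unipotent matrices (with entries in $\F_{p^2}$) relative to the basis $\bar e_1,\bar e_2$ of $V_0$. It therefore suffices to show that, for each $b\in \F_{p^2}$, the map $m$ hits the transvections $\bar e_2\mapsto \bar e_2+b\,\bar e_1$ (fixing $\bar e_1$) and $\bar e_1\mapsto \bar e_1+b\,\bar e_2$ (fixing $\bar e_2$).

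Concretely, let $\sigma$ denote the Frobenius of $W=W(k)$, so that $\sfF$ is $\sigma$-semilinear, and fix a lift $\tilde b\in W(\F_{p^2})\subseteq W$ of $b$, characterised by $\tilde b^{\sigma^2}=\tilde b$. I would define a $W$-linear endomorphism $g=g_{\tilde b}$ of $M_1$ that fixes $e_1,f_1,e_3,f_3$ and satisfies
\[
  g(e_2)=e_2+\tilde b\,e_1, \qquad g(f_2)=f_2+\tilde b^{\sigma}f_1,
\]
the second (upper-triangular) family being obtained by interchanging the indices $1$ and $2$. First I would check that $g$ commutes with $\sfF$ and $\sfV$: the relation $\sfF e_2=f_2$ forces $g(f_2)=\sfF g(e_2)=f_2+\tilde b^{\sigma}f_1$, and compatibility with $\sfF f_2=-p\,e_2$ then reduces to $-p\,\tilde b\,e_1=-p\,\tilde b^{\sigma^2}e_1$, which holds precisely because $\tilde b\in W(\F_{p^2})$; commutation with $\sfV$ is automatic since $\sfV=p\sfF^{-1}$ on $M_1[1/p]$. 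Next I would verify that $g$ preserves $\langle\,,\rangle$, a short computation using that the only nonzero pairings among $e_1,e_2,f_1,f_2$ are $\langle e_1,e_2\rangle=p^{-1}$ and $\langle f_1,f_2\rangle=1$; e.g. $\langle g e_1,g e_2\rangle=\langle e_1,e_2+\tilde b\,e_1\rangle=p^{-1}$ and $\langle g f_1,g f_2\rangle=\langle f_1,f_2+\tilde b^{\sigma}f_1\rangle=1$. Since $g$ is invertible with inverse $g_{-\tilde b}$ and preserves the lattice $M_1$, it lies in $\Aut(M_1,\langle\,,\rangle)$. Finally, reducing modulo $M_1^{t,\diamond}$, the induced automorphism of $V_0=\Span_{\F_{p^2}}\{\bar e_1,\bar e_2\}$ is $\bar e_1\mapsto\bar e_1$, $\bar e_2\mapsto \bar e_2+b\,\bar e_1$, so $m(g_{\tilde b})$ is the desired transvection; together with the symmetric family this shows $\img(m)$ contains a generating set of $\SL_2(\F_{p^2})$.

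The point that requires the most care, and the only real obstacle, is the interaction with the $\sigma$-semilinearity of $\sfF$: it is exactly this that dictates the companion rule $f_2\mapsto f_2+\tilde b^{\sigma}f_1$ and imposes the constraint $\tilde b\in W(\F_{p^2})$. One must then confirm that this constraint does not shrink the image — but it does not, because the reduction $W(\F_{p^2})\to\F_{p^2}$ is surjective, so $b$ still ranges over all of $\F_{p^2}$. Everything else is routine bookkeeping with the basis relations and the pairing.
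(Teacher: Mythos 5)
Your proof is correct, and it reaches the conclusion by a genuinely different route from the paper. You produce surjectivity by lifting an explicit generating set: the two families of symplectic transvections of $(V_0,\langle\,,\rangle_1)$, which generate $\SL_2(\F_{p^2})$, are lifted by hand to unipotent elements $g_{\tilde b}\in\Aut(M_1,\langle\,,\rangle)$. The one delicate point — that the $\sigma$-semilinearity of $\sfF$ forces the companion rule $f_2\mapsto f_2+\tilde b^{\sigma}f_1$ and the constraint $\tilde b^{\sigma^2}=\tilde b$ — is handled correctly, and since $W(\F_{p^2})\to\F_{p^2}$ is surjective this constraint costs nothing; your pairing and $\sfF,\sfV$-compatibility checks are all right (note for completeness that $g$ preserves $M_1^{\diamond}$ and $M_1^{t,\diamond}$ because it commutes with $\sfF,\sfV$ and preserves $M_1^{\vee}$, so it does descend to $V_0$). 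The paper argues structurally instead: $G_{x_1}\otimes\Zp$ is a smooth (parahoric) group scheme, so $G_{x_1}(\Zp)\to G_{x_1}(\Fp)$ is surjective by formal smoothness and Hensel's lemma, and $\Aut(V_0,\langle\,,\rangle_1)=\Res_{\F_{p^2}/\Fp}\SL_2$ is a reductive quotient of the special fibre, onto whose $\Fp$-points $G_{x_1}(\Fp)$ surjects since the kernel is connected unipotent. Your argument is more elementary and self-contained — it needs no input from the theory of parahoric group schemes and produces completely explicit lifts, which could be reused in the later index computations — at the price of a basis-dependent calculation and the (standard) fact that $\SL_2$ over a field is generated by its elementary transvections; the paper's argument is shorter given the machinery and generalises verbatim to higher rank and to other parahorics.
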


\begin{proof}
  Since $Y_1$ is supersingular, we have that $\End(Y_1)\otimes \Zp\simeq
  \End(M_1)$ and that $G_{x_1}(\Zp)\simeq \Aut(M_1,\<\,, \>)$; 
  recall the notation from \eqref{eq:aut}. 
  The group scheme $G_{x_1}\otimes \Zp$ is a parahoric group scheme
  and in particular is smooth over $\Zp$. Thus, the map 
   $G_{x_1}(\Zp)\to G_{x_1}(\Fp)$ is surjective. Now
  $\Aut(V_0,\<\,,\>_1)=\Res_{\F_{p^2}/\Fp} \SL_2$ 
  viewed as an algebraic group over $\Fp$ is a
  reductive quotient of the special fibre $G_{x_1}\otimes \Fp$. 
  Therefore, the map  $G_{x_1}(\Fp)\to
  \Aut(V_0,\<\,,\>_1)=\SL_2(\F_{p^2})$ is also surjective. This proves
  the lemma.  
\end{proof}

We now prove the main result of this section.

\begin{theorem}\label{thm:massa2}
  Let $x=(X,\lambda)\in \mathcal{S}_{3,1}(k)$ with $a(X)\ge 2$ and let $y\in
  \calP'_\mu(k)$ be a lift of $x$ 
  for some $\mu\in P(E^3)$. Write $y=(t,u)$
  where $t=\pi(y)\in C(\F_{p^2})$ and $u\in
  \pi^{-1}(t)=\bbP^1_t(k)$. Then 
\begin{equation}
  \label{eq:massa2}
  \mathrm{Mass}(\Lambda_x)=\frac{L_p}{2^{10}\cdot 3^4\cdot 5\cdot 7}, 
\end{equation}
where 
\begin{equation}
  \label{eq:massa2_1}
 L_p=
\begin{cases}
  (p-1)(p^2+1)(p^3-1) & \text{if } u\in
  \mathbb{P}^1_t(\mathbb{F}_{p^2}); \\ 
  (p-1)(p^3+1)(p^3-1)(p^4-p^2) & \text{if }
  u\in\mathbb{P}^1_t(\mathbb{F}_{p^4})\setminus
  \mathbb{P}^1_t(\mathbb{F}_{p^2}); \\ 
  2^{-e(p)}(p-1)(p^3+1)(p^3-1) p^2(p^4-1) & \text{ if
  } u \not\in 
  \mathbb{P}^1_t(\mathbb{F}_{p^4});
\end{cases} 
\end{equation}
where $e(p)=0$ if $p=2$ and $e(p)=1$ if $p>2$. 
\end{theorem}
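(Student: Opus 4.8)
The plan is to reduce everything to the single index $[\Aut(M_1,\<\,,\>):\Aut(M,\<\,,\>)]$ occurring in \eqref{eq:a2formula}, and to evaluate it separately in the three regimes for $u$. First, when $u\in \bbP^1_t(\F_{p^2})$ no index computation is needed: here $a(M)=3$, so $X$ is superspecial, $(X,\lambda)$ lies in $\Lambda_{3,1}$, and $\Mass(\Lambda_x)=\Mass(\Lambda_{3,1})$ is read off from \eqref{eq:ppg3ssp}. This yields $L_p=(p-1)(p^2+1)(p^3-1)$, the first line of the formula, so that it only remains to treat the two non-superspecial cases $u\notin \bbP^1_t(\F_{p^2})$.

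For these, the key tool is the reduction map $m$ of \eqref{eq:redmapautM1}, which by Lemma~\ref{lem:m} surjects $\Aut(M_1,\<\,,\>)$ onto $\SL_2(\F_{p^2})$ with $\ker(m)\subseteq \Aut(M,\<\,,\>)$. The point I would make is that, for an isometry $g\in \Aut(M_1,\<\,,\>)$, the condition $g(M)=M$ defining $\Aut(M,\<\,,\>)$ is equivalent to $m(g)$ stabilising the line $M/M_1^\vee=k\cdot u\subseteq M_1/M_1^\vee$; hence $m(\Aut(M,\<\,,\>))$ equals the stabiliser $S_u:=\SL_2(\F_{p^2})\cap\End(u)=\{h\in \End(u):\det h=1\}$. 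Combining surjectivity of $m$ with the inclusion $\ker(m)\subseteq \Aut(M,\<\,,\>)$ collapses the index to a purely group-theoretic one, namely
\[
  [\Aut(M_1,\<\,,\>):\Aut(M,\<\,,\>)]=[\SL_2(\F_{p^2}):S_u]=\frac{p^2(p^4-1)}{|S_u|}.
\]

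It then remains to compute $|S_u|$, and this is where Lemma~\ref{lem:endu} feeds in. If $u\in \bbP^1_t(\F_{p^4})\setminus \bbP^1_t(\F_{p^2})$ then $\End(u)\simeq \F_{p^4}$, the determinant restricts to the norm $\Nm_{\F_{p^4}/\F_{p^2}}$, and $S_u=\ker(\Nm)$ has order $p^2+1$; the index is thus $p^2(p^2-1)=p^4-p^2$, and multiplying by the non-principal-genus mass \eqref{eq:npg3ssp} gives the second line of $L_p$. If instead $u\notin \bbP^1_t(\F_{p^4})$ then $\End(u)=\F_{p^2}$ consists of the scalars $\alpha I$, whose determinant is $\alpha^2$, so that $S_u=\{\alpha\in \F_{p^2}^\times:\alpha^2=1\}$.

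I expect the genuine arithmetic content — and the only step requiring real care — to be this last group $S_u$, because it is exactly here that the characteristic intervenes: for $p>2$ one has $S_u=\{\pm 1\}$ of order $2$, whereas for $p=2$ the two signs coincide and $|S_u|=1$. This collapse accounts precisely for the factor $2^{-e(p)}$, turning the index into $2^{-e(p)}p^2(p^4-1)$ and producing the third line of $L_p$; it is also the source of the exceptional behaviour at $p=2$ underlying the failure of Oort's conjecture there. Everything else is bookkeeping: one multiplies each non-superspecial index by the non-principal-genus mass \eqref{eq:npg3ssp} of Corollary~\ref{cor:sspmassg3} and assembles the three cases of $L_p$.
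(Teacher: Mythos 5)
Your proposal is correct and follows essentially the same route as the paper: reduce to the index $[\Aut(M_1,\langle\,,\rangle):\Aut(M,\langle\,,\rangle)]$ via \eqref{eq:a2formula}, use the surjectivity of the reduction map $m$ (Lemma~\ref{lem:m}) to identify this index with $[\SL_2(\F_{p^2}):\SL_2(\F_{p^2})\cap\End(u)^\times]$, and evaluate the intersection via Lemma~\ref{lem:endu}, with the superspecial case $u\in\bbP^1_t(\F_{p^2})$ handled directly by Corollary~\ref{cor:sspmassg3}. Your identification of $S_u$ as the norm-one subgroup of $\F_{p^4}^\times$ (order $p^2+1$) in the second case and as $\{\alpha\in\F_{p^2}^\times:\alpha^2=1\}$ in the third, with the collapse at $p=2$ producing the factor $2^{-e(p)}$, matches the paper's computation exactly.
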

\begin{proof}
  By Lemma~\ref{lem:m}, 
\[ [\Aut(M_1,\<\ , \>): \Aut(M,\<\,,\>)]=[\SL_2(\F_{p^2}):
\SL_2(\F_{p^2})\cap \End(u)^\times]. \]
By Lemma~\ref{lem:endu}, 
\[ \SL_2(\F_{p^2})\cap \End(u)^\times = 
\begin{cases}
   \F_{p^4}^1 & \text{if $u\in\bbP^1_t(\mathbb{F}_{p^4})\setminus
   \mathbb{P}^1_t(\mathbb{F}_{p^2})$;}\\
   \{\pm 1\} & \text{if $u\not \in\mathbb{P}^1_t(\mathbb{F}_{p^4})$.}
\end{cases} \]
It follows that 
\[ [\Aut(M_1,\<\, \>): \Aut(M,\<\,,\>)]=
\begin{cases}
   p^2(p^2-1) & \text{if $u\in\bbP^1_t(\mathbb{F}_{p^4})\setminus
   \mathbb{P}^1_t(\mathbb{F}_{p^2})$;}\\
   \vert \PSL_2(\F_{p^2}) \vert & \text{if $u\not
   \in\mathbb{P}^1_t(\mathbb {F}_{p^4})$,}
\end{cases} \]
so the theorem follows from \eqref{eq:a2formula}. 
\end{proof}

\section{The case $a(X) = 1$}\label{sec:a1}

Suppose that $(X,\lambda)$ is a supersingular principally polarised
abelian threefold over $k$ with $a(X)=1$.  
By Proposition~\ref{prop:miniso}(1), there is a minimal isogeny
$\varphi: (Y_2,\mu) \to (X,\lambda)$, where $Y_2 = E_k^3$,
and where $\varphi^*\lambda = p\mu$ for $\mu \in P(E^3)$ a principal
polarisation. 
In this section we will compute the local index
\begin{equation}\label{eq:indexaut}
[\mathrm{Aut}((Y_2,\mu)[p^{\infty}]): \mathrm{Aut}((X,\lambda)[p^{\infty}])].
\end{equation}
Let $M$ and $M_2$ be the 
Dieudonn{\'e} modules of $X$ and $Y_2$,
respectively. Together with the induced (quasi-)polarisations, 
we have $(M, \langle , \rangle)$ and $(M_2,\langle , \rangle_2)$, where $\langle , \rangle_2 =
p\langle , \rangle$ is again a principal polarisation. 
(Note that $(M_2, \langle , \rangle_2)$ is the quasi-polarised Dieudonn{\'e} module associated to $(Y_2,\mu)$ and not to $(Y_2, p\mu)$, and that $pM_2 \subseteq M$ by the proof of Proposition \ref{prop:miniso}(1).)
The proof of Proposition \ref{prop:miniso}(1) also shows that every
automorphism of $M$ can be lifted to an automorphism of $M_2$, i.e.,
that $\mathrm{Aut}((M,\langle , \rangle)) \subseteq
\mathrm{Aut}((M_2,\langle, \rangle_2))$. 
Then equivalently to \eqref{eq:indexaut}, cf. Proposition
\ref{prop:compmass}, we will compute  
\begin{equation}\label{eq:indexsimple}
[\mathrm{Aut}((M_2,\langle, \rangle_2)):\mathrm{Aut}((M,\langle , \rangle))].
\end{equation}

\subsection{Determining $\boldsymbol{\mathrm{Aut}((M_2,\langle,
    \rangle_2))}$}\ 

Let $W = W(k)$ denote the ring of Witt vectors over $k$.
Choose a $W$-basis $e_1, e_2, e_3, f_1, f_2, f_3$ for $M_2$ such that 
\begin{equation}
  \label{eq:basis}
\sfF e_i =-\sfV e_i=f_i, \quad \sfF f_i =-\sfV f_i= -pe_i, \quad
\langle e_i, f_j \rangle_2 = \delta_{ij}, \quad \langle e_i,e_j
\rangle_2 = \langle f_i, f_j \rangle_2 = 0,   
\end{equation}
for all $i,j \in \{1,2,3\}$.

Let $D_p$ be the division quaternion algebra over $\mathbb{Q}_p$ and let $\mathcal{O}_{D_p}$ denote its maximal order. 
We also write $D_p = \mathbb{Q}_{p^2}[\Pi]$ and $\mathcal{O}_{D_p} = \mathbb{Z}_{p^2}[\Pi]$, where $\mathbb{Z}_{p^2} = W(\mathbb{F}_{p^2})$ and $\mathbb{Q}_{p^2} = \mathrm{Frac}W(\mathbb{F}_{p^2})$, and where $\Pi^2 = -p$ and $\Pi a = \overline{a} \Pi$ for any $a \in \mathbb{Q}_{p^2}$. 
Here $a \mapsto \overline{a}$ denotes the non-trivial automorphism of $\mathbb{Q}_{p^2}/\mathbb{Q}_p$. 
If we let $*$ denote the canonical involution of $D_p$, then $a^* = \overline{a}$ for any $a \in \mathbb{Q}_{p^2}$, and $\Pi^* = -\Pi$. 

\begin{lemma}\label{lem:endMtilde}
We have $\mathrm{End}(M_2) \simeq \mathrm{Mat}_3(\mathcal{O}_{D_p})$ and hence
$\mathrm{Aut}(M_2) \simeq \mathrm{GL}_3(\mathcal{O}_{D_p})$ (not taking the polarisation into account).
\end{lemma}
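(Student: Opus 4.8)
The plan is to reduce the computation to a single elliptic factor and then pass to matrices. The chosen basis exhibits $M_2$ as a direct sum $M_2 = N_1\oplus N_2\oplus N_3$ of $\sfF$- and $\sfV$-stable sublattices $N_i := We_i\oplus Wf_i$, each of which is the contravariant Dieudonn\'e module of the supersingular elliptic curve $E_k$ (recall that $\End(M_2)$ is taken in the Dieudonn\'e sense, i.e.\ it consists of the $W$-linear endomorphisms commuting with $\sfF$ and $\sfV$). The obvious identifications $e_i\mapsto e_j$, $f_i\mapsto f_j$ are isomorphisms $N_i\simeq N_j$ of Dieudonn\'e modules, so $\Hom(N_i,N_j)\simeq \End(N_1)$ for all $i,j$, and the endomorphism ring of the direct sum is the matrix ring $\End(M_2)\simeq \Mat_3(\End(N_1))$. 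Hence everything reduces to proving $\End(N_1)\simeq \mathcal{O}_{D_p}$, after which taking unit groups gives $\Aut(M_2)\simeq \GL_3(\mathcal{O}_{D_p})$.

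For the single factor I would write $N := N_1 = We\oplus Wf$ and let $\sigma$ be the Frobenius of $W$ over $\mathbb{Z}_p$, so that $\sfF$ is $\sigma$-semilinear with $\sfF e = f$, $\sfF f = -pe$. Taking a general $W$-linear $\phi$ with $\phi(e)=\alpha e+\beta f$, $\phi(f)=\gamma e+\delta f$ and imposing $\phi\sfF=\sfF\phi$, comparison of coefficients on $e$ and on $f$ yields
\[
\delta=\alpha^{\sigma},\quad \gamma=-p\beta^{\sigma},\quad \alpha=\delta^{\sigma},\quad -p\beta=\gamma^{\sigma}.
\]
Eliminating $\gamma,\delta$ forces $\alpha=\alpha^{\sigma^2}$ and $\beta=\beta^{\sigma^2}$, i.e.\ $\alpha,\beta\in W^{\sigma^2}=\mathbb{Z}_{p^2}$, with $\delta=\bar\alpha$ and $\gamma=-p\bar\beta$. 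Thus $\phi$, which I denote $\phi_{\alpha,\beta}$, is determined by a pair $(\alpha,\beta)\in\mathbb{Z}_{p^2}^2$, so $\End(N)$ is free of $\mathbb{Z}_p$-rank $4$. I would also note that commuting with $\sfV$ is then automatic: on $N[1/p]$ one has $\sfV=p\sfF^{-1}$, so $\phi$ commutes with $\sfV$ rationally, and since both $\phi\sfV$ and $\sfV\phi$ preserve $N$ and agree after inverting $p$, they coincide.

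Finally I would identify $\End(N)$ with $\mathcal{O}_{D_p}=\mathbb{Z}_{p^2}[\Pi]$ by sending $a\in\mathbb{Z}_{p^2}$ to $\phi_{a,0}\colon (e\mapsto ae,\ f\mapsto\bar a f)$ and $\Pi$ to $\phi_{0,1}\colon (e\mapsto f,\ f\mapsto -pe)$; a direct check gives $\Pi^2=-p$ and $\Pi a=\bar a\,\Pi$, the defining relations, so this extends to a ring homomorphism $\mathcal{O}_{D_p}\to\End(N)$. It is surjective because $\phi_{\bar\beta,0}\circ\Pi$ realizes the pair $(0,\beta)$, so together with the scalars $\phi_{\alpha,0}$ every $\phi_{\alpha,\beta}$ lies in the image; it is injective by the rank count (both sides have $\mathbb{Z}_p$-rank $4$), hence an isomorphism. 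The computation is essentially standard semilinear bookkeeping, and I expect the only points needing care to be keeping track of the $\sigma$-twists so that the constraints genuinely land the coefficients in $\mathbb{Z}_{p^2}$ and reproduce the conjugation relation $\Pi a=\bar a\,\Pi$ exactly, together with recording the off-diagonal $\Hom(N_i,N_j)$ correctly so that the passage to $\Mat_3$ is legitimate.
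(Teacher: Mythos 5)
Your proposal is correct, and it arrives at the same structural fact the paper uses, but by a more self-contained route. The paper's proof is essentially an appeal to the skeleton formalism of Li--Oort: it observes that $\End(M_2)=\End_{\mathcal{O}_{D_p}}(M_2^{\Diamond})$ where $M_2^{\Diamond}=\{m\in M_2:\sfF m+\sfV m=0\}$ is free of rank $3$ over $\mathcal{O}_{D_p}$ on the basis $e_1,e_2,e_3$ (with $\Pi$ acting as $\sfF$), and then records the $\mathrm{op}$-versus-ordinary matrix ring issue by composing with the involution $A\mapsto A^*$. You instead decompose $M_2=\bigoplus_i(We_i\oplus Wf_i)$ into $\sfF,\sfV$-stable rank-$2$ summands and compute $\End(We\oplus Wf)\simeq\mathcal{O}_{D_p}$ by direct $\sigma$-semilinear bookkeeping; your coefficient constraints $\alpha,\beta\in W^{\sigma^2}=\mathbb{Z}_{p^2}$, the relations $\Pi^2=-p$ and $\Pi a=\bar a\Pi$, the observation that commuting with $\sfV=p\sfF^{-1}$ is automatic, and the rank count are all correct. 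What your version buys is a proof that needs no prior knowledge of the skeleton and makes the $\mathcal{O}_{D_p}$-action completely explicit; what the paper's version buys is brevity and a formulation ($M_2^{\Diamond}$ as an $\mathcal{O}_{D_p}$-lattice) that is reused later in the argument. The one convention point you flag --- whether $\End$ of a direct sum of three isomorphic modules is $\Mat_3(\End(N_1))$ or its opposite --- is real but harmless here, exactly as in the paper, since the canonical involution gives $\Mat_3(\mathcal{O}_{D_p})^{\mathrm{op}}\simeq\Mat_3(\mathcal{O}_{D_p})$; it only matters if one wants the specific identification fixed in the paper for the subsequent computation of $\Aut(M_2,\langle\,,\rangle_2)$.
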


\begin{proof}
We have $\mathrm{End}(M_2) = \mathrm{End}_{\mathcal{O}_{D_p}}(M_2^{\Diamond})$, where $M_2^{\Diamond} := \{ m \in M_2 : \sfF m + \sfV m = 0\}$ denotes the skeleton of $M_2$; this is an $\mathcal{O}_{D_p}$-module where $\Pi$ acts by $\sfF$ and $\Pi^*$ acts by $\sfV$. 
Now the result follows by using the basis $e_1,e_2,e_3$ for
$\mathrm{Mat}_3(\mathcal{O}_{D_p})^{\mathrm {op}}$ (the opposite algebra);
we choose a convention where the matrices act on the left. We fix the
isomorphism $\mathrm{Mat}_3(\mathcal{O}_{D_p})^{\mathrm {op}}\simeq
\Mat_3(\calO_D)$ by sending $A$ to $A^*$. 
\end{proof}

We fix the identification $\End(M_2)=\Mat_3(\calO_D)$ by the
isomorphism chosen in Lemma~\ref{lem:endMtilde} with respect to the
basis in \eqref{eq:basis}.

\begin{lemma}\label{lem:autMtilde}
We have $\mathrm{Aut}(M_2,\langle, \rangle_2) \simeq \{ A \in
\mathrm{GL}_3(\mathcal{O}_{D_p}) : A^*A \simeq \mathbb{I}_3 \}$. 
\end{lemma}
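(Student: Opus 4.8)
The plan is to leverage the identification $\mathrm{End}(M_2) \cong \mathrm{Mat}_3(\mathcal{O}_{D_p})$ from Lemma~\ref{lem:endMtilde} and to show that, under this identification, the Rosati involution on $\mathrm{End}(M_2)$ attached to the quasi-polarisation $\langle\,,\rangle_2$ is exactly the conjugate-transpose involution $A \mapsto A^*$ on $\mathrm{Mat}_3(\mathcal{O}_{D_p})$. Once this is established the statement follows formally: an automorphism $g$ of $M_2$ preserves $\langle\,,\rangle_2$ if and only if its Rosati adjoint $g^{\dagger}$ satisfies $g^{\dagger}g = \mathrm{id}$, and translating this through the identification gives precisely the condition $A^* A = \mathbb{I}_3$.

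First I would define the Rosati adjoint $g^{\dagger}$ of $g \in \mathrm{End}(M_2)$ by the rule $\langle g x, y\rangle_2 = \langle x, g^{\dagger} y\rangle_2$; since $\langle\,,\rangle_2$ is a perfect $W$-bilinear alternating pairing, $g^{\dagger}$ exists, is unique, and $g \mapsto g^{\dagger}$ is a ring anti-involution of $\mathrm{End}(M_2)$. Because this anti-involution is determined by its values on a generating set, it suffices to compute $g^{\dagger}$ on the two kinds of generators visible in the skeleton description of Lemma~\ref{lem:endMtilde}: the scalar action of $\mathcal{O}_{D_p} = \Z_{p^2}[\Pi]$ on $M_2^{\Diamond}$, and the elementary endomorphisms $g_{ij}$ that permute the skeleton basis $e_1,e_2,e_3$. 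A direct computation on the basis \eqref{eq:basis} gives $\langle \Pi e_j, e_i\rangle_2 = -\delta_{ij}$ and $\langle \Pi f_j, f_i\rangle_2 = -p\,\delta_{ij}$ with the mixed pairings vanishing, from which one reads off $\Pi^{\dagger} = -\Pi = \Pi^*$; the analogous computation shows that the adjoint of a scalar $\zeta \in \Z_{p^2}$ is $\overline{\zeta} = \zeta^*$, and that the adjoint of $g_{ij}$ is $g_{ji}$, i.e.\ transpose. This last point reflects that in the basis $e_1,e_2,e_3$ the pairing $\langle\,,\rangle_2$ corresponds to the standard Hermitian form on $M_2^{\Diamond}$ whose Gram matrix is $\mathbb{I}_3$.

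Combining these computations with the convention fixed in Lemma~\ref{lem:endMtilde}, where $\mathrm{End}(M_2) = \mathrm{End}_{\mathcal{O}_{D_p}}(M_2^{\Diamond}) \cong \mathrm{Mat}_3(\mathcal{O}_{D_p})^{\mathrm{op}}$ is transported to $\mathrm{Mat}_3(\mathcal{O}_{D_p})$ via $A \mapsto A^*$, shows that the Rosati anti-involution becomes the conjugate-transpose involution $A \mapsto A^*$. Hence $g$ lies in $\mathrm{Aut}(M_2,\langle\,,\rangle_2)$ precisely when the associated $A \in \mathrm{GL}_3(\mathcal{O}_{D_p})$ satisfies $A^* A = \mathbb{I}_3$, as claimed.

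I expect the main obstacle to be the careful bookkeeping of conventions rather than any single hard computation. One must keep track of the opposite-algebra structure of $\mathrm{End}_{\mathcal{O}_{D_p}}(M_2^{\Diamond})$, so that a scalar $\zeta$ acts on the skeleton by \emph{right} multiplication and is therefore sent to its conjugate $\overline{\zeta}$ rather than fixed; together with the twist $A \mapsto A^*$ built into Lemma~\ref{lem:endMtilde}, this is exactly what forces the Rosati involution to land on conjugate-transpose and not on some other anti-involution. Reconciling the single sign $\Pi^{\dagger} = -\Pi$ with the scalar conjugation $\zeta^{\dagger} = \overline{\zeta}$ is the delicate point; the transpose behaviour on the matrix units and the final reformulation as $A^* A = \mathbb{I}_3$ are then routine.
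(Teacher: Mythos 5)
Your proposal is correct and follows essentially the same route as the paper: both arguments amount to showing that the adjoint of $A$ with respect to $\langle\,,\rangle_2$ is the conjugate-transpose $A^*$, the paper by directly verifying $\langle A\cdot e_i, e_j\rangle_2 = \langle e_i, A^*\cdot e_j\rangle_2$ for a general $A=(c_{ij}+d_{ij}\Pi)$ on the basis \eqref{eq:basis}, and you by computing the same adjoint on algebra generators ($\mathbb{Z}_{p^2}$-scalars, $\Pi$, and matrix units) and invoking the anti-involution property. Your bookkeeping of the opposite-algebra convention and the conjugation $\zeta\mapsto\overline{\zeta}$ on scalars is accurate, so this is only a minor repackaging of the paper's computation.
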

\begin{proof}
It suffices to check that $\langle A \cdot e_i, e_j\rangle_2 = \langle
e_i, A^* \cdot e_j \rangle_2$ for any $A \in
\mathrm{Mat}_3(\mathcal{O}_{D_p})$ and any $i,j \in \{1,2,3\}$.  
Write $A = (a_{ij})$ and $A^* = (a'_{ij})$ with $a_{ij} = c_{ij} +
d_{ij}\Pi$ for $c_{ij},d_{ij} \in \mathbb{Z}_{p^2}$, and with $a'_{ij}
= a^*_{ji}$.  
Then
\[
\langle A \cdot e_i, e_j\rangle_2 = \langle \sum_k a_{ik} e_k, e_j\rangle_2 = \langle d_{ij} f_j , e_j\rangle_2 = -d_{ij}
\]
coincides with
\[
\langle e_i, A^* \cdot e_j \rangle_2 = \langle e_i, \sum_k a'_{jk}e_k \rangle_2 = \langle e_i, a'_{ji} e_i\rangle_2 = \langle e_i, \overline{c}_{ij}e_i - d_{ij}f_i\rangle_2 = -d_{ij},
\]
as required.
\end{proof}

\subsection{Endomorphisms and automorphisms modulo $\boldsymbol{pM_2}$}\

As was pointed out earlier, the proof of Proposition \ref{prop:miniso}(1) contains the important observation that $pM_2 \subseteq M$. 
This allows us to consider the endomorphisms and automorphisms of both $M_2$ and $M$ modulo $p$ (i.e., reducing modulo $pM_2$) and modulo $\Pi$.  In Definitions~\ref{def:AutEndM2mod} and~\ref{def:AutEndMmod}, we first define, and introduce notation for, all the endomorphism rings and automorphism groups we are considering.

\begin{definition}\label{def:AutEndM2mod}
Let $m_p$ denote the reduction-modulo-$p$ map and $m_{\Pi}$ the reduction-modulo-$\Pi$ map. 
By Lemma~\ref{lem:endMtilde}, for $M_2$ we have 
\begin{equation}\label{eq:redM2}
\mathrm{End}(M_2) \simeq \mathrm{Mat}_3(\mathcal{O}_{D_p}) \xrightarrow{m_p} \mathrm{Mat}_3(\mathbb{F}_{p^2}[\Pi]) \xrightarrow{m_{\Pi}} \mathrm{Mat}_3(\mathbb{F}_{p^2}).
\end{equation}
On the level of automorphisms (respecting the polarisation) we get
\begin{equation}\label{eq:redAutM2}
\mathrm{Aut}(M_2,\langle, \rangle_2)  \xrightarrow{m_p} G_{(M_2,\langle, \rangle_2)} \xrightarrow{m_{\Pi}} \overline{G}_{(M_2,\langle, \rangle_2)},
\end{equation}
where
\begin{equation}\label{eq:defGM2}
G_{(M_2,\langle, \rangle_2)} := \{ A + B \Pi \in \mathrm{GL}_3(\mathbb{F}_{p^2}[\Pi]) : A\overline{A}^T = \mathbb{I}_3, B^T \overline{A} = \overline{A}^T B \},
\end{equation}
(here, $B^T$ denotes the transpose of the matrix $B$), and where
\begin{equation}\label{eq:defbarGM2}
\overline{G}_{(M_2,\langle, \rangle_2)} := \{ A \in \mathrm{GL}_3(\mathbb{F}_{p^2}) : A^*A = \mathbb{I}_3  \}.
\end{equation}
\end{definition}

\begin{definition}\label{def:AutEndMmod}
For $M$ we have $\mathrm{End}(M) = \{ g \in \mathrm{End}(M_2) : g(M) \subseteq M \}$ and $\mathrm{Aut}(M) = \{ g \in \mathrm{Aut}(M_2) : g(M) = M \}$, and
\begin{equation}\label{eq:AutM}
\mathrm{Aut}(M,\langle, \rangle) = \{ g \in \mathrm{Aut}(M_2,\langle, \rangle_2) : g(M) = M \}.
\end{equation}
Under the same maps $m_p$ and $m_{\Pi}$, we find
\begin{equation}\label{eq:redMp}
E_M := m_p(\mathrm{End}(M)) = \{ A \in \mathrm{Mat}_3(\mathbb{F}_{p^2}[\Pi]) : A \cdot M/pM_2\subseteq M/pM_2 \}
\end{equation}
and $\overline{E}_M := m_{\Pi}(E_M) \subseteq \mathrm{Mat}_3(\mathbb{F}_{p^2})$. These fit in the diagram
\begin{equation}\label{eq:diagram}
\begin{tikzcd}
\
\mathrm{End}(M) \rar{}\dar{m_p} & \mathrm{End}(M_2) = \mathrm{Mat}_3(\mathcal{O}_{D_p}) \dar{m_p} \\
E_M \rar{}\dar{m_{\Pi}} & \mathrm{Mat}_3(\mathbb{F}_{p^2}[\Pi])) \dar{m_{\Pi}} \\
\overline{E}_M \rar{} & \mathrm{Mat}_3(\mathbb{F}_{p^2})
\end{tikzcd}
\end{equation}
in which all horizontal maps are inclusion maps and the left vertical maps are the surjective reduction maps.

On the level of automorphisms, we let
\begin{equation}\label{eq:defGM}
G_{M} := m_p(\mathrm{Aut}(M)) = \{ A \in \mathrm{GL}_3(\mathbb{F}_{p^2}[\Pi]) : A \cdot M/pM_2 \subseteq M/pM_2  \}
\end{equation} 
and $\overline{G}_M := m_{\Pi}(G_{M})$. For the polarised versions, since $\varphi^* \lambda = p\mu$, we obtain
\begin{equation}\label{eq:defGMpol}
G_{(M, \langle, \rangle)} := \{ g \in G_{(M_2,\langle, \rangle_2)} : g(M/pM_2) \subseteq M/pM_2 \}
\end{equation} 
and 
\begin{equation}\label{eq:defGbarMpol}
\overline{G}_{(M, \langle, \rangle)} := \{ g \in \overline{G}_{(M_2,\langle, \rangle_2)} : g(M/\Pi M_2) \subseteq M/\Pi M_2 \}.
\end{equation} 
\end{definition}

Denote the group of three-by-three symmetric matrices over
$\mathbb{F}_{p^2}$ by $S_3(\mathbb{F}_{p^2})$; this group has
cardinality 
$p^{12}$ (since it a six-dimensional $\mathbb{F}_{p^2}$-vector
space). Also recall that the group $U_3(\mathbb{F}_p)$ of three-by-three unitary matrices with
entries in $\mathbb{F}_{p^2}$ has cardinality  $p^3(p+1)(p^2-1)(p^3+1)$.

\begin{lemma}\label{lem:sizeGM2pol}
In Equation \eqref{eq:defGM2} we have $A \in U_3(\mathbb{F}_p)$ and
$B^T\overline{A} \in S_3(\mathbb{F}_{p^2})$.  Hence, 
\begin{equation}\label{eq:sizeGM2pol}
\vert G_{(M_2,\langle, \rangle_2)} \vert = \vert
U_3(\mathbb{F}_p) \vert \cdot \vert S_3(\mathbb{F}_{p^2}) \vert =
p^{15}(p+1)(p^2-1)(p^3+1). 
\end{equation}
\end{lemma}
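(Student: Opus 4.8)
The plan is to read off both membership claims directly from the two conditions defining $G_{(M_2,\langle,\rangle_2)}$ in \eqref{eq:defGM2}, and then to obtain the cardinality by fibring the set of admissible pairs $(A,B)$ over $U_3(\F_p)$. First I would treat the condition $A\overline{A}^T=\mathbb{I}_3$. From the reduction maps in \eqref{eq:redM2}, the bar operation on $\Mat_3(\F_{p^2})$ is the entrywise Frobenius $\overline{A}=A^{(p)}$, inherited from the involution $*$ with $a^*=\overline{a}$ on $\F_{p^2}$. The relation $A\overline{A}^T=\mathbb{I}_3$ exhibits $\overline{A}^T=(A^T)^{(p)}$ as a two-sided inverse of $A$; applying the entrywise Frobenius to the identity $(A^T)^{(p)}A=\mathbb{I}_3$ and using $A^{(p^2)}=A$ over $\F_{p^2}$, one obtains $A^T A^{(p)}=\mathbb{I}_3$, which is exactly the defining condition for $A\in U_3(\F_p)$. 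For the second claim, set $C:=B^T\overline{A}$; then $C^T=\overline{A}^T B$, so the condition $B^T\overline{A}=\overline{A}^T B$ is precisely the statement $C=C^T$, i.e. $C\in S_3(\F_{p^2})$. Thus both assertions of the lemma are immediate translations of the two conditions in \eqref{eq:defGM2}.

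For the cardinality, I would first observe that the ambient requirement $A+B\Pi\in\GL_3(\F_{p^2}[\Pi])$ imposes no further constraint. Indeed $A\overline{A}^T=\mathbb{I}_3$ forces $A$ to be invertible, so we may write $A+B\Pi=A(\mathbb{I}_3+A^{-1}B\Pi)$, and $\mathbb{I}_3+X\Pi$ has inverse $\mathbb{I}_3-X\Pi$ because $\Pi X\Pi=X^{(p)}\Pi^2=0$ in the reduced twisted ring $\F_{p^2}[\Pi]$, where $\Pi a=\overline{a}\Pi$ and $\Pi^2=0$. Hence $G_{(M_2,\langle,\rangle_2)}$ is parametrised exactly by the pairs $(A,B)$ with $A\in U_3(\F_p)$ and $B^T\overline{A}$ symmetric, and I would count by fibring over $A$. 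For each fixed $A\in U_3(\F_p)$ the $\F_{p^2}$-linear map $B\mapsto B^T\overline{A}$ is a bijection of $\Mat_3(\F_{p^2})$, with inverse $C\mapsto(\overline{A}^T)^{-1}C^T$ since $\overline{A}$ is invertible, so it carries the fibre $\{B:B^T\overline{A}\in S_3(\F_{p^2})\}$ bijectively onto $S_3(\F_{p^2})$. Therefore each fibre has size $\vert S_3(\F_{p^2})\vert=(p^2)^6=p^{12}$, and summing over $A$ gives $\vert G_{(M_2,\langle,\rangle_2)}\vert=\vert U_3(\F_p)\vert\cdot p^{12}$. Substituting $\vert U_3(\F_p)\vert=p^3(p+1)(p^2-1)(p^3+1)$ then yields $p^{15}(p+1)(p^2-1)(p^3+1)$, as claimed.

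There is no serious obstacle in this argument: the content is bookkeeping with the Frobenius-twisted transpose and the twisted multiplication on $\F_{p^2}[\Pi]$. The only point requiring genuine care is the correct identification of $\overline{A}$ with the entrywise Frobenius $A^{(p)}$ and the resulting equivalence of $A\overline{A}^T=\mathbb{I}_3$ with the unitary condition $A^T A^{(p)}=\mathbb{I}_3$ defining $U_3(\F_p)$; once the bar is interpreted correctly, the symmetry claim and the fibration count are both formal.
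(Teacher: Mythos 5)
Your proof is correct; the paper in fact states Lemma~\ref{lem:sizeGM2pol} without any proof, and your argument supplies exactly the routine verification the authors omit. The translation of $A\overline{A}^T=\mathbb{I}_3$ into the unitary condition $A^TA^{(p)}=\mathbb{I}_3$ (via Frobenius and $A^{(p^2)}=A$), the observation that $B^T\overline{A}=\overline{A}^TB$ says precisely that $B^T\overline{A}$ is symmetric, the remark that invertibility of $A+B\Pi$ is automatic since $\Pi^2=0$ in $\F_{p^2}[\Pi]$, and the count by fibring over $U_3(\F_p)$ through the bijection $B\mapsto B^T\overline{A}$ are all accurate and give $\vert U_3(\F_p)\vert\cdot p^{12}=p^{15}(p+1)(p^2-1)(p^3+1)$ as claimed.
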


\begin{remark}\label{rem:indexmodp}
Now we note, cf. \eqref{eq:indexsimple}, that
\begin{equation}\label{eq:indexmodp}
[\mathrm{Aut}((M_2,\langle, \rangle_2)):\mathrm{Aut}((M,\langle ,
\rangle))] = [G_{(M_2,\langle, \rangle_2)} : G_{(M, \langle , \rangle)}]. 
\end{equation}
In light of Lemma \ref{lem:sizeGM2pol}, it now suffices to compute $[G_{(M_2,\langle, \rangle_2)} : G_{(M, \langle , \rangle)}]$. This will take up the remainder of this section.
\end{remark}

We start by studying the unpolarised automorphisms $G_{M_2}$.
Thus, let $g=(a_{ij} + b_{ij}\Pi )_{1 \leq i,j \leq 3} \in {\rm GL}_{3}(\mathbb{F}_{p^2}(\Pi))$ be an (unpolarised) automorphism of  $M_2/pM_2$.
If we take $\bar{e}_{1}, \bar{e}_{2}, \bar{e}_{3}, \bar{f}_{1}, \bar{f}_{2}, \bar{f}_{3}$ (i.e., the reductions of $e_1, \ldots, f_3$ in the previous subsection) as a basis of $M_{2}/pM_{2}$ in this order, $g$ can be expressed by a matrix of the form
\begin{align}\label{eq:matrixg}
g =  \begin{pmatrix}
  A &  0 \\
  B & A^{(p)}
 \end{pmatrix},
\end{align}
where $A=(a_{ij})_{1 \leq i,j \leq 3}$,$B=(b_{ij})_{1 \leq i,j \leq 3}$, and $A^{(p)} = (a_{ij}^p)_{1 \leq i,j \leq 3}$. 

Recall from Propositions~\ref{prop:sections} and~\ref{prop:miniso}(1) that the polarised flag type quotient $Y_2 \to Y_1 \to X$ corresponds to a point $t = (t_1:t_2:t_3) \in C^0(k)$ such that $M_1/\sfF M_2$ is generated by $t_1\bar{e}_1 + t_2 \bar{e}_2 + t_3 \bar{e}_3$, where $M_1$ is the Dieudonn{\'e} module of $Y_1$, and a point $u = (u_1: u_2) \in
\bbP^1_t(k):=\pi^{-1}(t)$. 
We choose a new basis for $M_2/pM_2$ as follows:
\begin{align*}
\bar{E}_{1}:= \sum _{i=1,2,3}t_{i}\bar{e}_{i}, ~
\bar{E}_{2}:= \sum _{i=1,2,3}t_{i}^{p}\bar{e}_{i}, ~
\bar{E}_{3}:= \sum _{i=1,2,3}t_{i}^{p^{-1}}\bar{e}_{i}, \\
\bar{F}_{1}:= \sum _{i=1,2,3}t_{i}\bar{f}_{i}, ~
\bar{F}_{2}:= \sum _{i=1,2,3}t_{i}^{p}\bar{f}_{i}, ~
\bar{F}_{3}:= \sum _{i=1,2,3}t_{i}^{p^{-1}}\bar{f}_{i}.
\end{align*}
(This is a basis by Lemma \ref{lem:CFp2}.) 
Using this basis, $g$ is expressed as
\begin{align}\label{eq:newmatrixg}
g =  \begin{pmatrix}
  \mathbb{T}^{-1} A \mathbb{T} & 0 \\
  \mathbb{T}^{-1} B \mathbb{T} & \mathbb{T}^{-1} A^{(p)} \mathbb{T}
 \end{pmatrix},
\end{align}
where
\begin{align}\label{eq:T}
\mathbb{T} := 
 \begin{pmatrix}
  t_{1} & t_{1}^p & t_{1}^{p^{-1}} \\
  t_{2} & t_{2}^p & t_{2}^{p^{-1}} \\
  t_{3} & t_{3}^p & t_{3}^{p^{-1}} 
 \end{pmatrix}.
\end{align}

Now we determine the group $G_M \subseteq \mathrm{GL}_3(\mathbb{F}_{p^2}[\Pi])$ of elements preserving $M/pM_2$. Any such element will also preserve $M_1/pM_2$. We prove the following proposition.

\begin{proposition}\label{prop:GM}
Let $g \in \mathrm{GL}_3(\mathbb{F}_{p^2}[\Pi])$ be an automorphism of $M_{2}/pM_{2}$, expressed as in \eqref{eq:matrixg}.
Then $g \in G_M$ (i.e., $g$ preserves $M/pM_2$) if and only if the following hold:
\begin{itemize}
\item[(a)] We have $A \cdot t = \alpha t$ for some $\alpha \in k$, i.e., $A \in \mathrm{End}(t)$.
\item[(b)] The $(1,1)$-component of the matrix $\mathbb{T}^{-1} B \mathbb{T}$ is $u_2u_1^{-1}(\alpha - \alpha^{p^3})$. 
\end{itemize}
\end{proposition}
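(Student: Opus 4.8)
The plan is to turn the condition $g\in G_M$ into linear algebra in the basis $\bar{E}_1,\dots,\bar{F}_3$ by describing both $M/pM_2$ and the intermediate module $M_1/pM_2$ explicitly. First I would record how $\sfF$ and $\sfV$ act on the new basis modulo $pM_2$: one computes $\sfF\bar{E}_1=\bar{F}_2$ and $\sfV\bar{E}_1=-\bar{F}_3$, while $\sfF$ and $\sfV$ annihilate $\bar{E}_2,\bar{E}_3$ and all of $\bar{F}_1,\bar{F}_2,\bar{F}_3$. Since $Y_2$ is superspecial we have $\sfF M_2/pM_2=\langle\bar{F}_1,\bar{F}_2,\bar{F}_3\rangle$, and $M_1=\sfF M_2+W\tilde{t}$ with $\tilde t=\sum_i t_i e_i$, whence $M_1/pM_2=\langle\bar{E}_1,\bar{F}_1,\bar{F}_2,\bar{F}_3\rangle$. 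Invoking the equality $(\sfF,\sfV)M_0=(\sfF,\sfV)M_1$ established in the proof of Proposition~\ref{prop:miniso}(1), I then get $(\sfF,\sfV)M_1/pM_2=\langle\bar{F}_2,\bar{F}_3\rangle\subseteq M/pM_2$.

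Next I would prove (a). Because $g$ commutes with $\sfF$ and preserves $M_2$, it automatically preserves $\sfF M_2/pM_2$; hence if $g$ preserves $M/pM_2$ it preserves $M/pM_2+\sfF M_2/pM_2$, which equals $M_1/pM_2$ exactly because $a(X)=1$ forces the $\bar{E}_1$-component of $M/pM_2$ to be nonzero. In the new basis $g$ has the block form \eqref{eq:newmatrixg}, and preserving $\langle\bar{E}_1,\bar{F}_1,\bar{F}_2,\bar{F}_3\rangle$ simply demands that $g(\bar{E}_1)$ have no $\bar{E}_2,\bar{E}_3$-component, i.e. that the first column of $\mathbb{T}^{-1}A\mathbb{T}$ be $(\alpha,0,0)^T$. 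As the first column of $\mathbb{T}$ is $t$, this is equivalent to $A t=\alpha t$, which is precisely (a).

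Working now under (a), I would locate $M/pM_2$ and extract (b). Applying the entrywise Frobenius to $A t=\alpha t$ and using $A^{(p^2)}=A$ gives $A^{(p)}t^{(p)}=\alpha^{p}t^{(p)}$ and $A^{(p)}t^{(p^{-1})}=\alpha^{p^{-1}}t^{(p^{-1})}$, so the second and third columns of $\mathbb{T}^{-1}A^{(p)}\mathbb{T}$ are $\alpha^{p}e_2$ and $\alpha^{p^{-1}}e_3$; thus $g(\bar{F}_2)=\alpha^{p}\bar{F}_2$ and $g(\bar{F}_3)=\alpha^{p^{-1}}\bar{F}_3$, so $g$ preserves $\langle\bar{F}_2,\bar{F}_3\rangle$. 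Writing $M/pM_2=\langle\bar{F}_2,\bar{F}_3,\;u_1\bar{E}_1+u_2\bar{F}_1\rangle$, where the line $u=(u_1:u_2)$ is read off from $M_0/(\sfF,\sfV)M_1\subseteq\langle\bar{E}_1,\bar{F}_1\rangle$ (the section $T$ being the line $\bar{F}_1$, so that $a(X)=1$ forces $u_1\neq0$), the only remaining requirement $g(u_1\bar{E}_1+u_2\bar{F}_1)\in M/pM_2$ reduces, modulo $\langle\bar{F}_2,\bar{F}_3\rangle$, to the single equation $u_1\beta_{11}=u_2(\alpha-\gamma_{11})$, where $\beta_{11}$ and $\gamma_{11}$ are the $(1,1)$-entries of $\mathbb{T}^{-1}B\mathbb{T}$ and $\mathbb{T}^{-1}A^{(p)}\mathbb{T}$.

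It then remains to evaluate $\gamma_{11}$, and the cleanest route is a determinant comparison: $A$ is diagonalised by $t,t^{(p^2)},t^{(p^4)}$ with eigenvalues $\alpha,\alpha^{p^2},\alpha^{p^4}$ (these vectors are independent by Lemma~\ref{lem:CFp2}), so $\det A=\alpha^{1+p^2+p^4}$; since the matrix $\mathbb{T}^{-1}A^{(p)}\mathbb{T}$ has diagonal $(\gamma_{11},\alpha^{p},\alpha^{p^{-1}})$ and determinant $(\det A)^{p}=\alpha^{p+p^3+p^5}$, and since $\alpha\in\mathrm{End}(t)\subseteq\mathbb{F}_{p^6}$ gives $\alpha^{p^5}=\alpha^{p^{-1}}$, one finds $\gamma_{11}=\alpha^{p^3}$. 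Substituting yields $\beta_{11}=u_2u_1^{-1}(\alpha-\alpha^{p^3})$, which is (b). The main obstacle I anticipate is the first step: correctly placing $M/pM_2$ in the $\bar{E}_\bullet,\bar{F}_\bullet$-basis and matching the resulting line with the coordinate $u$ of $\pi^{-1}(t)$ fixed in Proposition~\ref{prop:explicitmoduli}, while keeping track of the Frobenius twists $t\mapsto t^{(p)},t^{(p^{-1})}$ built into $\mathbb{T}$; once these subspaces are correctly identified, conditions (a) and (b) follow by the linear algebra above.
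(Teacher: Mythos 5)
Your argument is essentially the paper's own proof: the same basis $\bar E_\bullet,\bar F_\bullet$, the same block form \eqref{eq:newmatrixg}, the identification of $M/(\sfF,\sfV)M_1$ with the line $k(u_1\bar E_1+u_2\bar F_1)$ inside $\langle\bar E_1,\bar F_1\rangle$, and the same determinant trick to pin down the $(1,1)$-entry of $\mathbb{T}^{-1}A^{(p)}\mathbb{T}$ as $\alpha^{p^3}$ (the paper uses the eigenvector $t^{(p^{-2})}$ so it needs no appeal to $\alpha\in\F_{p^6}$, but your variant via $t^{(p^4)}$ is equally valid given Lemma~\ref{lem:Endt}); you are in fact slightly more explicit than the paper about why preservation of $M/pM_2$ forces preservation of $M_1/pM_2$ and hence condition (a). One aside is false, though harmless: $\sfF$ and $\sfV$ do \emph{not} annihilate $\bar E_2,\bar E_3$ modulo $pM_2$ (e.g.\ $\sfF\bar E_3=\bar F_1$ and $\sfV\bar E_2=-\bar F_1$); since you never use this claim --- the facts you actually need, namely $\sfF M_2/pM_2=\langle\bar F_1,\bar F_2,\bar F_3\rangle$ and $(\sfF,\sfV)M_1/pM_2=\langle\bar F_2,\bar F_3\rangle$, are derived correctly --- the proof stands, but the sentence should be corrected or deleted.
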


\begin{proof}
For an $A \in \mathrm{End}(t)$ (see Definition \ref{def:Endt}) with eigenvalue $\alpha$, it holds by definition that
\begin{align}\label{eq:comp}
\mathbb{T}^{-1} A \mathbb{T} = 
 \begin{pmatrix}
  \alpha & \ast & \ast \\
              & \ast & \ast \\
              & \ast & \ast 
 \end{pmatrix}, 
\mathbb{T}^{-1} A^{(p)} \mathbb{T} = 
 \begin{pmatrix}
  \ast &  &  \\
  \ast & \alpha^{p} &  \\
   \ast &  & \alpha^{p^{-1}}  
 \end{pmatrix}.
\end{align}
As ${\rm det}(A) = \alpha^{1+p^2+p^{-2}}$ and ${\rm det}(A^{(p)}) = {\rm det}(A)^p$, we see that 
\begin{align}\label{eq:comp2}
\mathbb{T}^{-1} A^{(p)} \mathbb{T} = 
 \begin{pmatrix}
  \alpha^{p^3} &  &  \\
  \ast & \alpha^{p} &  \\
   \ast &  & \alpha^{p^{-1}}  
 \end{pmatrix}.
\end{align}
By Proposition \ref{prop:miniso}(1), the quotient $M_{1}/pM_2$ is a
two dimensional $k$-vector space generated by $\bar{E}_{1}$ and
$\bar{F}_{1}$.  
As $M_1^{\vee} = (\sfF,\sfV)M_1 = pM_2$, we find that $M/pM_2 \subseteq M_{1}/pM_2$ is a one-dimensional $k$-vector space.  
Take $u_1, u_2 \in k$ so that $M/pM_2$ is generated by the image of $u_{1}\bar{E}_{1} + u_{2}\bar{F}_{1}$.
As $M \neq pM_{2}$, we see that $u_{1} \neq 0$.

We see that if $g \in \mathrm{GL}_3(\mathbb{F}_{p^2}[\Pi])$ preserves $M_{1}/(\sfF,\sfV)M_{2}$, then it induces an automorphism of $M_{1}/(\sfF,\sfV)M_{1} = M_1/pM_2$ which is expressed as 
$\left(\begin{smallmatrix}
\alpha & \\ \ast & \alpha^{p^3}
\end{smallmatrix}\right)$
by \eqref{eq:newmatrixg}, \eqref{eq:comp}, and \eqref{eq:comp2}.
Moreover, $g$ also preserves $M/(\sfF,\sfV)M_{1} = M/pM_2$ if and only if the column vector 
$\left( \begin{smallmatrix}
\alpha & \\ \ast & \alpha^{p^3}
\end{smallmatrix}\right) \left(\begin{smallmatrix} u_1 \\ u_2\end{smallmatrix}\right)$
is in the subspace spanned by $\left(\begin{smallmatrix} u_1 \\ u_2 \end{smallmatrix}\right)$. 
This is equivalent to the entry $\ast$ being equal to $u_2u_1^{-1}(\alpha - \alpha^{p^3})$.
\end{proof}

\begin{remark}\label{rem:choiceoftandu}
\begin{enumerate}
\item It follows from the construction of polarised flag type quotients that
for $(X,\lambda)$ with $a(X) =1$ and a choice $\mu \in P(E^3)$
together with an identification $(\wt X,\wt \lambda)= (
E^3_k,p\mu)$,   
there exists a unique pair $(t,u)$ where $t = (t_1:t_2:t_3) \in
C^0(k)$ and $u= (u_1:u_2) \in \mathbb{P}^1(k)$ as in the proof of
Proposition \ref{prop:GM}. For the rest of the section, we will work
with these $(t,u)$. 
\item The coordinates $(t,u)$ in (1) also give rise to 
a trivialisation $C^0\times \bbP\simeq \calP_{C^0}$, where 
$\calP_{C^0}:=\calP_\mu\times_C C^0$, as follows. 
By Proposition~\ref{prop:explicitmoduli}, points in $\calP_{C^0}$
correspond to pairs $(\ol M_1,\ol M)$: here $\ol M_1\subseteq \ol M_2$
is a four-dimensional subspace generated by the subspace $\sfV \ol M_2$ 
and $\ol E_1=t_1
\bar e_1+t_2 \bar e_2+t_3 \bar e_3$ with $(t_1:t_2:t_3)\in C^0$, and
$\ol M\subseteq \ol M_2$ is a three-dimensional subspace with $\ol
M_1^{\bot} \subseteq \ol  M \subseteq \ol M_1$, where $\ol
M_1^{\bot}$ is the orthogonal complement of $\ol M_1$ with respect to
$\<\,, \>_2$. The two-dimensional vector spaces $\ol M_1/\ol M_1^{\bot}$
for $t\in C^0$ form a rank two vector bundle $\calV=\calO(1)\oplus \calO(-1)|_{C^0}$ over $C^0$.  
As shown in the proof of Proposition \ref{prop:GM}, the images of $\ol E_1$ and $\ol F_1$ in $\ol M_1/\ol M_1^\bot$ (again denoted by
$\ol E_1$ and $\ol F_1$ for simplicity) form a basis, and give rise to two
global sections $\wt E_1$ and $\wt F_1$ of $\calV$ respectively 
(note that both $\ol E_1$ and $\ol F_1$ are vector-valued functions 
in $t_1$, $t_2$, and $t_3$). Then the desired trivialisation 
$C^0\times \bbP \isoto 
\calP_{C^0}\simeq \bbP(\calV)$ is given by 
$(t,(u_1:u_2))\mapsto [u_1 \wt E_1(t)+u_2 \wt F_1(t)]$. Since $M_2$ is
the \dieu module of $E^3_k$, the vector space $\ol M_2$ has an
$\F_{p^2}$-structure, so we see that this trivialisation is defined over
$\F_{p^2}$. 

Now let $t\in C^0(k)$ and $u=(0:1)$. The corresponding subspace $\ol M$ is
generated by $\ol F_1$ and $\ol M_1^{\bot}=(\sfF,\sfV)\ol M_1$. Therefore, we have $\ol M=\sfV \ol M_2$, which corresponds a point in $T$. 
It follows that under the above trivialisation, 
$T|_{C^0} \simeq C^0\times \{\infty\}$. 
\end{enumerate}   
\end{remark}

The following lemma follows from Lemma \ref{lem:Endt}, Lemma \ref{lem:CM}, and Proposition \ref{prop:GM}. It describes the \emph{polarised} elements $g \in G_{(M_2,\langle, \rangle_2)}$ that preserve $M_1/pM_2$: for such $g$ of the form \eqref{eq:matrixg}, Proposition \ref{prop:GM}(1) implies that $A \in \mathrm{End}(t)$, while Definition \ref{def:AutEndM2mod}\eqref{eq:defGM2} implies that $A$ is unitary.

\begin{lemma}\label{lem:EndtintU}
Let $t = (t_{1}:t_{2}:t_{3}) \in C^0(k)$.
\begin{enumerate}
\item When $t \notin C(\mathbb{F}_{p^6})$, we have
\begin{align*}
\mathrm{End}(t) \cap U_3(\mathbb{F}_p) \simeq \{ \alpha \in \mathbb{F}_{p^2} : \alpha^{p+1} = 1\}.
\end{align*}
\item When $t \in C(\mathbb{F}_{p^6})$, we have
\begin{align*}
\mathrm{End}(t) \cap U_3(\mathbb{F}_p) \simeq \{ \alpha \in \mathbb{F}_{p^6} : \alpha^{p^3+1} = 1 \} .
\end{align*}
\end{enumerate}
\end{lemma}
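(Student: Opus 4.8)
The plan is to read off $\mathrm{End}(t)\cap U_3(\F_p)$ directly from the two possible structures of $\mathrm{End}(t)$ supplied by Lemmas~\ref{lem:Endt} and~\ref{lem:CM}, using throughout that $U_3(\F_p)$ is precisely the group of $A\in\GL_3(\F_{p^2})$ preserving the standard Hermitian form $H(x,y):=x^T y^{(p)}=\sum_i x_i y_i^p$, extended sesquilinearly to $k^3$; indeed $H(Ax,Ay)=x^T\!\left(A^T A^{(p)}\right)y^{(p)}$ equals $H(x,y)$ for all $x,y$ exactly when $A^T A^{(p)}=\mathbb{I}_3$. I also use the injective $\F_{p^2}$-algebra homomorphism $\mathrm{End}(t)\to k$, $A\mapsto\alpha_A$ with $A\,t=\alpha_A\,t$, from the proof of Lemma~\ref{lem:Endt}: since it is a ring homomorphism and membership in $U_3(\F_p)$ is tested on $H$, the whole computation reduces to bookkeeping of eigenvalues. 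By Lemma~\ref{lem:CM}, the dichotomy in the statement is exactly $\mathrm{End}(t)\simeq\F_{p^2}$ (when $t\notin C(\F_{p^6})$) versus $\mathrm{End}(t)\simeq\F_{p^6}$ (when $t\in C^0(\F_{p^6})$); by Proposition~\ref{prop:GM}(a) and~\eqref{eq:defGM2} this is precisely the constraint the block $A$ of a polarised element satisfies.

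For part (1), $\mathrm{End}(t)$ is a one-dimensional $\F_{p^2}$-algebra containing the scalar subalgebra $\F_{p^2}\cdot\mathbb{I}_3$, hence $\mathrm{End}(t)=\F_{p^2}\cdot\mathbb{I}_3$ with $\alpha_{c\mathbb{I}_3}=c$. A scalar $c\mathbb{I}_3$ lies in $U_3(\F_p)$ iff $c^{p+1}=1$, giving $\mathrm{End}(t)\cap U_3(\F_p)\simeq\{c\in\F_{p^2}:c^{p+1}=1\}$, as claimed.

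For part (2), I use the explicit isomorphism $\F_{p^6}\isoto\mathrm{End}(t)$, $\alpha\mapsto A_\alpha=T\,\mathrm{diag}(\alpha,\alpha^{p^2},\alpha^{p^4})\,T^{-1}$ with $T=(t,t^{(p^2)},t^{(p^4)})$, from the proof of Lemma~\ref{lem:CM}. The columns $v_1=t,\,v_2=t^{(p^2)},\,v_3=t^{(p^4)}$ of $T$ form an eigenbasis, $A_\alpha v_j=\alpha^{p^{2(j-1)}}v_j$, and $T$ is invertible by Lemma~\ref{lem:CFp2}. I then compute the Gram matrix $G=(H(v_i,v_j))_{i,j}=T^T T^{(p)}$. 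Reducing each exponent $p^a+p^b$ modulo $p^6-1$ (legitimate since $t_l\in\F_{p^6}$) and using the Fermat relation $\sum_l t_l^{p+1}=0$ together with its Frobenius twists, every entry whose residue pair $\{a,b\}$ is consecutive vanishes; this kills the diagonal and the $(1,3),(2,1),(3,2)$ entries, leaving $G_{12}=w$, $G_{23}=w^{p^2}$, $G_{31}=w^{p}$ where $w:=\sum_l t_l^{1+p^3}$. Since $\det G=\det(T)^{1+p}\neq 0$, we get $w\neq 0$. Now $A_\alpha\in U_3(\F_p)$ iff $H(A_\alpha v_i,A_\alpha v_j)=H(v_i,v_j)$ for all $i,j$, i.e. $\alpha^{p^{2(i-1)}}(\alpha^{p^{2(j-1)}})^{p}\,G_{ij}=G_{ij}$; on the three surviving entries this reads $\alpha^{1+p^3}=1$ (the other two, $(\alpha^{1+p^3})^{p^2}=1$ and $(\alpha^{1+p^3})^{p}=1$, being consequences). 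Hence $\mathrm{End}(t)\cap U_3(\F_p)\simeq\{\alpha\in\F_{p^6}:\alpha^{p^3+1}=1\}$. Equivalently, the same $G$ shows that the anti-involution $A\mapsto(A^{(p)})^T$ cutting out $U_3(\F_p)$ restricts on $\mathrm{End}(t)$ to $A_\alpha\mapsto A_{\alpha^{p^3}}$, so that $A_\alpha\in U_3(\F_p)$ iff $A_\alpha A_{\alpha^{p^3}}=A_{\alpha^{1+p^3}}=\mathbb{I}_3$.

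The one genuinely computational step, and the main obstacle, is the evaluation of $G$ in part (2): determining exactly which of the sums $\sum_l t_l^{p^a+p^b}$ vanish and identifying the three survivors as $w,w^{p},w^{p^2}$. This is precisely where the maximality input $t\in C(\F_{p^6})$ (i.e. $t_l^{p^6}=t_l$, from Lemma~\ref{lem:CM}) enters, making a pair such as $\{0,5\}$ count as consecutive modulo $p^6-1$ so that the corresponding entry drops out. Once $G$ has this cyclic anti-diagonal shape, nondegeneracy of $H$ forces $w\neq0$ and the unitarity condition collapses to the single norm relation $\alpha^{p^3+1}=1$; the remaining verifications are immediate.
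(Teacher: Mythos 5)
Your proof is correct. Part (1) is the paper's argument verbatim: $\mathrm{End}(t)=\F_{p^2}\cdot\mathbb{I}_3$ and a scalar is unitary iff its norm is $1$. For part (2) the computational core is identical to the paper's — the sums $\sum_l t_l^{p^a+p^b}$ with $\{a,b\}$ consecutive modulo $6$ vanish by the Fermat relation and its Frobenius twists, leaving only $w=\sum_l t_l^{p^3+1}$ and its conjugates — but you package it differently. The paper proves necessity of $\alpha^{p^3+1}=1$ by a separate eigenvalue argument (unitarity makes $A^{(p)T}=A^{-1}$, forcing $\alpha^{-1}\in\{\alpha^p,\alpha^{p^3},\alpha^{p^5}\}$, and each case yields $\alpha^{p^3+1}=1$), and proves sufficiency by computing $AA^{(p)T}$ for $\alpha$ satisfying the norm condition and reducing to the case $\alpha=1$. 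You instead compute the Gram matrix $G=T^TT^{(p)}$ once and read off both directions from the single relation $\alpha^{1+p^3}G_{12}=G_{12}$, with $\det G=\det(T)^{1+p}\neq 0$ guaranteeing $G_{12}=w\neq 0$. Your version is marginally more self-contained — the nonvanishing of $w$ (the paper's $s$) is made explicit rather than being implicit in the appearance of $s^{-1}$ in the paper's formula — at the cost of forgoing the slicker eigenvalue argument for the forward implication; both routes are sound.
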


\begin{proof}
\begin{enumerate}
\item This follows since a diagonal matrix $\alpha \mathbb{I}_3$ with $\alpha \in \mathbb{F}_{p^2}$ is unitary if and only if $\alpha^{p+1} = 1$.
\item Take any $A \in \mathrm{End}(t) \cap U_3(\mathbb{F}_p)$.
The eigenvalues of $A^{(p)T}$ are $\alpha^{p}, \alpha^{p^3}, \alpha^{p^5}$ where $\alpha$ is the eigenvalue of $A$.
As $A$ is unitary, $\alpha^{-1}$ is also an eigenvalue, so we have $\alpha^{-1} \in \{ \alpha^{p}, \alpha^{p^3}, \alpha^{p^5} \}$.
In each case, we have $\alpha^{p^3+1} = 1$. 

For the converse, choose any $\alpha \in \mathbb{F}_{p^6}$ such that $\alpha^{p^3+1} = 1$.
By the proof of Lemma~\ref{lem:Endt}, the corresponding $A \in \mathrm{End}(t)$ is given by 
\begin{align*}
A = (t, t^{(p^2)}, t^{(p^4)}){\rm diag}(\alpha, \alpha^{p^2}, \alpha^{p^4})(t, t^{(p^2)}, t^{(p^4)})^{-1}.
\end{align*}
We compute that 
\begin{align*}
AA^{(p)T} = (t, t^{(p^2)}, t^{(p^4)})
\begin{pmatrix}
 & s^{-1} & \\
 & & s^{-p^2} \\
s^{-p} & &
\end{pmatrix}
(t^{(p)}, t^{(p^3)}, t^{(p^5)})^T
\end{align*}
where $s = t_{1}^{p^3+1}+t_{2}^{p^3+1}+t_{3}^{p^3+1}$.
That is, $AA^{(p)T}$ is independent of $\alpha$.
By the case $\alpha = 1$, we have $AA^{(p)T} = 1$. 
\end{enumerate}\end{proof}

Suppose now that we have $g \in G_{(M_2,\langle, \rangle_2)}$ of the form \eqref{eq:matrixg} preserving $M_1/pM_2$, i.e., we have $A \in  \mathrm{End}(t) \cap U_3(\mathbb{F}_p)$ by Lemma \ref{lem:EndtintU}. We now determine the conditions on $B$ so that $g$ also preserves $M/pM_2$, i.e., so that $g \in G_{(M,\langle , \rangle)}$. By \eqref{eq:defGM2}, $B$ satisfies a symmetric condition.

Let $S_3(\mathbb{F}_{p^2})A$ (for $A \in  \mathrm{End}(t) \cap U_3(\mathbb{F}_p)$ as above) be the $\mathbb{F}_{p^2}$-vector space consisting of matrices of the form $SA$ for some $S \in S_3(\mathbb{F}_{p^2})$.
Define a homomorphism of $\mathbb{F}_{p^2}$-vector spaces
\begin{equation}\label{eq:psi}
\begin{split}
\psi _{t, A} : & S_3(\mathbb{F}_{p^2})A \to k \\
& SA \mapsto \text{ the $(1,1)$-component of } \mathbb{T}^{-1} SA \mathbb{T}.
\end{split}
\end{equation}
Similarly define a homomorphism
\begin{equation}\label{eq:psix}
\begin{split}
\psi _{t} : & S_3(\mathbb{F}_{p^2}) \to k \\
& S \mapsto \text{ the $(1,1)$-component of } \mathbb{T}^{-1} S \mathbb{T}.
\end{split}
\end{equation}
Using these notations, we have the following proposition. 

\begin{proposition}\label{prop:GM2intGM}
The group $G_{(M, \langle , \rangle)}$ consists of the matrices of the form 
\[
\begin{pmatrix}
A & 0 \\
SA & A^{(p)}
\end{pmatrix}
\]
satisfying the following conditions:
\begin{enumerate}
\item $A \in \mathrm{End}(t) \cap U_3(\mathbb{F}_p)$ with eigenvalue $\alpha$;
\item $S \in S_3(\mathbb{F}_{p^2})$ is a symmetric matrix; and
\item $\psi _{t, A}(SA) = u_{2}u_{1}^{-1}(\alpha - \alpha^{p^3})$.
\end{enumerate}
The third condition is equivalent to 
\begin{itemize}
\item[(3')] $\psi _{t} (S) = u_{2}u_{1}^{-1}(1 - \alpha^{p^3 -1})$.
\end{itemize}
\end{proposition}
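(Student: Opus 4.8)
The plan is to obtain the description of $G_{(M,\langle , \rangle)}$ by intersecting the already-known group $G_{(M_2,\langle , \rangle_2)}$ with the stabiliser of $M/pM_2$, reading off conditions (1)--(3) from results proved earlier, and then to treat the equivalence of (3) and (3') as the one real computation. I would begin with an arbitrary $g\in G_{(M_2,\langle , \rangle_2)}$ written in the block form \eqref{eq:matrixg} as $g=\left(\begin{smallmatrix} A & 0\\ B & A^{(p)}\end{smallmatrix}\right)$. By Lemma~\ref{lem:sizeGM2pol} this already encodes $A\in U_3(\F_p)$ and $B^T\overline{A}\in S_3(\F_{p^2})$. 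Since any $g$ preserving $M/pM_2$ also preserves the two-dimensional quotient $M_1/pM_2$, Proposition~\ref{prop:GM}(a) forces $A\cdot t=\alpha t$ for some $\alpha\in k$, i.e.\ $A\in\mathrm{End}(t)$; combined with unitarity this is exactly condition (1), and Lemma~\ref{lem:EndtintU} identifies the possible eigenvalues $\alpha$.

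Next I would reparametrise $B=SA$ with $S:=BA^{-1}$ and translate the two remaining constraints. Using $A^T\overline{A}=\mathbb{I}_3$ (equivalently $\overline{A}=A^{-T}$ and $\overline{A}^T=A^{-1}$), the polarisation condition $B^T\overline{A}=\overline{A}^T B$ of \eqref{eq:defGM2} becomes $(A^{-1}SA)^T=A^{-1}SA$; for $A\in\mathrm{End}(t)\cap U_3(\F_p)$ I would check that this is equivalent to $S\in S_3(\F_{p^2})$, giving condition (2). This is immediate in the generic case, where Lemma~\ref{lem:EndtintU} makes $A=\alpha\mathbb{I}_3$ a scalar so that conjugation by $A$ is trivial; for the CM points $t\in C(\F_{p^6})$ I would use the explicit form of $A$ from Lemma~\ref{lem:EndtintU} and Lemma~\ref{lem:CM} to see that conjugation by $A$ still preserves symmetric matrices. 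Condition (3) is then nothing but Proposition~\ref{prop:GM}(b): the $(1,1)$-entry of $\mathbb{T}^{-1}B\mathbb{T}=\mathbb{T}^{-1}SA\mathbb{T}$ must equal $u_2u_1^{-1}(\alpha-\alpha^{p^3})$, which is the definition \eqref{eq:psi} of $\psi_{t,A}(SA)$. Running the argument in both directions yields the claimed characterisation of $G_{(M,\langle , \rangle)}$.

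For the equivalence (3)$\Leftrightarrow$(3') I would factor $\mathbb{T}^{-1}SA\mathbb{T}=(\mathbb{T}^{-1}S\mathbb{T})(\mathbb{T}^{-1}A\mathbb{T})$ and exploit that $t$ is the first column of $\mathbb{T}$. Since $A t=\alpha t$, the first column of $\mathbb{T}^{-1}A\mathbb{T}$ is $(\alpha,0,0)^T$, as already recorded in \eqref{eq:comp}. Hence the $(1,1)$-entry of the product is $\alpha$ times the $(1,1)$-entry of $\mathbb{T}^{-1}S\mathbb{T}$, that is $\psi_{t,A}(SA)=\alpha\,\psi_t(S)$. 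As $A$ is invertible we have $\alpha\neq 0$, so dividing the identity in (3) by $\alpha$ converts it into $\psi_t(S)=u_2u_1^{-1}(1-\alpha^{p^3-1})$, which is precisely (3').

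The clean part is this last eigenvalue computation, which drops out as soon as the change of basis $\mathbb{T}$ is in place. I expect the main obstacle to be the symmetry bookkeeping in the second step: namely verifying, under the right-multiplication parametrisation $B=SA$, that the condition cut out by \eqref{eq:defGM2} really is the symmetry of $S$ rather than of some conjugate of it, and in particular confirming this for the non-scalar matrices $A$ attached to the CM points $t\in C(\F_{p^6})$.
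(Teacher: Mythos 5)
Your argument is essentially the paper's own proof: it combines Proposition~\ref{prop:GM} with the polarisation condition of \eqref{eq:defGM2} to obtain conditions (1)--(3), and derives (3)$\Leftrightarrow$(3') from the identity $\psi_{t,A}(SA)=\alpha\,\psi_t(S)$, which the paper packages as the commutative diagram \eqref{eq:diagpsi} and you obtain via the equivalent factorisation $\mathbb{T}^{-1}SA\mathbb{T}=(\mathbb{T}^{-1}S\mathbb{T})(\mathbb{T}^{-1}A\mathbb{T})$ together with the first column of \eqref{eq:comp}. The symmetry bookkeeping you single out as the main obstacle --- whether the condition $B^T\overline{A}=\overline{A}^TB$ amounts to symmetry of $S=BA^{-1}$ itself or only of the conjugate $A^{-1}SA$ when $A$ is the non-scalar matrix attached to a point of $C^0(\F_{p^6})$ --- is not addressed in the paper's proof either, which simply asserts that the condition is the symmetry of $BA^{-1}$, so your proposal is, if anything, more explicit on that point.
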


\begin{proof}
It follows from \eqref{eq:defGMpol} and Proposition \ref{prop:GM} that for $A \in \mathrm{End}(t) \cap U_3(\mathbb{F}_p)$ with eigenvalue $\alpha$, the matrix 
$
\begin{pmatrix}
A & 0 \\
B & A^{(p)} 
\end{pmatrix}
$
is an element of $G_{(M,\langle,\rangle_2)} \cap G_{(M,\langle, \rangle)}$ if and only if $BA^{-1}$ is a symmetric matrix and the $(1,1)$-component of the matrix $\mathbb{T}^{-1} B \mathbb{T}$ is $u_2u_1^{-1}(\alpha - \alpha^{p^3})$. 
The latter condition amounts to Condition (3) (and (3')) by noticing that since $\mathbb{T}^{-1} A \mathbb{T}$ is of the form 
\[
 \begin{pmatrix}
  \alpha & \ast & \ast \\
              & \ast & \ast \\
              & \ast & \ast 
 \end{pmatrix}
 \] 
where $\alpha$ is the eigenvalue of $A$, we have a commutative diagram
\begin{equation}\label{eq:diagpsi}
\begin{tikzcd}
\
S_3(\mathbb{F}_{p^2}) \rar{\psi_x}\dar{\cdot A} & k \dar{\cdot \alpha} \\
S_3(\mathbb{F}_{p^2})A \rar{\psi_{t,A}} & k
\
\end{tikzcd},
\end{equation}
where the left vertical arrow is multiplying $A$ from the right and the right vertical arrow is multiplying with $\alpha$.
\end{proof}

The following corollary follows immediately from Proposition \ref{prop:GM2intGM} and summarises the results in this subsection.

\begin{corollary}\label{cor:GM2intGMsize}
We have
\begin{equation}\label{eq:GM2intGMsize}
\vert G_{(M, \langle , \rangle)} \vert = \vert \{ A \in \mathrm{End}(t) \cap U_3(\mathbb{F}_p) : u_{2}u_{1}^{-1}(1 - \alpha^{p^3 -1}) \in \mathrm{Im}(\psi_t)  \} \vert \cdot \vert \ker(\psi_t) \vert.
\end{equation}
\end{corollary}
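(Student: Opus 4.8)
The plan is to read the cardinality directly off the parametrization supplied by Proposition~\ref{prop:GM2intGM}. First I would observe that the assignment
\[
(A,S)\longmapsto \begin{pmatrix} A & 0 \\ SA & A^{(p)} \end{pmatrix}
\]
is a bijection between $G_{(M,\langle,\rangle)}$ and the set of pairs $(A,S)$ with $A\in\mathrm{End}(t)\cap U_3(\mathbb{F}_p)$ having eigenvalue $\alpha$, and $S\in S_3(\mathbb{F}_{p^2})$ subject to condition (3$'$), namely $\psi_t(S)=u_2u_1^{-1}(1-\alpha^{p^3-1})$. Injectivity is immediate; and since every $A\in U_3(\mathbb{F}_p)$ is invertible, one recovers $S=(SA)A^{-1}$ from the lower-left block, so the assignment is genuinely a bijection onto the described set of pairs.

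Next I would count by fibering over the choice of $A$. For a fixed admissible $A$ with eigenvalue $\alpha$, put $c_A:=u_2u_1^{-1}(1-\alpha^{p^3-1})\in k$; the admissible matrices $S$ are then exactly the elements of the fiber $\psi_t^{-1}(c_A)$. The key point is that $\psi_t\colon S_3(\mathbb{F}_{p^2})\to k$ is $\mathbb{F}_{p^2}$-linear, being the composite of the linear map $S\mapsto \mathbb{T}^{-1}S\mathbb{T}$ with extraction of the $(1,1)$-entry. Hence $\psi_t^{-1}(c_A)$ is empty when $c_A\notin\mathrm{Im}(\psi_t)$, and is a coset of $\ker(\psi_t)$, of cardinality $\vert\ker(\psi_t)\vert$, when $c_A\in\mathrm{Im}(\psi_t)$.

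Summing these fiber sizes over all $A\in\mathrm{End}(t)\cap U_3(\mathbb{F}_p)$ then gives
\[
\vert G_{(M,\langle,\rangle)}\vert=\sum_{A}\vert\psi_t^{-1}(c_A)\vert=\vert\{A:c_A\in\mathrm{Im}(\psi_t)\}\vert\cdot\vert\ker(\psi_t)\vert,
\]
which is precisely the asserted formula. I do not expect any real obstacle here: the entire content is the observation that the constraint on $S$ is a single inhomogeneous $\mathbb{F}_{p^2}$-linear equation, so that each nonempty fiber has the uniform size $\vert\ker(\psi_t)\vert$ and the count collapses to deciding for how many $A$ the required value $c_A$ lands in the image of $\psi_t$.
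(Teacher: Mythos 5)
Your proposal is correct and is exactly the argument the paper intends: the paper simply states that the corollary "follows immediately from Proposition~\ref{prop:GM2intGM}", and your write-up fills in the two routine details (the bijection with pairs $(A,S)$, using invertibility of $A$ to recover $S$, and the fact that each nonempty fiber of the $\mathbb{F}_{p^2}$-linear map $\psi_t$ is a coset of $\ker(\psi_t)$). No discrepancy with the paper's approach.
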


\subsection{Analysing $\boldsymbol{\mathrm{Im}(\psi_t)}$ and $\boldsymbol{\ker(\psi_t)}$}\

In the following subsection, we will make Corollary \ref{cor:GM2intGMsize} more explicit by analysing the image and kernel of the homomorphism $\psi_t$.

\begin{definition}\label{def:dx}
In the notation as above, we set
\begin{equation}\label{eq:defdx}
d(t) := \dim_{\mathbb{F}_{p^2}}({\rm Im}(\psi _{t})).
\end{equation}
\end{definition}
As $\dim_{\mathbb{F}_{p^2}}(S_3(\mathbb{F}_{p^2})) = 6$, we see that $d(t) \leq 6$, and that 
\begin{equation}\label{eq:sizeker}
\vert \ker(\psi _{t}) \vert = p^{2(6-d(t))}.
\end{equation}
We prove the following precise result about the values of $d(t)$.

\begin{proposition}\label{prop:dx}
We have $3 \leq d(t) \leq 6$.
When $p = 2$, we have $d(t) = 3$. Let $v = (t_1^2,t_2^2, t_3^2,t_1t_2, t_1t_3, t_2t_3)$ and let 
\[
\Delta = \left\{ \det\left(v^T, (v^{(p^2)})^T, (v^{(p^4)})^T, \ldots, (v^{(p^{10})})^T\right) = 0 \right\}.
\]
When $p \neq 2$, we have:
\begin{equation}\label{eq:dtvalues}
\begin{split}
d(t) = 3  \qquad \text{ if and only if } &  \qquad t \in C^0(\mathbb{F}_{p^6}); \\
d(t) = 4   \qquad \text{ if and only if } &  \qquad t \in C^0(\mathbb{F}_{p^8}); \\
d(t) = 5   \qquad\text{ if and only if } &  \qquad t \in \Delta \cap C^0 \setminus \left( C^0(\mathbb{F}_{p^6}) \amalg C^0(\mathbb{F}_{p^8})  \right);\\
d(t) = 6   \qquad \text{ if and only if } &  \qquad t \not\in \Delta \cap C^0.\\
\end{split}
\end{equation}
\end{proposition}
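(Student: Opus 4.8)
The plan is to first reduce the computation of $d(t)=\dim_{\mathbb{F}_{p^2}}\mathrm{Im}(\psi_t)$ to an elementary statement about the $\mathbb{F}_{p^2}$-span of the degree-two monomials in the coordinates of $t$, and then to extract the four cases from the field of definition of $t$ together with the determinantal locus $\Delta$.

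First I would make $\psi_t$ explicit. The first row of $\mathbb{T}^{-1}$ is $(\det\mathbb{T})^{-1}\bigl(t^{(p)}\times t^{(p^{-1})}\bigr)^T$, so $\psi_t(S)=(\det\mathbb{T})^{-1}\,z^T S\,t$ with $z:=t^{(p)}\times t^{(p^{-1})}$. The crucial point is that $t$ lies on the Fermat curve: the equation $\sum_i t_i^{p+1}=0$ says precisely that $\langle t,t^{(p)}\rangle=\sum_i t_i^{p+1}=0$ and, taking $p^{-1}$-th powers, $\langle t,t^{(p^{-1})}\rangle=\bigl(\sum_i t_i^{p+1}\bigr)^{1/p}=0$ for the standard bilinear form $\langle a,b\rangle=a^Tb$. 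Since $z$ is by construction orthogonal to $t^{(p)}$ and $t^{(p^{-1})}$, and these are linearly independent (Lemma~\ref{lem:CFp2}), their orthogonal complement is one-dimensional; hence $z=\lambda t$ for some $\lambda\in k^\times$. Thus $\psi_t(S)=(\lambda/\det\mathbb{T})\,t^T S\,t$ is, up to a nonzero scalar, the conic evaluation $S\mapsto q_S(t):=t^TSt$, and therefore
\[
d(t)=\dim_{\mathbb{F}_{p^2}}\Span_{\mathbb{F}_{p^2}}\{\,t_it_j:1\le i\le j\le 3\,\}.
\]
Two consequences are immediate. When $p=2$ the cross terms of $q_S(t)$ disappear, so the image is $\Span_{\mathbb{F}_{p^2}}\{t_1^2,t_2^2,t_3^2\}$, which is $3$-dimensional because $t_1,t_2,t_3$ are $\mathbb{F}_{p^2}$-independent; hence $d(t)=3$. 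In general $\{t_1^2,t_1t_2,t_1t_3\}=t_1\cdot\{t_1,t_2,t_3\}$ is $3$-dimensional (all $t_i\neq 0$, and multiplication by $t_1$ is $\mathbb{F}_{p^2}$-linear and injective), giving the lower bound $d(t)\ge 3$.

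Next I would treat $p\neq 2$. If $t\in C(\mathbb{F}_{p^{2m}})$, scaling a representative into $\mathbb{F}_{p^{2m}}$ puts every $t_it_j$ in $\mathbb{F}_{p^{2m}}$, so $d(t)\le m$; in particular $d(t)\le 3$ on $C^0(\mathbb{F}_{p^6})$ and $d(t)\le 4$ on $C^0(\mathbb{F}_{p^8})$. Writing $v$ for the Veronese vector, the Moore determinant theorem identifies $d(t)=\dim_{\mathbb{F}_{p^2}}\Span_{\mathbb{F}_{p^2}}\{v_1,\dots,v_6\}$ with the rank of $\bigl(v^T,(v^{(p^2)})^T,\dots,(v^{(p^{10})})^T\bigr)$; hence $d(t)=6$ exactly when its determinant is nonzero, i.e. when $t\notin\Delta$. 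For the bottom case, suppose $d(t)=3$ and put $V=\Span_{\mathbb{F}_{p^2}}\{t_1,t_2,t_3\}$. Then the $3$-dimensional space $V\!\cdot\!V$ contains the $3$-dimensional space $t_1V$, so $V\!\cdot\!V=t_1V$; writing $W=t_1^{-1}V\ni 1$ this reads $WV=V$, whence $W^nV=V$ for all $n$ and $R:=\bigcup_n W^n$ is a finite-dimensional $\mathbb{F}_{p^2}$-subalgebra of $k$, hence a field, with $RV=V$. Since $R\supseteq W$ is at least $3$-dimensional while $[R:\mathbb{F}_{p^2}]\mid\dim_{\mathbb{F}_{p^2}}V=3$ (as $V$ is then an $R$-vector space), we get $R=\mathbb{F}_{p^6}$ and $V=R\,t_1$, so $t\in C^0(\mathbb{F}_{p^6})$. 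This proves $d(t)=3\iff t\in C^0(\mathbb{F}_{p^6})$. Using $d(t)\ge 3$ and $C(\mathbb{F}_{p^6})\cap C(\mathbb{F}_{p^8})=C(\mathbb{F}_{p^2})$ (Lemma~\ref{lem:Cmaxmim}), the case $d(t)=4$ then becomes $t\in C^0(\mathbb{F}_{p^8})$, and $d(t)=5$ is forced on the remainder of $\Delta\cap C^0$.

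The step I expect to be the main obstacle is the converse for $d(t)=4$: one must show that the Veronese Frobenius orbit of a Fermat point can drop to rank exactly $4$ only over $\mathbb{F}_{p^8}$, and never for points of larger field of definition lying on $\Delta$. In other words, one has to exclude accidental intermediate degenerations of the Moore matrix of $v$, so that the only sub-generic ranks are $3$ on $C^0(\mathbb{F}_{p^6})$, $4$ on $C^0(\mathbb{F}_{p^8})$, and $5$ elsewhere on $\Delta$. This seems to require the Fermat relation beyond the scalar identity $z=\lambda t$, and is precisely the kind of set-theoretic intersection analysis deferred to the Appendix; once it is in place, matching the remaining dimension values to the loci is the purely formal bookkeeping indicated above.
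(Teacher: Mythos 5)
Your reduction of $\psi_t$ to the conic evaluation $S\mapsto t^TSt$ (via the observation that the first row of $\mathbb{T}^{-1}$ is proportional to $t^{(p)}\times t^{(p^{-1})}$, which the Fermat relation forces to be proportional to $t$) is correct, and is in fact a slicker route to the paper's key computation: the paper's Lemma~\ref{lem:wi} verifies the relations $w_1=t_1^2w_3$, $w_4=2t_1t_2w_3$, $w_5=2t_1w_3$, etc.\ entry by entry, using $\sum_i t_i^{p+1}=\sum_i t_i^{p^{-1}+1}=0$ in exactly the way your orthogonality argument packages them. Everything you deduce from that reduction --- $d(t)=3$ for $p=2$, the lower bound $d(t)\ge 3$, the upper bound $d(t)\le m$ on $C^0(\mathbb{F}_{p^{2m}})$, the Moore-determinant characterisation of $d(t)=6$ via $\Delta$, and the subalgebra argument giving $d(t)=3\Rightarrow t\in C^0(\mathbb{F}_{p^6})$ --- is sound and agrees in substance with the paper's proof.

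However, the step you flag as the ``main obstacle'' is a genuine gap, and it is load-bearing: without the implication $d(t)\le 4\Rightarrow \deg_{\mathbb{F}_{p^2}}(t)\le 4$ you can conclude neither the $d(t)=4$ characterisation nor, consequently, the $d(t)=5$ one (the latter is ``forced'' on the remainder of $\Delta\cap C^0$ only once you know $d(t)\le 4$ cannot occur there). Your guesses about the resolution are also off: it is not deferred to the Appendix (which studies the finer structure of $C\cap\Delta$ on the $d(t)=5$ stratum), and it needs nothing from the Fermat equation beyond Lemma~\ref{lem:CFp2}. The paper closes the gap with a short B\'ezout argument: if $d(t)\le 4$, the kernel of the evaluation map $\mathbb{F}_{p^2}[X_1,X_2,X_3]_2\to k$ at $t$ has dimension $\ge 2$, so there are two $\mathbb{F}_{p^2}$-linearly independent conics $Q_1,Q_2$ through $t$; they cannot share a component, since a common component would be an $\mathbb{F}_{p^2}$-line and $Q_1$ would then be a union of two $\mathbb{F}_{p^2}$-lines, forcing $t$ onto an $\mathbb{F}_{p^2}$-line and contradicting the $\mathbb{F}_{p^2}$-independence of $t_1,t_2,t_3$; hence $\vert Q_1\cap Q_2\vert\le 4$ by B\'ezout, and since $Q_1\cap Q_2$ contains the whole Galois orbit of $t$ over $\mathbb{F}_{p^2}$, one gets $\deg_{\mathbb{F}_{p^2}}(t)\le 4$, whence $t\in C^0(\mathbb{F}_{p^6})\amalg C^0(\mathbb{F}_{p^8})$ because $C^0(\mathbb{F}_{p^4})=\emptyset$. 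You need to supply this (or an equivalent) argument for your proof to be complete.
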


\begin{proof}
Since $t \in C^0(k)$, we see that $t_i \neq 0$, and without loss of generality we assume that $t_3 = 1$.
For $1 \leq i, j \leq 3$, let $I_{ij} $ be the three-by-three matrix whose $(i, j)$-component is one and where all other entries are zero.
Then $I_{11}, I_{22}, I_{33}, I_{12}+ I_{21}, I_{13} + I_{31}, I_{23} + I_{32} $ is a basis for $S_3(\mathbb{F}_{p^2})$ over $\mathbb{F}_{p^2}$.
We set 
\begin{equation}\label{eq:wi}
\begin{split}
&w_1 = \psi_t(I_{11}), w_2 = \psi_t(I_{22}), w_3 = \psi_t(I_{33}), \\
&w_4 = \psi_t(I_{12}+I_{21}), w_5 = \psi_t(I_{13}+I_{31}), w_6 = \psi_t(I_{23}+I_{32}). 
\end{split}
\end{equation}
\begin{lemma}\label{lem:wi}
The $w_i$ in \eqref{eq:wi} satisfy the following relations:
\begin{flalign*}
&w_1=t_1^2w_3 , \qquad w_2=t_2^2w_3 , \\
&w_4 = 2t_1t_2w_3, \\
&w_5 = 2t_1w_3, \qquad w_6 = 2t_2w_3,
\end{flalign*}
and $w_3$ is not zero.
\end{lemma}
\begin{proof}[Proof of lemma]
The inverse matrix of $\mathbb{T}$ is
\[
\mathbb{T}^{-1}=\det(\mathbb{T})^{-1}
\begin{pmatrix}
t_2^p-t_2^{p^{-1}} & t_1^{p^{-1}}-t_1^p & t_1^pt_2^{p^{-1}}-t_1^{p^{-1}}t_2^p \\
t_2^{p^{-1}}-t_2 & t_1-t_1^{p^{-1}} & t_1^{p^{-1}}t_2-t_1t_2^{p^{-1}} \\
t_2^p-t_2 & t_1-t_1^p & t_1^pt_2-t_1t_2^p 
\end{pmatrix}.
\]
Since for any matrices $M=(m_{ij})$, $N=(n_{ij})$ and $L=(l_{ij})$ the $(1, 1)$-component of $MNL$ is given by $\sum_{i, j}m_{1i}n_{ij}l_{j1}$, we have
\begin{align*}
w_1=\det(\mathbb{T})^{-1}(t_2^p-t_2^{p^{-1}})t_1; \\
w_2=\det(\mathbb{T})^{-1}(t_1^{p^{-1}}-t_1^p)t_2.
\end{align*}
Furthermore, $w_3$ is given by
\begin{align*}
w_3
&= \det(\mathbb{T})^{-1}(t_1^pt_2^{p^{-1}}-t_1^{p^{-1}}t_2^p) \\
&=\det(\mathbb{T})^{-1}t_1^{-1}(t_1^{p+1}t_2^{p^{-1}}-t_1^{p^{-1}+1}t_2^p) \\
&=\det(\mathbb{T})^{-1}t_1^{-1}(t_2^p-t_2^{p^{-1}}).
\end{align*}
For the last equality, we used equations $t_1^{p+1}+t_2^{p+1}+1=0$ and $t_1^{p^{-1}+1}+t_2^{p^{-1}+1}+1=0$.
Similarly, we see that $w_3=\det(\mathbb{T})^{-1}t_2^{-1}(t_1^{p^{-1}}-t_1^p)$.
These computations imply the first two relations of the assertion, and since $t_1, t_2 \not \in \mathbb{F}_{p^2}$, we see that $w_3$ is not zero.
Furthermore, we compute that
\begin{align*}
w_4
&=\det(\mathbb{T})^{-1}((t_2^p-t_2^{p^{-1}})t_2+(t_1^{p^{-1}}-t_1^p)t_1) \\
&=\det(\mathbb{T})^{-1}(t_2^{p+1}-t_2^{p^{-1}+1}+t_1^{p^{-1}+1}-t_1^{p+1}) \\
&=2\det(\mathbb{T})^{-1}t_2(t_2^p-t_2^{p^{-1}}); \\
w_5
&=\det(\mathbb{T})^{-1} ((t_2^p-t_2^{p^{-1}})+(t_1^pt_2^{p^{-1}}-t_1^{p^{-1}}t_2^p)t_1) \\
&=\det(\mathbb{T})^{-1}(t_2^p-t_2^{p^{-1}}+t_1^{p+1}t_2^{p^{-1}}-t_1^{p^{-1}+1}t_2^p) \\
&=2\det(\mathbb{T})^{-1}(t_2^p-t_2^{p^{-1}}).
\end{align*}
Similarly, we see that $w_6=2\det(\mathbb{T})^{-1}(t_1^{p^{-1}}-t_1^p)$, so we obtain the remaining relations.
\end{proof}
When $p \neq 2$, we see from Lemma \ref{lem:wi} that
\begin{align*}
d(t) = \dim_{\mathbb{F}_{p^2}}\langle w_1, w_2, w_3, w_4, w_5, w_6 \rangle = \dim _{\mathbb{F}_{p^2}}\langle 1, t_1 , t_2, t_1t_2, t_1^2, t_2^2 \rangle.
\end{align*}
In particular, this implies that
\begin{align*}
d(t) \geq \dim _{\mathbb{F}_{p^2}}\langle w_3, w_5, w_6 \rangle = \dim _{\mathbb{F}_{p^2}}\langle 1, t_1 , t_2 \rangle =3.
\end{align*}
When $p=2$, by Lemma \ref{lem:CFp2} and Lemma \ref{lem:wi}, we see that $d(t) =3$.
So assume $p \neq 2$, and consider \eqref{eq:dtvalues}.

By construction (since $t_3 = 1$), we have $t \in \Delta$ if and only if  $\dim _{\mathbb{F}_{p^2}}\langle 1, t_1 , t_2, t_1t_2, t_1^2, t_2^2 \rangle \leq 5$. 
Hence we see that $t \in \Delta \cap C^0$ if and only if $d(t) \leq 5$, which gives the required statement for $d(t) =6$.
Also note that if $d(t) \leq 5$ then there exists some conic $Q/\mathbb{F}_{p^2}$ with equation $a_1+a_2t_1+a_3t_2 + a_4t_1t_2 + a_5t_1^2 + a_6t_2^2 = 0$ such that $t \in C^0 \cap Q$.
Similarly if $d(t) \leq 4$ then there exist two independent conics
$Q_1, Q_2$ such that $t \in C^0 \cap Q_1 \cap Q_2$. In this case,
$Q_1$ and $Q_2$ do not have a common component (even defined over
$\Fpbar$). 
Otherwise, the intersection $Q_1\cap Q_2$ must be a line $L$ defined
over $\F_{p^2}$ (because we require $Q_1\neq Q_2$)
and $Q_1=L\cup L_1$ for another line $L_1$ defined over $\F_{p^2}$. 
This implies that $t\in L$ or $t\in L_1$, a contradiction by 
Lemma~\ref{lem:CFp2}.  
If $d(t) \leq 3$ there exist three independent conics $Q_1, Q_2, Q_3$
such that $t \in C^0 \cap Q_1 \cap Q_2 \cap Q_3$. 
 
If $t \in C^0(\mathbb{F}_{p^{2a}})$ then $d(t) \leq a$, i.e., if $2
\leq \deg_{\mathbb{F}_{p^2}}(t) \leq a$ then $d(t) \leq a$, for any
value of $a$. This shows in particular that if $t\in C^0(\F_{p^6})$,
then $d(t)=3$, cf.~Lemma~\ref{lem:CFp2}.  
Conversely, since $\vert Q_1 \cap Q_2 \vert \le 4$ 
by B{\'e}zout's theorem we see that if $d(t) \leq 4$ then
$\deg_{\mathbb{F}_{p^2}}(t) \leq 4$.  
That is, then $t \in C^0(\mathbb{F}_{p^8}) \cup
C^0(\mathbb{F}_{p^6})$; note that by Lemma~\ref{lem:Cmaxmim} we have
$C^0(\mathbb{F}_{p^4}) = \emptyset$. 
If $d(t)=3$, then the $\F_{p^2}$-subspace
$\<1,t_1,t_2,t_1^2,t_2^2,t_1t_2\>$ is equal to the $\F_{p^2}$-subspace
$U$ spanned by $1,t_1,t_2$. Since $t_1U\subseteq U$ and $t_2 U\subseteq
U$, the algebra $\F_{p^2}[t_1,t_2]=U$ has dimension three and
$\deg_{\F_{p^2}}(t)=3$. This implies that $d(t) = 3$ if and only if $t
\in C^0(\mathbb{F}_{p^6})$ and hence $d(t) = 4$ if and only if $t \in
C^0(\mathbb{F}_{p^8})$.  
The statement for $d(t) =5$ now follows.
\end{proof}

\begin{remark}\label{rem:d3deg3}
  We provide another proof of the implication $d(t)=3\implies
  \deg_{\F_{p^2}} (t)=3$, since this information may also be useful.
  Suppose $P_1,P_2,P_3,P_4\in \bbP^2(K)$, where $K$ is a field, are four
  distinct points not on the same line. Then the conics passing
  through them form a $\bbP^1$-family. To see this, suppose $Q$ is
  represented by $F(t)=0$, where $F(t)=a_1t_1^2+a_2t_2^2+a_3t_3^2 +
  a_4t_1t_2 + a_5t_1 t_2 + a_6t_1 t_3$. By assumption $P_1, P_2, P_3$ are not
  on the same line. Choose a coordinate for
  $\bbP^2$ over $K$ such that $P_1=(1:0:0), P_2=(0:1:0)$ and
  $P_3=(0:0:1)$. Then $a_1=a_2=a_3=0$. The point
  $P_4=(\alpha_1:\alpha_2:\alpha_3)$ satisfies 
  $(\alpha_1\alpha_2,\alpha_1\alpha_3,\alpha_2 \alpha_3)\neq
  (0,0,0)$. Thus, $F(P_4)=0$ gives a non-trivial linear relation among $a_4,a_5$, and $a_6$. 

  Suppose now $t\in C^0 \cap Q_1\cap Q_2 \cap Q_3$ with
  $\F_{p^2}$-linear independent conics $Q_1, Q_2, Q_3$. It suffices to
  prove $\vert Q_1\cap Q_2\cap Q_3\vert \le 3$. If $\vert Q_1\cap Q_2\vert \le 3$, then we are
  done. So suppose that $Q_1\cap Q_2=\{P_1,P_2,P_3,P_4\}$. If $Q_3$
  contains these four points, then $Q_3$ is a linear combination of
  $Q_1$ and $Q_2$ over some extension of $\F_{p^2}$ and by descent an
  $\F_{p^2}$-linear combination of  $Q_1$ and $Q_2$,
  contradiction. Thus, we have shown that $\vert Q_1\cap Q_2\cap Q_3\vert \le 3$. 
\end{remark}

\begin{definition}\label{def:D}
Let $\mathcal{P}_{C^0} \simeq C^0\times \mathbb{P}^1$ be the 
fibre $\mathbb{P}_{C}(\mathcal{O}(-1)\oplus
\mathcal{O}(1))\times_{C} C^0$ over $C^0$,
cf. Remark~\ref{rem:choiceoftandu}.  
For each $S \in S_3(\mathbb{F}_{p^2})$, we define a morphism 
$f_S: C^0 \to \mathcal{P}_{C^0}$  via 
the map $C^0 \ni t=(t_1:t_2:t_3) 
\mapsto
(t^{(p)}, (1:\psi_t(S)^p)) \in C^0\times \mathbb{P}^1$. 
Observe from the computation in the proof of
Proposition~\ref{prop:dx} that $\psi_t(S)$ is a polynomial function in 
$t_1^{p^{-1}},t_2^{p^{-1}}, t_3^{p^{-1}}$, and hence that $\psi_t(S)^p$ is a
polynomial function in $t_1,t_2, t_3$.   
The image of $f_S$ defines a Cartier divisor $\calD_S\subseteq \calP_{C^0}$, 
and we let $\calD$ be the horizontal divisor
\begin{align*}
\calD = \sum _{S \in S_3(\mathbb{F}_{p^2})}\calD_S.
\end{align*}
For $t \in C^0(k)$, 
let $\calD_t = \pi ^{-1}(t) \cap \calD$. That is, $(u_1:u_2) \in \calD_t$ if and only if $u_2u_1^{-1} \in \mathrm{Im}(\psi_t)$.
\end{definition}

\begin{lemma}\label{lem:imDt}
Let $t = (t_1:t_2:t_3) \in C^0(k)$.
\begin{enumerate}
\item If $t \not\in C^0(\mathbb{F}_{p^6})$, then 
         \begin{align*}
         \{ \alpha \in \mathbb{F}_{p^2}^{\times} : u_2u_1^{-1}(1-\alpha^{p^3-1}) \in \mathrm{Im}(\psi_t) \} =
        \begin{cases}
            \mathbb{F}_{p^2}^{\times} & \text{ if } (u_1:u_2) \in \calD_t; \\
            \mathbb{F}_{p}^{\times} & \text{ otherwise.}
         \end{cases}
         \end{align*}
\item If $t \in C^0(\mathbb{F}_{p^6})$, then
         \begin{align*}
         \{ \alpha \in \mathbb{F}_{p^6}^{\times} : u_2 u_1^{-1}(1-\alpha^{p^3-1}) \in \mathrm{Im}(\psi_t) \} =
        \begin{cases}
            \mathbb{F}_{p^6}^{\times} & \text{ if } (u_1:u_2) \in \calD_t; \\
            \mathbb{F}_{p^3}^{\times} & \text{ otherwise}.
         \end{cases}
         \end{align*}
\end{enumerate}
\end{lemma}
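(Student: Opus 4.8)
The plan is to reduce the whole statement to two structural facts about the $\mathbb{F}_{p^2}$-linear map $\psi_t$ from \eqref{eq:psix}: first, that $\mathrm{Im}(\psi_t)$ is an $\mathbb{F}_{p^2}$-subspace of $k$ (hence closed under multiplication by scalars in $\mathbb{F}_{p^2}$), and second, that in the case $t\in C^0(\mathbb{F}_{p^6})$ this image is in fact the subfield $\mathbb{F}_{p^6}\subseteq k$. Throughout I set $\beta:=u_2u_1^{-1}$, which is well-defined since $u_1\neq 0$ for points away from $T$ (cf.~the proof of Proposition~\ref{prop:GM}), and I recall from Definition~\ref{def:D} that $(u_1:u_2)\in\calD_t$ is precisely the condition $\beta\in\mathrm{Im}(\psi_t)$. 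In both parts the set in question consists of those $\alpha$ for which $\beta(1-\alpha^{p^3-1})\in\mathrm{Im}(\psi_t)$, so everything comes down to deciding, for each relevant $\alpha$, whether the scalar $1-\alpha^{p^3-1}$ is zero and, when nonzero, in which field it lives.

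For Part (1), where $\alpha\in\mathbb{F}_{p^2}^\times$, the key simplification is that $\alpha^{p^2}=\alpha$ forces $\alpha^{p^3-1}=\alpha^{p-1}\in\mathbb{F}_{p^2}$, and that $\alpha^{p-1}=1$ holds exactly when $\alpha\in\mathbb{F}_p^\times$. Thus $1-\alpha^{p^3-1}$ is an element of $\mathbb{F}_{p^2}$ which vanishes iff $\alpha\in\mathbb{F}_p^\times$. If $(u_1:u_2)\in\calD_t$, i.e.\ $\beta\in\mathrm{Im}(\psi_t)$, then $\beta(1-\alpha^{p^3-1})$ is an $\mathbb{F}_{p^2}$-multiple of $\beta$ and so lies in $\mathrm{Im}(\psi_t)$ for every $\alpha\in\mathbb{F}_{p^2}^\times$, giving the full group $\mathbb{F}_{p^2}^\times$. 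If instead $\beta\notin\mathrm{Im}(\psi_t)$, then $\mathbb{F}_p^\times$ still lies in the set because there the factor vanishes; conversely, for $\alpha\notin\mathbb{F}_p$ the scalar $1-\alpha^{p-1}$ is a nonzero element of $\mathbb{F}_{p^2}$, so $\beta(1-\alpha^{p-1})\in\mathrm{Im}(\psi_t)$ would, after multiplying by its inverse in $\mathbb{F}_{p^2}$ and using that $\mathrm{Im}(\psi_t)$ is an $\mathbb{F}_{p^2}$-subspace, force $\beta\in\mathrm{Im}(\psi_t)$---a contradiction. This yields exactly $\mathbb{F}_p^\times$.

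For Part (2) I would first establish the identification $\mathrm{Im}(\psi_t)=\mathbb{F}_{p^6}$. Since $t\in C^0(\mathbb{F}_{p^6})$, the coordinates $t_i$ and their twists $t_i^p,t_i^{p^{-1}}$ all lie in $\mathbb{F}_{p^6}$, so the matrix $\mathbb{T}$ of \eqref{eq:T} and its inverse have entries in $\mathbb{F}_{p^6}$; consequently $\psi_t(S)$, being the $(1,1)$-entry of $\mathbb{T}^{-1}S\mathbb{T}$ for $S\in S_3(\mathbb{F}_{p^2})$, lies in $\mathbb{F}_{p^6}$, whence $\mathrm{Im}(\psi_t)\subseteq\mathbb{F}_{p^6}$. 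By Proposition~\ref{prop:dx} we have $\dim_{\mathbb{F}_{p^2}}\mathrm{Im}(\psi_t)=d(t)=3=[\mathbb{F}_{p^6}:\mathbb{F}_{p^2}]$, forcing equality. With this in hand the argument mirrors Part (1), now using that for $\alpha\in\mathbb{F}_{p^6}^\times$ one has $\alpha^{p^3-1}\in\mathbb{F}_{p^6}$, with $1-\alpha^{p^3-1}=0$ iff $\alpha\in\mathbb{F}_{p^3}^\times$: when $\beta\in\mathbb{F}_{p^6}=\mathrm{Im}(\psi_t)$ every $\alpha$ works and the set is $\mathbb{F}_{p^6}^\times$, while when $\beta\notin\mathbb{F}_{p^6}$ only $\alpha\in\mathbb{F}_{p^3}^\times$ survive, since for $\alpha\notin\mathbb{F}_{p^3}$ the nonzero factor $1-\alpha^{p^3-1}$ is invertible in the field $\mathbb{F}_{p^6}$ and would again force $\beta\in\mathbb{F}_{p^6}$.

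The only genuinely substantive step is identifying $\mathrm{Im}(\psi_t)$ with the field $\mathbb{F}_{p^6}$ in Part (2): the ``otherwise'' branches crucially use closure under multiplication by the inverse of $1-\alpha^{p^3-1}$, which in Part (1) is supplied for free by $\mathbb{F}_{p^2}$-linearity but in Part (2) needs the full multiplicative (field) structure of $\mathbb{F}_{p^6}$ rather than merely that of an $\mathbb{F}_{p^2}$-subspace of dimension $3$. The dimension count from Proposition~\ref{prop:dx} together with the containment $\mathrm{Im}(\psi_t)\subseteq\mathbb{F}_{p^6}$ is exactly what upgrades the bare subspace to the field, and this reasoning works uniformly in $p$, including $p=2$ (where $d(t)=3$ as well).
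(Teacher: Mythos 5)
Your proposal is correct and follows essentially the same route as the paper: part (1) rests on $\mathrm{Im}(\psi_t)$ being an $\mathbb{F}_{p^2}$-subspace together with the observation that $1-\alpha^{p^3-1}=1-\alpha^{p-1}$ is a scalar in $\mathbb{F}_{p^2}$ vanishing exactly for $\alpha\in\mathbb{F}_p^\times$, and part (2) rests on upgrading $\mathrm{Im}(\psi_t)$ to the field $\mathbb{F}_{p^6}$ via the containment $\mathrm{Im}(\psi_t)\subseteq\mathbb{F}_{p^6}$ and the dimension count from Proposition~\ref{prop:dx}. The paper's proof is terser but uses exactly these two ingredients, so there is nothing to add.
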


\begin{proof}
\begin{enumerate}
\item First we note that $\mathbb{F}_{p}^{\times} \subseteq \{ \alpha \in \mathbb{F}_{p^2}^{\times} : u_2u_1^{-1}(1-\alpha^{p^3-1}) \in \mathrm{Im}(\psi_t) \}$.
Since $\mathrm{Im}(\psi_t)$ is an $\mathbb{F}_{p^2}$-vector space, we have that if $(u_1:u_2) \in \calD_t$, i.e., if $u_2u_1^{-1} \in \mathrm{Im}(\psi_t)$, then $u_2u_1^{-1}(1-\alpha^{p^3-1}) \in \mathrm{Im}(\psi_t)$ for any $\alpha \in \mathbb{F}_{p^2}^{\times}$.
Conversely if  $u_2u_1^{-1}(1-\alpha^{p^3-1}) \in \mathrm{Im}(\psi_t)$ for some $\alpha \in \mathbb{F}_{p^2} \setminus \mathbb{F}_p$, then $u_2u_1^{-1} \in \mathrm{Im}(\psi_t)$.
\item If $t \in C^0(\mathbb{F}_{p^6})$, then $\mathrm{Im}(\psi_t) \subseteq \mathbb{F}_{p^6}$.
Since $\dim_{\mathbb{F}_{p^2}}(\mathbb{F}_{p^6}) = 3$ and $d(t) \geq 3$ by Proposition~\ref{prop:dx}, we must have that $\mathrm{Im}(\psi_t) = \mathbb{F}_{p^6}$.
The proof now follows from a similar argument as in (1).
\end{enumerate}
\end{proof}

\begin{corollary}\label{cor:GbarMpol}
We have
\begin{equation*}
\begin{split}
& \{ A \in \mathrm{End}(t) \cap U_3(\mathbb{F}_p) : u_{2}u_{1}^{-1}(1 - \alpha^{p^3 -1}) \in \mathrm{Im}(\psi_t) \} \simeq  \\
 & \begin{cases}
\{ \alpha \in \mathbb{F}_{p} : \alpha^{p+1} = 1\} & \text{ if } t \notin C^0(\mathbb{F}_{p^6}) \text{ and } u \notin \calD_t; \\
\{ \alpha \in \mathbb{F}_{p^2} : \alpha^{p+1} = 1\} & \text{ if } t \notin C^0(\mathbb{F}_{p^6}) \text{ and } u \in \calD_t; \\
\{ \alpha \in \mathbb{F}_{p^3} : \alpha^{p^3+1} = 1\} & \text{ if } t \in C^0(\mathbb{F}_{p^6}) \text{ and } u \notin \calD_t; \\
\{ \alpha \in \mathbb{F}_{p^6} : \alpha^{p^3+1} = 1\} & \text{ if } t \in C^0(\mathbb{F}_{p^6}) \text{ and } u \in \calD_t.
\end{cases}
\end{split}
\end{equation*}
\end{corollary}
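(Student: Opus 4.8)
The plan is to reduce the claim to a direct intersection of the two preceding lemmas. The key observation is that, by the proof of Lemma~\ref{lem:Endt}, the assignment $A \mapsto \alpha_A$ sending a matrix $A \in \mathrm{End}(t)$ to its eigenvalue (determined by $A \cdot t = \alpha_A t$) is an injective $\F_{p^2}$-algebra homomorphism; restricted to units it is a group isomorphism of $\mathrm{End}(t)^{\times}$ onto its image. So first I would use it to identify the set in question with a set of eigenvalues $\alpha$. Under this identification, Lemma~\ref{lem:EndtintU} describes $\mathrm{End}(t) \cap U_3(\F_p)$ as the group of those $\alpha$ satisfying a norm-one condition ($\alpha^{p+1}=1$ when $t \notin C^0(\F_{p^6})$, and $\alpha^{p^3+1}=1$ when $t \in C^0(\F_{p^6})$), while Lemma~\ref{lem:imDt} pins down exactly which $\alpha$ additionally satisfy the condition $u_2u_1^{-1}(1-\alpha^{p^3-1}) \in \mathrm{Im}(\psi_t)$. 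The set I want is precisely the intersection of these two conditions, so the corollary should follow by combining them.

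Next I would split along whether or not $t \in C^0(\F_{p^6})$, and then along whether or not $u \in \calD_t$, giving the four cases in the statement. In each case I would simply intersect: for $t \notin C^0(\F_{p^6})$, Lemma~\ref{lem:imDt}(1) allows $\alpha$ to range over $\F_{p^2}^{\times}$ when $u \in \calD_t$ and over $\F_p^{\times}$ when $u \notin \calD_t$; intersecting with $\{\alpha \in \F_{p^2} : \alpha^{p+1}=1\}$ yields $\{\alpha \in \F_{p^2} : \alpha^{p+1}=1\}$ in the former subcase and $\{\alpha \in \F_p : \alpha^{p+1}=1\}$ in the latter (the value $\alpha=0$ being excluded automatically). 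Likewise, for $t \in C^0(\F_{p^6})$, Lemma~\ref{lem:imDt}(2) allows $\alpha \in \F_{p^6}^{\times}$ or $\alpha \in \F_{p^3}^{\times}$, and intersecting with $\{\alpha \in \F_{p^6} : \alpha^{p^3+1}=1\}$ gives the two remaining lines. This exhausts all four displayed cases.

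Since all the genuine work has already been carried out in Lemmas~\ref{lem:EndtintU} and~\ref{lem:imDt}, I do not anticipate a real obstacle; the corollary is essentially bookkeeping. The only point deserving care is the compatibility of the two parametrisations: I must verify that the eigenvalue $\alpha$ appearing in the unitarity condition of Lemma~\ref{lem:EndtintU} is the \emph{same} $\alpha$ that appears in the image condition $u_2u_1^{-1}(1-\alpha^{p^3-1}) \in \mathrm{Im}(\psi_t)$ of Lemma~\ref{lem:imDt}, namely the eigenvalue of the single matrix $A$ via $A \cdot t = \alpha t$. Once this is confirmed, intersecting the two descriptions genuinely cuts out the desired subgroup of eigenvalues, and the four cases follow immediately.
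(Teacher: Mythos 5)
Your proposal is correct and matches the paper's own proof, which consists precisely of the observation that the corollary follows by combining Lemma~\ref{lem:EndtintU} with Lemma~\ref{lem:imDt}. The extra care you take in checking that the eigenvalue $\alpha$ in the unitarity condition is the same $\alpha$ appearing in the image condition is a sensible point to verify, and the four-way case split is exactly the intended bookkeeping.
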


\begin{proof}
This follows from combining Lemma \ref{lem:EndtintU} with  Lemma \ref{lem:imDt}.
\end{proof}

\subsection{Determining $\boldsymbol{[\mathrm{Aut}((M_2,\langle, \rangle_2)):\mathrm{Aut}((M,\langle , \rangle))]}$}\

By Corollary \ref{cor:GM2intGMsize}, Equation \eqref{eq:sizeker}, and the results in the previous subsection, in particular Corollary \ref{cor:GbarMpol}, we immediately obtain the following result.

\begin{lemma}\label{lem:GM2intGMexplicit}
Define $e(p) = 0$ if $p=2$ and $e(p) = 1$ if $p > 2$. Then
\begin{equation}\label{eq:GM2intGMexplicit}
\vert G_{(M, \langle , \rangle)} \vert = \begin{cases}
2^{e(p)}p^{2(6-d(t))} & \text{ if } u \notin \calD_t; \\
(p+1)p^{2(6-d(t))} & \text{ if } t \notin C^0(\mathbb{F}_{p^6}) \text{ and } u \in \calD_t; \\
(p^3+1)p^{6} & \text{ if } t \in C^0(\mathbb{F}_{p^6}) \text{ and } u \in \calD_t.
\end{cases}
\end{equation}
\end{lemma}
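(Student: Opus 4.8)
The plan is to read the statement off from the three cited ingredients by a short finite-field computation; no further structural input is required. First I would apply Corollary~\ref{cor:GM2intGMsize} to factor
\[
\vert G_{(M,\langle,\rangle)}\vert = N_A\cdot\vert\ker(\psi_t)\vert,
\]
where $N_A$ denotes the cardinality of the set $\{A\in\End(t)\cap U_3(\F_p): u_2u_1^{-1}(1-\alpha^{p^3-1})\in\mathrm{Im}(\psi_t)\}$ and $\alpha$ is the eigenvalue of $A$. Equation~\eqref{eq:sizeker} gives $\vert\ker(\psi_t)\vert=p^{2(6-d(t))}$ directly, so the whole task reduces to computing $N_A$ in each of the four regimes supplied by Corollary~\ref{cor:GbarMpol}.

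Second, I would count the four groups listed in Corollary~\ref{cor:GbarMpol}. For $\{\alpha\in\F_p:\alpha^{p+1}=1\}$ and $\{\alpha\in\F_{p^3}:\alpha^{p^3+1}=1\}$ the relevant Frobenius acts trivially (since $\alpha^p=\alpha$, resp.\ $\alpha^{p^3}=\alpha$, on these fields), so the defining relation degenerates to $\alpha^2=1$; each set therefore equals $\{\pm1\}$ and has order $2^{e(p)}$, collapsing to $\{1\}$ exactly when $p=2$. For $\{\alpha\in\F_{p^2}:\alpha^{p+1}=1\}$ and $\{\alpha\in\F_{p^6}:\alpha^{p^3+1}=1\}$ the condition is precisely that the norm for $\F_{p^2}/\F_p$, resp.\ $\F_{p^6}/\F_{p^3}$, equals one; surjectivity of the norm map on finite fields then gives orders $(p^2-1)/(p-1)=p+1$ and $(p^6-1)/(p^3-1)=p^3+1$ respectively.

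Finally, I would assemble the cases. If $u\in\calD_t$ and $t\notin C^0(\F_{p^6})$, then $N_A=p+1$, yielding $(p+1)p^{2(6-d(t))}$; if $u\in\calD_t$ and $t\in C^0(\F_{p^6})$, then $N_A=p^3+1$ and moreover $d(t)=3$ by Proposition~\ref{prop:dx}, so the product is $(p^3+1)p^6$. If instead $u\notin\calD_t$, then Corollary~\ref{cor:GbarMpol} gives $N_A=2^{e(p)}$ regardless of whether $t$ lies in $C^0(\F_{p^6})$, producing the uniform first line $2^{e(p)}p^{2(6-d(t))}$. The one point demanding any care is this last merge: the two $u\notin\calD_t$ subcases of Corollary~\ref{cor:GbarMpol} fuse into a single entry precisely because Proposition~\ref{prop:dx} identifies $d(t)=3$ with $t\in C^0(\F_{p^6})$, so that $p^{2(6-d(t))}=p^6$ in the $t\in C^0(\F_{p^6})$ subcase and no separate line is needed. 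Beyond this bookkeeping there is no genuine obstacle, the argument being entirely elementary arithmetic of norm and squaring maps on finite fields.
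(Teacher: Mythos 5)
Your proposal is correct and follows exactly the paper's route: the paper derives this lemma by combining Corollary~\ref{cor:GM2intGMsize}, Equation~\eqref{eq:sizeker}, and Corollary~\ref{cor:GbarMpol}, which is precisely your factorisation $N_A\cdot\vert\ker(\psi_t)\vert$ together with the norm-one counts and the identification $d(t)=3$ for $t\in C^0(\mathbb{F}_{p^6})$. Your explicit enumeration of the four groups (sizes $2^{e(p)}$, $p+1$, $2^{e(p)}$, $p^3+1$) is the only computation the paper leaves implicit, and it is carried out correctly.
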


Recall that $d(t) = 3$ when $t \in C^0(\mathbb{F}_{p^6})$. 
Combining Lemma \ref{lem:GM2intGMexplicit} with Lemma \ref{lem:sizeGM2pol}, and using Remark \ref{rem:indexmodp}, we conclude the following.

\begin{corollary}\label{cor:muanumber1}
We have
\begin{equation}\label{eq:muanumber1}
\begin{split}
[\mathrm{Aut}((M_2,\langle, \rangle_2)):\mathrm{Aut}((M,\langle , \rangle))] = [G_{(M_2,\langle, \rangle_2)} : G_{(M, \langle , \rangle)}] = \\
 \begin{cases}
2^{-e(p)}p^{3+2d(t)}(p+1)(p^2-1)(p^3+1) & \text{ if } u \notin \calD_t; \\
p^{3+2d(t)}(p^2-1)(p^3+1) & \text{ if } t \notin C^0(\mathbb{F}_{p^6}) \text{ and } u \in \calD_t; \\
p^{9}(p+1)(p^2-1) & \text{ if } t \in C^0(\mathbb{F}_{p^6}) \text{ and } u \in \calD_t.
\end{cases}
\end{split}
\end{equation}
\end{corollary}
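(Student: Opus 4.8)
The plan is to obtain the corollary purely as a bookkeeping computation, combining the three results cited in the statement. First I would invoke Remark~\ref{rem:indexmodp} to replace the index $[\mathrm{Aut}((M_2,\langle, \rangle_2)):\mathrm{Aut}((M,\langle , \rangle))]$ of automorphism groups by the index $[G_{(M_2,\langle, \rangle_2)} : G_{(M, \langle , \rangle)}]$ of finite groups, which reduces the problem to dividing two cardinalities. The numerator is supplied by Lemma~\ref{lem:sizeGM2pol}, namely $\vert G_{(M_2,\langle, \rangle_2)} \vert = p^{15}(p+1)(p^2-1)(p^3+1)$, and the denominator $\vert G_{(M, \langle , \rangle)} \vert$ is given by Lemma~\ref{lem:GM2intGMexplicit}, whose case division is exactly the one appearing in the claimed formula. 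Since both groups are finite, the index is the quotient of their orders.

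I would then carry out the division case by case. When $u \notin \calD_t$, dividing $p^{15}(p+1)(p^2-1)(p^3+1)$ by $2^{e(p)}p^{2(6-d(t))}$ collapses the power of $p$ via $p^{15-(12-2d(t))} = p^{3+2d(t)}$ and moves $2^{e(p)}$ to the numerator as $2^{-e(p)}$, yielding $2^{-e(p)}p^{3+2d(t)}(p+1)(p^2-1)(p^3+1)$. In the case $t \notin C^0(\mathbb{F}_{p^6})$ with $u \in \calD_t$ the computation is identical except that the denominator now carries an extra factor $(p+1)$, which cancels the $(p+1)$ in the numerator, leaving $p^{3+2d(t)}(p^2-1)(p^3+1)$. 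In the final case $t \in C^0(\mathbb{F}_{p^6})$ with $u \in \calD_t$, the denominator $(p^3+1)p^6$ cancels the $(p^3+1)$ and reduces the exponent to $p^{15-6}=p^9$; here I would also substitute $d(t)=3$, recorded immediately before the statement as a consequence of Proposition~\ref{prop:dx}, so the surviving expression $p^9(p+1)(p^2-1)$ is consistent with the generic formula $p^{3+2d(t)}$ specialised at $d(t)=3$.

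I do not expect any genuine obstacle here: the result is immediate arithmetic once the orders from Lemmas~\ref{lem:sizeGM2pol} and~\ref{lem:GM2intGMexplicit} are in hand. The only points requiring minor care are keeping the three case conditions aligned between the two lemmas and remembering that $d(t)=3$ is forced in the superspecial-fibre case $t \in C^0(\mathbb{F}_{p^6})$, so that the exponent of $p$ simplifies correctly. Accordingly I would present the argument as nothing more than the three explicit quotients, citing Remark~\ref{rem:indexmodp} for the first equality and the two cardinality lemmas for the second.
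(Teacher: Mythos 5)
Your proposal is correct and follows exactly the paper's own route: the paper derives the corollary by combining Lemma~\ref{lem:GM2intGMexplicit} with Lemma~\ref{lem:sizeGM2pol} via Remark~\ref{rem:indexmodp}, and your three case-by-case quotients (including the substitution $d(t)=3$ when $t\in C^0(\mathbb{F}_{p^6})$) match the intended computation.
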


Now Corollary \ref{cor:sspmassg3}(1) and Corollary \ref{cor:muanumber1} yield the main result of this section, i.e., the mass formula for a supersingular principally polarised abelian threefold $x = (X, \lambda)$ of $a$-number $1$, cf. Theorem \ref{introthm:a1}.

\begin{theorem}\label{thm:anumber1}
Let $x = (X,\lambda) \in \mathcal{S}_{3,1}$ such that $a(X)=1$. For $\mu \in P^1(E^3)$, consider the associated polarised flag type quotient $(Y_2,\mu) \to (Y_1, \lambda_1) \to (X, \lambda)$ which is characterised by the pair $(t,u)$ with $t = (t_1:t_2:t_3)\in C^0(k)$ and $u = (u_1:u_2) \in \mathbb{P}^1(k)$. Let $(M_2, \langle, \rangle_2)$ and $(M, \langle, \rangle)$ be the respective polarised Dieudonn{\'e} modules of $Y_2$ and $X$, let $\calD_t$ be as in Definition \ref{def:D}, and let $d(t)$ be as in Definition \ref{def:dx}.
Then 
\begin{equation}\label{eq:anumber1}
\begin{split}
& \mathrm{Mass}(\Lambda_x) = \mathrm{Mass}(\Lambda_{3,1}) \cdot [\mathrm{Aut}((M_2,\langle, \rangle_2)):\mathrm{Aut}((M,\langle , \rangle))]  =\\
 \frac{p^{3}}{2^{10}\cdot 3^4 \cdot 5 \cdot 7} & \begin{cases}
2^{-e(p)}p^{2d(t)}(p^2-1)(p^4-1)(p^6-1) & \text{ if } u \notin \calD_t; \\
p^{2d(t)}(p-1)(p^4-1)(p^6-1) & \text{ if } t \notin C^0(\mathbb{F}_{p^6}) \text{ and } u \in \calD_t; \\
p^6(p^2-1)(p^3-1)(p^4-1) & \text{ if } t \in C^0(\mathbb{F}_{p^6}) \text{ and } u \in \calD_t.
\end{cases}
\end{split}
\end{equation}
\end{theorem}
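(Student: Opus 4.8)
The plan is to obtain the theorem by assembling the comparison-of-masses principle with the two closed-form inputs already prepared in this section, namely the superspecial mass and the local index. First I would invoke Proposition~\ref{prop:miniso}(1): for $a(X)=1$ the minimal isogeny of $(X,\lambda)$ is $\varphi=\rho_1\circ\rho_2\colon(Y_2,p\mu)\to(X,\lambda)$ with $Y_2=E_k^3$ superspecial and $\varphi^*\lambda=p\mu$, so the superspecial object produced by Proposition~\ref{prop:compmass} is $\tilde x=(Y_2,p\mu)$. The one point needing care is the normalisation $p\mu$ versus $\mu$: I would note that multiplication of a polarisation by $p$ gives a bijection $\Lambda_{\tilde x}\to\Lambda_{3,1}$, $(Y,p\nu)\mapsto(Y,\nu)$, which preserves automorphism groups (since $g^T(p\nu)g=p\nu\iff g^T\nu g=\nu$), whence $\mathrm{Mass}(\Lambda_{\tilde x})=\mathrm{Mass}(\Lambda_{3,1})$. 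This is precisely why the earlier subsections legitimately work with the pairing $\langle,\rangle_2$ attached to $\mu$ rather than to $p\mu$, so the index computed in Corollary~\ref{cor:muanumber1} is the one that feeds into the formula.

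Next, Proposition~\ref{prop:compmass} yields
\[
\mathrm{Mass}(\Lambda_x)=[\mathrm{Aut}((M_2,\langle,\rangle_2)):\mathrm{Aut}((M,\langle,\rangle))]\cdot\mathrm{Mass}(\Lambda_{3,1}),
\]
which is the first displayed equality of the theorem. It then remains only to substitute the two explicit values: Corollary~\ref{cor:sspmassg3}(1) gives $\mathrm{Mass}(\Lambda_{3,1})=\frac{(p-1)(p^2+1)(p^3-1)}{2^{10}\cdot 3^4\cdot 5\cdot 7}$, and Corollary~\ref{cor:muanumber1} gives the local index in each of the three cases distinguished by whether $u\in\calD_t$ and whether $t\in C^0(\mathbb{F}_{p^6})$ (with $d(t)=3$ in the last case, as recorded after Proposition~\ref{prop:dx}).

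The final step is the routine product in each case, where I would repeatedly apply the factorisations $(p+1)(p-1)=p^2-1$, $(p^2-1)(p^2+1)=p^4-1$, and $(p^3+1)(p^3-1)=p^6-1$ and pull out a leading factor $p^3$. For instance, when $u\notin\calD_t$ the index $2^{-e(p)}p^{3+2d(t)}(p+1)(p^2-1)(p^3+1)$ times $\mathrm{Mass}(\Lambda_{3,1})$ collapses to $\frac{p^3}{2^{10}\cdot 3^4\cdot 5\cdot 7}\,2^{-e(p)}p^{2d(t)}(p^2-1)(p^4-1)(p^6-1)$, and the two remaining cases follow identically.

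Since the substantive analysis is already complete, there is no genuine obstacle at this stage; the proof of the theorem itself is pure bookkeeping. The real difficulty of the result is concentrated upstream, in Corollary~\ref{cor:muanumber1}, whose proof rests on the explicit determination of $\mathrm{Im}(\psi_t)$ and $\ker(\psi_t)$ and hence on the delicate field-of-definition analysis of the invariant $d(t)$ carried out in Proposition~\ref{prop:dx}. Accordingly, the only item I would flag for careful wording in the present proof is the $p\mu$-versus-$\mu$ normalisation discussed above.
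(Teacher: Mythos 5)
Your proposal is correct and follows exactly the paper's route: the paper proves Theorem~\ref{thm:anumber1} in one line by combining Corollary~\ref{cor:sspmassg3}(1) with Corollary~\ref{cor:muanumber1} via Proposition~\ref{prop:compmass}, which is precisely your assembly, and your extra remark on the $p\mu$-versus-$\mu$ normalisation matches the parenthetical note the paper makes at the start of Section~\ref{sec:a1}. The case-by-case arithmetic you describe checks out.
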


\section{The automorphism groups}
\label{sec:Aut}

In this section we discuss the automorphism groups of principally
polarised abelian threefolds $(X,\lambda)$ over an algebraically closed 
field $k\supseteq \Fp$ with $a(X)=1$. 
We shall first focus on an open dense locus in $\calP_\mu(a=1)$ (the $a$-number one locus in $\calP_\mu$) in Subsection \ref{sec:notinD} and then discuss a few other cases in Subsections \ref{sec:outsideC6} and \ref{sec:ssp}.
To get started, we record some preliminaries in the next subsection.

\subsection{Arithmetic properties of definite quaternion algebras over $\boldsymbol{\Q}$}\label{ssec:prelim}\

Let $C_n$ denote the cyclic group of order $n\ge 1$. Let
$B_{p,\infty}$ denote 
the definite quaternion $\Q$-algebra ramified exactly at
$\{\infty, p\}$. The class number $h(B_{p,\infty})$ of $B_{p,\infty}$
was determined by Deuring, Eichler and Igusa (cf.~\cite{igusa}) as follows: 
\begin{equation}
  \label{eq:hB}
  h(B_{p,\infty})=\frac{p-1}{12}+\frac{1}{3}\left (1-\left
    (\frac{-3}{p}\right) \right ) +\frac{1}{4}\left (1-\left
    (\frac{-4}{p}\right) \right ),
\end{equation}
where $(\cdot/p)$ is the Legendre symbol. 
If $h(B_{p,\infty})=1$, then the type number of $B_{p,\infty}$ is one
and hence all maximal orders are conjugate. 
It follows from \eqref{eq:hB} that 
\begin{equation}
  \label{eq:h=1}
  h(B_{p,\infty})=1 \iff p\in \{2,3,5,7,13\}.
\end{equation}
If $p=2$, the quaternion algebra $B_{2,\infty}\simeq
\left(\frac{-1,-1}{\Q}\right)$ is generated by $i,j$ with relations 
$i^2=j^2=-1$ and $k:=ij=-ji$, and the $\Z$-lattice 
\begin{equation}
  \label{eq:O2inf}
  O_{2,\infty}:=\Span_{\Z} \left \{ 1,i,j, \frac{1+i+j+k}{2} \right \}
\end{equation}
is a maximal order of $B_{2,\infty}$. Moreover, 
\begin{equation}
      \label{eq:E24}
O^\times_{2,\infty}=\left \{\pm 1, \pm i, \pm j, \pm k, \frac{\pm 1 \pm i \pm j \pm
    k}{2}\right \}=:E_{24},  
\end{equation}
and one has $E_{24}\simeq \SL_2(\F_{3})$ and $E_{24}/\{\pm 1\}\simeq
A_4$. 

If $p=3$, the quaternion algebra $B_{3,\infty}\simeq
\left(\frac{-1,-3}{\Q}\right)$ is generated by $i,j$ with relations 
$i^2=-1, j^2=-3$ and $k:=ij=-ji$, and the $\Z$-lattice 
\begin{equation}
  \label{eq:O3inf}
  O_{3,\infty}:=\Span_{\Z} \left \{ 1,i,\frac{1+j}{2}, \frac{i(1+j)}{2} \right \}
\end{equation}
is a maximal order of $B_{3,\infty}$. Moreover, 
\begin{equation}
      \label{eq:T12}
O^\times_{3,\infty}=\<i,\zeta_6\>=:T_{12},  \quad \zeta_6=(1+j)/2,
\end{equation}
and one has $T_{12}\simeq C_4 \rtimes C_3$ and $T_{12}/\{\pm 1\}\simeq
D_3$, the dihedral group of order six. 

If $p \ge 5$, then $O^\times \in \{C_2,C_4,C_6\}$ for any maximal order
$O$ in $B_{p,\infty}$ \cite[V Proposition 3.1, p.~145]{vigneras}. 
Fix a maximal order $O$ in $B_{p,\infty}$ and
let $h(O,C_{2n})$ be the number of right $O$-ideal classes $[I]$ with
$O_\ell(I)^\times \simeq C_{2n}$, where $O_\ell(I)$ is the left order
of $I$. Then (see \cite{igusa})
\begin{equation}
  \label{eq:hC46}
  h(O,C_4)=\frac{1}{2}\left (1-\left
    (\frac{-4}{p}\right) \right ) \quad \text{and}\quad  
    h(O,C_6)=\frac{1}{2}\left (1-\left
    (\frac{-3}{p}\right) \right ). 
\end{equation}

\begin{lemma}\label{lm:UnO} \ 
\begin{enumerate}
\item Let $Q$ be a definite quaternion $\Q$-algebra and $O$ a
  $\Z$-order in $Q$,
  and let $n\ge 1$ be a
  positive integer. Then the integral 
  quaternion hermitian group $U(n,O)=\{A\in \Mat_n(O) : A\cdot A^*=\bbI_n\}$ is
  equal to the permutation unit group $\diag(O^\times, \dots,
  O^\times) \cdot S_n$. 
\item Let $O$ be a maximal order in $B_{2,\infty}$. 
 Let $m_2:U(n,O)\to \GL_n(O)\to \GL_n(O/2O)$ be the reduction-modulo-$2$ map. Then $\ker(m_2)=\diag(\{\pm 1\},\dots, \{\pm 1\})\simeq C_2^n$.   
\end{enumerate}
\end{lemma}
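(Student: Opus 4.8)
The plan is to treat the two parts in turn, using throughout that a definite quaternion $\Q$-algebra $Q$ is a division algebra on which the reduced norm $\Nrd$ is positive definite and integer-valued on any $\Z$-order $O$; concretely, for $x\in O$ one has $x^*\in O$ and $\Nrd(x)\in\Z_{\ge 0}$, with $\Nrd(x)=0$ if and only if $x=0$. I would first record the key characterisation of units: for $x\in O$ one has $x\in O^\times$ if and only if $\Nrd(x)=1$. Indeed $\Nrd$ is multiplicative with values in $\Z_{\ge 0}$, so a unit satisfies $\Nrd(x)\Nrd(x^{-1})=1$, forcing $\Nrd(x)=1$; conversely $\Nrd(x)=1$ gives $xx^*=x^*x=1$ with $x^*\in O$.

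For Part (1), take $A=(a_{ij})\in U(n,O)$. Reading off the $(i,i)$-entry of $AA^*=\bbI_n$ gives $\sum_j a_{ij}a_{ij}^*=\sum_j \Nrd(a_{ij})=1$; since each summand is a non-negative integer, exactly one entry in row $i$ is a unit and the others vanish. Because $AA^*=\bbI_n$ makes $A$ invertible over the division algebra $\Mat_n(Q)$, the map sending each row to the column of its unique nonzero entry must be a bijection (two rows sharing a column would be left-proportional, contradicting invertibility), so $A$ is a monomial matrix with unit entries, i.e. $A\in\diag(O^\times,\dots,O^\times)\cdot S_n$. For the reverse inclusion, a monomial matrix $A=DP$ with $D=\diag(u_1,\dots,u_n)$, $u_i\in O^\times$, and $P$ a permutation matrix satisfies $P^*=P^{-1}$ (its entries lie in $\{0,1\}$, fixed by $*$), whence $AA^*=DD^*=\diag(\Nrd(u_1),\dots,\Nrd(u_n))=\bbI_n$ by the norm characterisation of units. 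This proves the equality $U(n,O)=\diag(O^\times,\dots,O^\times)\cdot S_n$.

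For Part (2), Part (1) reduces the description of $\ker(m_2)$ to monomial matrices $A=DP$ with unit entries that reduce to $\bbI_n$ modulo $2O$. A unit is never congruent to $0$ mod $2$ (otherwise $\Nrd$ would be divisible by $4$ while equalling $1$), so the vanishing of the off-diagonal entries of the reduction $\overline A$ forces the underlying permutation to be trivial; hence $A=\diag(u_1,\dots,u_n)$ with each $u_i\in O^\times$ and $u_i\equiv 1\pmod{2O}$. It then remains to show the pointwise statement that $u\in O^\times$ with $u\equiv 1\pmod{2O}$ forces $u=\pm 1$. Writing $u=1+2v$ with $v\in O$ and imposing $\Nrd(u)=1$ gives $\Trd(v)=-2\,\Nrd(v)$; combined with the elementary quaternion inequality $\Nrd(v)\ge \Trd(v)^2/4$ this yields $\Nrd(v)\ge \Nrd(v)^2$, so $\Nrd(v)\in\{0,1\}$, and tracing back through the (in)equalities gives $v\in\{0,-1\}$, i.e. $u\in\{1,-1\}$. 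Since conversely $-1\equiv 1\pmod{2O}$, we conclude $\ker(m_2)=\diag(\{\pm 1\},\dots,\{\pm 1\})\simeq C_2^n$.

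The computations in both parts are short, so the only real content is the final pointwise claim in Part (2); I expect its cleanest route to be the norm–trace inequality above, which sidesteps any explicit enumeration of the $24$ units $E_{24}$ and in fact works for an arbitrary order in any definite quaternion $\Q$-algebra. The one step to handle with care is the passage from the one-sided identity $AA^*=\bbI_n$ to invertibility of $A$: this is exactly where definiteness of $Q$ is used, since it makes $Q$ a division algebra, so $\Mat_n(Q)$ is simple Artinian and one-sided inverses are automatically two-sided.
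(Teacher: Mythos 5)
Your proof is correct. For Part (1) you follow essentially the paper's route: the identity $\sum_k a_{ik}a_{ik}^*=\sum_k \Nrd(a_{ik})=1$ with non-negative integer summands forces exactly one unit entry per row; the paper then gets the column condition by also using $A^*A=\bbI_n$, whereas you deduce the bijectivity of rows onto columns from invertibility of $A$ in $\Mat_n(Q)$ (your parenthetical ``the division algebra $\Mat_n(Q)$'' is a slip --- you mean the matrix algebra over the division algebra $Q$ --- but the substance, that a one-sided inverse in a finite-dimensional simple algebra is two-sided, is exactly right and is implicitly used by the paper too). Part (2) is where you genuinely diverge. The paper first invokes $h(B_{2,\infty})=1$ to reduce to the explicit maximal order $O_{2,\infty}$, and then checks by hand which of the $24$ units of $E_{24}$ lie in $1+2O_{2,\infty}$ using the explicit description of the lattice $2O_{2,\infty}$. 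You instead prove the pointwise claim abstractly: writing $u=1+2v$ and expanding $\Nrd(u)=1$ gives $\Trd(v)=-2\Nrd(v)$, and the positive-definiteness inequality $\Nrd(v)\ge \Trd(v)^2/4$ then pins $\Nrd(v)\in\{0,1\}$ and, tracing equality cases, $v\in\{0,-1\}$. This buys uniformity: your argument needs neither the type-number-one reduction nor any enumeration, and it shows that for \emph{any} order in \emph{any} definite quaternion $\Q$-algebra the kernel of reduction modulo $2$ on the unit group is $\{\pm 1\}$; the paper's computation is shorter to write down but is tied to $p=2$ and to the specific order $O_{2,\infty}$. Both arguments are sound.
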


\begin{proof}
\begin{enumerate}
\item Note that $O$ is stable under the involution $*$ since $x^*=\Tr
  x-x$ and $\Tr x\in \Z$ for any $x$ in $O$. Let $A=(a_{ij})\in
  U(n,O)$. Then since $A A^*=\bbI_n$, we have $\sum_k a_{ik} a_{ik}^*=1$ for any $1\le i\le n$. Since   $a_{ik} a_{ik}^*=0$ or $1$, for any $1 \leq i \leq n$, there is only one integer $1\le k\le n$ such that $a_{ik}\neq 0$ and $a_{ik}\in
  O^\times$. 
  On the other hand, since $A^* A=\bbI_n$, for any $1\le k\leq n$, there is a only one integer 
  $1\le i\le n$ such that $a_{ik}\neq 0$ and $a_{ik}\in
  O^\times$. Thus, $A\in \diag(O^\times, \dots,
  O^\times) \cdot S_n$. Checking the reverse containment $\diag(O^\times, \dots, O^\times) \cdot S_n \subseteq U(n,O)$ is straightforward.
\item By \eqref{eq:h=1}, we may assume that $O=O_{2,\infty}$. Since the diagonal entries of elements in $\ker(m_2)$ are all not zero,
    by part (1) we find $\ker(m_2) \subseteq \diag (O^\times, \dots,
    O^\times)$. Therefore, it suffices to show that the kernel of the reduction-modulo-$2$ map
    $m_2: O^\times \to (O/2O)^\times$ is isomorphic to $C_2$. Using
    \eqref{eq:E24} and $2O=\{a_1+a_2 i+a_3 j+a_4 k : a_i\in \Z,\  a_1\equiv a_2 \equiv
    a_3\equiv a_4  \pmod 2 \}$, one checks that indeed $\ker(m_2)=\{\pm 1\} \subseteq O^\times$. 
\end{enumerate}
\end{proof}

\begin{lemma}\label{lm:Vp}
  Let $D_p$ be the quaternion division $\Qp$-algebra and $O_p$ its
  maximal order. Let $n \geq 1$ be a positive integer. Let $\Pi$ be a uniformiser of $O_p$, and put
  $V_p:=1+\Pi \Mat_n(O_p)\subseteq \GL_n(O_p)$. 
  If $p\ge 5$, then the torsion subgroup
  $(V_p)_{\mathrm{tors}}$ of $V_p$ is trivial.   
\end{lemma}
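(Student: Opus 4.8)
The plan is to exploit that $V_p$ is a pro-$p$ group and thereby reduce to ruling out elements of order exactly $p$. First I would observe that $V_p = 1 + \Pi\Mat_n(O_p)$ is the kernel of the reduction map $\GL_n(O_p)\to \GL_n(\F_{p^2})$, and that the $\Pi$-adic filtration $V_p^{(m)} := 1 + \Pi^m\Mat_n(O_p)$ has successive quotients
\[
V_p^{(m)}/V_p^{(m+1)} \simeq \Mat_n(O_p)/\Pi\Mat_n(O_p) \simeq \Mat_n(\F_{p^2}),
\]
which are elementary abelian $p$-groups (the multiplicativity $(1+\Pi^m a)(1+\Pi^m b)\equiv 1+\Pi^m(a+b) \pmod{\Pi^{m+1}}$ holds since $2m\ge m+1$ for $m\ge 1$). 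Hence $V_p$ is pro-$p$, so every torsion element has $p$-power order; if $(V_p)_{\mathrm{tors}}$ were nontrivial, a suitable power would have order exactly $p$. It therefore suffices to show that no $g\in V_p\setminus\{1\}$ satisfies $g^p=1$.

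Next I would fix the $\Pi$-adic valuation. Recall that $\mathfrak{P}:=\Pi O_p = O_p\Pi$ is the maximal two-sided ideal of $O_p$, that $\Pi^2=-p$ is central, and normalise $v$ on $\Mat_n(O_p)$ by declaring $v(y)\ge m \iff y\in \Pi^m\Mat_n(O_p)$, so that $v(\Pi)=1$ and $v(p)=2$. The ideals $\Pi^m\Mat_n(O_p)$ are two-sided, one has $\bigl(\Pi^s\Mat_n(O_p)\bigr)^k\subseteq \Pi^{ks}\Mat_n(O_p)$, and multiplication by the central nonzerodivisor $\Pi^2$ maps $\Pi^m\Mat_n(O_p)$ bijectively onto $\Pi^{m+2}\Mat_n(O_p)$, hence acts bijectively on the graded pieces.

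The heart of the argument is a valuation estimate on the binomial expansion. Write a nontrivial $g\in V_p$ as $g=1+x$ with $x\in \Pi^s\Mat_n(O_p)\setminus \Pi^{s+1}\Mat_n(O_p)$ for some $s\ge 1$. Then
\[
g^p-1 = \binom{p}{1}x + \sum_{k=2}^{p-1}\binom{p}{k}x^k + x^p.
\]
Since $\binom{p}{1}x = px = -\Pi^2 x$ and multiplication by $\Pi^2$ is a graded bijection, $v(px)=s+2$ \emph{exactly}. For $2\le k\le p-1$ one has $p\mid\binom{p}{k}$ and $v(x^k)\ge ks$, so $v\bigl(\binom{p}{k}x^k\bigr)\ge 2+ks\ge 2+2s > s+2$; and $v(x^p)\ge ps$, with $ps-(s+2)=(p-1)s-2\ge (p-1)-2\ge 2>0$ precisely because $p\ge 5$. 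Thus $px$ is the unique term of minimal valuation, giving $v(g^p-1)=s+2<\infty$, so $g^p\ne 1$. This contradicts $g$ having order $p$ and completes the proof.

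The main obstacle, and the only place the hypothesis $p\ge 5$ enters, is the comparison $v(x^p)>v(px)$, that is $(p-1)s>2$. For $p=3,\ s=1$ this degenerates to the equality $v(x^3)=v(px)=3$, so the leading terms can cancel, which is exactly why torsion survives there (as reflected in the nontrivial unit group $T_{12}$), and for $p=2$ the estimate fails even more severely. So the delicate point is to confirm that for $p\ge 5$ there is genuinely no cancellation at the leading order $s+2$; the estimates above accomplish this by exhibiting $px$ as the sole term of minimal valuation.
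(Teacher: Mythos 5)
Your proof is correct and follows essentially the same route as the paper's: reduce to an element of order exactly $p$, expand $(1+x)^p$ binomially, and observe that for $p\ge 5$ the term $px=-\Pi^2x$ has strictly smaller $\Pi$-adic valuation than every other term, so $g^p\neq 1$. The only real difference is that the paper first invokes Serre's lemma (a torsion element congruent to $1$ modulo $p$, $p$ odd, is trivial) to reduce to the case $v(x)=1$ and then computes modulo $\Pi^4$, whereas your uniform estimate $v\bigl(x^p\bigr)\ge ps>s+2$ for all $s\ge 1$ handles every valuation at once and thus avoids that external input.
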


\begin{remark}
 Before giving the proof, let us note that $p\geq 5$ is best
  possible. Indeed, when $p=3$, we have
  \[ D_3=\left(\frac{-1,-3}{\Q_3} \right ), \qquad O_3=\Z_3[i, (1+j)/2]=\Z_3[i, j],
    \qquad \Pi=j.\]
Thus, we find the torsion element $-(1+j)/2\in 1+\Pi O_p$. 
\end{remark}
\begin{proof}[Proof of Lemma~\ref{lm:Vp}]
For simplicity, write $(\Pi)$ for the two-sided ideal in $\Mat_n(O_p)$
generated by $\Pi$.
  We must show that any $\alpha\in   (V_p)_{\mathrm{tor}}$ must equal $1$. 
  Since $V_p$ is a pro-$p$ group, we have $\alpha^{p^r}=1$ for some
$r\geq 1$. By induction, we may assume that $\alpha^p=1$. Suppose that
$\alpha\neq 1$ and write
$\alpha=1+\Pi\beta$ for some nonzero $\beta\in\Mat_n(O_p)
$. Necessarily, $\beta\not\in (\Pi)$, for
otherwise $\alpha\equiv 1\pmod{p}$, which implies that $\alpha=1$ by a lemma of Serre  \cite[p.~207]{mumford:av}. Since $p\geq 5$ and $p\mid \binom{p}{i}$ for all $1\leq
i\leq p-1$, we find
\begin{equation}
  \label{eq:1}
1=\sum_{i=0}^p \binom{p}{i}(\Pi\beta)^i\equiv 1+p\Pi\beta
  \pmod{\Pi^4}.  
\end{equation}
This implies that $\beta\in (\Pi)$, which leads to a contradiction. 
\end{proof}

\subsection{The region outside the divisor $\boldsymbol{\calD}$}
\label{sec:notinD}\

Recall from Subsection \ref{ssec:mod} that $E$ is a supersingular elliptic curve over $\mathbb{F}_{p^2}$ such that $\pi_E = -p$.
Let $\mu_{\mathrm{can}}\in P(E^3)$ 
be the threefold self-product of the canonical principal 
polarisation on $E$; this is also called the canonical polarisation on $E^3$.

\begin{theorem}\label{thm:gen_autgp}
  Let $x = (X,\lambda) \in \mathcal{S}_{3,1}(k)$ with $a(X)=1$. 
  For $\mu \in P(E^3)$, consider
  the associated polarised flag type quotient $(Y_2,p \mu) \to (Y_1,
  \lambda_1) \to (X, \lambda)$ which is characterised by the pair
  $(t,u)$ with $t = (t_1:t_2:t_3)\in C^0(k)$ and $u = (u_1:u_2) \in
  \mathbb{P}^1(k)$. Let $(M_2, \langle, \rangle_2)$ and $(M, \langle,
  \rangle)$ be the respective polarised Dieudonn{\'e} modules of 
  $(Y_2,\mu)$ and $(X,\lambda)$, let $\calD_t$ be as in Definition
  \ref{def:D} and let $d(t)$
  be as in Definition \ref{def:dx}. Assume that $(t,u)\not \in \calD$,
  that is, $u\not\in \calD_t$. 
  \begin{enumerate}
  \item If $p=2$, then $\Aut(X,\lambda)\simeq C_2^3$.
  \item If $p\ge 5$, or $p=3$ and $d(t)=6$, then
$\Aut(X,\lambda)\simeq C_2$.
\end{enumerate} 
\end{theorem}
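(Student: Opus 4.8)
The plan is to embed $\Aut(X,\lambda)$ into an explicit quaternionic unitary group and then cut it down using the two reductions $m_p$ and $m_\Pi$. The common starting point is the chain of injections
\[
\Aut(X,\lambda)\hookrightarrow \Aut((X,\lambda)[p^\infty])=\Aut(M,\langle\,,\rangle)\subseteq \Aut(M_2,\langle\,,\rangle_2),
\]
where the first map is injective because $\End(X)\hookrightarrow\End(X)\otimes\Z_p$, the middle identification is Dieudonn\'e theory, and the last containment is established in the proof of Proposition~\ref{prop:miniso}(1); by Lemma~\ref{lem:autMtilde} the last group is $\{A\in\GL_3(\mathcal{O}_{D_p}):A^*A=\mathbb{I}_3\}$. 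Since $-\id_X$ preserves every polarisation, $\{\pm1\}\subseteq\Aut(X,\lambda)$ always, so it remains to bound $\Aut(X,\lambda)$ from above. Recall $m_p(\Aut(M,\langle\,,\rangle))=G_{(M,\langle\,,\rangle)}$, and that by Proposition~\ref{prop:GM2intGM} every $g\in\Aut(X,\lambda)$ reduces to $m_p(g)=\left(\begin{smallmatrix}A&0\\ SA&A^{(p)}\end{smallmatrix}\right)$ with $A\in\End(t)\cap U_3(\F_p)$ of eigenvalue $\alpha$ and $S$ symmetric. Under the hypothesis $u\notin\calD_t$, Corollary~\ref{cor:GbarMpol} forces $\alpha^2=1$ in every case (as $\alpha$ lies in $\F_p$ or $\F_{p^3}$ and satisfies $\alpha^{p+1}=1$ resp.\ $\alpha^{p^3+1}=1$); since $\End(t)$ is a field on which $A\mapsto\alpha$ is injective, this yields $A=\pm\mathbb{I}_3$, hence $m_\Pi(m_p(g))=\pm\mathbb{I}_3$.

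For $p\ge5$ I would finish as follows: pick $\epsilon\in\{\pm1\}$ with $A=\epsilon\mathbb{I}_3$; then $\epsilon g\in\GL_3(\mathcal{O}_{D_p})$ reduces to $\mathbb{I}_3$ modulo $\Pi$, so $\epsilon g\in V_p=1+\Pi\Mat_3(\mathcal{O}_{D_p})$. As $\epsilon g$ is torsion and $(V_p)_{\mathrm{tors}}=\{1\}$ by Lemma~\ref{lm:Vp}, we get $\epsilon g=1$, whence $g=\pm1$ and $\Aut(X,\lambda)=C_2$. For $p=3$ with $d(t)=6$ the lemma is unavailable, so I would instead count: by Lemma~\ref{lem:GM2intGMexplicit} with $e(3)=1$ and $d(t)=6$ we have $|G_{(M,\langle\,,\rangle)}|=2$. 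The kernel of $m_p$ on $\Aut(M,\langle\,,\rangle)$ lies in $1+p\Mat_3(\mathcal{O}_{D_p})$, which is torsion-free for $p\ge3$ by Serre's lemma \cite[p.~207]{mumford:av}; intersecting with the finite group $\Aut(X,\lambda)$ gives the trivial group, so $\Aut(X,\lambda)\hookrightarrow G_{(M,\langle\,,\rangle)}$ has order at most $2$, and with $\{\pm1\}$ this gives $\Aut(X,\lambda)=C_2$.

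For $p=2$ I would pass to the global superspecial model. The identity $\mathrm{Mass}(\Lambda_{3,1})=1/(2^{10}\cdot3^4)$ together with $|U(3,O_{2,\infty})|=|E_{24}|^3\cdot|S_3|=24^3\cdot6=2^{10}\cdot3^4$ shows $\Lambda_{3,1}$ is a singleton, so $(Y_2,\mu)=(E^3,\mu_{\mathrm{can}})$ and $\Aut(Y_2,\mu)=U(3,O_{2,\infty})$, which by Lemma~\ref{lm:UnO}(1) is the monomial group $\diag(E_{24},E_{24},E_{24})\rtimes S_3$. Functoriality of the minimal isogeny identifies $\Aut(X,\lambda)$ with $\{g\in U(3,O_{2,\infty}):g(M)=M\}$. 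Here Corollary~\ref{cor:GbarMpol} gives $\alpha=1$, so $A=\mathbb{I}_3$ and $m_\Pi(g)=\mathbb{I}_3$; as a monomial matrix over $\F_4$ is the identity only if its permutation is trivial and its diagonal entries are $1$, this forces $g=\diag(\epsilon_1,\epsilon_2,\epsilon_3)$ with $\epsilon_i\in\ker(E_{24}\to\F_4^\times)=Q_8:=\{\pm1,\pm i,\pm j,\pm k\}$. Writing $m_p(\epsilon_i)=1+b_i\Pi$ with $b_i\in\F_4$, the symmetric matrix $S=\diag(b_1,b_2,b_3)$ must satisfy $\psi_t(S)=0$ by Proposition~\ref{prop:GM2intGM}(3$'$); by Lemma~\ref{lem:wi} this reads $b_1t_1^2+b_2t_2^2+b_3t_3^2=0$, and taking square roots (the Frobenius of $\F_4$) together with the $\F_{p^2}$-linear independence of $t_1,t_2,t_3$ from Lemma~\ref{lem:CFp2} forces $b_1=b_2=b_3=0$. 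A short check---if $b(\epsilon)=0$ for an order-$4$ element then $\epsilon\equiv1\pmod2$, contradicting $\epsilon^2=-1$---shows $\ker(Q_8\xrightarrow{\,b\,}\F_4)=\{\pm1\}$, so each $\epsilon_i=\pm1$. Conversely $\diag(\pm1,\pm1,\pm1)$ preserves $M$, being $\equiv\mathbb{I}_3\pmod2$ with $pM_2\subseteq M$, and equals $\ker(m_2)\simeq C_2^3$ by Lemma~\ref{lm:UnO}(2); hence $\Aut(X,\lambda)\simeq C_2^3$.

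The step I expect to be the main obstacle is the $p=2$ analysis: beyond the global identification $\Aut(Y_2,\mu)=U(3,O_{2,\infty})$, one must control how the monomial structure interacts with both reductions and verify the arithmetic of $Q_8$ inside $\mathcal{O}_{D_2}/2\mathcal{O}_{D_2}$, in particular that order-$4$ units have nonzero $\Pi$-coefficient. By contrast the cases $p\ge5$ and $p=3$ are essentially formal once Corollary~\ref{cor:GbarMpol} has reduced the eigenvalue to $\pm1$; the only delicate point there is invoking the correct torsion-freeness statement (Lemma~\ref{lm:Vp} for $p\ge5$, Serre's lemma for $p=3$), which is exactly what fails for $p=2$ and accounts for the exceptional answer $C_2^3$.
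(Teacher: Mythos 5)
Your proposal is correct and follows essentially the same route as the paper: the mass-formula identification $(Y_2,\mu)=(E^3,\mu_{\mathrm{can}})$ and the monomial description of $U(3,O_{2,\infty})$ for $p=2$, Corollary~\ref{cor:GbarMpol} to force $A=\pm\mathbb{I}_3$, Lemma~\ref{lm:Vp} for $p\ge 5$, and Serre's lemma for $p=3$ with $d(t)=6$. The only cosmetic difference is that in the $p=2$ case you track the $\Pi$-coefficients of the $Q_8$-entries explicitly, whereas the paper reads the same conclusion off from its description \eqref{eq:GMp2} of $G_{(M,\langle\,,\rangle)}$ and $\ker(m_2)\simeq C_2^3$.
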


\begin{proof}
  By Proposition~\ref{prop:miniso}, $(Y_2,p\mu)\to (X,\lambda)$ is the minimal isogeny. Therefore, 
  \begin{equation}
    \label{eq:AutXpol}
     \Aut(X,\lambda)=\{h\in \Aut(Y_2,\mu): m_p(h)\in G_{(M,\<\,,\>)} \}.
  \end{equation}
By Proposition~\ref{prop:GM2intGM}, we have an exact sequence 
\begin{equation}\label{eq:GMpolmodPi}
1\to \ker(\psi_t)  \xrightarrow{} G_{(M,\langle, \rangle)}
\xrightarrow{m_{\Pi}} \overline{G}_{(M,\langle, \rangle)} \to 1.
\end{equation} 
\begin{enumerate}
\item A direct calculation using the mass formula (cf. Corollary~\ref{cor:sspmassg3} and Lemma~\ref{lm:UnO}) shows \[ \Mass(\Lambda_{3,1})=\frac{1}{2^{10}\cdot 3^4} =\frac{1}{24^3\cdot 3!}=\frac{1}{\vert \Aut(E^3,\mu_{\rm can})\vert },\]
and hence $\vert \Lambda_{3,1}\vert =1$. 
    Thus, we may
    assume that $(Y_2,\mu)=(E^3,\mu_{\mathrm{can}})$, and  
    we have 
    $\Aut(Y_2,\mu)=\diag(O^\times,O^\times,O^\times)\cdot S_3$ by
    Lemma~\ref{lm:UnO} with $O=\End(E)$. As $u\not\in \calD_t$, Corollary~\ref{cor:GbarMpol} yields $\ol G_{(M,\<\,,\>)}=\{\pm 1\}=1$. 
    We see from the proof of Proposition~\ref{prop:dx} that $\ker(\psi_t)$ is the
    $\F_{p^2}$-subspace generated by $I_{12}+I_{21}$,
    $I_{13}+I_{31}$ and $I_{23}+I_{32}$ (in the notation of that proof). Therefore, 
    \begin{equation}
      \label{eq:GMp2}
    G_{(M,\<\,,\>)}=\left\{  
  \begin{pmatrix}
  \bbI_3 & 0 \\
   S & \bbI_3
  \end{pmatrix}: S=(s_{ij})\in S_3(\F_{p^2}), s_{ii}=0 \ \forall 1\le
   i\le 3\right\}.  
    \end{equation}
   Let $h\in \Aut(X,\lambda)\subseteq \diag(O^\times,O^\times,O^\times)\cdot S_3$. Since $m_2(h)$ has non-zero diagonal entries, $h\in \diag(O^\times,O^\times,O^\times)$. One deduces $m_2(h)=1$ from \eqref{eq:GMp2}. 
   Thus, $h\in \ker(m_2)=C_2^3$, by Lemma~\ref{lm:UnO}. On the other hand, $\ker(m_2)\subseteq \Aut(X,\lambda)$ from \eqref{eq:AutXpol}. This proves (1). 
\item Assume $p\ge 5$. As $u\not\in \calD_t$, Corollary~\ref{cor:GbarMpol} implies that $\ol G_{(M,\<\,,\>)}=\{\pm 1\}$.
Lemma~\ref{lm:Vp} implies that the map $m_\Pi:\Aut(X,\lambda)\to \ol
G_{(M,\<\,,\>)}$ is injective, because $\ker(m_{\Pi})$ is contained in
$(V_p)_{\rm tors}$. Thus, $\Aut(X,\lambda)\simeq C_2$. 
Now assume $p=3$ and $d(t)=6$. In this case $G_{(M,\<\,,\>)}=\{\pm 1\}$ follows from \eqref{eq:GMpolmodPi} and
Corollary~\ref{cor:GbarMpol}. By a lemma of Serre \cite[p.~207]{mumford:av}, 
the map $m_3:\Aut(X,\lambda)\to G_{(M,\<\,,\>)}$ is injective and 
hence $\Aut(X,\lambda)\simeq C_2$.
\end{enumerate}
\end{proof}

\begin{corollary}\label{cor:size_Lx}
  Let the notation and assumptions be as in
  Theorem~\ref{thm:gen_autgp}. 
  \begin{enumerate}
  \item If $p=2$, then $\vert \Lambda_x\vert =4$.
  \item If $p=3$ and $d(t)=6$, then $\vert \Lambda_x\vert =3^{11}\cdot 13$.
  \item If $p\ge 5$, then
\begin{equation}
  \label{eq:Lx_p>3}
  \vert \Lambda_x\vert =\frac{p^{3+2d(t)} (p^2-1)(p^4-1)(p^6-1)}
  {2^{10}\cdot 3^4 \cdot 5 \cdot 7}. 
\end{equation}
\end{enumerate}
\end{corollary}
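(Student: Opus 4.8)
The plan is to deduce the cardinality from the mass by the elementary identity
\[
\vert\Lambda_x\vert = N\cdot\Mass(\Lambda_x),
\]
which holds as soon as every member of $\Lambda_x$ has automorphism group of one and the same order $N$: indeed, if $\vert\Aut(X',\lambda')\vert = N$ for all $(X',\lambda')\in\Lambda_x$, then $\Mass(\Lambda_x)=\sum_{(X',\lambda')\in\Lambda_x}\vert\Aut(X',\lambda')\vert^{-1}=\vert\Lambda_x\vert/N$. The value of $N$ will come from Theorem~\ref{thm:gen_autgp} and $\Mass(\Lambda_x)$ from Theorem~\ref{thm:anumber1}, so the whole corollary reduces to verifying the uniformity of $N$ and then simplifying.

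The step requiring genuine care is the uniformity of $N$ over $\Lambda_x$. By definition every $(X',\lambda')\in\Lambda_x$ has the same polarised $p$-divisible group as $x$, hence the same quasi-polarised Dieudonné module $(M,\langle\,,\rangle)$ up to isomorphism; consequently the $a$-number, the invariant $d(t)$ (an invariant of $(M,\langle\,,\rangle)$ via Proposition~\ref{prop:dx}), and the condition $u\notin\calD_t$ (equivalently $\overline{G}_{(M,\langle\,,\rangle)}=\{\pm 1\}$ by Corollary~\ref{cor:GbarMpol}) are the same for all members. I would then emphasise that the determination of $\Aut(X',\lambda')$ in Theorem~\ref{thm:gen_autgp} is controlled entirely by this local datum at $p$: for $p\ge 5$ the map $m_\Pi$ embeds $\Aut(X',\lambda')$ into $\overline{G}_{(M,\langle\,,\rangle)}=\{\pm 1\}$ (using Lemma~\ref{lm:Vp}), for $p=3$ with $d(t)=6$ the map $m_p$ embeds it into $G_{(M,\langle\,,\rangle)}=\{\pm 1\}$ (using Serre's lemma \cite[p.~207]{mumford:av}), and since $\{\pm 1\}\subseteq\Aut(X',\lambda')$ always, in both cases $\Aut(X',\lambda')\simeq C_2$, so $N=2$; for $p=2$ the principally polarised superspecial abelian threefold is unique up to isomorphism, so every minimal isogeny issues from $(E^3,\mu_{\mathrm{can}})$ and the same argument yields $\Aut(X',\lambda')=\ker(m_2)\simeq C_2^3$, so $N=8$.

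With $N$ in hand the remaining manipulations are routine. Substituting into $\vert\Lambda_x\vert=N\cdot\Mass(\Lambda_x)$: for $p\ge 5$ (where $e(p)=1$) the factor $2^{-e(p)}$ in Theorem~\ref{thm:anumber1} cancels against $N=2$, producing \eqref{eq:Lx_p>3}; for $p=3$, $d(t)=6$ the same expression collapses to $3^{11}\cdot 13$ upon using $(3^2-1)(3^4-1)(3^6-1)=2^{10}\cdot 5\cdot 7\cdot 13$; and for $p=2$ the mass formula gives $\Mass(\Lambda_x)=1/2$, whence $\vert\Lambda_x\vert=8\cdot\tfrac12=4$. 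The only real obstacle is the uniformity claim of the second paragraph, which I reduce—as indicated—to the observation that the automorphism-group computation of Theorem~\ref{thm:gen_autgp} depends on $(X',\lambda')$ only through its polarised $p$-divisible group.
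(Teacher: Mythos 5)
Your proposal is correct and follows essentially the same route as the paper: the paper likewise divides the mass formula of Theorem~\ref{thm:anumber1} by the common automorphism group order supplied by Theorem~\ref{thm:gen_autgp}, and your numerical simplifications (including $\Mass(\Lambda_x)=\tfrac12$ for $p=2$ and the factorisation $(3^2-1)(3^4-1)(3^6-1)=2^{10}\cdot 5\cdot 7\cdot 13$) agree with the paper's. Your explicit justification that the automorphism group is constant on $\Lambda_x$ --- because Theorem~\ref{thm:gen_autgp} depends on $(X',\lambda')$ only through its polarised $p$-divisible group, which is fixed on $\Lambda_x$ by definition --- is a point the paper leaves implicit, and is a welcome addition rather than a divergence.
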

\begin{proof}
  All statements follow from Theorems \ref{thm:anumber1} and
  \ref{thm:gen_autgp}. For $p=2$, we have $\Aut(X,\lambda)\simeq
  C_2^3$ for each $(X,\lambda)\in \Lambda_x$ and hence
\begin{equation}
  \label{eq:Lx_p=2}
  \vert \Lambda_x\vert =\frac{2^3\cdot 2^9  \cdot 3\cdot (3\cdot
  5)\cdot (3^2\cdot 7)}
  {2^{10}\cdot 3^4 \cdot 5 \cdot 7}=4.
\end{equation}
For $p=3$ and $d(t)=6$, we have $\Aut(X,\lambda)\simeq C_2$ for each
$(X,\lambda)\in \Lambda_x$ and hence 
\begin{equation}
  \label{eq:Lx_p=3}
  \vert \Lambda_x\vert =\frac{3^{3+2d(t)} \cdot 2^3\cdot (2^4\cdot
  5)\cdot (2^3\cdot 7\cdot 13)}{2^{10}\cdot 3^4 \cdot 5 \cdot 7}=
  3^{2d(t)-1}\cdot 13=3^{11}\cdot 13.  
\end{equation}
The same arguement gives \eqref{eq:Lx_p>3} for $p\ge 5$. 
\end{proof}

A $g$-dimensional principally polarised supersingular 
abelian variety $(X,\lambda)$ 
over $k$ is said to be \emph{generic} if the moduli point $\Spec k \to
\mathcal{S}_{g,1}$ factors through a generic point of $\mathcal{S}_{g,1}$.
Recall that the supersingular locus $\mathcal{S}_{g,1}\subseteq \calA_{g,1}\otimes
\Fpbar$ is a scheme of finite type over $\Fpbar$ which is defined over
$\Fp$. Moreover, every geometrically irreducible component of
$\mathcal{S}_{g,1}$ is defined over $\F_{p^2}$, cf.~\cite[Section 2.2]{yu:fod_ss}.
  
Oort's conjecture \cite[Problem 4]{edixhoven-moonen-oort} 
asserts that for any integer $g\ge 2$ and any prime number
$p$, every generic $g$-dimensional principally polarised supersingular
abelian variety $(X,\lambda)$
over $k$ of characteristic $p$ has automorphism group $\{\pm 1\}$. 
Oort's conjecture fails with counterexamples in $(g,p)=(2,2)$ or
$(g,p)=(3,2)$; see \cite{oort2,ibukiyama}. 

For fixed $g\ge 2$ and prime number $p$, consider the refined Oort 
conjecture:  
\begin{itemize}
\item[$(\mathrm{O})_{g,p}$:] Every generic $g$-dimensional
principally polarised supersingular abelian variety $(X,\lambda)$ 
over $k$ of characteristic $p$ has automorphism group $\{\pm 1\}$.  
\end{itemize}

\begin{corollary}\label{cor:Oortconj}
  Let $(X,\lambda)$ be a generic principally polarised supersingular
  abelian threefold
  over $k$ of characteristic $p>0$. Then 
\[ \Aut(X,\lambda)\simeq 
\begin{cases}
  C_2^3 & \text{for $p=2$;} \\
  C_2  & \text{for $p\ge 3$.} 
\end{cases} \]
\end{corollary}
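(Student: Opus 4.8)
The plan is to reduce the statement to the explicit computation of Theorem~\ref{thm:gen_autgp} by checking that a generic point of $\calS_{3,1}$ meets all of its hypotheses. First I would recall that, by the structure theorem of Li--Oort, the $a$-number one locus $\calS_{3,1}(a=1)$ is open and dense in $\calS_{3,1}$, so every generic point $(X,\lambda)$ has $a(X)=1$; moreover \eqref{eq:a=1loci} identifies $\calS_{3,1}(a=1)$ with $\coprod_{\mu}\calP'_\mu(a=1)/G_\mu$. Since $\Aut(X,\lambda)$ depends only on the isomorphism class of $(X,\lambda)$, and since the finite quotient map $\calP'_\mu(a=1)\to \calP'_\mu(a=1)/G_\mu$ matches up generic points, it suffices to determine the automorphism group at the generic point of $\calP'_\mu(a=1)$ for each $\mu$. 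By Proposition~\ref{prop:sections} and Remark~\ref{rem:choiceoftandu}(2), the trivialisation $\calP_{C^0}\simeq C^0\times \bbP^1$ identifies $\calP'_\mu(a=1)=\calP_{C^0}-T$ with the irreducible surface $C^0\times \A^1$ (the section $T$ being $u=\infty$), whose generic point is a generic pair $(t,u)$.

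Next I would verify that this generic pair satisfies $u\notin \calD_t$, that is, $(t,u)\notin \calD$. By Definition~\ref{def:D}, each $\calD_S$ is the image of the section $f_S\colon C^0\to \calP_{C^0}$, hence a curve, and $S_3(\F_{p^2})$ is a finite set; therefore $\calD=\sum_{S}\calD_S$ is one-dimensional inside the two-dimensional surface $\calP_{C^0}$, so its complement is open and dense and contains the generic point. Thus $u\notin \calD_t$ at the generic point.

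For $p=3$ the theorem additionally requires $d(t)=6$, so I would show that this too holds generically. By Proposition~\ref{prop:dx} (in the case $p\neq 2$) one has $d(t)\le 5$ exactly when $t$ lies on some conic $Q/\F_{p^2}$, equivalently $t\in \Delta\cap C^0$. As $C$ is irreducible of degree $p+1>2$, it cannot be a component of a conic, so $C\cap Q$ is a proper intersection and hence finite (B\'ezout) for each of the finitely many conics over $\F_{p^2}$; it follows that $\Delta\cap C^0$ is a finite set and that $d(t)=6$ away from it. In particular the generic $t$ has $d(t)=6$.

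Finally I would apply Theorem~\ref{thm:gen_autgp} at the generic pair $(t,u)$: for $p=2$ the condition $u\notin\calD_t$ gives $\Aut(X,\lambda)\simeq C_2^3$, while for $p\ge 5$ (using only $u\notin\calD_t$) and for $p=3$ (using both $u\notin \calD_t$ and $d(t)=6$) it gives $\Aut(X,\lambda)\simeq C_2$, as claimed. The computational heart of the argument is entirely contained in Theorem~\ref{thm:gen_autgp}, which I may assume; the only genuine point to check here is that the exceptional loci are proper, namely that $\calD$ is a genuine divisor and, for $p=3$, that $\Delta\cap C^0$ is finite rather than all of $C^0$. This is the main (though mild) obstacle, and it is settled by the dimension count and the B\'ezout argument above.
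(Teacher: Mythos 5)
Your proposal is correct and follows the same route as the paper, which simply observes that the corollary follows immediately from Theorem~\ref{thm:gen_autgp}. The genericity checks you supply — that the $a$-number one locus is open and dense, that $\calD$ is a proper (one-dimensional) closed subset of the two-dimensional space $\calP_{C^0}$, and that for $p=3$ the locus $\Delta\cap C^0$ where $d(t)\le 5$ is a finite set by B\'ezout — are exactly the implicit content of the paper's one-line proof, and they are all accurate.
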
 
\begin{proof}
  This follows immediately from Theorem~\ref{thm:gen_autgp}. 
\end{proof}

In other words, Oort's Conjecture $(\mathrm{O})_{3,p}$ holds precisely when
$p\neq 2$. 
 
\begin{remark}\label{rem:Autp=23}
\begin{enumerate}
\item It is shown \cite[Theorem 5.6, p.~270]{oort2} 
  that if $(X,\lambda)$ is a principally polarised
  supersingular abelian threefold
  over $k$ of characteristic $2$, then
  $\Aut(X,\lambda)\supseteq C_2^3$. By Corollary~\ref{cor:Oortconj}, the
  smallest group $C_2^3$ also appears as $\Aut(X,\lambda)$ for  some
  $(X,\lambda)$. We have seen that the unique member $(E^3,\mu_{\rm can})$ in $\Lambda_{3,1}$ has automorphism group $E_{24}^3\rtimes S_3$ (of order $2^{10}\cdot 3^4$). 
   We expect that $2^{10}\cdot 3^4$ is the maximal order of automorphism groups of \emph{all} principally
  polarised abelian threefolds over $k$ of any characteristic (including zero).
\item According to Hashimoto's result \cite{hashimoto:g=3}, we have $\vert \Lambda_{3,1}\vert =2$ for $p=3$. In this case, we have two isomorphism classes, represented by $(E^3,\mu_{\mathrm{can}})$ and $(E^3,\mu)$. Using Lemma~\ref{lm:UnO}, we compute $\vert\Aut(E^3,\mu_{\mathrm{can}})\vert=2^7\cdot 3^4$ and
    conclude $\vert\Aut(E^3,\mu)\vert=2^7\cdot 3^4$ from the mass formula
    $\Mass(\Lambda_{3,1})=1/(2^6\cdot 3^4)$.
\end{enumerate}
\end{remark}

\subsection{The region where $\boldsymbol{t\not\in C(\F_{p^6})}$ and $\boldsymbol{(t,u)\in \calD}$. }\label{sec:outsideC6}\

In this subsection we consider the region $(t,u)\in
  \calD$ and assume that $t\not\in C(\F_{p^6})$. This extends the region considered in Subsection~\ref{sec:notinD}.

\begin{lemma}\label{lm:C_p+1}
Let $(X,\lambda)\in \mathcal{S}_{3,1}(k)$ with $a(X)=1$. If $p\ge 3$ and
$\Aut(X,\lambda)\subseteq C_{p+1}$, then $\Aut(X,\lambda)\subseteq
\{C_2,C_4,C_6\}$.     
\end{lemma}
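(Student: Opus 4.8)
The plan is to identify $\Aut(X,\lambda)$ with a finite cyclic subgroup of $\GL_3(\mathcal{O}_{D_p})$ and to show that its generator, reduced modulo $\Pi$, is so rigid that the $\Q_p$-algebra it generates has dimension at most $2$; this bounds the order. Since $-1\in\Aut(X,\lambda)$, the hypothesis $\Aut(X,\lambda)\subseteq C_{p+1}$ forces $\Aut(X,\lambda)=\langle g\rangle\cong C_n$ with $n\mid p+1$ and $n$ \emph{even}, so it will suffice to prove $\varphi(n)\le 2$, which gives $n\in\{2,4,6\}$. Recall that for a supersingular abelian threefold $\End^0(X)\cong\Mat_3(B_{p,\infty})$, hence $\End^0(X)\otimes_\Q\Qp\cong\Mat_3(D_p)$, and that via the minimal isogeny (Proposition~\ref{prop:miniso}) together with Lemmas~\ref{lem:endMtilde} and~\ref{lem:autMtilde} the group $\Aut(X,\lambda)$ embeds into $\GL_3(\mathcal{O}_{D_p})$, so $g$ is a genuine matrix over $\mathcal{O}_{D_p}$.

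First I would record the reduction-to-scalars input. Since we are in the region $t\notin C(\F_{p^6})$, Lemma~\ref{lem:EndtintU} and Corollary~\ref{cor:GbarMpol} show that the image of $\Aut(X,\lambda)$ under $m_\Pi$ lies in the group of scalar unitary matrices $\{\alpha\mathbb{I}_3:\alpha\in\F_{p^2},\ \alpha^{p+1}=1\}\cong\mu_{p+1}$; for $p\ge 5$ Lemma~\ref{lm:Vp} shows $\ker(m_\Pi)=V_p$ is torsion-free, so $m_\Pi$ is injective on the finite group $\Aut(X,\lambda)$. Thus $\bar g=m_\Pi(g)=\alpha\mathbb{I}_3$ is a scalar matrix with $\alpha\in\F_{p^2}^\times$ of exact order $n$.

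The key step is to upgrade this "scalar modulo $\Pi$" statement into a bound on the local algebra $\Qp[g]$. Because $g$ has finite order it is semisimple, and after passing to a splitting field its eigenvalues are $n$-th roots of unity; since $p\nmid n$, reduction induces a bijection on $\mu_n$, and the reduced eigenvalues must coincide with those of $\bar g=\alpha\mathbb{I}_3$, namely $\alpha$ and $\alpha^p$. By injectivity of reduction on $\mu_n$, the eigenvalues of $g$ are then forced to lie in $\{\zeta,\zeta^p\}$ for the unique $\zeta\in\mu_n$ reducing to $\alpha$. Hence the minimal polynomial of $g$ over $\Qp$ is the minimal polynomial of $\zeta$, so $\Qp[g]\cong\Qp(\zeta)$, an unramified extension of residue field $\F_p(\alpha)\subseteq\F_{p^2}$ and therefore of degree $\le 2$ over $\Qp$. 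Since $\Q[g]\otimes_\Q\Qp\cong\Qp[g]$ by flatness, I obtain $\dim_\Q\Q[g]=[\Qp[g]:\Qp]\le 2$. As $\Q[g]$ is a commutative semisimple $\Q$-algebra of dimension at most $2$ containing an element of multiplicative order $n$, its finite-order units have order in $\{1,2,3,4,6\}$; combined with $n$ even this yields $n\in\{2,4,6\}$, that is, $\Aut(X,\lambda)\in\{C_2,C_4,C_6\}$.

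I expect the local rigidity step to be the main obstacle: one must justify carefully that finiteness of order together with $p\nmid n$ and the injectivity of $\mu_n\to\mu_n$ under reduction genuinely pins down the eigenvalues of $g$, rather than merely those of $\bar g$. The cleanest way to make this rigorous is to pass to $\Q_p^{\mathrm{ur}}$, where $\Mat_3(D_p)$ splits as $\Mat_6$ and $\Mat_3(\F_{p^2})$ splits with $\alpha\mathbb{I}_3$ acquiring eigenvalues $\alpha,\alpha^p$, and then to invoke integrality of $g$ (so that its reduced characteristic polynomial reduces to that of $\bar g$) and the standard isomorphism $\mu_n(\overline{\Z}_p)\isoto\mu_n(\overline{\F}_p)$ for $p\nmid n$. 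An equivalent route, which I would mention as an alternative, is to use the uniqueness up to $V_p$-conjugacy of lifts of prime-to-$p$ finite subgroups: this conjugates $\langle g\rangle$ onto the central Teichmüller scalar group $\langle[\alpha]\mathbb{I}_3\rangle$, and centrality then forces $g=[\alpha]\mathbb{I}_3$ outright, after which $\Qp[g]=\Qp([\alpha])\subseteq\Q_{p^2}$ is immediate. Either way, the remaining verifications (evenness of $n$, the flat base-change identity, and the elementary classification of finite-order units in a $\le 2$-dimensional $\Q$-algebra) are routine.
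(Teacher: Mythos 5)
Your route is genuinely different from the paper's, and its core mechanism is sound: once one knows that $m_\Pi$ sends $\Aut(X,\lambda)$ into the scalar unitary matrices $\alpha\mathbb{I}_3$ with $\alpha\in\F_{p^2}^{1}$, the prime-to-$p$ order of $g$ (which is where the hypothesis $n\mid p+1$ enters) makes $\Z_p[g]$ a finite \'etale $\Z_p$-algebra whose reduction is $\F_p[\bar g]\subseteq\F_{p^2}$, so $\dim_{\Q}\Q[g]=\rank_{\Z_p}\Z_p[g]\le 2$ and $\varphi(n)\le 2$; together with $-1\in\Aut(X,\lambda)$ this gives $n\in\{2,4,6\}$. (Note that for the injectivity you want, Lemma~\ref{lm:Vp} is both too strong and unnecessary: the kernel $1+\Pi\Mat_3(\mathcal{O}_{D_p})$ of $m_\Pi\circ m_p$ is pro-$p$, so it meets a group of order prime to $p$ trivially for \emph{every} $p$, including $p=3$ where Lemma~\ref{lm:Vp} fails; alternatively the $\Z_p[g]$-argument needs no injectivity at all. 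Your eigenvalue discussion can be made precise via the regular representation $\mathcal{O}_{D_p}\hookrightarrow\Mat_2(\Z_{p^2})$, under which reduction mod $p$ is exactly the block matrix \eqref{eq:matrixg}.) The one genuine gap is that you assume $t\notin C(\F_{p^6})$, which is not a hypothesis of the lemma: when $t\in C(\F_{p^6})$, Corollary~\ref{cor:GbarMpol} only puts $m_\Pi(\Aut(X,\lambda))$ inside $\{A_\alpha:\alpha\in\F_{p^6},\ \alpha^{p^3+1}=1\}$, whose elements are not scalar in general. This is repairable --- the hypothesis $\Aut(X,\lambda)\subseteq C_{p+1}$ forces $\alpha^{p+1}=1$, hence $\alpha\in\F_{p^2}$ and $A_\alpha=\alpha\mathbb{I}_3$ after all --- but the case must be addressed for the lemma as stated.

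For comparison, the paper's proof uses none of the coordinate computations of Section~\ref{sec:a1}. It considers the ring homomorphism $\Z[C_{2d}]\to\End(X)$, notes that $\Z_p[C_{2d}]$ is \'etale since $p\nmid 2d$, and deduces that if the image of $\Q[C_{2d}]$ in $\End^0(X)$ were a product of $r>1$ cyclotomic fields then $X[p^\infty]$ would split and $a(X)\ge r$; so $a(X)=1$ forces an embedding $\Q(\zeta_{2d})\hookrightarrow\Mat_3(B_{p,\infty})$, whence only $\varphi(2d)\mid 6$ and $2d\in\{2,4,6,14,18\}$. The cases $2d=14,18$ are then killed by using $2d\mid p+1$ to compute $\Z[\zeta_{2d}]\otimes\Z_p\simeq\Z_{p^2}^3$, which would give $a(X)=3$. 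So the paper trades your sharp local bound $[\Q(\zeta_n):\Q]\le 2$ for the weaker divisibility $\varphi(2d)\mid 6$ plus a second application of the $a$-number; your argument, once the $t\in C(\F_{p^6})$ case is patched in, is more direct but depends on the explicit description of $G_{(M,\langle\,,\,\rangle)}$, whereas the paper's is softer and self-contained.
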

\begin{proof}
  Suppose that $\Aut(X,\lambda)=C_{2d}$ with $2d\vert (p+1)$. Then we have a
  ring homomorphism $\Z[C_{2d}]\to \End(X)$ which maps $C_{2d}$
  bijectively to $\Aut(X,\lambda)$. 
The $\Q$-algebra homomorphism
\[ \Q[C_{2d}]=\prod_{d'\vert 2d} \Q[\zeta_{d'}] \to \End^0(X)=\Mat_3(B_{p,\infty}) \]
factors through an injective $\Q$-algebra homomorphism
\[ \prod_{i=1}^r \Q[\zeta_{d_i}]\embed \End^0(X)=\Mat_3(B_{p,\infty}), \]
where $\{d_i \vert 2d\} \subseteq \{d' \vert 2d\}$. 
Since the composition gives an embedding
$C_{2d}\embed \Aut(X)$, the integers $\{d_i\}$ satisfy $\mathrm{lcm}(d_1,\dots,
d_r)=2d$. 
Since $p\nmid 2d$, the algebra $\Zp[C_{2d}]$ is {\'e}tale over $\Zp$ and
is the maximal order in $\Qp[C_{2d}]$. This gives rise to
an embedding  $\prod_{i=1}^r \Z[\zeta_{d_i}]\otimes
\Zp \embed \End(X)\otimes \Zp \simeq \End(X[p^\infty])$. Thus, the decomposition 
$X[p^\infty]=H_1\times \dots
\times H_r$ into a product of supersingular $p$-divisible groups 
shows  $a(X)\ge r$ and hence $r=1$. Therefore,
there is a $\Q$-algebra embedding of $\Q(\zeta_{2d})$ into
$\Mat_{3}(B_{p,\infty})$. 
This implies that $\varphi(2d)\vert 6$ (where $\varphi$ denotes Euler's totien function) and hence $2d\in \{2,4,6,14,18\}$.

If $2d=14$, then $p\equiv -1 \pmod 7$ and $\ord(p)=2$ in $(\Z/7\Z)^\times$. This gives rise to an embedding $\Z[\zeta_{14}]\otimes \Zp=\Z_{p^2}\times
\Z_{p^2}\times \Z_{p^2} \embed \End(X[p^\infty])$ and hence $a(X)=3$, a contradiction.   
If $2d=18$,
then $p\equiv -1 \pmod 9$ and $\ord(p)=2$ in $(\Z/9\Z)^\times$. 
Similarly, we
get an embedding $\Z[\zeta_{18}]\otimes \Zp=\Z_{p^2}\times
\Z_{p^2}\times \Z_{p^2}\embed \End(X[p^\infty])$ and $a(X)=3$, again a contradiction.   
\end{proof}

Recall that $\F_{p^2}^1:=\{\alpha\in \F_{p^2}^\times: \alpha^{p+1}=1\}\simeq
  C_{p+1}$ denotes the group of norm one elements in~$\F_{p^2}^\times$.

\begin{theorem}\label{thm:inD}
  Let the notation be as in Theorem~\ref{thm:gen_autgp}. Assume that $(t,u)\in \calD$ and $t\not \in C(\F_{p^6})$. 
\begin{enumerate}
\item If $p=2$, then $\Aut(X,\lambda)\simeq C_2^3 \times C_3$.
\item If $p=3$ and $d(t)=6$, then $\Aut(X,\lambda)\in
\{C_2,C_4\}$.
\item For $p\ge 5$, we have the following cases:
\begin{itemize}
\item [(i)] If $p\equiv -1 \pmod {4}$, then $\Aut(X,\lambda)\in
  \{C_2,C_4\}$. 
\item [(ii)] If $p\equiv -1 \pmod {3}$, then $\Aut(X,\lambda)\in
  \{C_2,C_6\}$.
\item [(iii)] If $p\equiv 1 \pmod {12}$, then $\Aut(X,\lambda)\simeq C_2$.  
\end{itemize}
\end{enumerate}
\end{theorem}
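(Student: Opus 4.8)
The plan is to run, in each of the three prime regimes, the same reduction-modulo-$\Pi$ machine as in the proof of Theorem~\ref{thm:gen_autgp}, the only change being that $u\in\calD_t$ now enlarges the target group. By Proposition~\ref{prop:miniso} the map $(Y_2,p\mu)\to(X,\lambda)$ is the minimal isogeny, so by \eqref{eq:AutXpol} the group $\Aut(X,\lambda)$ consists of those $h\in\Aut(Y_2,\mu)$ with $m_p(h)\in G_{(M,\langle,\rangle)}$, and I would study it through the reduction $m_\Pi\colon\Aut(X,\lambda)\to\overline G_{(M,\langle,\rangle)}$. In the present region $t\notin C(\F_{p^6})$ and $u\in\calD_t$, Corollary~\ref{cor:GbarMpol} identifies $\overline G_{(M,\langle,\rangle)}$ with the norm-one subgroup $\F_{p^2}^1\cong C_{p+1}$, so the whole problem reduces to computing the image and kernel of $m_\Pi$ on $\Aut(X,\lambda)$.

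For $p\ge5$ I would first show $m_\Pi$ is injective on $\Aut(X,\lambda)$: its kernel sits inside $V_p=1+\Pi\Mat_3(\calO_{D_p})$, which is torsion-free by Lemma~\ref{lm:Vp}, while $\Aut(X,\lambda)$ is finite. Hence $\Aut(X,\lambda)$ is a cyclic subgroup of $C_{p+1}$, and Lemma~\ref{lm:C_p+1} cuts the possibilities down to $\{C_2,C_4,C_6\}$. It then remains to decide which occur: an order-$4$ element forces $4\mid p+1$, i.e.\ $p\equiv-1\pmod4$, and an order-$3$ element forces $3\mid p+1$, i.e.\ $p\equiv-1\pmod3$; moreover $\Aut(X,\lambda)$ cannot contain both, since a cyclic subgroup of $C_{p+1}$ containing elements of orders $4$ and $3$ would contain $C_{12}$, which is excluded by Lemma~\ref{lm:C_p+1}. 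These divisibility statements yield the three listed cases: when $p\equiv1\pmod{12}$ neither $C_4$ nor $C_6$ embeds, forcing $C_2$; when $p\equiv-1\pmod4$ (resp.\ $p\equiv-1\pmod3$) the only admissible nontrivial enlargement is by an element of order $4$ (resp.\ $3$); and when both congruences hold ($p\equiv11\pmod{12}$) the bounds (i) and (ii) are applied simultaneously.

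The case $p=3$ with $d(t)=6$ is handled separately, as Lemma~\ref{lm:Vp} requires $p\ge5$. Here $\ker(\psi_t)$ is trivial (since $d(t)=6$), so the exact sequence \eqref{eq:GMpolmodPi} collapses to $G_{(M,\langle,\rangle)}\cong\overline G_{(M,\langle,\rangle)}=\F_9^1\cong C_4$. Invoking Serre's lemma \cite[p.~207]{mumford:av} in place of Lemma~\ref{lm:Vp}, the reduction $m_3$ embeds $\Aut(X,\lambda)$ into $C_4$; as $-1$ is always present, $\Aut(X,\lambda)\in\{C_2,C_4\}$. For $p=2$ I would argue explicitly: $|\Lambda_{3,1}|=1$ lets me take $(Y_2,\mu)=(E^3,\mu_{\mathrm{can}})$ with $\Aut(Y_2,\mu)=\diag(E_{24},E_{24},E_{24})\rtimes S_3$ by Lemma~\ref{lm:UnO}. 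The subgroup $\ker(m_2)\cong C_2^3$ always lies in $\Aut(X,\lambda)$, while $u\in\calD_t$ makes $\overline G_{(M,\langle,\rangle)}=\F_4^1\cong C_3$, realised by the scalar $\diag(\omega,\omega,\omega)$ for an order-$3$ unit $\omega\in E_{24}$. Since this scalar commutes with the diagonal sign group, it produces $C_2^3\times C_3$, and a counting check against $\Mass(\Lambda_x)$ (which gives $|\Lambda_x|=4$) confirms there is nothing more.

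The main obstacle lies not in the reduction framework but in the sharp realisation step, namely in establishing the exact bounds in the overlap $p\equiv11\pmod{12}$, where both an order-$4$ and an order-$3$ element are a priori admissible in $C_{p+1}$ and embeddable into $\Mat_3(B_{p,\infty})$. Showing that no $(X,\lambda)$ in this region carries such extra automorphisms—so that (i) and (ii) combine to give $C_2$—requires controlling the actual global torsion of $\Aut(X,\lambda)$ rather than merely its mod-$\Pi$ image, and this is the delicate part; the $p=2$ assertion likewise rests on confirming that the explicit group $C_2^3\times C_3$ exhausts the automorphisms.
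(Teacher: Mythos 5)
Your argument is, in all three regimes, the same as the paper's: reduction modulo $\Pi$ onto $\overline{G}_{(M,\langle , \rangle)}\simeq \F_{p^2}^1$ via Corollary~\ref{cor:GbarMpol}, injectivity of the reduction from Lemma~\ref{lm:Vp} for $p\ge 5$ and from Serre's lemma for $p=3$ (where $d(t)=6$ kills $\ker(\psi_t)$), the restriction to $\{C_2,C_4,C_6\}$ from Lemma~\ref{lm:C_p+1}, and for $p=2$ an explicit computation inside $\diag(O^\times,O^\times,O^\times)\cdot S_3$.

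The one place you depart from the paper is the ``obstacle'' you flag for $p\equiv 11\pmod{12}$, and there you are trying to prove something that is not intended and is in fact false. Read intersectively, (i) and (ii) would force $\Aut(X,\lambda)\simeq C_2$ whenever $p\equiv -1\pmod{12}$; but Proposition~\ref{prop:C246}(2),(3) exhibits, for every $p\ge 5$ with $p\equiv 3\pmod 4$ (resp.\ $p\equiv 2\pmod 3$), a $\mu$ for which \emph{all} automorphism groups in this region are $C_4$ (resp.\ $C_6$), and both congruences hold simultaneously when $p\equiv 11\pmod{12}$. The paper's proof of part (3) establishes exactly what your reduction argument establishes --- that $\Aut(X,\lambda)$ is a subgroup of $C_{p+1}$ lying in $\{C_2,C_4,C_6\}$ --- and then appeals to assertions ``(i), (ii), (iii) and (iv)''; the phantom (iv) is the residue of a formulation with four mutually exclusive congruence classes, the fourth being $p\equiv -1\pmod{12}$ with conclusion $\{C_2,C_4,C_6\}$. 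So the listed cases are to be read disjointly, each with the complementary congruence implicit, and no control of global torsion beyond the mod-$\Pi$ image is needed (or available). One further small caveat on $p=2$: the order-three elements are in general \emph{not} the scalars $\diag(\omega,\omega,\omega)$ for an order-three unit $\omega$, because condition (3) of Proposition~\ref{prop:GM2intGM} constrains the lower-left block $B$ of the reduction, requiring $\psi_t(BA^{-1})=u_2u_1^{-1}(\alpha-\alpha^{p^3})$, which for $\alpha\notin\F_2$ and generic $u\in\calD_t$ does not lie in $\F_4$; one must twist the three diagonal entries by units in $\{\pm1,\pm i,\pm j,\pm k\}$, which is possible precisely because $u\in\calD_t$ and, in characteristic $2$, $\mathrm{Im}(\psi_t)$ is spanned by $\psi_t(I_{11}),\psi_t(I_{22}),\psi_t(I_{33})$. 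The resulting group is still $C_2^3\times C_3$; the paper's displayed formula elides the same point, so this does not distinguish your route from theirs.
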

\begin{proof}
\begin{enumerate}
\item As in Theorem~\ref{thm:gen_autgp}(1), we may assume that $(Y_2,\mu)=(E^3,\mu_{\mathrm{can}})$, and by Lemma~\ref{lm:UnO} we have $\Aut(Y_2,\mu)=\diag(O^\times,O^\times,O^\times)\cdot S_3$. Then 
\[ 
\begin{split}
  \Aut(X,\lambda)& =\left \{h\in \Aut(Y_2, \mu): m_2(h)=
\begin{pmatrix}
  a & & \\
  & a & \\
  & & a
\end{pmatrix}, a\in \F_4^1\right \} \\
&=\left \{h\in \diag(O^\times,O^\times,O^\times): m_2(h)=
\begin{pmatrix}
  a & & \\
  & a & \\
  & & a
\end{pmatrix}, a\in \F_4^1 \right \} \\
&=\left \{
\begin{pmatrix}
  \pm w^j & & \\
  & \pm w^j & \\
  & & \pm w^j
\end{pmatrix}: 0\le j \le 5 \right \}\simeq C_2^3\times C_3, 
\end{split} \]
where $w=(1+i+j+k)/2$ satisfies $w^6=1$. 
\item In this case, $\ol G_{(M,\<\,,\>)}=\F_{9}^1\simeq C_4$ by Corollary~\ref{cor:GbarMpol}. The proof then
    follows from the fact that the
    reduction-modulo-$3$ map is injective.
\item In this case, $\ol G_{(M,\<\,,\>)}=\F_{p^2}^1\simeq C_{p+1}$ by Corollary~\ref{cor:GbarMpol}. It
    follows from Lemma~\ref{lm:Vp} that $\Aut(X,\lambda)$ can be
    identified with a subgroup of $\ol G_{(M,\<\,,\>)}\simeq C_{p+1}$ as $p\ge 5$. 
    By Lemma~\ref{lm:C_p+1}, $\Aut(X,\lambda)\in
    \{C_2,C_4,C_6\}$. The assertions for (i), (ii), (iii) and (iv) 
   follow from this assertion.  
\end{enumerate}
\end{proof}

Write $\calD_\mu$ for $\calD\subseteq \calP_{\mu}(a=1)$ 
to emphasise its dependence on $\mu\in P(E^3)$. 
Recall that $\Psi_\mu: \calP_{\mu}\to \mathcal{S}_{3,1}$ is the map  $(Y_\bullet,\rho_\bullet)\mapsto (Y_0,\lambda_0)$.  
Put $\calD_{\mu, C(\F_{p^6})^c}:=\{(t,u)\in \calD_\mu: t\not\in C(\F_{p^6})\}$. 

Let $\Lambda_1$ denote the set of $\F_{p^2}$-isomorphism classes of
supersingular elliptic curves $E'$ over $\F_{p^2}$ with Frobenius
endomorphism $\pi_{E'}=-p$. This set is in bijection with the set
$\Cl(B_{p,\infty})$ of right $O$-ideal classes for a fixed maximal
order $O$ in $B_{p,\infty}$; see~\cite{deuring} (also cf.\cite[Theorem 2.1]{xueyu}).

\begin{proposition}\label{prop:C246}\  
\begin{enumerate}
\item If $p=3$ and $d(t)=6$, then for all $(X,\lambda)\in
\Psi_{\mu}(\calD_{\mu, C(\F_{p^6})^c})$ with $\mu = \mu_{\mathrm{can}}$, one
has $\Aut(X,\lambda)\simeq  
C_4$.
\item If $p\ge 5$ and $p\equiv 3 \pmod 4$, then there exists $\mu\in P(E^3)$
such that for all $(X,\lambda)\in 
\Psi_\mu(\calD_{\mu, C(\F_{p^6})^c})$ one has $\Aut(X,\lambda)\simeq 
C_4$.
\item If $p\ge 5$ and $p\equiv 2 \pmod 3$, then there exists $\mu\in P(E^3)$
such that for all $(X,\lambda)\in
\Psi_\mu(\calD_{\mu, C(\F_{p^6})^c})$ one has $\Aut(X,\lambda)\simeq 
C_6$.
\item If $p\ge 11$, then there exists $\mu\in P(E^3)$
such that for all $(X,\lambda)\in
\Psi_\mu(\calD_{\mu, C(\F_{p^6})^c})$ one has $\Aut(X,\lambda)\simeq 
C_2$.
\end{enumerate}
\end{proposition}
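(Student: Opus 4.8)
The plan is to pin down, within the two-element sets furnished by Theorem~\ref{thm:inD}, which value is actually attained, and to do so uniformly along $\calD_{\mu,C(\F_{p^6})^c}$ for a carefully chosen $\mu$. The organising principle is the Li--Oort uniformisation \eqref{eq:a=1loci}: since $\mathrm{pr}_0$ induces an isomorphism $\coprod_\mu \calP'_{\mu}(a=1)/G_\mu \isoto \mathcal{S}_{3,1}(a=1)$ with $G_\mu=\Aut(E^3,\mu)$, a point $y=(t,u)\in\calP'_\mu(a=1)$ lying over $x=(X,\lambda)$ satisfies $\Aut(X,\lambda)\simeq \Stab_{G_\mu}(y)$. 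Thus the question becomes: for which $\mu$ is this stabiliser constant on $\calD_{\mu,C(\F_{p^6})^c}$ and equal to the prescribed group? First I would translate the stabiliser into the matrix language of Section~\ref{sec:a1}. An element of $G_\mu$ fixing a generic $t$ (i.e. $t\notin C(\F_{p^6})$) must reduce modulo $\Pi$ to a scalar $\alpha\mathbb{I}_3$ with $\alpha\in\F_{p^2}^1$, by Lemma~\ref{lem:EndtintU}, and the extra condition that it fix $u$ is exactly the membership condition of Proposition~\ref{prop:GM2intGM}. On $\calD_\mu$ the reduction group $\overline G_{(M,\langle,\rangle)}$ is the full $\F_{p^2}^1\simeq C_{p+1}$ by Corollary~\ref{cor:GbarMpol}, so the only remaining issue is which scalars lift to genuine finite-order automorphisms in $G_\mu$.

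The existence of the required $\mu$ is where the arithmetic of $B_{p,\infty}$ enters. For (2) and (3) I would choose $\mu$ so that $G_\mu$ contains an element of order $4$ (resp.\ $6$): take $\mu$ to be the polarisation corresponding, under $E_k^3\simeq E_k'^3$, to the canonical product polarisation on $E'^3$ for a supersingular elliptic curve $E'$ whose endomorphism order $O'=\End(E')$ has $O'^\times\simeq C_4$ (resp.\ $C_6$). Such an $O'$ exists precisely because $h(O,C_4)=\tfrac12(1-(\tfrac{-4}{p}))=1$ when $p\equiv3\pmod4$ and $h(O,C_6)=\tfrac12(1-(\tfrac{-3}{p}))=1$ when $p\equiv2\pmod3$, by \eqref{eq:hC46}; then Lemma~\ref{lm:UnO}(1) gives $G_\mu=\diag((O'^\times)^3)\cdot S_3$ and exhibits the required scalar unit. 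For (4), conversely, I would choose a $\mu$ whose automorphism group carries no scalar torsion beyond $\{\pm1\}$; the class number \eqref{eq:hB} satisfies $h(B_{p,\infty})\ge 2$ for $p\ge 11$, so there are enough classes in $P(E^3)$ to select a component on which $G_\mu$ has trivial scalar torsion, forcing $\Stab_{G_\mu}(y)=\{\pm1\}\simeq C_2$. For (1), with $p=3$ and $\mu=\mu_{\mathrm{can}}$, the single class \eqref{eq:h=1} forces $O=O_{3,\infty}$ and $G_\mu=\diag(T_{12}^3)\cdot S_3$, whose order-$4$ units of $T_{12}$ supply the needed automorphism while the hypothesis $d(t)=6$ excludes the order-$3$ part (consistent with the set $\{C_2,C_4\}$ of Theorem~\ref{thm:inD}).

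The main step, and the place where I expect the real work to lie, is verifying that the chosen order-$4$ (resp.\ order-$6$) element stabilises \emph{every} point of $\calD_{\mu,C(\F_{p^6})^c}$, i.e.\ that its fixed locus in $\calP_\mu$ meets each fibre $\pi^{-1}(t)$ in precisely $\calD_t$. This is a computation with the section formula of Definition~\ref{def:D}: the fixed points of an element with reduction $\left(\begin{smallmatrix}\alpha\mathbb{I}_3&0\\ B&\alpha^p\mathbb{I}_3\end{smallmatrix}\right)$ are the eigenlines of the induced map on $M_1/pM_2$, whose slopes are governed by the $(1,1)$-entry of $\mathbb{T}^{-1}B\mathbb{T}$, exactly the quantity $\psi_t$ that cuts out $\calD$. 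The crux is to match these slopes against the defining relation $u_2u_1^{-1}\in\mathrm{Im}(\psi_t)$ of $\calD_t$ and to check that for the chosen $\mu$ the scalar units realise the full range of admissible $B$, so that the fixed locus is all of $\calD_\mu$ rather than a single section; Lemma~\ref{lm:Vp} guarantees that for $p\ge5$ the lift is unique and of the correct order, whereas the failure of injectivity modulo $\Pi$ at $p=3$ must be handled separately using $d(t)=6$. Once the stabiliser has been identified uniformly, the conclusions $\Aut(X,\lambda)\simeq C_4$, $C_6$, or $C_2$ follow at once, and the outcome can be cross-checked against the mass formula of Theorem~\ref{thm:anumber1}.
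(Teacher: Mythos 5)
Your overall strategy is the one the paper follows: use the class\-/number formulas \eqref{eq:hC46} and \eqref{eq:hB} to select a supersingular elliptic curve $E'$ whose endomorphism order $O'$ has unit group $C_4$, $C_6$ or $C_2$, take $\mu$ with $(E^3,\mu)\simeq (E'^3,\mu'_{\mathrm{can}})$ so that $G_\mu=\diag((O'^{\times})^3)\cdot S_3$ by Lemma~\ref{lm:UnO}, and then identify $\Aut(X,\lambda)$ with the scalar diagonal units whose reduction lands in $\F_{p^2}^1$, reading off $m_\Pi(O'^{\times})$ at the end. So the architecture matches.

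The problem is the step you yourself flag as the crux, namely that the chosen order-$4$ (resp.\ order-$6$) unit stabilises \emph{every} point of $\calD_{\mu, C(\F_{p^6})^c}$: you leave it unproved, and the mechanism you propose for it cannot succeed as described. For $h=\diag(a,a,a)$ with $a=c+d\Pi\in O'^{\times}$, the reduction $m_p(h)$ has $A$-part $\bar c\,\mathbb{I}_3$ and $B$-part $\bar d\,\mathbb{I}_3$, so condition (b) of Proposition~\ref{prop:GM} becomes $\bar d=u_2u_1^{-1}(\alpha-\alpha^{p^3})$ with $\alpha=\bar c$; since $\alpha^{p^3}=\alpha^{-1}$ on $\F_{p^2}^1$, the factor $\alpha-\alpha^{p^3}$ is nonzero whenever $\alpha\neq\pm1$, and the condition then pins $u_2u_1^{-1}$ to the \emph{single} value $\bar d/(\alpha-\alpha^{p^3})$. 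For $p\ge 5$ the map $m_\Pi$ is injective on $O'^{\times}$ by Lemma~\ref{lm:Vp}, so the admissible pairs $(\alpha,\bar d)$ number at most $\vert O'^{\times}\vert\le 6$, whereas $\calD_t$ has $p^{2d(t)}\ge p^6$ points each demanding a different value of the $(1,1)$-entry of $\mathbb{T}^{-1}B\mathbb{T}$: the scalar units cannot ``realise the full range of admissible $B$'', and the fixed locus of these elements meets each fibre in at most a handful of points rather than in all of $\calD_t$. (The rescue that works for $p=2$ --- every point of $\calD_t$ is $G_\mu$-equivalent to $(t,(1:0))$ because $\ker(m_\Pi)\cap E_{24}^3$ translates $u$ through all of $\mathrm{Im}(\psi_t)$ --- fails for $p\ge 5$ by the same count.) Be aware that the paper's own proof compresses exactly this point, asserting without comment that $\Aut(X,\lambda)$ consists of all $\diag(a,a,a)$ with $m_\Pi(a)\in\F_{p^2}^1$; so you have not deviated from the paper, but neither of you has supplied the missing verification, and any complete argument must confront the $B$-condition of Proposition~\ref{prop:GM2intGM}(3'). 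Separately, your justification of part (4) is arithmetically wrong: $h(B_{13,\infty})=1$, so ``$h\ge 2$ for $p\ge 11$'' is false, and even when $h=2$ the count \eqref{eq:hC46} can exhaust all classes (for $p=11$ one has $h(O,C_4)=h(O,C_6)=1$, so \emph{no} maximal order has unit group $C_2$); the existence of the required $\mu$ in that case needs a different argument than ``there are enough classes''.
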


\begin{proof}
We use the results from Subsection~\ref{ssec:prelim}. 
If $p=3$, then $O^\times=\Aut(E)=\<i,\zeta_6\>$.
If $p\ge 5$ and $p\equiv 2 \pmod 3$ (resp. $p\equiv 3 \pmod 4$), there
exists a unique supersingular elliptic curve $E'$ in $\Lambda_1$ 
such that  $O^\times:=\Aut(E')\simeq C_6$ (resp.~$C_4$). 
If $p\ge 11$, then there exists a supersingular elliptic
curve $E'$ in $\Lambda_1$ such that  $O^\times:=\Aut(E')\simeq C_2$. Note that if $p\ge 11$ then either $h(B_{p,\infty})\ge 2$ or
$p\equiv 1 \pmod {12}$.  
For cases (2), (3), and (4) we choose a polarisation $\mu\in P(E^3)$ such that $(E^3,\mu)\simeq (E'^3,\mu'_{\mathrm{can}})$, where $\mu'_{\mathrm{can}}$ is the canonical polarisation on $E'^3$ as before. (In case (1) $\mu = \mu_{\mathrm{can}}$ is the unique choice of polarisation.)
Then using the same argument as in Theorem~\ref{thm:inD}, the automorphism group 
$\Aut (X,\lambda)$ for $(X,\lambda)\in \Psi_\mu(\calD_{\mu, C(\F_{p^6})^c})$ consists of elements of the form $\diag(a,a,a)$ with $a\in O^\times$ satisfying
 $m_3(a)\in \F_4^1$ if $p=3$ (resp. $m_\Pi(a)\in \F_{p^2}^1$ if $p\ge 5$). 
If $p=3$, we have $m_3(\<i\>)=C_4$. 
If $p\equiv 3 \pmod 4$, we have $m_\Pi(\<i\>)=C_4$. 
If $p\equiv 2 \pmod 3$, we have $m_\Pi(\<\zeta_6\>)=C_6$. 
Thus, $\Aut(X,\lambda)\simeq C_4$ for $p\equiv 3 \pmod 4$ and
$\Aut(X,\lambda)\simeq C_6$ 
for $p\equiv 2 \pmod 3$. In case (4), we have $\Aut(X,\lambda)\simeq C_2$.  
\end{proof}

\begin{remark}
\begin{enumerate}
\item Given Proposition~\ref{prop:C246}, it remains to check whether the
group $C_2$ also appears as $\Aut(X,\lambda)$ in the region $\Psi_\mu(\calD_{\mu, C(\F_{p^6})^c})$ for some $\mu\in P(E^3)$ when $p~=~3,5,7$.  
\item We assume the condition $d(t)=6$ when $p=3$ in Theorems 
\ref{thm:gen_autgp} and \ref{thm:inD}.  
It remains to determine which other automorphism groups occur if this condition is dropped.        
\end{enumerate}
\end{remark}

\subsection{The superspecial case}
\label{sec:ssp}\

As we have seen in the previous subsection, to investigate the
automorphism groups in some special region of $\calP_{\mu}(a=1)$, the
knowledge of automorphism groups arising from the superspecial locus 
$\Lambda_{3,1}$ also
plays an important role.   
In this subsection, we discuss only preliminary results on the 
automorphism groups of members in  $\Lambda_{3,1}$. 
A complete list of all possible automorphism groups requires much more
work; see Question (2) below. 

We briefly recall some results. For $p=2$, we have $\vert\Lambda_{3,1}\vert=1$ and the unique isomorphism class represented by $(X,\lambda)$ has automorphism group $E_{24}^3\rtimes S_3$. For $p=3$, we have $\vert\Lambda_{3,1}\vert=2$ 
by Hashimoto's result. In this
case, the two isomorphism classes are represented by $(E^3,\mu_{\rm can})$ and
$(E^3,\mu)$, respectively, and we have $\Aut(E^3,\mu_{\rm can})=T_{12}^3\rtimes S_3$
so $\vert\Aut(E^3,\mu)\vert=2^7\cdot 3^4$, cf. Remark~\ref{rem:Autp=23}.
For $p\ge 5$, the following non-abelian groups occur:
\[
\begin{cases}
  C_2^3 \rtimes S_3 & \text{for $p\equiv 1 \pmod {12}$};\\
  C_4^3 \rtimes S_3 & \text{for $p\equiv 3\pmod {4}$};\\
  C_6^3 \rtimes S_3 & \text{for $p\equiv 2\pmod {6}$},
\end{cases}
\] cf.~Lemma~\ref{lm:UnO}.    

Unlike the $a$-number one case, it is more difficult to construct a member
$(X,\lambda)$ in $\Lambda_{3,1}$ such that $\Aut(X,\lambda)\simeq
C_2$. However, it is expected that when $p$ goes to infinity, most members of $\Lambda_{g,1}$ have automorphism group $C_2$. The following
result confirms this expectation for $g=3$, 
based on Hashimoto's result~\cite{hashimoto:g=3}.

\begin{proposition}\label{prop:asympt}
  Let $\Lambda_{3,1}(C_2):=\{(X,\lambda)\in \Lambda_{3,1}:
\Aut(X,\lambda)\simeq C_2\}$. Then 
\begin{equation}
  \label{eq:asympt}
  \frac{\vert \Lambda_{3,1}(C_2)\vert }{\vert \Lambda_{3,1}\vert }\to 1 \quad \text{as\ 
$p\to \infty$}.
\end{equation}
\end{proposition}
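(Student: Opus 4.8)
The plan is to play the exact mass formula for $\Mass(\Lambda_{3,1})$ off against Hashimoto's explicit class number formula for $\vert\Lambda_{3,1}\vert$, reducing everything to a single counting inequality. Write $N=\vert\Lambda_{3,1}\vert$, $N_2=\vert\Lambda_{3,1}(C_2)\vert$ and $M=\Mass(\Lambda_{3,1})$. The crucial structural input is that for every $(X,\lambda)\in\Lambda_{3,1}$ the automorphism group contains $\{\pm 1\}\simeq C_2$, so $\vert\mathrm{Aut}(X,\lambda)\vert$ is even; moreover, if $(X,\lambda)\notin\Lambda_{3,1}(C_2)$ then $\vert\mathrm{Aut}(X,\lambda)\vert\ge 4$. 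Splitting the defining sum of the mass accordingly gives
\[
N-2M=\sum_{(X,\lambda)\in\Lambda_{3,1}}\Bigl(1-\tfrac{2}{\vert\mathrm{Aut}(X,\lambda)\vert}\Bigr)=\sum_{(X,\lambda)\notin\Lambda_{3,1}(C_2)}\Bigl(1-\tfrac{2}{\vert\mathrm{Aut}(X,\lambda)\vert}\Bigr)\ge\tfrac12\,(N-N_2),
\]
since every summand on the right is at least $1-\tfrac12=\tfrac12$. Hence $N-N_2\le 2(N-2M)$, and dividing by $N$ yields the squeeze
\[
0\le 1-\frac{N_2}{N}\le 2\Bigl(1-\frac{2M}{N}\Bigr).
\]

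Next I would record the precise size of the two quantities. By Corollary~\ref{cor:sspmassg3}(1),
\[
M=\frac{(p-1)(p^2+1)(p^3-1)}{2^{10}\cdot 3^4\cdot 5\cdot 7},
\]
a polynomial in $p$ of degree $6$ with leading coefficient $1/(2^{10}\cdot 3^4\cdot 5\cdot 7)$. I would then invoke Hashimoto's explicit evaluation of the principal class number $\vert\Lambda_{3,1}\vert=H_3(p,1)$ from \cite{hashimoto:g=3}: this is a quasi-polynomial in $p$ (depending on congruence conditions modulo small integers) of degree $6$, whose leading term is $2p^6/(2^{10}\cdot 3^4\cdot 5\cdot 7)$, i.e.\ exactly twice the leading term of $M$ (reflecting that the generic automorphism group is $C_2$). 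Consequently $N-2M$ is a quasi-polynomial of degree at most $5$, so $N-2M=O(p^5)$, while $N\sim 2M$ grows like $p^6$. Therefore $1-2M/N=(N-2M)/N\to 0$ as $p\to\infty$, and the displayed squeeze forces $N_2/N\to 1$, which is the assertion.

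The main obstacle is this second step: verifying from Hashimoto's formula that the leading asymptotics of the class number is exactly $2M$, equivalently that $N-2M=o(N)$. The automatic inequality $M\le N/2$ (each term $1/\vert\mathrm{Aut}(X,\lambda)\vert\le\tfrac12$) already gives the trivial lower bound $N\ge 2M$; what genuinely requires Hashimoto's computation is the matching upper bound $N\le 2M+o(p^6)$, which is precisely the statement that only a vanishing proportion of classes carry automorphism groups strictly larger than $C_2$. Once the degree-$6$ leading coefficient of $H_3(p,1)$ is confirmed to coincide with $2/(2^{10}\cdot 3^4\cdot 5\cdot 7)$, everything else is the elementary squeeze above and involves no further geometry.
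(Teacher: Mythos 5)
Your proposal is correct and follows essentially the same route as the paper: both arguments combine the exact mass formula $\Mass(\Lambda_{3,1})=(p-1)(p^2+1)(p^3-1)/(2^{10}\cdot 3^4\cdot 5\cdot 7)$ with Hashimoto's class number formula (main term equal to $2\,\Mass(\Lambda_{3,1})$, error $O(p^5)$) and the bound $\vert\Aut(X,\lambda)\vert\ge 4$ off $\Lambda_{3,1}(C_2)$ to squeeze $\vert\Lambda_{3,1}(C_2)\vert/\vert\Lambda_{3,1}\vert$ to $1$. Your inequality $N-2M\ge\tfrac12(N-N_2)$ is algebraically identical to the paper's $\Mass(\Lambda_{3,1})\le \tfrac{h_2}{2}+\tfrac{h-h_2}{4}$, so the two proofs differ only in presentation.
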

\begin{proof}
  Put $h_2(p):=\vert\Lambda_{3,1}(C_2)\vert$.
  By \cite[Main Theorem]{hashimoto:g=3}, the main term of
  $h(p):=\vert\Lambda_{3,1}\vert$ is $H_1(p):=(p-1)(p^2+1)(p^3-1)/(2^9\cdot
  3^4\cdot 5\cdot 7)$ and the error term $\varepsilon(p)$ is
  $O(p^5)$. Observe that $\Mass(\Lambda_{3,1})=H_1(p)/2$. If
  $(X,\lambda)\not\in \Lambda_{3,1}(C_2)$, then $\vert\Aut(X,\lambda)\vert\ge 4$. This gives the inequality
\[ 
\Mass(\Lambda_{3,1})\le \frac{h_2(p)}{2}+\frac{h(p)-h_2(p)}{4}= \frac{h_2(p)}{4}+\frac{H_1(p)+\varepsilon(p)}{4}. 
\]
From $\Mass(\Lambda_{3,1})=H_1(p)/2$ one deduces that $h_2(p)\ge H_1(p)-\varepsilon(p)$. Since
\[ 
\frac{H_1(p)-\varepsilon(p)}{H_1(p)+\varepsilon(p)}\le
{\frac{\vert\Lambda_{3,1}(C_2)\vert}{\vert\Lambda_{3,1}\vert}}\le 1 \quad  \text{and} \quad \frac{H_1(p)-\varepsilon(p)}{H_1(p)+\varepsilon(p)}\to 1 \quad
\text{as  $p\to \infty$},
\]
we get the assertion \eqref{eq:asympt}.    
\end{proof}

We end the paper with some open problems.

\begin{questions*}
\begin{enumerate}
\item Let $X$ be a principally polarisable supersingular abelian variety over
$k$, and let $P(X)$ be the set of isomorphism classes of principal
polarisations on $X$. The mass of $P(X)$ is defined as
\begin{equation}
  \label{eq:massPX}
  \Mass(P(X)):=\sum_{\lambda\in P(X)} \frac{1}{\vert \Aut(X,\lambda)\vert }.
\end{equation}
One would like to find a mass formula for $\Mass(P(X))$ and understand the relationship between the sets $P(X)$ and $\Lambda_{(X,\lambda)}$ for a polarisation
$\lambda\in P(X)$ when $\dim(X)~=~3$ .
Ibukiyama \cite{ibukiyama} studied $P(X)$ for
$\dim(X)=2$. He gave a mass formula for $\Mass(P(X))$ and also showed that $P(X)$
is in bijection with
the set $\Lambda_{(X,\lambda)}$ for any principal polarisation $\lambda$ on
$X$. Note that not every supersingular abelian threefold is
principally polarisable: by \cite[Theorem 10.5, p.~71]{lioort} we see that the
supersingular locus $\calS_{3,d}\subseteq \calA_{3,d}\otimes \Fpbar$ is three-dimensional if $d$ is divisible by a high power of $p$, while $\dim(\mathcal{S}_{3,1})=2$.  
\item In order to study the automorphism groups of $(X,\lambda)$ with
$a(X)=2$, we also need to study the automorphism groups arising from
the non-principal genus $\Lambda_{3,p}$; see
Proposition~\ref{prop:miniso}. Do we have an asymptotic result similar to Proposition~\ref{prop:asympt} for $\Lambda_{3,p}$?  
What are the possible automorphism groups arising from $\Lambda_{3,1}$ or from $\Lambda_{3,p}$? 
We refer to Ibukiyama-Katsura-Oort~\cite{Ibukiyama-Katsura-Oort-1986}, 
Katsura-Oort~\cite{katsuraoort:compos87} and 
Ibukiyama~\cite{ibukiyama:autgp1989} for
detailed investigations for the principal genus case $\Lambda_{2,1}$
and the non-principal genus case $\Lambda_{2,p}$.
Observe that there are natural maps $\Lambda_{2,1}\times
\Lambda_{1,1}\to \Lambda_{3,1}$ and  $\Lambda_{2,p}\times
\Lambda_{1,1}\to \Lambda_{3,p}$. Following the references mentioned above, these maps already produce many automorphism groups of members of $\Lambda_{3,1}$ and  $\Lambda_{3,p}$. 
\item We say two polarised abelian varieties $(X_1,\lambda_1)$ and
$(X_2,\lambda_2)$ are isogenous, denoted $(X_1,\lambda_1)\sim
(X_2,\lambda_2)$, if there exists a quasi-isogeny
$\varphi:X_1\to X_2$ such that $\varphi^* \lambda_2=\lambda_1$. Let
$x=(X_0,\lambda_0)\in \calA_{g,1}(k)$ be a geometric point. Define 
\begin{equation}
  \label{eq:newLambda_x}
  \Lambda_x:=\{(X,\lambda)\in \calA_{g,1}(k): (X,\lambda) \sim
  (X_0,\lambda_0) \text{\ and \ }
  (X,\lambda)[p^\infty]\simeq (X_0,\lambda_0)[p^\infty] \}.
\end{equation}
Using the foliation structure on Newton strata due to
Oort~\cite{oort:foliation}, one can
show that the set $\Lambda_x$ is finite. Note that any two principally
polarised supersingular abelian varieties over~$k$ are isogenous,
cf.~\cite[Corollary 10.3]{yu:thesis}. Thus, the definition of 
$\Lambda_x$ in \eqref{eq:newLambda_x}
coincides that of $\Lambda_x$ in \eqref{eq:Lambdaxo} when $x\in \mathcal{S}_{g,1}$. That is, a mass function 
\begin{equation}
  \label{eq:massfcn}
  \Mass: \calA_{g,1}(k) \to \Q, \quad x\mapsto \Mass(\Lambda_x)
\end{equation}
extends the mass function $\Mass(x):=\Mass(\Lambda_x)$ defined on
$\mathcal{S}_{g,1}(k)$ as before. One would like to compute or 
study the properties of such a mass function on
$\calA_{g,1}(k)$, starting in low genus $g$. 
This problem may require developing more explicit
descriptions of the foliation structure on Newton strata, or employing analogues of the Rapoport-Zink space which was introduced in Subsection~\ref{ssec:mod}.
\end{enumerate}  
\end{questions*}

\section{Appendix: The intersection $C\cap \Delta$}
\label{sec:CcapD}

Let $C\subseteq \bbP^2$ be the Fermat curve defined by 
the equation $X_1^{p+1}+X_2^{p+1}+X_3^{p+1}=0$ and $\Delta\subseteq
\bbP^2$ the curve defined in Proposition~\ref{prop:dx}. 

In Section~\ref{sec:a1} we have seen the inclusion 
\[ C(\F_{p^2})\coprod C^0(\F_{p^6}) \coprod C^0(\F_{p^8})\coprod
C^0(\F_{p^{10}}) \subseteq C\cap \Delta \]
for $p>2$. In this (independent) section we study the complement of this inclusion. 

\subsection{Bounds for the degrees}\label{sec:bound}\

Let $\calQ$ denote the set of all conics (including degenerate ones)
$Q\subseteq \bbP^2$ defined over $\F_{p^2}$. Then
$\Delta=\cup_{Q\in \calQ} Q$. If $t\in C\cap \Delta$, then $t\in C\cap
Q$ for some $Q\in \calQ$ and hence
$\deg_{\F_{p^2}}(t):=[\F_{p^2}(t):\F_{p^2}]\le 2(p+1)$. We need the 
following well-known result.

\begin{theorem}[Kummer's Theorem]\label{thm:Kummer}
  Let $K$ be any field and $n\ge 1$ an integer and $a\in K^\times$. If
  $(n, {\rm char\,} K)=1$, and $\mu_n(K^\sep)\subseteq K$, and the element $a \pmod{(K^\times)^n}$ in $K^\times/(K^\times)^n$ has order $n$, then $[K(a^{1/n}):K]=n$. 
\end{theorem}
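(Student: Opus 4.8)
The plan is to run the classical argument of Kummer theory, exploiting the fact that $K$ contains enough roots of unity to make the relevant radical extension Galois with cyclic Galois group. Fix a root $\alpha\in K^\sep$ of the polynomial $f(X)=X^n-a$ and set $L=K(\alpha)$. First I would check that $L/K$ is Galois. Since $\mu_n(K^\sep)\subseteq K$, every root of $f$ has the form $\zeta\alpha$ with $\zeta\in\mu_n\subseteq K$, so $f$ splits completely over $L$ and $L$ is its splitting field over $K$. The hypothesis $(n,\mathrm{char}\,K)=1$ together with $a\neq 0$ makes $f$ separable (its derivative $nX^{n-1}$ shares no root with $f$), so $L/K$ is a finite Galois extension; write $G=\Gal(L/K)$ and $d=|G|=[L:K]$.

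Next I would identify $G$ with a subgroup of $\mu_n$. Each $\sigma\in G$ permutes the roots of $f$, so $\sigma(\alpha)=\chi(\sigma)\alpha$ for a unique $\chi(\sigma)\in\mu_n$. Because $\mu_n\subseteq K$ is fixed pointwise by $G$, a one-line computation shows that $\chi$ is a group homomorphism $G\to\mu_n$, and it is injective since $\alpha$ generates $L$ over $K$. As $\gcd(n,\mathrm{char}\,K)=1$, the group $\mu_n$ is cyclic of order exactly $n$, so $G$ is cyclic of order $d\mid n$. This already gives $[L:K]=d\mid n$; the entire content of the theorem is then the reverse divisibility $n\mid d$.

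The crux — and the only place where the order hypothesis on $a$ enters — is this reverse divisibility. Let $\sigma$ be a generator of the cyclic group $G$, so that $\chi(\sigma)$ is a primitive $d$-th root of unity; then $\sigma(\alpha^d)=\chi(\sigma)^d\alpha^d=\alpha^d$, whence $\alpha^d$ is $G$-invariant and lies in $K$. Setting $b:=\alpha^d\in K^\times$, I would then compute $a^d=(\alpha^n)^d=(\alpha^d)^n=b^n\in(K^\times)^n$, so the class of $a$ in $K^\times/(K^\times)^n$ has order dividing $d$. By hypothesis this order is exactly $n$, forcing $n\mid d$, and combined with $d\mid n$ this yields $d=n$, i.e.\ $[K(a^{1/n}):K]=n$, as desired. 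I expect no genuine obstacle beyond being careful that the two standing hypotheses are each used exactly once: separability together with $\mu_n\subseteq K$ to produce a cyclic Galois group of order dividing $n$, and the order of $a$ modulo $n$-th powers to pin the degree down from below.
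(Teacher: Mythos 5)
Your proof is correct: it is the standard Kummer-theory argument (adjoin a root $\alpha$ of $X^n-a$, use $\mu_n\subseteq K$ and separability to see $K(\alpha)/K$ is Galois with group embedding into $\mu_n$ via $\sigma\mapsto\sigma(\alpha)/\alpha$, then use the order hypothesis on $a$ to force the degree up to $n$ via $a^d=(\alpha^d)^n\in(K^\times)^n$). The paper states this result as well known and gives no proof of its own, so there is nothing to compare against; your argument fills that gap correctly.
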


The authors are grateful to Ming-Lun Hsieh for providing the following proposition. 

\begin{proposition}\label{prop:ML}
  There exist a conic $Q\in \calQ$ and a point $t\in C\cap Q$ such
  that $\deg_{\F_{p^2}}(t)=(p+1)$.
\end{proposition}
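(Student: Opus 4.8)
The plan is to produce the point $t$ by intersecting $C$ with an explicit smooth conic $Q\in\calQ$ chosen so that the Fermat equation, pulled back along a rational parametrization $\PP^1\to Q$, becomes a \emph{quadratic polynomial in $v^{p+1}$} (where $v$ is the affine parameter). This reduces the whole problem to a single pure equation $v^{p+1}=W_1$ with $W_1\in\F_{p^2}$, to which Kummer's Theorem~\ref{thm:Kummer} applies directly with $n=p+1$.

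First I would record two facts about the group $\F_{p^2}^\times/(\F_{p^2}^\times)^{p+1}$. Since $(p+1)\mid(p^2-1)$, this quotient is cyclic of order $p+1$ and contains $\mu_{p+1}\subseteq\F_{p^2}$, so the roots-of-unity hypothesis of Kummer's theorem is met. Moreover, since the norm map $\Nm\colon\F_{p^2}^\times\to\F_p^\times$, $x\mapsto x^{p+1}$, is surjective, we have $(\F_{p^2}^\times)^{p+1}=\F_p^\times$. Consequently a generator $g$ of $\F_{p^2}^\times$ has class of order exactly $p+1$ in the quotient, so $\gcd$-conditions in Kummer's theorem hold for the element $g$.

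Next I would choose $b,c\in\F_{p^2}^\times$ with $\Nm(b)=\Nm(g)=g^{p+1}$ and $\Nm(c)=-\Tr(g)=-(g+g^p)$; both exist because the norm is onto $\F_p^\times$ and because $\Tr(g)\in\F_p^\times$. (Here $g+g^p\ne0$: one has $g+g^p=g(1+g^{p-1})$, and $g^{p-1}$ is a primitive $(p+1)$-st root of unity, hence $\ne -1$.) I would then take $Q$ to be the smooth conic $bX_3^2=c^2X_1X_2$ over $\F_{p^2}$, parametrized by $(s:t)\mapsto(s^2:bt^2:c\,st)$. Substituting into the Fermat equation and writing $v=s/t$, $W=v^{p+1}$ yields $W^2+\Nm(c)\,W+\Nm(b)=W^2-\Tr(g)\,W+\Nm(g)$, whose roots are $g$ and $g^p$. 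Since neither $(1:0:0)$ nor $(0:1:0)$ lies on $C$, every point of $C\cap Q$ has $s,t\ne0$ and thus satisfies $v^{p+1}=g$ (or $v^{p+1}=g^p$). By Kummer's theorem, a root $v$ of $v^{p+1}=g$ satisfies $[\F_{p^2}(v):\F_{p^2}]=p+1$, and the corresponding point $t=(v^2:b:c\,v)\in C\cap Q$ has $\F_{p^2}(t)=\F_{p^2}(v^2,v)=\F_{p^2}(v)$, so $\deg_{\F_{p^2}}(t)=p+1$, as desired.

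The main obstacle — and the reason a naively symmetric conic such as $X_1X_3=X_2^2$ fails — is that the coefficients of the reduced quadratic in $W$ are automatically norms, hence lie in $\F_p=(\F_{p^2}^\times)^{p+1}\cup\{0\}$. Forcing the radicand $W_1$ into $\F_{p^2}$ then tends to land its class in a proper subgroup of $\F_{p^2}^\times/(\F_{p^2}^\times)^{p+1}$: for the symmetric conic the product of the two roots is $1$, so $W_1\in\mu_{p+1}$ and its class has order only $(p+1)/2$, producing points of degree $(p+1)/2$ instead of $p+1$. Introducing the asymmetric coefficients $b$ and $c$ and invoking surjectivity of the norm is precisely the device that lets $W_1$ be a \emph{generator} of $\F_{p^2}^\times$, and hence achieve the full degree $p+1$.
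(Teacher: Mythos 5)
Your proof is correct and follows essentially the same route as the paper: both intersect $C$ with a conic of the form $X_1X_2 = (\mathrm{const})\,X_3^2$ so that the Fermat equation becomes a quadratic in $v^{p+1}$ with roots $\{u,u^p\}$ for $u$ a generator of $\F_{p^2}^\times/(\F_{p^2}^\times)^{p+1}=\F_{p^2}^\times/\F_p^\times$, and then apply Kummer's theorem. The only (cosmetic) difference is that the paper rescales the generator by an element of $\F_p^\times$ to force $\Tr(u)=-1$, whereas you keep the generator fixed and instead adjust the conic's coefficients $b,c$ using surjectivity of the norm.
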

\begin{proof}
  Choose a generator $u_1$ of $\F_{p^2}^\times$ such that
  $u_1^p+u_1=-a\neq 0$. Put $u:=a^{-1} u_1$ and let $\alpha$ be a
  $p+1$-th root of $u$. As $a\in \Fp^\times$, we have $u^p+u=-1$. 
  Since the element $u \pmod{(\F_{p^2}^\times)^{p+1}}$ in
  $\F_{p^2}^\times/(\F_{p^2}^\times)^{p+1}=\F_{p^2}^\times/(\F_{p}^\times)$
  has order $p+1$, one has $[\F_{p^2}(\alpha):\F_{p^2}]=p+1$ by
  Kummer's Theorem.
  Let  
\[ Q: X_1 X_2=u X_3^2 \quad \text{ and } \quad
t:=(\alpha:u \alpha^{-1}:1). 
\]
 One sees $t\in C$ as
  $\alpha^{p+1}+(u \alpha^{-1})^{p+1}+1=u+u^{p+1}\cdot
  u^{-1}+1=0$. So $t\in C\cap Q$ and $\deg_{\F_{p^2}}(t)=p+1$. 
\end{proof}

The following result, due to Akio Tamagawa, says that the upper bound $2(p+1)$ for $\deg_{\F_{p^2}}(t)$ in $C\cap \Delta$ can be realised.

\begin{proposition}\label{prop:akio}
  There exist a conic $Q\in \calQ$ and a point $t\in C\cap Q$ such
  that $\deg_{\F_{p^2}}(t)=2(p+1)$.
\end{proposition}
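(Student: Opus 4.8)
The plan is to exhibit the conic and the point explicitly and thereby reduce the statement to an irreducibility assertion, in the same spirit as Proposition~\ref{prop:ML}. For a smooth conic $Q$ over $\F_{p^2}$ meeting $C$ transversally, B\'ezout gives $\deg(C\cap Q)=2(p+1)$, so $2(p+1)$ is the largest possible degree of a point of $C\cap Q$, and it is attained precisely when the $2(p+1)$ intersection points form a single Galois orbit, i.e.\ when the intersection polynomial is irreducible over $\F_{p^2}$. First I would fix a conic $Q_\xi$ over $\F_{p^2}$ that is \emph{not} of the binomial type $X_iX_j=cX_\ell^2$ (those are what Proposition~\ref{prop:ML} exploits), for instance
\[
Q_\xi:\quad X_1X_3=X_2^2+\xi X_3^2,\qquad \xi\in\F_{p^2},
\]
parametrised by $s\mapsto t(s)=(s^2+\xi:s:1)$. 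Since $\F_{p^2}(t(s))=\F_{p^2}(s)$, any root $s$ of the intersection polynomial
\[
f_\xi(s):=(s^2+\xi)^{p+1}+s^{p+1}+1=s^{2(p+1)}+\xi s^{2p}+s^{p+1}+\bar\xi\,s^2+(\mathrm{N}(\xi)+1),
\]
where $\bar\xi=\xi^p$ and $\mathrm{N}(\xi)=\xi\bar\xi\in\F_p$, that has degree $2(p+1)$ over $\F_{p^2}$ produces a point $t(s)\in C\cap Q_\xi$ with $\deg_{\F_{p^2}}(t(s))=2(p+1)$. Thus the proposition reduces to showing that for a suitable $\xi$ the polynomial $f_\xi$ is irreducible over $\F_{p^2}$.

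The heart of the matter, and the main obstacle, is this irreducibility, which I would attack by realising the residue field $\F_{p^2}(s)$ as a tower $\F_{p^2}\subseteq\F_{p^4}\subseteq\F_{p^2}(s)$ of degrees $2$ and $p+1$. The degree-$(p+1)$ step is controlled by Kummer's Theorem~\ref{thm:Kummer}: since $\mu_{p+1}\subseteq\F_{p^2}\subseteq\F_{p^4}$, $\gcd(p+1,p)=1$, and $(p+1)\mid(p^4-1)$, one has $\F_{p^4}^\times/(\F_{p^4}^\times)^{p+1}\simeq\Z/(p+1)\Z$, and a $(p+1)$-th root $\alpha$ of $w\in\F_{p^4}^\times$ satisfies $[\F_{p^4}(\alpha):\F_{p^4}]=p+1$ exactly when the class of $w$ generates this quotient. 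The plan is therefore to locate inside $\F_{p^2}(s)$ an auxiliary element $w$, built from $s$ by way of the relation $f_\xi(s)=0$, for which (i) $w\in\F_{p^4}\setminus\F_{p^2}$, giving the quadratic step $\F_{p^2}(w)=\F_{p^4}$; and (ii) $s$ is, up to the quadratic data, a $(p+1)$-th root of a $w$ whose class generates $\F_{p^4}^\times/(\F_{p^4}^\times)^{p+1}$. If both hold, then $[\F_{p^2}(s):\F_{p^2}]=[\F_{p^2}(s):\F_{p^4}]\cdot[\F_{p^4}:\F_{p^2}]=(p+1)\cdot 2$, so $f_\xi$ is irreducible. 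I emphasise that the analogue of this argument with $w\in\F_{p^2}$ is precisely Proposition~\ref{prop:ML} and yields only degree $p+1$; the new ingredient is the genuine quadratic step landing in $\F_{p^4}$.

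The delicate point — which is why a non-binomial conic is forced — is that one checks that no $\F_{p^2}$-conic makes $f_\xi$ a polynomial in $s^{p+1}$ alone unless it is binomial, and for a binomial conic the intermediate quantity is a norm lying in $\F_p$, whence $w\in\F_{p^2}$ and $\deg_{\F_{p^2}}(t)\le p+1$. Consequently the quadratic and the Kummer steps cannot be read off from a single substitution $w=s^{p+1}$, and they must be disentangled directly: I would analyse the factorisation type of $f_\xi$ over the successive extensions $\F_{p^4}\subseteq\F_{p^8}\subseteq\cdots$, using the point counts for $C$ from Lemma~\ref{lem:Cmaxmim} (in particular $C^0(\F_{p^4})=\emptyset$) to exclude factors of small degree, and then choose $\xi\in\F_{p^2}$ so that the associated $w$ indeed lies in $\F_{p^4}\setminus\F_{p^2}$ and is not a $(p+1)$-th power there. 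Producing such a $\xi$ uniformly in $p$ — equivalently, showing that the locus of ``good'' $\xi$ is nonempty over $\F_{p^2}$ — is where the real work lies; should an explicit $\xi$ in the style of Proposition~\ref{prop:ML} prove elusive, a Lang--Weil count on the family $\{f_\xi\}_{\xi\in\F_{p^2}}$ (showing the reducible locus misses some $\F_{p^2}$-rational $\xi$) is the natural fallback.
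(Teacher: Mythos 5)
Your reformulation is legitimate: parametrising a smooth $\F_{p^2}$-conic and asking that the degree-$2(p+1)$ intersection polynomial $f_\xi(s)=(s^2+\xi)^{p+1}+s^{p+1}+1$ be irreducible over $\F_{p^2}$ would indeed prove the proposition. But the proof stops exactly where the content lies: no $\xi$ with $f_\xi$ irreducible is ever produced, and you say yourself that this is ``where the real work lies.'' As written this is a plan, not a proof. The proposed fallback does not obviously close the gap either: in an explicit Chebotarev/Lang--Weil count over the one-parameter family $\{f_\xi\}_{\xi\in\F_{p^2}}$, the main term (the proportion of $\xi$ whose Frobenius acts as a full $2(p+1)$-cycle) is at most about $p^2/(2(p+1))\sim p/2$, while the error term grows with the degree and genus of the splitting cover, which here grow with $p$; so nonemptiness of the irreducible locus is not automatic and would itself require serious work (note also that for odd $p$ the polynomial $f_\xi$ is even in $s$, so its Galois group lies in $C_2\wr S_{p+1}$, and the monodromy computation is genuinely nontrivial). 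Likewise, the ``auxiliary element $w$'' that is supposed to realise the tower $\F_{p^2}\subseteq\F_{p^4}\subseteq\F_{p^2}(s)$ is never exhibited from the relation $f_\xi(s)=0$.

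For comparison, the paper sidesteps irreducibility of a smooth-conic intersection entirely by taking $Q$ \emph{degenerate}: a product of two $\F_{p^4}$-conjugate lines, $X_2^2+bX_2X_3+cX_3^2=0$ with root $y/z\in\F_{p^4}\setminus\F_{p^2}$. On such a line the equation of $C$ becomes a pure Kummer equation $x^{p+1}=-\xi$ with $\xi=y^{p+1}+z^{p+1}$, so one only needs $\xi$ to generate $\F_{p^4}^\times/(\F_{p^4}^\times)^{2(p+1)}$; this forces $\xi\notin\F_{p^2}$ (giving the quadratic step) and $[\F_{p^4}(x):\F_{p^4}]=p+1$ by Kummer's Theorem. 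The existence of suitable $y,z$ is then an elementary counting argument in $\F_{p^4}^\times/\F_p^\times\simeq\bbP^3(\F_p)$ solving $\eta+\zeta=\xi$ with $\eta,\zeta,\xi$ in prescribed subsets, whose key input is $C(\F_{p^4})=C(\F_{p^2})$ from Lemma~\ref{lem:Cmaxmim}. If you wish to keep the smooth-conic route you must supply the irreducibility argument in full; otherwise the degenerate-conic construction is the one for which the estimates actually close.
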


\begin{proof}[Construction] 
We first consider the case $p=2$. Let $\zeta$ be a primitive fifth
roof of unity in $\ol \F_2$. Since $(\Z/5\Z)^\times\simeq \< 2\!\!\mod
5\>$, we have $\F_2(\zeta)=\F_{2^4}$. One computes that 
$(1+\zeta)^3=1+\zeta+\zeta^2+\zeta^3\neq 1$ and
$(1+\zeta)^5=\zeta+\zeta^4 \neq 1$. 
Therefore $1+\zeta$ generates
the cyclic group $\F_{2^4}^\times \simeq C_{15}$.  
Choose $x,y,z\in \ol \F_2$ such that $x=1$,
$y^3=\zeta$ and $z^3=1+\zeta$, and put $t:=(x:y:z)$; 
we have $1+\zeta+(1+\zeta)=0$.
Since $\F_2(z)$ contains
$\F_2(\zeta)=\F_{2^4}$, we have $\F_2(z)=\F_{2^4}(z)$.  
Since $\<1+\zeta\>=\F_{2^4}^\times$, by Kummer's Theorem,
$\F_2(z)=\F_{2^4}(z)=\F_{2^{12}}$ and hence $\deg_{\F_{4}}(t)=6=2(p+1)$.
Since $x,y\in \F_{2^4}$, there exist $a,b,c\in \F_{2^2}$ such that $a
x^2+bxy+c y^2=0$. Let $Q\subseteq \bbP^2$ be the (degenerate) conic
defined by the equation $a X_1^2+b X_1 X_2+c X_2^2$. Then the point
$t\in C\cap Q$ satisfies the desired property.  

Assume now that $p>2$. We would like to find solutions
$t=(x:y:z)$ with $x\in \F_{p^{4(p+1)}}^\times$, $y\in
\F_{p^4}^\times\setminus \F_{p^2}^\times$,
and $z\in \F_{p^2}^\times$ satisfying the desired properties.

Let 
\[
f:\F_{p^4}^\times\to \F_{p^4}^\times/ (\F_{p^4}^\times)^{2(p+1)}
\] 
be the natural projection; one has  $\F_{p^4}^\times/
(\F_{p^4}^\times)^{2(p+1)} \simeq C_{2(p+1)}$ as $p\neq 2$. Consider the following three sets:
\begin{equation}
  \label{eq:XYZ}
  \begin{split}
&Z:=\{z^{p+1}: z\in \F_{p^2}^\times \}\simeq \F_{p}^\times; \\
&Y:=\{y^{p+1}: y\in \F_{p^4}^\times \}\setminus Z; \\
&X:=\{\xi\in \F_{p^4}^\times: \text{$f(\xi)$ generates the cyclic group
$C_{2(p+1)}$}\,  \}.    
  \end{split}
\end{equation}

The sets $Y$ and $Z$ are equipped with an $\F_p^\times$-action and we have
\begin{equation}
  \label{eq:sizeXYZ}
  \vert Z\vert =p-1, \quad \vert Y\vert =p^2(p-1), \quad \vert X\vert =(p^4-1)\cdot
\frac{\varphi(2(p+1))}{2(p+1)}.
\end{equation}
Let $g$ be the composition 
\[ 
\begin{CD}
  g: \F_{p^4}^\times @>{N}>>  \F_{p^2}^\times @>{\rm proj.}>>
  \F_{p^2}^\times/(\F_{p}^\times)^2\simeq C_{2(p+1)}, 
\end{CD}
\]
where $N(\alpha)=\alpha^{p^2+1}$ is the norm map. The map $f$ can be
identified with $g$ by a suitable choice of the generators. Since
the image $g(\F_{p}^\times)$ is trivial, the image $f(\F_{p}^\times)$
is also trivial. 
Thus, $X$ is also equipped with an $\F_{p}^\times$-action and hence $-X=X$. 

We would like to find
\begin{equation}
  \label{eq:etazetaxi}
  \eta + \zeta = \xi
\end{equation}
for some $\eta\in Y$, $\zeta\in Z$ and $\xi\in -X=X$.

Note that $X$, $Y$ and $Z$ are mutually disjoint: that $Y\cap Z=\emptyset$ follows by definition, and $X\cap Z=\emptyset$ follows from the fact that $\Fp^\times \subseteq \ker(f)$. Since $f((\F_{p^4}^\times)^{p+1})$ is the $2$-torsion subgroup
of $\F_{p^4}^\times/ (\F_{p^4}^\times)^{2(p+1)}\simeq C_{2(p+1)}$ and $f(Y)\subseteq
f((\F_{p^4}^\times)^{p+1})$, the image $f(Y)$ contains no 
generator of $C_{2(p+1)}$. Therefore, we also have $Y\cap X=\emptyset$.

We are working on the space $\bbP:=\F_{p^4}^\times/
\F_{p}^\times \simeq \bbP^3(\F_{p})$. The images of $X,Y$ and $Z$ in $\bbP$
are written as $\ol X$, $\ol Y$ and $\ol Z$, respectively. So
$\ol Z=\{\bar \zeta\}$ and 
\[ \vert \ol Z\vert =1, \quad \vert \ol Y\vert =p^2, \quad \vert \ol X\vert =(p^2+1)\cdot
\frac{\varphi(2(p+1))}{2}. \]
 
For each point $\bar \eta\in \ol Y$ ($\bar \eta\neq \bar \zeta$),
denote by $L_{\bar \eta}\subseteq \bbP$ the line joining the points 
$\bar \eta$ and $\bar \zeta$. To solve \eqref{eq:etazetaxi}, 
it suffices to prove that
\begin{equation}
  \label{eq:LintX}
  \left(\bigcup_{\bar \eta\in \ol Y} L_{\bar \eta} \right )\cap \ol X\neq
  \emptyset. 
\end{equation}
This is because if $\bar \xi\in L_{\bar \eta}\cap \ol X$ for some $\bar
\eta\in \ol Y$, then we have $a \eta+b\zeta=c\xi$ with $a,b,c\in
\F_p^\times$ and hence $\eta'+\zeta'=\xi'$ with $\eta'\in Y, \zeta'\in Z$
and $\xi'\in X$. 

\begin{lemma}\label{lem:Lbareta}
  For any two distinct points $\bar \eta_1$ and $\bar \eta_2$ of\, $\ol
  Y$, one has $L_{\bar \eta_1}\cap L_{\bar \eta_2}=\{\bar \zeta\}.$
\end{lemma}
\begin{proof}
  Suppose that $L_{\bar \eta_1}\cap L_{\bar \eta_2}\supsetneq \{\bar
  \zeta\}$. Then $L_{\bar \eta_1}= L_{\bar \eta_2}$ and $\bar
  \eta_2\in L_{\bar \eta_1}$. Therefore, $-\eta_2=a \eta_1+b \zeta$
  for $a,b\in \F_p^\times$ and hence we have
\[ \eta_2 + \eta_1'+ \zeta' =0 \]
for some $ \eta_1'\in Y$ and $\zeta'\in Z$. Now write
\[ 
\eta_2=(y_2)^{p+1}, \quad \eta_1'=(y_1')^{p+1}, \quad
\zeta'=(z')^{p+1}, 
\]
with $y_2,y_1'\in \F_{p^4}^\times\setminus \F_{p^2}^\times$ and $z'\in
\F_{p^2}^\times$. That is, we get a point $(y_2:y_1':z')\in
C(\F_{p^4})$. Since $C(\F_{p^4}) =C(\F_{p^2})$ by
Lemma~\ref{lem:Cmaxmim}, 
we have $y_2,y_1'\in
\F_{p^2}$,  contradiction. 
\end{proof}
 
By Lemma~\ref{lem:Lbareta},
\[ \bigcup_{\bar \eta\in \ol Y} L_{\bar \eta}=\{\bar \zeta\} \amalg \coprod_{\bar \eta\in \ol Y} L_{\bar \eta}-\{\bar \zeta\}, \]
and hence 
\[ \vert \bigcup_{\bar \eta\in \ol Y} L_{\bar \eta}\vert =1+\vert \ol Y \vert \cdot
p=p^3+1,\quad \text{and} \quad \vert \bbP-\bigcup_{\bar \eta\in \ol Y} L_{\bar
    \eta}\vert =p^2+p. \]
To show \eqref{eq:etazetaxi}, we check the inequality
\begin{equation}
  \label{eq:olX}
  \vert \ol X \vert=(p^2+1)\cdot
\frac{\varphi(2(p+1))}{2}> p^2+p
\end{equation}
for all $p\neq 2$. If $p=3$, then $\vert \ol X\vert =20>12$ holds. For $p\ge 5$, by the
inequality $\varphi(n)\ge \sqrt{n/2}$, it suffices to show 
\[ (p^2+1)\cdot
\frac{\sqrt{p+1}}{2}> p^2+p. \]
This follows from 
\[ (p^2+1)^2 (p+1)-4(p^2+p)^2=(p+1)(p^4-4p^3-2p^2+1)>0 \]
for $p\ge 5$. Therefore, the inequality \eqref{eq:olX} holds and
we have found $\eta, \zeta, \xi$ as in \eqref{eq:etazetaxi}.

Now write
\[ \zeta=z^{p+1} \ \  (\text{ for } z\in \F_{p^2}^\times), \quad \eta=y^{p+1}\ \ (\text{ for } y\in
\F_{p^4}^\times\setminus \F_{p^2}^\times). \]
Choose an element $x\in\Fpbar$ such that $x^{p+1}=-\xi\in
\F_{p^4}^\times$. Since the element $\xi \pmod{(\F_{p^4}^\times)^{p+1}}$ is a
generator in $\F_{p^4}^\times/(\F_{p^4}^\times)^{p+1}$, by Kummer's Theorem we have
\begin{equation}
  \label{eq:p+1}
  [\F_{p^4}(x):\F_{p^4}]=p+1.
\end{equation}
We claim that $\xi\not\in \F_{p^2}^\times$. 
Suppose for contradiction that $\xi\in
\F_{p^2}^\times$. Then 
\[ f(\xi)=g(\xi)\in
g(\F_{p^2}^\times)=(\F_{p^2}^\times)^2/(\F_{p}^\times)^2\subsetneq
\F_{p^2}^\times/(\F_{p}^\times)^2\simeq C_{2(p+1)}. \]     
Therefore, $f(\xi)$ cannot be a generator of $C_{2(p+1)}$,
contradiction. So since $\xi\in \F_{p^4}^\times\setminus
\F_{p^2}^\times$, we have $\F_{p^2}(x)\supset
\F_{p^2}(\xi)=\F_{p^4}$. This shows that
\[ \F_{p^2}(x)=\F_{p^4}(x), \quad \text{and}\quad
[\F_{p^2}(x):\F_{p^2}]=2(p+1) \]
by \eqref{eq:p+1}. Put $t:=(x:y:z)=(x/z:y/z:1)\in C(\Fpbar)$. Then we get
\begin{equation}
  \label{eq:2(p+1)}
  [\F_{p^2}(t):\F_{p^2}]=2(p+1).
\end{equation}
Since $y/z\in \F_{p^4}^\times\setminus
\F_{p^2}^\times$, there exist $b,c\in \F_{p^2}$ such that 
\[ \left ( \frac{y}{z} \right )^2+ b\left ( \frac{y}{z} \right )+c=0,
\quad \text{or} \quad y^2+byz+c z^2=0. \]
Let $Q\in \calQ$ be the (degenerate) conic defined by the equation 
$X_2^2+b X_2 X_3+cX_3^2=0$. Then $t\in C\cap Q$ and
$\deg_{\F_{p^2}}(t)=2(p+1)$. 
This completes the construction.
\end{proof} 

\subsection{Estimate of  $\vert C\cap \Delta\vert $}\label{sec:sizeCD}\

In this subsection, points in $C$ will mean geometric points and
$C\cap \Delta$ will mean the set-theoretic intersection. 
Define
\[ \calZ:=\{(t,Q)\in C \times \calQ: t\in \calQ\} \]
and consider the following natural maps:
\[
\begin{tikzcd}
\ & \mathcal{Z} \arrow{dl}{\pi}\arrow{dr}{q} & \ \\
C & \ & \mathcal{Q}
\end{tikzcd}
\]
The degree of the map $q$ is $2(p+1)$. For each $Q\in \calQ$, the
fibre over $Q$ has size
\[ 2(p+1)-\varepsilon_Q, \]
where $\varepsilon_Q=\sum_{r\ge 2}\varepsilon_{Q,r}$ with 
\[ \varepsilon_{Q,r}=\#\{t\in C\cap Q : \mathrm{mult}_{C\cap \Delta}(t)=r \}\cdot (r-1). \]
Thus, $\vert \calZ \vert =2(p+1)(p^{10}+p^{8}+p^6+p^4+p^2+1)-\varepsilon$, where
\begin{equation}
  \label{eq:error}
  \varepsilon:=\sum_{Q\in \calQ} \varepsilon_Q 
\end{equation}
is the error term coming from intersection multiplicities.

\begin{proposition}\label{prop:CcapD}
  We have  
  $\vert C\cap \Delta\vert =p^{11}+o(p^{11})-\varepsilon$ as
  $p \to \infty$, 
  where $\varepsilon$ is defined in \eqref{eq:error}.
\end{proposition}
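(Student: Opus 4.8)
The plan is to evaluate $|\calZ|$ a second time, now through the projection $\pi\colon \calZ\to C$, and to match it against the expression $|\calZ|=2(p+1)(p^{10}+p^8+p^6+p^4+p^2+1)-\varepsilon$ already obtained from $q$. Since the fibre $\pi^{-1}(t)$ is the set of conics in $\calQ$ through $t$, and $\calQ\simeq\PP^5(\F_{p^2})$ consists of $p^{10}+p^8+p^6+p^4+p^2+1$ conics, the first task is to compute, for each $t\in C$, the number $N(t):=|\pi^{-1}(t)|$; then $|\calZ|=\sum_{t\in C}N(t)$, a sum supported on $C\cap\Delta$ because $N(t)=0$ precisely when no conic passes through $t$, i.e.\ when $t\notin\Delta$.

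To find $N(t)$, write $v=v(t)=(t_1^2,t_2^2,t_3^2,t_1t_2,t_1t_3,t_2t_3)\in k^6$ as in Proposition~\ref{prop:dx}, so that a conic $[a]\in\PP^5(\F_{p^2})$ contains $t$ iff $\langle a,v\rangle=0$. As $a$ is fixed by the Frobenius $\sigma=(\,\cdot\,)^{(p^2)}$, this is equivalent to $\langle a,v^{(p^{2i})}\rangle=0$ for all $i\ge 0$, i.e.\ to $a\perp W_t$, where $W_t:=\Span_k\{v^{(p^{2i})}:i\ge0\}$. The subspace $W_t$ is $\sigma$-stable, hence descends to an $\F_{p^2}$-form of dimension $\delta(t):=\dim_k W_t$; its orthogonal complement in $\F_{p^2}^6$ therefore has dimension $6-\delta(t)$, and
\[
N(t)=\frac{p^{2(6-\delta(t))}-1}{p^2-1}.
\]
By the Moore-matrix principle recorded in the remark following Lemma~\ref{lem:CM}, $\delta(t)=\dim_{\F_{p^2}}\langle t_1^2,\dots,t_2t_3\rangle_{\F_{p^2}}$, which is exactly $d(t)$ by the computation in the proof of Proposition~\ref{prop:dx}; in particular $\Delta=\{\delta(t)\le 5\}$, as it should be. For $t\in C(\F_{p^2})$ one has $\delta(t)=1$ directly.

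It remains to stratify and substitute. By Proposition~\ref{prop:dx} the strata $\{t\in C^0:d(t)=3\}$ and $\{t\in C^0:d(t)=4\}$ are $C^0(\F_{p^6})$ and $C^0(\F_{p^8})$, of orders $p^6$ and $p^8$ by Lemma~\ref{lem:Cmaxmim}; write $n_5:=|\{t\in C^0:d(t)=5\}|$, and recall $|C(\F_{p^2})|=p^3+1$ with $\delta\equiv1$ there. Feeding the values $N(t)=p^8+p^6+p^4+p^2+1$, $p^4+p^2+1$, $p^2+1$, $1$ for $\delta(t)=1,3,4,5$ into $|\calZ|=\sum_t N(t)$ gives
\[
|\calZ|=(p^3+1)(p^8+p^6+p^4+p^2+1)+n_3(p^4+p^2+1)+n_4(p^2+1)+n_5,
\]
where $n_3,n_4$ denote the two middle stratum sizes. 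Here the first term equals $p^{11}+o(p^{11})$ while $n_3(p^4+p^2+1)$ and $n_4(p^2+1)$ are $O(p^{10})$; comparing with $|\calZ|=2p^{11}+O(p^{10})-\varepsilon$ yields $n_5=p^{11}+o(p^{11})-\varepsilon$. Finally
\[
|C\cap\Delta|=|C(\F_{p^2})|+n_3+n_4+n_5=n_5+O(p^8)=p^{11}+o(p^{11})-\varepsilon,
\]
as asserted.

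The step I expect to be the main obstacle is the determination of $N(t)$ in the middle paragraph: one must set up the Galois-descent count of conics through a point correctly and, crucially, identify $\delta(t)$ with the invariant $d(t)$ of Proposition~\ref{prop:dx} through the Moore-matrix principle, while treating the degenerate conics in $\calQ$ and the $\F_{p^2}$-rational points uniformly. After that the argument is pure bookkeeping, whose only surprising feature is that the $\sim p^3$ points of $C(\F_{p^2})$---through the large multiplicity $N(t)\sim p^8$---already supply the full leading term $p^{11}$, so that the contributions of the strata of larger $d(t)$ are negligible.
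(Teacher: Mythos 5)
Your proof is correct and follows essentially the same route as the paper: count $\vert\calZ\vert$ a second time via $\pi$, identify each fibre $\pi^{-1}(t)$ with $\bbP^{5-d(t)}(\F_{p^2})$, stratify $C$ by $d(t)$ using Proposition~\ref{prop:dx} and Lemma~\ref{lem:Cmaxmim}, and solve for the $d=5$ stratum. The only cosmetic difference is that you obtain the fibre count by Galois descent on the span of the Frobenius orbit of $v(t)$ (the Moore-matrix identity $\delta(t)=d(t)$), where the paper uses the evaluation exact sequence $0\to W_t\to \F_{p^2}[X_1,X_2,X_3]_2\to \F_{p^2}\langle t_1^2,\dots,t_2t_3\rangle\to 0$; the paper also records the exact count \eqref{eq:formulaCcapD} rather than only the asymptotic.
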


\begin{remark}
We expect that $\varepsilon=o(p^{11})$. Then we would have $\vert C\cap \Delta\vert =p^{11}+o(p^{11})$ as $p \to \infty$. 
\end{remark}

\begin{proof}
For any integer $i\ge 1$, define 
\[ C_i:=\{t \in C(\Fpbar): \deg_{\F_{p^2}}(t)=i \}. \]
By Lemma~\ref{lem:Cmaxmim}, we have
\[ \vert C_1\vert =\vert C(\F_{p^2})\vert =p^3+1, \quad
\vert C_3\vert =\vert C^0(\F_{p^6})\vert =p^6+p^5-p^4-p^3, \]
\[ \vert C_4\vert =\vert C^0(\F_{p^8})\vert =p^8-p^6+p^5-p^3, \quad
\vert C_5\vert =\vert C^0(\F_{p^{10}})\vert =p^{10}+p^7-p^6-p^3. \] 
Let $\F_{p^2}[X_1,X_2,X_3]_2\subseteq \F_{p^2}[X_1,X_2,X_3]$ denote the subspace of homogeneous
polynomials of degree two.
For each point $t=(t_1:t_2:t_3)\in C$, 
the fibre $\pi^{-1}(t)$ is the set $\left(W_t-\{0\}\right)/\F_{p^2}^\times$, where
\[ W_t:=\{F\in \F_{p^2}[X_1,X_2,X_3]_2: F(t)=0\}. \]
They fit into the following exact sequence
\[
\begin{CD}
  0 @>>> W_t @>>> \F_{p^2}[X_1,X_2,X_3]_2 @>{\mathrm{ev}_t}>>
  \F_{p^2}\<t_1^2,t_2^2,t_3^2, t_1t_2,t_1t_3, t_2t_3\> @>>> 0. 
\end{CD}
\]
It follows that $\dim(W_t)=6-d(t)$ and  $\pi^{-1}(t)\simeq
\bbP^{5-d(t)}(\F_{p^2})$, where we redefine $d(t)$ as the dimension of
$\F_{p^2}\<t_1^2,t_2^2,t_3^2, t_1t_2,t_1t_3, t_2t_3\>$ -- even for $p=2$. Therefore, 
the numbers of fibres over $C_i$ for $i=1,3,4,5$ are
\[ (p^{8}+p^6+p^4+p^2+1), \quad (p^4+p^2+1), \quad (p^2+1), \quad 1, \]
respectively. Then the number of points in $\calZ$ over the union of $C_i$
for $i=1,3,4,5$ is given by
\[ 
\begin{split}
  A&:=(p^3+1)(p^{8}+p^6+p^4+p^2+1)+ (p^6+p^5-p^4-p^3)(p^4+p^2+1) \\
  &\quad \  +(p^8-p^6+p^5-p^3)(p^2+1)+(p^{10}+p^7-p^6-p^3) \\
  &=p^{11}+3p^{10}+2p^9+p^8+3p^7-p^6+p^{5}-2p^3+p^2+1.  
\end{split}\]
Thus, 
\[ 
\begin{split}
B&:=\#\{(t,Q)\in \calZ: \deg_{\F_{p^2}}(t)>5\}=\vert \calZ\vert -A \\  
&=p^{11}-p^{10}+p^8-p^7+3p^6+p^5+2p^{4}+4p^3+p^2+2p+1-\varepsilon.
\end{split}
 \]
Finally, 
\begin{equation}\label{eq:formulaCcapD}
\begin{split}
 \vert C\cap \Delta\vert &=\vert \mathrm{Im}(\pi)\vert =\vert C_1\vert +\vert C_3\vert + \vert C_4\vert +\vert C_5\vert +B \\   
&=p^{11}+2p^8+2p^6+3p^5+p^4+2p^3+p^2+2p+2-\varepsilon.
\end{split}  
\end{equation}
\end{proof}

\providecommand{\bysame}{\leavevmode\hbox to3em{\hrulefill}\thinspace}
\providecommand{\href}[2]{#2}

\end{document}